\newtheorem{teo}{Theorem}[section]
\newtheorem{defi}{Definition}[section]
\newtheorem{cor}{Corollary}[section]
\newtheorem{prop}{Proposition}[section]
\newtheorem{rem} {Remark}[section]
\newcommand{\nn}{\nonumber}
\DeclareMathOperator{\im}{im}
\DeclareMathOperator{\ind}{ind}
\DeclareMathOperator{\dvol}{dvol}
\DeclareMathOperator{\supp}{supp}
\DeclareMathOperator{\id}{Id}
\DeclareMathOperator{\vol}{vol}
\DeclareMathOperator{\reg}{reg}
\DeclareMathOperator{\sing}{sing}
\DeclareMathOperator{\Tr}{Tr}
\DeclareMathOperator{\abs}{abs}
\DeclareMathOperator{\rel}{rel}
\title{\huge \bf  Degenerating Hermitian metrics and spectral geometry of the canonical bundle}
\author{Francesco Bei  \bigskip \\
Institut Camille Jordan, Universit\'e Lyon 1\\ E-mail addresses: \ bei@math.univ-lyon1.fr\     francescobei27@gmail.com }
\date{}
\begin{document}

\maketitle

\begin{abstract}
Let $(X,h)$ be  a compact and irreducible Hermitian complex space of complex dimension $m$. In this paper we are interested in the Dolbeault operator acting on the space of $L^2$ sections of the canonical bundle of $\reg(X)$, the regular part of $X$. More precisely let $\overline{\mathfrak{d}}_{m,0}:L^2\Omega^{m,0}(\reg(X),h)\rightarrow L^2\Omega^{m,1}(\reg(X),h)$ be an arbitrarily fixed closed extension of $\overline{\partial}_{m,0}:L^2\Omega^{m,0}(\reg(X),h)\rightarrow L^2\Omega^{m,1}(\reg(X),h)$ where the  domain of the latter operator is $\Omega_c^{m,0}(\reg(X))$.  We establish various properties such as closed range of $\overline{\mathfrak{d}}_{m,0}$, compactness of the  inclusion  $\mathcal{D}(\overline{\mathfrak{d}}_{m,0})\hookrightarrow L^2\Omega^{m,0}(\reg(X),h)$ where $\mathcal{D}(\overline{\mathfrak{d}}_{m,0})$, the domain of $\overline{\mathfrak{d}}_{m,0}$, is endowed with the corresponding graph norm, and   discreteness of the spectrum of the  associated Hodge-Kodaira Laplacian $\overline{\mathfrak{d}}_{m,0}^*\circ \overline{\mathfrak{d}}_{m,0}$ with an estimate for the growth of its eigenvalues. Several corollaries such as trace class property for   the heat operator associated to $\overline{\mathfrak{d}}_{m,0}^*\circ \overline{\mathfrak{d}}_{m,0}$, with an estimate for its  trace, are derived. Finally in the last part we provide several applications to the Hodge-Kodaira Laplacian  in the setting of both compact irreducible Hermitian complex spaces with isolated singularities and  complex  projective surfaces.
\end{abstract}

\noindent\textbf{Keywords}: Hermitian complex space,  Hermitian pseudometric, canonical bundle, $\overline{\partial}$-operator, Hodge-Kodaira Laplacian, parabolicity, complex  projective surface, Fubini-Study metric.\\

\noindent\textbf{Mathematics subject classification}:  32W05, 32W50, 35P15,  58J35.

\tableofcontents

\section*{Introduction}
Consider a complex projective variety $V\subset \mathbb{C}\mathbb{P}^n$. The regular part of $V$, $\reg(V)$, comes equipped with a natural K\"ahler metric $g$,  which is the one induced by  the Fubini-Study metric of $\mathbb{C}\mathbb{P}^n$. 
In particular, whenever $V$ has a nonempty singular set, we get  an incomplete K\"ahler manifold of finite volume. In the seminal papers \cite{CGM} and  \cite{Mac}, given a singular projective variety $V$, many questions with a rich interaction of topology and analysis, for instance  intersection cohomology, $L^2$-cohomology and  Hodge theory,  have been raised for the incomplete K\"ahler manifold $(\reg(V),g)$.   Some of the most important among them  are the Cheeger-Goresky-MacPherson's conjecture and  the MacPherson's conjecture. The former, which is still open,   says that the maximal $L^2$-de Rham cohomology groups  of $(\reg(V),g)$ are isomorphic to the middle perversity intersection cohomology groups of $V$ while the latter, proved in \cite{PS}, asks whether the $L^2$-$\overline{\partial}$-cohomology groups in bidegree $(0,q)$ of $(\reg(V),g)$ are  isomorphic to the $(0,q)$-Dolbeault cohomology groups of $\tilde{V}$, where $\tilde{V}$ is a resolution of $V$  \`a la Hironaka. Related to these problems there are many other interesting and deep analytic questions. We can mention for instance the $L^2$-Stokes theorem which asks whether  the maximal and minimal extension of the de Rham differential $d$ are the same, the existence of a $L^2$-Hodge decomposition for the $L^2$-de Rham cohomology of $(\reg(V),g)$ in terms of the $L^2$-${\overline{\partial}}$-cohomology of $(\reg(V),g)$, the existence of  self-adjoint extensions of the  Hodge-de Rham operator  $d+d^t$ and  the Hodge-Dolbeault operator $\overline{\partial}+\overline{\partial}^t$ with discrete spectrum, the properties of the heat operator associated to some self-adjoint extension of the Laplacian and so on. Moreover we point out that many of these problems  admit a natural extension in the more general setting of  Hermitian complex spaces. Several papers, during the last thirty years, have been devoted to these questions. Without any goal of completeness we can recall here  \cite{HS} and \cite{TOh} which concern the Cheeger-Goresky-MacPherson's conjecture, \cite{BPS}, \cite{PH},\cite{OVV},  \cite{OV},  \cite{PS} and \cite{JRu} devoted to the $L^2$-$\overline{\partial}$-cohomology, \cite{OvRu} and \cite{Ruppe} concerning the $\overline{\partial}$-operator on Hermitian complex spaces,  \cite{BLE}, \cite{GL}, \cite{PaS} and \cite{JRU} dealing with the $L^2$-Hodge decomposition and the $L^2$-Stokes theorem and finally   \cite{BRLU}, \cite{LT}, \cite{MasNag} and  \cite{Pa}  devoted to the heat operator.\\ Now, after this brief overview of the literature, we carry on by describing the aim of this paper. Given a compact and  irreducible Hermitian complex space $(X,h)$ of complex dimension $m$ we are interested in the Dolbeault operator $\overline{\partial}_{m,0}$ acting on the space of $L^2$-sections of the canonical bundle of $\reg(X)$, the regular part of $X$. More precisely our point of view is to consider $\overline{\partial}_{m,0}$ as an unbounded and densely defined operator 
\begin{equation}
\label{Amburgo}
\overline{\partial}_{m,0}:L^2\Omega^{m,0}(\reg(X),h)\rightarrow L^2\Omega^{m,1}(\reg(X),h)
\end{equation}
 with  domain $\Omega^{m,0}_c(\reg(X))$, the space of smooth sections with compact support of $\Lambda^{m,0}(\reg(X))$. Labeling by  $\overline{\mathfrak{d}}_{m,0}:L^2\Omega^{m,0}(\reg(X),h)\rightarrow L^2\Omega^{m,1}(\reg(X),h)$ any closed extension of \eqref{Amburgo}, we are interested in properties such as closed range of $\overline{\mathfrak{d}}_{m,0}$, compactness of the  inclusion  $\mathcal{D}(\overline{\mathfrak{d}}_{m,0})\hookrightarrow L^2\Omega^{m,0}(\reg(X),h)$ where $\mathcal{D}(\overline{\mathfrak{d}}_{m,0})$, the domain of $\overline{\mathfrak{d}}_{m,0}$, is endowed with the corresponding graph norm,  discreteness of the spectrum of  $\overline{\mathfrak{d}}_{m,0}^*\circ \overline{\mathfrak{d}}_{m,0}:L^2\Omega^{m,0}(\reg(X),h)\rightarrow L^2\Omega^{m,0}(\reg(X),h)$, estimates of the growth of the eigenvalues and so on. \\Let  us now go  more  into  the  details  by explaining  the  structure  of  this  paper.   The   first  section  is  devoted  to the background material. We collect some basic definitions and notions concerning differential operators with particular regard to the case of the $\overline{\partial}$-operator. The second section contains some abstract results  that will be  used later on in the paper. In the third section we recall the notion of parabolic Riemannian manifold $(M,g)$, see Def. \ref{para}, and then we proceed by studying  the Hodge-Dolbeault operator acting on an open, parabolic and dense subset of a compact Hermitian manifold. Furthermore the remaining part of the third section collects some useful propositions  in the realm of Hermitian manifolds. The forth section contains the main results of this paper whose applications will provide some satisfactory answers for the questions raised about the operator \eqref{Amburgo}. More precisely, the forth section starts with the notion of Hermitian pseudometric: given a complex manifold $M$, a Hermitian pseudometric $h$ on $M$ is nothing but a positive semidefinite Hermitian product on $M$ which is positive definite on an open and dense subset of $M$. As we will see later on, by virtue of Hironaka resolution, this is a convenient set to deal with many problems involving the $\overline{\partial}$-operator on Hermitian complex spaces. Within this framework the first theorem proved in the forth section is the following:
\begin{teo}
\label{mumford}
Let $(M,g)$ be a compact Hermitian manifold of complex dimension $m$. Let $h$ be a Hermitian pseudometric on $M$  and let $A_h:=M\setminus Z_h$ with $Z_h$ the degeneracy locus of $h$, see Def. \ref{pseudohermi}. Let  $(E,\rho)$ be a Hermitian holomorphic vector bundle over $M$. Assume that $(A_h,g|_{A_h})$ is parabolic.  Let 
\begin{equation}
\label{anycc}
\overline{\mathfrak{d}}_{E,m,0}:L^2\Omega^{m,0}(A_h,E|_{A_h},h|_{A_h})\rightarrow L^2\Omega^{m,1}(A_h,E|_{A_h},h|_{A_h})
\end{equation}
be any closed extension of $\overline{\partial}_{E,m,0}:L^2\Omega^{m,0}(A_h,E|_{A_h},h|_{A_h})\rightarrow L^2\Omega^{m,1}(A_h,E|_{A_h},h|_{A_h})$ where the  domain of the latter operator is  $\Omega_c^{m,0}(A_h,E|_{A_h})$.
Let  
\begin{equation}
\label{zitacc}
\overline{\partial}_{E,m,0}:L^2\Omega^{m,0}(M,E,g)\rightarrow L^2\Omega^{m,1}(M,E,g)
\end{equation}
be the unique closed extension of  $\overline{\partial}_{E,m,0}:\Omega^{m,0}(M,E)\rightarrow \Omega^{m,1}(M,E)$ where the latter operator is viewed as an unbounded and densely defined operator acting between $L^2\Omega^{m,0}(M,E,g)$ and $L^2\Omega^{m,1}(M,E,g)$. Finally let $\mathcal{D}(\overline{\mathfrak{d}}_{E,m,0})$  and $\mathcal{D}(\overline{\partial}_{E,m,0})$  be the domains of  \eqref{anycc} and \eqref{zitacc} respectively. Then the following properties hold true:
\begin{enumerate}
\item We have a continuous inclusion $\mathcal{D}(\overline{\mathfrak{d}}_{E,m,0})\hookrightarrow \mathcal{D}(\overline{\partial}_{E,m,0})$ where  each domain is endowed with the corresponding graph norm.  Moreover on $\mathcal{D}(\overline{\mathfrak{d}}_{E,m,0})$ the operator \eqref{zitacc}  coincides with  the operator \eqref{anycc}.
\item The inclusion $\mathcal{D}(\overline{\mathfrak{d}}_{E,m,0})\hookrightarrow L^2\Omega^{m,0}(M,E,g)$ is a compact operator where $\mathcal{D}(\overline{\mathfrak{d}}_{E,m,0})$ is endowed with the corresponding graph norm.
\item Let $\overline{\mathfrak{d}}_{E,m,0}^*:L^2\Omega^{m,1}(A_h,E|_{A_h},h|_{A_h})\rightarrow L^2\Omega^{m,0}(A_h,E|_{A_h},h|_{A_h})$ be the adjoint of  \eqref{anycc}. Then  the operator
 \begin{equation}
\label{anylapcc}
\overline{\mathfrak{d}}_{E,m,0}^*\circ\overline{\mathfrak{d}}_{E,m,0}:L^2\Omega^{m,0}(A_h,E|_{A_h},h|_{A_h})\rightarrow L^2\Omega^{m,0}(A_h,E|_{A_h},h|_{A_h})
\end{equation} whose domain is defined as $\{s\in \mathcal{D}(\overline{\mathfrak{d}}_{E,m,0}):\ \overline{\mathfrak{d}}_{E,m,0}s\in \mathcal{D}(\overline{\mathfrak{d}}_{E,m,0}^*)\}$, has discrete spectrum.
\end{enumerate}
\end{teo}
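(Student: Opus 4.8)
The plan is to reduce all three assertions to standard facts on the compact manifold $(M,g)$, by means of two elementary comparisons between the two $L^2$ structures at play. The first, and the only genuinely special, point is that for sections of bidegree $(m,0)$ with values in $(E,\rho)$ the pointwise density $|s|^2_{h,\rho}\,\dvol_h$ depends only on $\rho$ and not on the Hermitian metric on $M$; hence, since the degeneracy locus $Z_h$ is negligible (Def. \ref{pseudohermi}) and $A_h$ is dense, restriction induces an isometric identification $L^2\Omega^{m,0}(A_h,E|_{A_h},h|_{A_h})=L^2\Omega^{m,0}(M,E,g)$. The second comparison concerns bidegree $(m,1)$: since $M$ is compact, $g$ is positive definite and $h$ is a positive semidefinite smooth Hermitian form, there is $C>0$ with $h\le Cg$ on $M$, and passing to the metrics induced on $\Lambda^{m,1}$ (which involves the inverse metrics) this gives $\|\beta\|_{L^2\Omega^{m,1}(M,E,g)}\le C\|\beta\|_{L^2\Omega^{m,1}(A_h,E|_{A_h},h|_{A_h})}$, so that the $h$-space of $(m,1)$-forms sits continuously inside the $g$-space.

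Granting this, item (1) is obtained as follows. Fix an arbitrary closed extension $\overline{\mathfrak{d}}_{E,m,0}$ as in \eqref{anycc} and let $s\in\mathcal{D}(\overline{\mathfrak{d}}_{E,m,0})$. By the two comparisons, $s\in L^2\Omega^{m,0}(M,E,g)$ and $\overline{\mathfrak{d}}_{E,m,0}s\in L^2\Omega^{m,1}(M,E,g)$. Since a closed extension of a differential operator acts as the associated distributional operator (Section 1), on $A_h$ one has $\overline{\mathfrak{d}}_{E,m,0}s=\overline{\partial}_{E,m,0}s$ as distributions; what must be shown is that this identity does not pick up any contribution supported on $Z_h$, i.e. that $\overline{\partial}_{E,m,0}s=\overline{\mathfrak{d}}_{E,m,0}s$ holds distributionally on all of $M$. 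Here one uses the parabolicity of $(A_h,g|_{A_h})$: choose cut-off functions $\phi_k\in C_c^\infty(A_h)$ with $0\le\phi_k\le1$, $\phi_k\to1$ pointwise and $\|d\phi_k\|_{L^2(A_h,g)}\to0$ (Def. \ref{para}), and pair $\overline{\partial}_{E,m,0}s$ against a smooth test form $\eta$ on the compact manifold $M$; the commutator term is controlled by $\|s\|_{L^2}\,\|\eta\|_{\infty}\,\|d\phi_k\|_{L^2(A_h,g)}\to0$, and the desired identity on $M$ follows. This is essentially one of the propositions proved in Section 3. Therefore $\mathcal{D}(\overline{\mathfrak{d}}_{E,m,0})\subseteq\mathcal{D}(\overline{\partial}_{E,m,0})$, the operator \eqref{zitacc} restricts to \eqref{anycc} there, and continuity of the inclusion of graph-normed domains is immediate from the two comparisons.

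For item (2), by item (1) the inclusion $\mathcal{D}(\overline{\mathfrak{d}}_{E,m,0})\hookrightarrow L^2\Omega^{m,0}(M,E,g)$ factors through the continuous inclusion $\mathcal{D}(\overline{\mathfrak{d}}_{E,m,0})\hookrightarrow\mathcal{D}(\overline{\partial}_{E,m,0})$, so it suffices to show that $\mathcal{D}(\overline{\partial}_{E,m,0})\hookrightarrow L^2\Omega^{m,0}(M,E,g)$ is compact, a statement on the compact manifold $M$. The relevant observation is that $\overline{\partial}_{E,m,0}\colon\Omega^{m,0}(M,E)\to\Omega^{m,1}(M,E)$ has injective principal symbol, since $\alpha\mapsto\xi^{0,1}\wedge\alpha$ is injective on $\Lambda^{m,0}$ for every $\xi\ne0$; hence $\overline{\partial}_{E,m,0}$ is overdetermined elliptic and satisfies the estimate $\|s\|_{H^1}\le c\bigl(\|\overline{\partial}_{E,m,0}s\|_{L^2}+\|s\|_{L^2}\bigr)$. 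As $M$ is compact and smooth forms are dense, this identifies $\mathcal{D}(\overline{\partial}_{E,m,0})$ endowed with the graph norm with the Sobolev space $H^1\Omega^{m,0}(M,E)$ up to equivalence of norms, and the Rellich--Kondrachov theorem yields compactness of $H^1\Omega^{m,0}(M,E)\hookrightarrow L^2\Omega^{m,0}(M,E,g)$. Composing with the continuous inclusion of item (1) proves the claim.

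Item (3) is then purely formal: the operator \eqref{anylapcc} equals $\overline{\mathfrak{d}}_{E,m,0}^*\circ\overline{\mathfrak{d}}_{E,m,0}$ with its natural domain, hence is self-adjoint and non-negative by von Neumann's theorem, and its spectrum is discrete if and only if its resolvent is compact, which is equivalent to the compactness of the inclusion of $\mathcal{D}(\overline{\mathfrak{d}}_{E,m,0})$, with the graph norm, into $L^2\Omega^{m,0}(A_h,E|_{A_h},h|_{A_h})$; this is one of the abstract results recalled in Section 2, and under the identification $L^2\Omega^{m,0}(A_h,E|_{A_h},h|_{A_h})=L^2\Omega^{m,0}(M,E,g)$ it is exactly item (2). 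The only step that is not either metric bookkeeping or standard elliptic and functional-analytic machinery is the removable-singularity argument in item (1) — showing that the distributional $\overline{\partial}$ of an $L^2$ section that is controlled away from $Z_h$ puts no mass on $Z_h$ — and this is precisely where the parabolicity hypothesis enters.
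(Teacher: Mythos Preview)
Your proof is correct and follows the same overall architecture as the paper: identify the $(m,0)$ Hilbert spaces, compare the $(m,1)$ norms via $h\le Cg$, use parabolicity to pass from $A_h$ to $M$, and then invoke ellipticity on the compact manifold.

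There are two places where your argument is organized a bit differently from the paper's. For item~(1), the paper does not run the cut-off argument directly against a test form as you do; instead it first reduces, via Prop.~\ref{partiro}, to smooth sections in $\mathcal{D}(\overline{\mathfrak{d}}_{E,m,0})$, obtains the inclusion into $\mathcal{D}(\overline{\partial}_{E,m,0,\max})$ on $(A_h,g|_{A_h})$ by the norm comparisons, and then invokes Prop.~\ref{lit} (whose proof is precisely the parabolicity cut-off argument of Prop.~\ref{gaur}) to identify the latter with the unique closed extension on $(M,g)$. Your direct distributional computation is a slight shortcut of the same idea. For item~(2), the paper obtains compactness of $\mathcal{D}(\overline{\partial}_{E,m,0})\hookrightarrow L^2\Omega^{m,0}(M,E,g)$ by embedding it in the domain of the full Hodge--Dolbeault operator $\overline{\partial}_{E,m}+\overline{\partial}_{E,m}^t$ on $L^2\Omega^{m,\bullet}(M,E,g)$, which is elliptic in the usual sense, and then projecting onto the $q=0$ summand. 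Your route via the injectivity of the principal symbol of $\overline{\partial}_{E,m,0}$ alone (overdetermined ellipticity and the resulting $H^1$ estimate) is more economical, as it avoids bringing in the whole complex. For item~(3) the two arguments coincide: the paper also deduces discreteness from the compact inclusion of $\mathcal{D}(\overline{\mathfrak{d}}_{E,m,0})$, using the elementary estimate $\|\overline{\mathfrak{d}}_{E,m,0}s\|^2\le\tfrac12(\|s\|^2+\|\overline{\mathfrak{d}}_{E,m,0}^*\overline{\mathfrak{d}}_{E,m,0}s\|^2)$ to pass from $\mathcal{D}(\overline{\mathfrak{d}}_{E,m,0}^*\circ\overline{\mathfrak{d}}_{E,m,0})$ to $\mathcal{D}(\overline{\mathfrak{d}}_{E,m,0})$.
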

We point out explicitly that in the previous theorem the assumption concerning the parabolicity of $(A_h,g|_{A_h})$ does not depend on the particular Hermitian metric $g$ that we fix on $M$. Indeed if $g'$ is another Hermitian metric on $M$ then, since $g$ and $g'$ are quasi-isometric on $M$, we have that $(A_h,g|_{A_h})$ is parabolic if and only if $(A_h,g'|_{A_h})$ is parabolic.
Consider again the setting of Theorem \ref{mumford} and let $\overline{\partial}^t_{E,m,0}$ be the formal adjoint of $\overline{\partial}_{E,m,0}$ with respect to $g$. Let $\Delta_{\overline{\partial},E,m,0}:\Omega^{m,0}(M,E)\rightarrow \Omega^{m,0}(M,E)$, $\Delta_{\overline{\partial},E,m,0}=\overline{\partial}_{E,m,0}^t\circ\overline{\partial}_{E,m,0}$ be the Hodge-Kodaira Laplacian in bidegree $(m,0)$. Since $M$ is compact and  $\Delta_{\overline{\partial},E,m,0}$ is elliptic and formally self-adjoint we have that $\Delta_{\overline{\partial},E,m,0}$, acting on $L^2\Omega^{m,0}(M,E,g)$ with  domain $\Omega^{m,0}(M,E)$, is essentially self-adjoint. With  
\begin{equation}
\label{crematico}
\Delta_{\overline{\partial},E,m,0}:L^2\Omega^{m,0}(M,E,g)\rightarrow L^2\Omega^{m,0}(M,E,g)
\end{equation}
we mean its unique closed (and therefore self-adjoint) extension. We are  now in the position to recall the second theorem  proved in the forth section.

\begin{teo}
\label{prince}
In the  setting of Theorem \ref{mumford}. Let $$0\leq \mu_1\leq \mu_2\leq...\leq \mu_k\leq...$$ be the eigenvalues of \eqref{crematico} and let  $$0\leq \lambda_1\leq \lambda_2\leq...\leq \lambda_{k}\leq...$$ be the eigenvalues of  \eqref{anylapcc}
Then there exists a constant $\gamma>0$ such that  for every $k\in \mathbb{N}$ we have the following inequality:
\begin{equation}
\label{inesa}
\gamma\lambda_k\geq \mu_k.
\end{equation}
Moreover we have the following asymptotic inequality:
\begin{equation}
\label{asinello}
\lim \inf \lambda_k k^{-\frac{1}{m}}>0
\end{equation} 
as $k\rightarrow \infty$.
\end{teo}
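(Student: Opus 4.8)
The plan is to derive both inequalities from the Courant--Fischer min-max principle, using only the one-sided metric comparison $h\leq Cg$ on the compact manifold $M$ together with the classical Weyl law for the elliptic operator $\Delta_{\overline{\partial},E,m,0}$ on $M$. Two preliminary facts about $(m,\bullet)$-forms are the only non-formal ingredients. First, since $h$ is a smooth positive semidefinite Hermitian form on the compact manifold $M$, there is a constant $C>0$ with $h\leq Cg$ on $M$, hence $g^{-1}\leq C\,h^{-1}$ on covectors over $A_h$. Second, for a section of $\Lambda^{m,0}(M)\otimes E$ the contribution of the $\Lambda^{m,0}$-factor to the pointwise norm is cancelled exactly by the Riemannian volume density, so the $L^2$-norm in bidegree $(m,0)$ does not depend on the Hermitian metric of $M$; combined with the inclusion provided by Theorem \ref{mumford}(1), this lets us view $\mathcal{D}(\overline{\mathfrak{d}}_{E,m,0})$ as a subspace of $L^2\Omega^{m,0}(M,E,g)$ on which the two $(m,0)$-norms coincide. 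The very same cancellation shows that for $(m,1)$-forms only the metric on the $\Lambda^{0,1}$-factor matters (the fixed bundle metric $\rho$ contributing uniformly bounded factors on compact $M$), so $g^{-1}\leq C\,h^{-1}$ produces a constant $C'>0$ with
\begin{equation*}
\|\omega\|^2_{L^2\Omega^{m,1}(A_h,E|_{A_h},g|_{A_h})}\ \leq\ C'\,\|\omega\|^2_{L^2\Omega^{m,1}(A_h,E|_{A_h},h|_{A_h})}\qquad\text{for all }\omega\in L^2\Omega^{m,1}(A_h,E|_{A_h},h|_{A_h}).
\end{equation*}

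Next I would apply min-max. Both $\overline{\mathfrak{d}}_{E,m,0}^*\circ\overline{\mathfrak{d}}_{E,m,0}$ (Theorem \ref{mumford}(3)) and $\Delta_{\overline{\partial},E,m,0}=\overline{\partial}_{E,m,0}^*\circ\overline{\partial}_{E,m,0}$ ($M$ compact) are non-negative self-adjoint operators with discrete spectrum, and the form domain of $T^*T$ equals $\mathcal{D}(T)$, so
\begin{align*}
\mu_k&=\inf\Big\{\sup_{0\neq u\in W}\|\overline{\partial}_{E,m,0}u\|^2_{L^2\Omega^{m,1}(M,E,g)}\big/\|u\|^2_{L^2\Omega^{m,0}(M,E,g)}\ :\ W\subseteq\mathcal{D}(\overline{\partial}_{E,m,0}),\ \dim W=k\Big\},\\
\lambda_k&=\inf\Big\{\sup_{0\neq u\in W}\|\overline{\mathfrak{d}}_{E,m,0}u\|^2_{L^2\Omega^{m,1}(A_h,E|_{A_h},h|_{A_h})}\big/\|u\|^2_{L^2\Omega^{m,0}(A_h,E|_{A_h},h|_{A_h})}\ :\ W\subseteq\mathcal{D}(\overline{\mathfrak{d}}_{E,m,0}),\ \dim W=k\Big\}.
\end{align*}
Fixing $k$ and $\epsilon>0$, I would pick a $k$-dimensional $W\subseteq\mathcal{D}(\overline{\mathfrak{d}}_{E,m,0})$ realising $\lambda_k+\epsilon$ in the second formula. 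By Theorem \ref{mumford}(1) one has $W\subseteq\mathcal{D}(\overline{\partial}_{E,m,0})$ and $\overline{\partial}_{E,m,0}u=\overline{\mathfrak{d}}_{E,m,0}u$ for $u\in W$, so $W$ is an admissible competitor in the first formula, and the two facts above give
\begin{equation*}
\mu_k\ \leq\ \sup_{0\neq u\in W}\frac{\|\overline{\partial}_{E,m,0}u\|^2_{L^2\Omega^{m,1}(M,E,g)}}{\|u\|^2_{L^2\Omega^{m,0}(M,E,g)}}\ \leq\ C'\sup_{0\neq u\in W}\frac{\|\overline{\mathfrak{d}}_{E,m,0}u\|^2_{L^2\Omega^{m,1}(A_h,E|_{A_h},h|_{A_h})}}{\|u\|^2_{L^2\Omega^{m,0}(A_h,E|_{A_h},h|_{A_h})}}\ \leq\ C'(\lambda_k+\epsilon).
\end{equation*}
Letting $\epsilon\to 0$ yields $\mu_k\leq C'\lambda_k$, which is \eqref{inesa} with $\gamma:=C'$.

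For \eqref{asinello} I would then invoke the classical Weyl asymptotics for the second-order elliptic self-adjoint operator $\Delta_{\overline{\partial},E,m,0}$ on the $2m$-real-dimensional compact manifold $M$, namely $\#\{k:\mu_k\leq\Lambda\}=c_M\Lambda^{m}+o(\Lambda^{m})$ as $\Lambda\to\infty$ for some $c_M>0$, whence $\mu_k k^{-1/m}\to c_M^{-1/m}>0$. Combined with $\lambda_k\geq\gamma^{-1}\mu_k$ this gives $\liminf_k\lambda_k k^{-1/m}\geq\gamma^{-1}\lim_k\mu_k k^{-1/m}>0$.

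I expect the only delicate points to be the two facts in the first paragraph — that the degeneration of $h$ is invisible in bidegree $(m,0)$, so that the two Hilbert spaces of $(m,0)$-forms and their norms are genuinely identified, and that in bidegree $(m,1)$ only the always-available bound $h\leq Cg$ is needed (there is no loss in $h$ being possibly much smaller than $g$) — and the verification that the subspace optimising the min-max for $\lambda_k$ is a legitimate competitor for $\mu_k$, which is precisely what Theorem \ref{mumford}(1) provides. Writing the $(m,1)$-norm comparison carefully in the presence of the bundle $(E,\rho)$ is routine but should be done explicitly.
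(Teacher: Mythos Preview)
Your proposal is correct and follows essentially the same route as the paper: both compare the two Rayleigh quotients via the min-max principle using the identification of $(m,0)$-norms (Prop.~\ref{brick}) and the one-sided $(m,1)$-norm inequality \eqref{drago}, together with Theorem~\ref{mumford}(1) to make $\mathcal{D}(\overline{\mathfrak{d}}_{E,m,0})$ a legitimate source of test subspaces for $\mu_k$, and then invoke the Weyl law on the compact $M$ for the asymptotic. The only cosmetic difference is that the paper takes the exact eigenspace $F_k=\mathrm{span}\{\phi_1,\dots,\phi_k\}$ (available since the spectrum is discrete) instead of your $\epsilon$-approximation; your constant $C'$ is precisely the paper's $\gamma$.
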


Finally the remaining part of the forth section contains various corollaries and remarks. In particular we show that the heat operator associated to $\overline{\mathfrak{d}}_{E,m,0}^*\circ\overline{\mathfrak{d}}_{E,m,0}$ is a trace class operator and moreover we provide an estimate for its trace.    In the fifth and last section of this paper we collect various applications of Th. \ref{mumford} and Th. \ref{prince}. Its first part is devoted to the $\overline{\partial}$-operator acting on the space of $L^2$-sections of the canonical bundle of $\reg(X)$, see \eqref{Amburgo}. In particular we prove  the following result that, for the sake of brevity,  here is formulated only in the version where the canonical bundle is untwisted.  For the more general version we refer to Th. \ref{canonical}.
\begin{teo}
\label{canonicale}
Let $(X,h)$ be a compact and irreducible Hermitian complex space of complex dimension $m$. Consider the Dolbeault operator  $\overline{\partial}_{m,0}:L^2\Omega^{m,0}(\reg(X),h)\rightarrow L^2\Omega^{m,1}(\reg(X),h)$ with  domain $\Omega^{m,0}_c(\reg(X))$ and let 
\begin{equation}
\label{Amburgoe}
 \overline{\mathfrak{d}}_{m,0}:L^2\Omega^{m,0}(\reg(X),h)\rightarrow L^2\Omega^{m,1}(\reg(X),h)
\end{equation}
 be any of its closed extensions. The following properties hold true:
\begin{enumerate}
\item The inclusion $\mathcal{D}(\overline{\mathfrak{d}}_{m,0})\hookrightarrow L^2\Omega^{m,0}(\reg(X),h)$ is a compact operator where $\mathcal{D}(\overline{\mathfrak{d}}_{m,0})$ is endowed with the corresponding graph norm.
\item Let $\overline{\mathfrak{d}}_{m,0}^*:L^2\Omega^{m,1}(\reg(X),h)\rightarrow L^2\Omega^{m,0}(\reg(X),h)$ be the adjoint of  \eqref{Amburgoe}. Then  the operator 
\begin{equation}
\label{anylapze}
\overline{\mathfrak{d}}_{m,0}^*\circ\overline{\mathfrak{d}}_{m,0}:L^2\Omega^{m,0}(\reg(X),h)\rightarrow L^2\Omega^{m,0}(\reg(X),h)
\end{equation} whose domain is defined as $\{s\in \mathcal{D}(\overline{\mathfrak{d}}_{m,0}):\ \overline{\mathfrak{d}}_{m,0}s\in \mathcal{D}(\overline{\mathfrak{d}}_{m,0}^*)\}$,  has discrete spectrum.
\end{enumerate}
Let now $$0\leq \lambda_1\leq \lambda_2\leq \lambda_3\leq...$$ be the eigenvalues of \eqref{anylapze}. Then we have the following asymptotic inequality 
\begin{equation}
\label{rospoe}
\lim \inf \lambda_k k^{-\frac{1}{m}}>0
\end{equation} 
as $k\rightarrow \infty$.\\
Finally consider the heat operator $$e^{-t\overline{\mathfrak{d}}_{m,0}^*\circ\overline{\mathfrak{d}}_{m,0}}:L^2\Omega^{m,0}(\reg(X),h)\rightarrow L^2\Omega^{m,0}(\reg(X),h)$$ associated to \eqref{anylapze}. We have the following properties:
\begin{enumerate}
\item $e^{-t\overline{\mathfrak{d}}_{m,0}^*\circ\overline{\mathfrak{d}}_{m,0}}:L^2\Omega^{m,0}(\reg(X),h)\rightarrow L^2\Omega^{m,0}(\reg(X),h)$ is a trace class operator.
\item $\Tr(e^{-t\overline{\mathfrak{d}}_{m,0}^*\circ\overline{\mathfrak{d}}_{m,0}})\leq C t^{-m}$ for $t\in (0,1]$ and  for some constant $C>0$.
\end{enumerate}
\end{teo}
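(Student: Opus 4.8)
The plan is to deduce Theorem \ref{canonicale} from Theorems \ref{mumford} and \ref{prince} via a resolution of singularities. Let $\pi\colon M\to X$ be a resolution, so that $M$ is a compact complex manifold of complex dimension $m$, $\pi$ is proper and holomorphic, $\pi$ restricts to a biholomorphism $M\setminus \pi^{-1}(\sing(X))\to \reg(X)$, and $D:=\pi^{-1}(\sing(X))$ is an analytic divisor. Fix once and for all an arbitrary Hermitian metric $g$ on $M$. Since $(X,h)$ is a Hermitian complex space, $h$ is induced by a Hermitian metric of $\mathbb{C}^N$ through local embeddings; composing these with $\pi$ yields holomorphic maps that are generically immersions, so $\pi^*h$ is a positive semidefinite Hermitian product on $M$, positive definite on the dense open set $M\setminus D$, that is a Hermitian pseudometric in the sense of Def. \ref{pseudohermi}, and its degeneracy locus $Z_h$ (the locus where the relevant holomorphic map fails to be an immersion) is a proper analytic subset of $M$ contained in $D$. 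Set $A_h:=M\setminus Z_h$; then $(A_h,\pi^*h|_{A_h})$ is a genuine incomplete Hermitian manifold, $A_h$ is open and dense in $M$, it contains $M\setminus D\cong\reg(X)$, and $A_h\setminus(M\setminus D)=D\setminus Z_h$ has measure zero.

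Next I would identify the operators of Theorem \ref{canonicale} with those of Theorems \ref{mumford} and \ref{prince} applied to the trivial line bundle $E=M\times\mathbb{C}$. Because $D\setminus Z_h$ is a nullset, pulling back along $\pi$ gives isometric identifications $L^2\Omega^{m,0}(\reg(X),h)\cong L^2\Omega^{m,0}(A_h,\pi^*h|_{A_h})$ and $L^2\Omega^{m,1}(\reg(X),h)\cong L^2\Omega^{m,1}(A_h,\pi^*h|_{A_h})$ under which $\overline{\partial}_{m,0}$ becomes $\overline{\partial}_{E,m,0}$. The point to check is that any closed extension $\overline{\mathfrak{d}}_{m,0}$ of $\overline{\partial}_{m,0}$ on $\reg(X)$ is also a closed extension of $\overline{\partial}_{E,m,0}$ on $A_h$, for which it suffices to show that the minimal and the maximal closed extensions computed on $\reg(X)$ and on $A_h$ coincide. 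Since $D\setminus Z_h$ is, locally in $A_h$, a proper analytic subset and hence of real codimension at least two, one can build cut-off functions $\chi_\varepsilon$ with $\chi_\varepsilon\to1$ pointwise and $\|\overline{\partial}\chi_\varepsilon\|_{L^2}\to0$; multiplying by such cut-offs shows $\Omega_c^{m,0}(A_h)$ lies in the graph closure of $\Omega_c^{m,0}(\reg(X))$, so the two minimal extensions agree, and the same cut-off estimate gives an $L^2$ removable-singularity statement for $(m,0)$- and $(m,1)$-forms across $D\setminus Z_h$, so the two maximal extensions agree as well. As every closed extension lies between the minimal and the maximal one, the identification follows.

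It then remains to invoke the main theorems. By the propositions collected in Section 3 (see Def. \ref{para}), the complement of a proper analytic subset of a compact Hermitian manifold is parabolic; hence $(A_h,g|_{A_h})$ is parabolic, a property which, as noted after Theorem \ref{mumford}, does not depend on the auxiliary metric $g$. Thus Theorem \ref{mumford} (with $E$ trivial) applies and yields at once the compactness of the inclusion $\mathcal{D}(\overline{\mathfrak{d}}_{m,0})\hookrightarrow L^2\Omega^{m,0}(\reg(X),h)$ and the discreteness of the spectrum of $\overline{\mathfrak{d}}_{m,0}^*\circ\overline{\mathfrak{d}}_{m,0}$, which are items 1 and 2. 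The asymptotic inequality \eqref{rospoe} is exactly \eqref{asinello} of Theorem \ref{prince}. Finally, from \eqref{rospoe} one gets $\lambda_k\geq c\,k^{1/m}$ for all large $k$ and some $c>0$; hence $\sum_k e^{-t\lambda_k}<\infty$ for every $t>0$, so $e^{-t\overline{\mathfrak{d}}_{m,0}^*\circ\overline{\mathfrak{d}}_{m,0}}$ is trace class, and comparing $\sum_k e^{-t\lambda_k}$ with $\int_0^\infty e^{-tcx^{1/m}}\,dx=C't^{-m}$ gives $\Tr(e^{-t\overline{\mathfrak{d}}_{m,0}^*\circ\overline{\mathfrak{d}}_{m,0}})\leq Ct^{-m}$ for $t\in(0,1]$; these are the two heat-operator properties, which also follow directly from the corollaries of Section 4.

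The main obstacle is the second step: one must be sure that deleting the codimension-one analytic set $D\setminus Z_h$ from $A_h$ changes neither the minimal nor the maximal closed extension of the Dolbeault operator, which is precisely where the $L^2$ cut-off and removable-singularity estimates enter. Once this domain comparison is settled, the remaining assertions are a direct application of Theorems \ref{mumford} and \ref{prince} together with Weyl's law on the compact manifold $M$.
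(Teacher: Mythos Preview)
Your overall strategy---pull everything back through a Hironaka resolution $\pi:M\to X$ and apply Theorems \ref{mumford} and \ref{prince} with the trivial bundle---is exactly the paper's approach. The paper's proof (of the more general Th.~\ref{canonical}) is however shorter because it sidesteps what you flag as the ``main obstacle.'' Rather than comparing the closed extensions on $\reg(X)\cong M\setminus D$ with those on $A_h=M\setminus Z_h$, the paper invokes Remark~\ref{bottchern}: Theorems \ref{mumford} and \ref{prince} (and their corollaries) hold verbatim with $A_h$ replaced by any open dense $A'_h\subset A_h$ for which $(A'_h,g|_{A'_h})$ is parabolic. Taking $A'_h=M\setminus D$ and transporting via the unitary $(\pi|_{M\setminus D})^*$, one lands directly on the operator \eqref{Amburgoe} without ever needing your cut-off/removable-singularity comparison. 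So your second paragraph, while correct in spirit, is unnecessary once Remark~\ref{bottchern} is available.

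Two small points of precision. First, you should take $\pi$ to be a Hironaka resolution so that $D$ is a divisor with \emph{only normal crossings}; this is what Prop.~\ref{exam} requires to conclude that $(M\setminus D,g|_{M\setminus D})$ is parabolic. Your blanket assertion that ``the complement of a proper analytic subset of a compact Hermitian manifold is parabolic'' is true, but the paper only establishes it in the normal-crossings case (via Prop.~\ref{parasuf} and Prop.~\ref{exam}, which live in Section~5 rather than Section~3). Second, for the heat-trace bound the paper goes through Cor.~\ref{heat}, comparing directly with $\Tr(e^{-t\Delta_{\overline{\partial},m,0}})$ on the compact resolution; your integral-comparison argument from the eigenvalue lower bound \eqref{rospoe} is a valid alternative and yields the same $t^{-m}$ estimate.
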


We stress on the fact that Th. \ref{canonicale} does not require assumptions on $\sing(X)$ nor on the dimension of $X$. 
In the second part of the fifth section, combining  Th. \ref{canonicale} with other theorems already available in the literature, we show the existence of self-adjoint extensions with discrete spectrum for the Hodge-Kodaira Laplacian in the framework of  compact and irreducible Hermitian complex spaces with isolated singularities.  For the definition of  Friedrich extension and  absolute extension we refer to  Prop. \ref{fall} and \eqref{asdf}.
\begin{teo}
\label{tommasodacquino}
Let $(X,h)$ be a compact and irreducible Hermitian complex space of complex dimension $m$. Assume that $\sing(X)$ is made of isolated singularities. Then  we have the following properties:
\begin{enumerate}
\item $\Delta_{\overline{\partial},m,q,\abs}:L^2\Omega^{m,q}(\reg(X),h)\rightarrow L^2\Omega^{m,q}(\reg(X),h)$  has discrete spectrum  for each $q=0,...,m$.
\item $\overline{\partial}_{m,\max}+\overline{\partial}^t_{m,\min}:L^2\Omega^{m,\bullet}(\reg(X),h)\rightarrow L^2\Omega^{m,\bullet}(\reg(X),h)$ has  discrete spectrum.
\item $\Delta_{\overline{\partial},m,q}^{\mathcal{F}}:L^2\Omega^{m,q}(\reg(X),h)\rightarrow L^2\Omega^{m,q}(\reg(X),h)$ has discrete spectrum for each $q=0,...,m$.
\end{enumerate}
\end{teo}
Finally in the last part of the fifth section, joining again our results with others already available in the literature, we provide a quite accurate study   of the Hodge-Kodaira Laplacian  on complex projective surfaces. We conclude this introduction by summarizing some of these results in the next two theorems.  
\begin{teo}
\label{lillottinab}
Let $V\subset \mathbb{C}\mathbb{P}^n$ be a complex projective surface and let $h$ be the K\"ahler metric on $\reg(V)$ induced by the Fubini-Study metric of $\mathbb{C}\mathbb{P}^n$. Then for each $q=0,1,2$  the operator
\begin{equation}
\label{resilevb}
\Delta_{\overline{\partial},2,q,\abs}:L^2\Omega^{2,q}(\reg(V),h)\rightarrow L^2\Omega^{2,q}(\reg(V),h)
\end{equation}
has discrete spectrum.
Let now $$0\leq \lambda_1\leq \lambda_2\leq...\leq \lambda_k\leq...$$ be the eigenvalues of \eqref{resilevb}. Then we have the following asymptotic inequality
\begin{equation}
\label{resilesvb}
\lim \inf \lambda_k k^{-\frac{1}{2}}>0
\end{equation}
as $k\rightarrow \infty$.\\ Finally consider  the heat operator associated to \eqref{resilevb}
\begin{equation}
\label{resilexvb}
e^{-t\Delta_{\overline{\partial},2,q,\abs}}:L^2\Omega^{2,q}(\reg(V),h)\rightarrow L^2\Omega^{2,q}(\reg(V),h).
\end{equation}
Then \eqref{resilexvb} is a trace class operator and its trace satisfies the following estimate
\begin{equation}
\label{resilezvb}
\Tr(e^{-t\Delta_{\overline{\partial},2,q,\abs}})\leq C_qt^{-2}
\end{equation}
for $t\in (0,1]$ and some constant $C_q>0$.
\end{teo}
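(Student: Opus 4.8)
The plan is to reduce everything to Theorem~\ref{canonicale} (together with its twisted counterpart Theorem~\ref{canonical}) and to exploit the symmetries of the Dolbeault complex of the canonical bundle on the \emph{surface} $\reg(V)$. First I would fix a resolution of singularities $\pi\colon\hat V\to V$ \`a la Hironaka, so that $\hat V$ is a smooth compact projective surface, $\pi$ is a proper modification which is biholomorphic over $\reg(V)$, and the exceptional set $E=\pi^{-1}(\sing(V))$ is a curve, hence an analytic subset of real codimension two. Writing $\iota\colon V\hookrightarrow\mathbb{C}\mathbb{P}^{n}$, the pull-back $g:=(\iota\circ\pi)^{*}\omega_{FS}$ is a smooth Hermitian pseudometric on $\hat V$ whose degeneracy locus $Z_g$ is contained in $E$; since $\pi$ restricts to an isometry from $(\pi^{-1}(\reg(V)),g)$ onto $(\reg(V),h)$ and $A_g\setminus\pi^{-1}(\reg(V))\subseteq E$ has measure zero, the manifolds $(A_g,g|_{A_g})$ and $(\reg(V),h)$ share the same $L^{2}$-theory of forms in every bidegree and the same closed extensions of $\overline\partial$. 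Moreover $(A_g,g|_{A_g})$ is parabolic, because removing a real-codimension-two set from a compact manifold along which the pseudometric degenerates only polynomially produces a parabolic manifold; this is the kind of statement collected in Section~3 (and it is classical for projective varieties). Granting this, the case $q=0$ is immediate: there is no incoming $\overline\partial$, so $\Delta_{\overline\partial,2,0,\abs}=\overline{\mathfrak d}_{2,0}^{*}\circ\overline{\mathfrak d}_{2,0}$ with $\overline{\mathfrak d}_{2,0}=\overline\partial_{2,0,\max}$, and Theorem~\ref{canonicale} (with $m=2$) yields discreteness, the bound $\liminf\lambda_k k^{-1/2}>0$, the trace class property of the heat operator and $\Tr(e^{-t\Delta_{\overline\partial,2,0,\abs}})\le C t^{-2}$ on $(0,1]$. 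As a by-product, $\overline\partial_{2,0,\max}$ has closed range and finite-dimensional kernel.

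Next I would treat the top degree $q=2$, which is the heart of the matter. Set $A:=\overline\partial_{2,0,\max}$ and $B:=\overline\partial_{2,1,\max}$, so $BA=0$ and $\Delta_{\overline\partial,2,2,\abs}=BB^{*}$. The conjugate-linear Hodge star of $(\reg(V),h)$ exchanges absolute and relative boundary conditions and, because $(\reg(V),h)$ is K\"ahler, intertwines the Hodge--Kodaira Laplacian in bidegree $(2,2)$ with the one in bidegree $(0,0)$, i.e.\ with one half of the Laplace--Beltrami operator on functions; equivalently, $(0,0)$-forms on $\reg(V)$ are sections of $K_{\hat V}\otimes K_{\hat V}^{*}$, so that operator is the Hodge--Kodaira Laplacian in bidegree $(2,0)$ with values in the holomorphic line bundle $K_{\hat V}^{*}$ over $\hat V$ endowed with the fibre metric induced by $g$. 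The plan is to obtain its discreteness, the growth $\liminf\lambda_k k^{-1/2}>0$ and the heat-trace bound $\le Ct^{-2}$ either from the twisted version of the results of Section~4 (Theorem~\ref{canonical} and Theorem~\ref{prince} applied on $\hat V$ with $E=K_{\hat V}^{*}$) or---since $g$ degenerates only along the curve $E$ and only polynomially, so the twist alters the $L^{2}$-density by a weight comparable, up to lower-order contributions, to a power of the distance to $E$---by quoting from the literature the spectral theory of the Laplace--Beltrami operator on the regular part of a projective surface (the references on the heat operator listed in the Introduction). \emph{This is the real obstacle, and the reason the theorem is stated for surfaces: here the canonical-bundle results of Section~4 do not apply verbatim, and one needs the precise behaviour of the Fubini--Study metric near the singularities.}

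It then remains to treat $q=1$, which follows from the two previous cases by Hodge theory. By the first two steps $A$ and $B$ have closed range, and since $\overline{\im A}\perp\overline{\im B^{*}}$ inside $L^{2}\Omega^{2,1}(\reg(V),h)$ one gets the weak Hodge--Kodaira decomposition
\[
L^{2}\Omega^{2,1}(\reg(V),h)=\overline{\im A}\ \oplus\ \mathcal H^{2,1}\ \oplus\ \overline{\im B^{*}},\qquad \mathcal H^{2,1}=\ker A^{*}\cap\ker B,
\]
which is preserved by $\Delta_{\overline\partial,2,1,\abs}=AA^{*}+B^{*}B$: the operator acts as $AA^{*}$ on $\overline{\im A}$ (unitarily equivalent to $\Delta_{\overline\partial,2,0,\abs}$ restricted to $(\ker A)^{\perp}$), as $B^{*}B$ on $\overline{\im B^{*}}$ (unitarily equivalent to $\Delta_{\overline\partial,2,2,\abs}$ restricted to $(\ker B^{*})^{\perp}$), and as $0$ on $\mathcal H^{2,1}$. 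Since $\dim\mathcal H^{2,1}<\infty$---the finiteness of the reduced $L^{2}$-$\overline\partial$-cohomology of the canonical bundle of $\reg(V)$, available in the literature---the spectrum of $\Delta_{\overline\partial,2,1,\abs}$ is the union, with multiplicities, of the non-zero spectra of $\Delta_{\overline\partial,2,0,\abs}$ and $\Delta_{\overline\partial,2,2,\abs}$ together with $\{0\}$ of finite multiplicity. Discreteness is then clear; merging two eigenvalue sequences each bounded below by a constant times $k^{1/2}$ produces a sequence bounded below by a constant times $k^{1/2}$, so $\liminf\lambda_k k^{-1/2}>0$; and
\[
\Tr\bigl(e^{-t\Delta_{\overline\partial,2,1,\abs}}\bigr)\le \Tr\bigl(e^{-t\Delta_{\overline\partial,2,0,\abs}}\bigr)+\Tr\bigl(e^{-t\Delta_{\overline\partial,2,2,\abs}}\bigr)+\dim\mathcal H^{2,1}\le C t^{-2}
\]
for $t\in(0,1]$, the constant $\dim\mathcal H^{2,1}$ being absorbed since $t^{-2}\ge1$ there. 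This would complete the argument, the only genuinely delicate point being the top-degree case described above.
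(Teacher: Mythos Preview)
Your overall architecture---handle $q=0$ by Theorem~\ref{canonicale}, handle $q=2$ by duality with functions, then splice $q=1$ from the other two via the isospectrality of $T^{*}T$ and $TT^{*}$---is exactly the paper's. For $q=1$ the paper invokes Pardon--Stern for the finite-dimensionality of $H^{2,q}_{2,\overline\partial_{\max}}(\reg(V),h)$, then uses the abstract Corollary~\ref{calanchi} (for discreteness) and Proposition~\ref{same} (for the eigenvalue and trace identities), which is precisely what your Hodge-decomposition paragraph does.

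Where you are vague, and where the paper is sharp, is $q=2$. Your first option, applying Theorem~\ref{canonical} with $E=K_{\hat V}^{*}$, genuinely fails rather than merely ``not applying verbatim'': that theorem requires a Hermitian holomorphic bundle $(F,\tau)$ on the whole of $\hat V$, and the fibre metric on $K_{\hat V}^{*}$ induced by the pseudometric $g$ \emph{blows up} along the degeneracy locus, so no such $\tau$ exists. The paper does not attempt to twist. Instead it observes that, since $\overline\partial_{2,2}=0$, one has $\Delta_{\overline\partial,2,2,\abs}=\overline\partial_{2,1,\max}\circ\overline\partial_{2,1,\min}^{t}$; the Hodge star (plain, $\mathbb C$-linear) conjugates this to $\partial_{\max}^{t}\circ\partial_{\min}$ on $L^{2}(\reg(V),h)$, which by Proposition~\ref{fall} is the Friedrich extension $\Delta_{\partial}^{\mathcal F}$. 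Because $(\reg(V),h)$ is K\"ahler, $\Delta_{\partial}=\Delta_{\overline\partial}$ on $C^{\infty}_{c}(\reg(V))$, hence $\Delta_{\partial}^{\mathcal F}=\Delta_{\overline\partial}^{\mathcal F}$. Now Li--Tian \cite{LT} gives, for $\Delta_{\overline\partial}^{\mathcal F}$ on functions on the regular part of any projective variety, discrete spectrum, the growth $\liminf \lambda_k k^{-1/m}>0$, and the heat-trace bound $\Tr(e^{-t\Delta})\le Ct^{-m}$; with $m=2$ this is exactly what you need. So the ``real obstacle'' you flag is resolved not by a new twisted estimate but by a clean reduction to a known theorem on the scalar Laplacian.
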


We point out that  Th. \ref{lillottinab} does not require assumptions on $\sing(V)$. On the other hand, assuming moreover that $\sing(V)$ is made of isolated singularities, we have also the following result.

\begin{teo}
\label{tempoditurbavb}
Let $V\subset \mathbb{C}\mathbb{P}^n$ be a complex projective surface and let $h$ be the K\"ahler metric on $\reg(V)$ induced by the Fubini-Study metric of $\mathbb{C}\mathbb{P}^n$. Assume that $V$ has only isolated singularities. Then for each $q=0,1,2$   the operator 
\begin{equation}
\label{miguelvb}
\Delta_{\overline{\partial},0,q,\abs}:L^2\Omega^{0,q}(\reg(V),h)\rightarrow L^2\Omega^{0,q}(\reg(V),h)
\end{equation}
has discrete spectrum.
Let  $$0\leq \lambda_1\leq \lambda_2\leq...\leq \lambda_k\leq...$$ be the eigenvalues of \eqref{miguelvb}. Then we have the following asymptotic inequality
\begin{equation}
\label{miguelsvb}
\lim \inf \lambda_k k^{-\frac{1}{2}}>0
\end{equation}
as $k\rightarrow \infty$.\\ Finally consider the heat operator associated to \eqref{miguelvb}
\begin{equation}
\label{miguelzvb}
e^{-t\Delta_{\overline{\partial},0,q,\abs}}:L^2\Omega^{0,q}(\reg(V),h)\rightarrow L^2\Omega^{0,q}(\reg(V),h).
\end{equation}
Then \eqref{miguelzvb} is a trace class operator and its trace satisfies the following estimate
\begin{equation}
\label{miguelxvb}
\Tr(e^{-t\Delta_{\overline{\partial},0,q,\abs}})\leq C_qt^{-2}
\end{equation}
for $t\in (0,1]$ and some constant $C_q>0$.
\end{teo}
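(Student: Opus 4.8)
The plan is to reduce the statement to the case of the canonical bundle of $\reg(V)$ --- which, since $m:=\dim_{\mathbb{C}}V=2$, is precisely the bidegree $(2,\bullet)$ already treated in Theorems \ref{canonicale}, \ref{tommasodacquino} and \ref{lillottinab} --- by means of Hodge--star duality. First I would introduce the conjugate-linear Hodge star operator $\overline{\ast}\colon\Lambda^{0,q}(\reg(V))\to\Lambda^{2,2-q}(\reg(V))$. Fibrewise $\overline{\ast}$ is an isometry preserving the volume form, so it induces an antilinear unitary $\overline{\ast}\colon L^2\Omega^{0,q}(\reg(V),h)\to L^2\Omega^{2,2-q}(\reg(V),h)$, and on smooth forms $\overline{\ast}\,\overline{\partial}\,\overline{\ast}^{-1}=\pm\,\overline{\partial}^t$. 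Since $\overline{\ast}$ maps $\Omega^{0,q}_c(\reg(V))$ bijectively onto $\Omega^{2,2-q}_c(\reg(V))$ and intertwines $\overline{\partial}$ and $\overline{\partial}^t$ also in the sense of distributions, it carries the Hilbert complex $\bigl(L^2\Omega^{0,\bullet}(\reg(V),h),\overline{\partial}_{\max}\bigr)$ isometrically onto the adjoint complex (with the grading reversed) of $\bigl(L^2\Omega^{2,\bullet}(\reg(V),h),\overline{\partial}_{\min}\bigr)$, where $\overline{\partial}_{\max},\overline{\partial}_{\min}$ are the maximal and minimal closed extensions. Because the Hodge--Kodaira Laplacian of a Hilbert complex coincides with that of its adjoint in complementary degree, this shows that $\Delta_{\overline{\partial},0,q,\abs}$ is unitarily equivalent to the Hodge--Kodaira Laplacian in bidegree $(2,2-q)$ built from $\overline{\partial}_{\min}$, i.e.\ to $\Delta_{\overline{\partial},2,2-q}^{\mathcal{F}}$ in the sense of Prop.\ \ref{fall}. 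As $q$ runs over $\{0,1,2\}$ so does $2-q$, hence all three cases of the theorem are of canonical-bidegree type.

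For $q=2$ the operator $\Delta_{\overline{\partial},2,0}^{\mathcal{F}}$ is simply $(\overline{\partial}_{2,0,\min})^{*}\circ\overline{\partial}_{2,0,\min}$, that is, it has the form $\overline{\mathfrak{d}}_{2,0}^{*}\circ\overline{\mathfrak{d}}_{2,0}$ for the closed extension $\overline{\mathfrak{d}}_{2,0}=\overline{\partial}_{2,0,\min}$ of $\overline{\partial}_{2,0}$, so Theorems \ref{canonicale} and \ref{prince} apply to it directly, with no assumption on $\sing(V)$. For $q=0,1$ one invokes the hypothesis: since $V$ has only isolated singularities and $\dim_{\mathbb{C}}V=2$, the set $\sing(V)$ has complex codimension $2$, hence $(\reg(V),h)$ is parabolic, which is exactly the regime of Theorems \ref{mumford}, \ref{prince} and \ref{tommasodacquino}. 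Theorem \ref{tommasodacquino}(3), applied with $m=2$ and with $q$ replaced by $2-q$, already gives that $\Delta_{\overline{\partial},2,2-q}^{\mathcal{F}}$ has discrete spectrum, and transporting this back through $\overline{\ast}$ yields the discreteness of $\Delta_{\overline{\partial},0,q,\abs}$. For the eigenvalue growth \eqref{miguelsvb} and the trace estimate \eqref{miguelxvb} I would compare $\Delta_{\overline{\partial},2,2-q}^{\mathcal{F}}$ with the absolute extension $\Delta_{\overline{\partial},2,2-q,\abs}$ controlled by Theorem \ref{lillottinab}: both are non-negative self-adjoint extensions of the Hodge--Kodaira Laplacian acting on $\Omega^{2,2-q}_c(\reg(V))$, and by the Birman--Krein--Vishik theory the Friedrichs extension dominates every other one in the resolvent sense, so $\lambda_k\bigl(\Delta_{\overline{\partial},2,2-q}^{\mathcal{F}}\bigr)\ge\lambda_k\bigl(\Delta_{\overline{\partial},2,2-q,\abs}\bigr)$ for all $k$. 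Together with Theorem \ref{lillottinab} this gives $\liminf_{k}\lambda_k\bigl(\Delta_{\overline{\partial},2,2-q}^{\mathcal{F}}\bigr)k^{-1/2}>0$ and $\Tr\bigl(e^{-t\Delta_{\overline{\partial},2,2-q}^{\mathcal{F}}}\bigr)=\sum_k e^{-t\lambda_k}\le\Tr\bigl(e^{-t\Delta_{\overline{\partial},2,2-q,\abs}}\bigr)\le C\,t^{-2}$ for $t\in(0,1]$, in particular the trace-class property; pulling these conclusions back through the unitary $\overline{\ast}$ finishes the proof.

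I expect the main obstacle to be the bookkeeping of the closed extensions under Hodge--star duality: one must check carefully which self-adjoint extension of the canonical-bidegree Hodge--Kodaira Laplacian is the image of $\Delta_{\overline{\partial},0,q,\abs}$, identify it with the object denoted $\Delta_{\overline{\partial},2,2-q}^{\mathcal{F}}$ in Prop.\ \ref{fall} and \eqref{asdf}, and --- crucially --- make sure that the quantitative conclusions of Theorem \ref{lillottinab}, proved there for the absolute extension, do transfer to it, and not merely the qualitative discreteness. The two technical facts this step rests on are the identity $(\overline{\partial}_{\min})^{*}=(\overline{\partial}^t)_{\max}$ (with its counterpart for $\overline{\partial}_{\max}$) and the equality of the Hodge--Kodaira Laplacian of a Hilbert complex with that of its dual. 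Should the Hodge--star image of $\Delta_{\overline{\partial},0,q,\abs}$ turn out, under the adopted conventions, to be a self-adjoint extension lying strictly between the Krein--von Neumann and the Friedrichs one, the argument can still be closed by appealing to the fact --- established in Section 4 and underlying Theorems \ref{tommasodacquino} and \ref{lillottinab} --- that on a parabolic regular part \emph{every} closed extension of the canonical Dolbeault complex has compact resolvent with the stated spectral asymptotics.
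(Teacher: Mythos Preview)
Your reduction via the conjugate Hodge star contains a concrete misidentification. By Prop.~\ref{occhiodibue}(6) (or by the computation you outline), $\overline{\ast}$ carries $\Delta_{\overline{\partial},0,q,\abs}$ to the \emph{relative} extension $\Delta_{\overline{\partial},2,2-q,\rel}$ of \eqref{buio}, not to the Friedrichs extension $\Delta_{\overline{\partial},2,2-q}^{\mathcal{F}}$. These coincide at $(2,0)$ (where the Laplacian is just $\overline{\partial}^t\overline{\partial}$ and Prop.~\ref{fall} applies), but for $q=0,1$ --- i.e.\ at $(2,2)$ and $(2,1)$ --- the relative extension is built from $\overline{\partial}_{\min}$ and $\overline{\partial}^t_{\max}$ separately, whereas the Friedrichs extension comes from $(\overline{\partial}+\overline{\partial}^t)_{\min}$; in general they differ. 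Once the target is $\Delta_{\overline{\partial},2,2-q,\rel}$, your Birman--Krein--Vishik comparison with $\Delta_{\overline{\partial},2,2-q,\abs}$ collapses: the form domain of the relative extension is $\mathcal{D}(\overline{\partial}_{\min})\cap\mathcal{D}(\overline{\partial}^t_{\max})$, that of the absolute extension is $\mathcal{D}(\overline{\partial}_{\max})\cap\mathcal{D}(\overline{\partial}^t_{\min})$, and neither contains the other, so there is no monotone inequality between their min--max quotients. Your fallback to ``every closed extension of the canonical Dolbeault complex has the stated asymptotics'' does not help either: Theorems~\ref{mumford} and \ref{prince} concern closed extensions of $\overline{\partial}_{E,m,0}$ at bidegree $(m,0)$ only, and say nothing about self-adjoint extensions of the Laplacian at $(2,1)$ or $(2,2)$.

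The paper's proof (Th.~\ref{nonnapapera}) proceeds quite differently and leans on external input specific to the isolated-singularity hypothesis. Discreteness for all $q$ comes from \O vrelid--Ruppenthal's compactness theorem together with the finite-dimensionality of $H^{0,q}_{2,\overline{\partial}_{\max}}$ from \cite{JRu}. For the quantitative estimates the three values of $q$ are handled separately: $q=0$ uses Grieser--Lesch's $\overline{\partial}_{\min}=\overline{\partial}_{\max}$ to identify $\Delta_{\overline{\partial},0,0,\abs}$ with $\Delta_{\overline{\partial}}^{\mathcal{F}}$ and then invokes Li--Tian; $q=2$ does use your Hodge-star move, but correctly lands on $\Delta_{\overline{\partial},2,0,\rel}=\overline{\partial}^t_{2,0,\max}\circ\overline{\partial}_{2,0,\min}$, which is an instance of \eqref{anylapze} and is covered by Th.~\ref{canonical}; and $q=1$ is obtained from $q=0,2$ via the spectral correspondence of Prop.~\ref{same}, exactly as in the $(2,1)$ case of Th.~\ref{coccabelladezio}.
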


\vspace{1 cm}

\noindent\textbf{Acknowledgments.}  I wish to thank Paolo Piazza and Jochen Br\"uning  for interesting discussions. This research has been financially supported by the SFB 647 : Raum-Zeit-Materie.

\section{Background material}

We start by briefly recalling some basic notions about $L^p$-spaces  and differential operators. We refer for instance to \cite{FraBei} and the bibliography cited there. Let $(M,g)$  be an open and possibly incomplete Riemannian manifold of dimension $m$. Let $E$ be a vector bundle over $M$ of rank $k$ and let $\rho$ be a metric on $E$, Hermitian if $E$ is a complex vector bundle, Riemannian if $E$ is a real vector bundle. Let $\dvol_g$ be the one-density associated to $g$.   A section $s$ of $E$ is said measurable if, for any trivialization $(U,\phi)$ of $E$, $\phi(s|_U)$ is given by a $k$-tuple of measurable functions.  Given a measurable section $s$ let $|s|_{\rho}$ be defined as $|s|_{\rho}:= (\rho(s,s))^{1/2}$. Then for every $p$, $1\leq p< \infty$ we can define $L^{p}(M,E,g)$ as the  space  of measurable sections $s$ such that    $$\|s\|_{L^{p}(M,E,g)}:=\left(\int_{M}|s|_{\rho}^p\dvol_g\right)^{1/p}<\infty.$$
For each $p\in [1, \infty)$ we have a Banach space,  for each $p\in (1, \infty)$  we have a reflexive Banach space and  in the case $p=2$ we have a Hilbert space whose inner product is given by $$\langle s, t \rangle_{L^2(M,E,g)}:= \int_{M}\rho(s,t)\dvol_g.$$ Moreover $C^{\infty}_c(M,E)$,  the space of smooth sections with compact support,  is dense in $L^p(M,E,g)$ for $p\in [1,\infty).$ Finally $L^{\infty}(M,E,\rho)$ is defined as the space of measurable sections whose essential supp is bounded. 
Also in this case we get a Banach space. Clearly, when $p\in [1,\infty)$,  all the spaces we defined so far depend on $M$, $E$, $\rho$ and $g$ but in order to have a lighter notation we prefer to write $L^p(M,E,g)$ instead of $L^p(M,E,\rho,g)$.\\ 
Let now $F$ be another vector bundle over $M$ endowed with a metric $\tau$. Let $P: C^{\infty}_c(M,E)\longrightarrow  C^{\infty}_c(M,F)$ be a differential operator of order $d$. The formal adjoint of $P$ $$P^t: C^{\infty}_c(M,F)\longrightarrow C^{\infty}_c(M,E)$$ is the differential operator uniquely characterized by the following property: for each $u\in C^{\infty}_c(M,E)$ and for each $v\in C^{\infty}_c(M,F)$ we have  $$\int_{M}\rho(u, P^tv)\dvol_g=\int_M\tau(Pu,v)\dvol_g.$$ We can look at $P$ as an unbounded, densely defined and closable operator  acting between $L^2(M,E,g)$ and $L^2(M,F,g)$. In general $P$ admits several different closed extensions. We recall now the definitions of the maximal and the minimal one.
The domain of the  maximal extension of $P:L^2(M,E,g)\longrightarrow L^2(M,F,g)$ is defined as\\ 
\begin{align}
\label{ner}
& \mathcal{D}(P_{\max}):=\{s\in L^{2}(M,E,g): \text{there is}\ v\in L^2(M,F,g)\ \text{such that}\ \int_{M}\rho(s,P^t\phi)\dvol_g=\\
 \nn &=\int_{M}\tau(v,\phi)\dvol_g\ \text{for each}\ \phi\in C^{\infty}_c(M,F,g)\}.\ \text{In this case we put}\ P_{\max}s=v.
\end{align} In other words the maximal extension of $P$ is the one defined in the distributional sense.\\The domain of the minimal extension of $P:L^2(M,E,g)\longrightarrow L^2(M,F,g)$ is defined as\\ 
\begin{align}
\label{spinaci}
& \mathcal{D}(P_{\min}):=\{s\in L^{2}(M,E,g)\ \text{such that there is a sequence}\ \{s_i\}\in C_c^{\infty}(M,E)\ \text{with}\ s_i\rightarrow s\\ \nn & \text{in}\ L^{2}(M,E,g)\ \text{and}\ Ps_i\rightarrow w\ \text{in}\ L^2(M,F,g)\ \text{to some }\ w\in L^2(M,F,g)\}.\ \text{We put}\ P_{\min}s=w.
\end{align} Briefly the minimal extension of $P$ is the closure of $C^{\infty}_c(M,E)$ under the graph norm $\|s\|_{L^2(M,E,g)}+\|Ps\|_{L^2(M,F,g)}$.  It is immediate to check that 
\begin{equation}
\label{emendare}
P_{\max}^*=P^t_{\min}\ \text{and that}\  P_{\min}^*=P^t_{\max}
\end{equation}
 that is  $P^t_{\max/\min}:L^2(M,F,g)\rightarrow L^2(M,E,g)$ is the Hilbert space adjoint of $P_{\min/\max}$ respectively. Moreover we have the following two $L^2$-orthogonal decompositions for $L^2(M,E,g)$
\begin{equation}
\label{fantaghi}
L^2(M,E)=\ker(P_{\min/\max})\oplus \overline{\im(P^t_{\max/\min})}.
\end{equation}

Before to proceed by recalling some general properties we add the following remark.
\begin{rem}
\label{mogimo}
In this paper, when we will say that  a closed operator $\overline{P}:L^2(M,E,g)\rightarrow L^2(M,F,g)$ with domain $\mathcal{D}(\overline{P})$ is a closed extension of $P:C^{\infty}_c(M,E)\rightarrow C^{\infty}_c(M,F)$, we will always mean  that $P_{\max}$ is defined on $\mathcal{D}(\overline{P})$, and that $P_{\max}|_{\mathcal{D}(\overline{P})}=\overline{P}$. Note that $\overline{P}$ might be $P_{\min}$ or $P_{\max}$.
\end{rem}
We have now the   following propositions:

\begin{prop}
\label{partiro}
Let $(M,g)$,  $(E,\rho)$ and $(F,\tau)$ be as above.  Let $P:C^{\infty}_c(M,E)\rightarrow C^{\infty}_c(M,F)$ be a differential operator such that  $P^t\circ P:C^{\infty}_c(M,E)\rightarrow C^{\infty}_c(M,E)$ is elliptic. Let $\overline{P}:L^2(M,E,g)\rightarrow L^2(M,F,g)$ be a closed extension of $P$.  Let $\overline{P}^*$ be the Hilbert space adjoint of $\overline{P}$.  Then $C^{\infty}(M,E)\cap \mathcal{D}(\overline{P}^*\circ \overline{P})$ is dense in $\mathcal{D}(\overline{P})$ with respect to the graph norm of $\overline{P}$. In particular we have that  $C^{\infty}(M,E)\cap \mathcal{D}(P_{\max/\min})$ is dense in $\mathcal{D}(P_{\max/\min})$ with respect to the graph norm of $P_{\max/\min}$.
\end{prop}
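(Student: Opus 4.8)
The plan is to exploit elliptic regularity together with Friedrichs-type mollification. Since $P^t \circ P$ is elliptic, the operator $P^t P + \id$ is elliptic as well, so its distributional solutions are smooth; this is the mechanism that will let us replace an element of $\mathcal{D}(\overline{P})$ by smooth sections without leaving the domain. Concretely, given $s \in \mathcal{D}(\overline{P})$, I would consider the element $t := (\id + \overline{P}^* \circ \overline{P})^{-1} s$, which is well defined because $\id + \overline{P}^* \circ \overline{P}$ is self-adjoint, nonnegative, and boundedly invertible. Then $t \in \mathcal{D}(\overline{P}^* \circ \overline{P})$ and, since $t$ satisfies $(\id + P^t P) t = s$ in the distributional sense on the interior where everything is smooth, interior elliptic regularity gives $t \in C^\infty(M,E)$. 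So $C^\infty(M,E) \cap \mathcal{D}(\overline{P}^*\circ\overline{P})$ is nonempty in a strong sense — it is the range of a bounded operator defined on all of $L^2$.

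The key step is then an approximation-of-the-identity argument. I would introduce the family of bounded operators $R_\varepsilon := (\id + \varepsilon(\overline{P}^* \circ \overline{P}))^{-1}$ for $\varepsilon > 0$ (the Yosida-type approximation built from the self-adjoint operator $\overline{P}^*\circ\overline{P}$). Each $R_\varepsilon$ maps $L^2(M,E,g)$ into $\mathcal{D}(\overline{P}^*\circ\overline{P})$, and by the same elliptic regularity as above, $R_\varepsilon s \in C^\infty(M,E)$ for every $s \in L^2(M,E,g)$. Moreover $R_\varepsilon$ commutes with $\overline{P}^*\circ\overline{P}$ on its domain, and a standard spectral-calculus estimate shows $R_\varepsilon \to \id$ strongly on $L^2$ as $\varepsilon \to 0$. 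The point is that $R_\varepsilon$ also converges to the identity in the graph norm of $\overline{P}$: for $s \in \mathcal{D}(\overline{P})$ one has $\overline{P}(R_\varepsilon s) = R'_\varepsilon(\overline{P} s)$ for a suitable companion operator $R'_\varepsilon := (\id + \varepsilon(\overline{P}\circ\overline{P}^*))^{-1}$ acting on the target Hilbert space (this intertwining follows from $\overline{P}\,\overline{P}^*\overline{P} = (\overline{P}\overline{P}^*)\overline{P}$ on the relevant domain), and $R'_\varepsilon \to \id$ strongly there as well. Hence $R_\varepsilon s \to s$ and $\overline{P}(R_\varepsilon s) \to \overline{P} s$, which is exactly graph-norm convergence. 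Thus every $s \in \mathcal{D}(\overline{P})$ is a graph-norm limit of elements $R_\varepsilon s \in C^\infty(M,E) \cap \mathcal{D}(\overline{P}^*\circ\overline{P})$.

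The main obstacle is making the interior elliptic regularity rigorous in a way that is uniform enough to conclude smoothness of $R_\varepsilon s$ on all of $M$: one must check that $R_\varepsilon s$, a priori only an $L^2$ section lying in $\mathcal{D}(\overline{P}^*\circ\overline{P})$, satisfies the elliptic equation $(\id + \varepsilon P^t P)(R_\varepsilon s) = s$ in the distributional sense, and then iterate local elliptic estimates on relatively compact charts to bootstrap from $L^2_{\mathrm{loc}}$ to $C^\infty$. This requires knowing that $\overline{P}^*\circ\overline{P}$ agrees with the distributional operator $P^t P$ on its domain; this in turn follows because $\overline{P}$ is by convention (Remark \ref{mogimo}) a restriction of $P_{\max}$ and $\overline{P}^*$ is then an extension of $P^t_{\min}$, so that for $u \in \mathcal{D}(\overline{P}^*\circ\overline{P})$ and any test section $\phi$, integrating by parts twice yields $\langle (\overline{P}^*\overline{P})u, \phi\rangle = \langle u, P^t P \phi \rangle$. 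Once this identification is in place, the regularity and the strong-convergence estimates are routine, and the final sentence of the statement follows by taking $\overline{P} = P_{\max}$ or $\overline{P} = P_{\min}$.
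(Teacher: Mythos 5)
The structural skeleton of your argument (an approximating family built from the functional calculus of $\overline{P}^*\circ\overline{P}$, the intertwining identity $\overline{P}R_\varepsilon=R'_\varepsilon\overline{P}$ on $\mathcal{D}(\overline{P})$, strong convergence to the identity giving graph-norm convergence, and the identification of $\overline{P}^*\circ\overline{P}$ with the distributional $P^tP$ via $P_{\min}\subset\overline{P}\subset P_{\max}$) is sound. But there is a genuine gap in the regularity step: it is \emph{false} that $R_\varepsilon s=(\id+\varepsilon\overline{P}^*\overline{P})^{-1}s$ is smooth for every $s\in L^2(M,E,g)$. The equation $(\id+\varepsilon P^tP)(R_\varepsilon s)=s$ has right-hand side only in $L^2$, so interior elliptic regularity gives $R_\varepsilon s\in H^{2d}_{\mathrm{loc}}$ (with $d$ the order of $P$) and then stalls: rewriting it as $\varepsilon P^tP(R_\varepsilon s)=s-R_\varepsilon s$ shows the source term never improves beyond the regularity of $s$ itself, so there is no bootstrap. (Already for $P=d/dx$ on the circle, $(\id-\varepsilon\,d^2/dx^2)^{-1}s$ is merely $H^2$ for generic $s\in L^2$.) The same objection applies to your first paragraph, where you claim $(\id+\overline{P}^*\overline{P})^{-1}s$ is smooth.

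The repair is to use an approximant whose range lies in $\bigcap_{k\geq 1}\mathcal{D}\bigl((\overline{P}^*\circ\overline{P})^k\bigr)$ rather than merely in $\mathcal{D}(\overline{P}^*\circ\overline{P})$: for instance the heat semigroup $e^{-\varepsilon\overline{P}^*\overline{P}}$ or the spectral cut-offs $E([0,1/\varepsilon])$ of $\Delta:=\overline{P}^*\circ\overline{P}$. One then shows by induction, using exactly your distributional identification at each stage, that an element $u$ of $\bigcap_k\mathcal{D}(\Delta^k)$ satisfies $(P^tP)^ku\in L^2_{\mathrm{loc}}$ for all $k$, hence $u\in H^{2dk}_{\mathrm{loc}}$ for all $k$ and so $u\in C^\infty(M,E)$ by Sobolev embedding. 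Your intertwining and strong-convergence arguments carry over verbatim to these operators (e.g.\ $\overline{P}\,e^{-\varepsilon\overline{P}^*\overline{P}}=e^{-\varepsilon\overline{P}\,\overline{P}^*}\overline{P}$ on $\mathcal{D}(\overline{P})$, or one notes $\|\overline{P}v\|=\|\Delta^{1/2}v\|$ and that $E([0,n])$ commutes with $\Delta^{1/2}$), so with this substitution the proof closes. The paper itself only cites an external reference for this proposition, so no direct comparison of methods is possible.
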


\begin{proof}
See Prop. 2.1 in \cite{FraBei}.
\end{proof}

\begin{prop}
\label{carmina}
Let $(M,g)$,  $(E,\rho)$ and $(F,\tau)$ be as above.  Let  $ P:C^{\infty}_c(M,E)\rightarrow C^{\infty}_c(M,F)$ be  a first order differential operator. Let $s\in\mathcal{D}(P_{\max})$. Assume that there is  an open subset $U\subset M$  with compact closure such that $s|_{M\backslash \overline{U}}=0$. Then $s\in\mathcal{D}(P_{\min})$.

\end{prop}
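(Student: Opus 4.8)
The plan is to reduce the statement to a local truncation-and-mollification argument, exploiting that $s$ has compact support. First I would choose a slightly larger open set: pick $V$ open with compact closure such that $\overline{U}\subset V$, and let $\chi\in C^\infty_c(V)$ with $\chi\equiv 1$ on a neighborhood of $\overline{U}$. Since $s$ vanishes off $\overline{U}$, we have $\chi s=s$, so multiplying by $\chi$ changes nothing but confines all the analysis to the coordinate-and-trivialization-friendly relatively compact region $V$. The point of passing to $V$ is that on $\overline{V}$ all the metrics $g$, $\rho$, $\tau$ are uniformly comparable to Euclidean ones and $P$ has bounded smooth coefficients, so standard Friedrichs mollifiers apply without the pathologies that an incomplete or singular metric could otherwise introduce.

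Next I would mollify. Cover $\overline{V}$ by finitely many trivializing charts, use a partition of unity subordinate to them to write $s=\sum_j s_j$ with each $s_j$ supported in one chart, and in each chart convolve the (compactly supported, $L^2$) component functions with a standard mollifier $\phi_\varepsilon$ to get $s_j^\varepsilon\in C^\infty_c$. Then $s_j^\varepsilon\to s_j$ in $L^2$. The key estimate is for $P s_j^\varepsilon$: because $P$ is \emph{first order}, the Friedrichs commutator lemma applies — $[P,\phi_\varepsilon*]$, i.e. the difference $P(\phi_\varepsilon * u) - \phi_\varepsilon * (Pu)$, is bounded on $L^2$ uniformly in $\varepsilon$ and tends to $0$ strongly as $\varepsilon\to 0$ for $u\in L^2$ with compact support (this is exactly where first order is used; for higher order one would not get the commutator to vanish by this elementary argument). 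Since $s\in\mathcal{D}(P_{\max})$ and $s$ is compactly supported, $Ps\in L^2$ with compact support, so $\phi_\varepsilon*(P s_j)\to P s_j$ in $L^2$; combined with the commutator estimate this gives $P s_j^\varepsilon\to P s_j$ in $L^2$. Summing over $j$, the smooth compactly supported sections $s^\varepsilon:=\sum_j s_j^\varepsilon$ satisfy $s^\varepsilon\to s$ and $Ps^\varepsilon\to P_{\max}s$ in $L^2$, which is precisely the definition of $s\in\mathcal{D}(P_{\min})$ in \eqref{spinaci}.

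The main obstacle — really the only subtle point — is making the mollification genuinely local and controlling the interaction of the mollifier with the chart transitions and the partition of unity, so that the commutator terms stay $L^2$-bounded and converge. This is handled by the compactness of $\overline{V}$: finitely many charts suffice, the transition functions and cutoffs are smooth with bounded derivatives on $\overline{V}$, and multiplication by such a fixed smooth bounded function commutes with mollification up to an error that is $O(\varepsilon)$ in operator norm on $L^2$ with support in $V$. Once everything is localized in a single relatively compact Euclidean chart the argument is the classical Friedrichs lemma, so I would cite the standard reference (e.g.\ the same source \cite{FraBei} used for Proposition \ref{partiro}, or \cite{FraBei} Lemma of Friedrichs type) rather than reproducing the convolution estimates. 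Note also that the hypothesis only asks $s\in\mathcal{D}(P_{\max})$, not $s\in\mathcal{D}(\overline P)$ for a prescribed extension, and the conclusion $s\in\mathcal{D}(P_{\min})$ is the strongest possible, consistent with the convention in Remark \ref{mogimo}.
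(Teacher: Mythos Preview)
Your argument is correct: this is the standard Friedrichs mollifier proof, and the first-order hypothesis enters exactly where you say, through the commutator lemma. The paper does not reproduce any of this; its entire proof is a one-line citation of Lemma 2.1 in \cite{GL}, which is precisely the statement (and presumably the argument) you have sketched. So you and the paper agree on the method; you have simply written out what the reference contains rather than invoking it.
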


\begin{proof}
The statement follows by   Lemma 2.1 in \cite{GL}.
\end{proof}

In the remaining part of this introductory section we specialize to the case of complex manifolds and to the natural differential operators appearing in this setting. Our aim here is to  introduce   some notations and to recall  some results from the general theory of Hilbert complexes applied to the Dolbeault complex.  We refer to \cite{BL} for the proofs. Assume  that $(M,g)$ is a complex manifold of real dimension $2m$.  As usual with $\Lambda^{p,q}(M)$ we denote the bundle $\Lambda^p(T^{1,0}M)^*\otimes \Lambda^q(T^{0,1}M)^*$ and by $\Omega^{p,q}(M)$, $\Omega^{p,q}_c(M)$ we denote respectively the space of sections, sections with compact support,  of $\Lambda^{p,q}(M)$. On the bundle $\Lambda^{p,q}(M)$ we consider the Hermitian metric induced by $g$ and with a little abuse of notation we still label it by $g$. With $L^2\Omega^{p,q}(M,g)$ we denote the Hilbert space of $L^2$-$(p,q)$-forms. The Dolbeault operator acting on $(p,q)$-forms is labeled by $\overline{\partial}_{p,q}:\Omega^{p,q}(M)\rightarrow \Omega^{p,q+1}(M)$ and similarly  we have the operator $\partial_{p,q}:\Omega^{p,q}(M)\rightarrow \Omega^{p+1,q}(M)$. When we look at $\overline{\partial}_{p,q}:L^2\Omega^{p,q}(M,g)\rightarrow L^2\Omega^{p,q+1}(M,g)$ as an unbounded and densely defined operator with domain $\Omega_c^{p,q}(M)$ we label by $\overline{\partial}_{p,q,\max/\min}:L^2\Omega^{p,q}(M,g)\rightarrow L^2\Omega^{p,q+1}(M,g)$ respectively its maximal and minimal extension. Analogous meaning has the notation $\partial_{p,q,\max/\min}:L^2\Omega^{p,q}(M,g)\rightarrow L^2\Omega^{p+1,q}(M,g)$. In the case of functions we will simply write $\overline{\partial}:C^{\infty}(M)\rightarrow \Omega^{0,1}(M)$, $\overline{\partial}_{\max/\min}:L^2(M,g)\rightarrow L^2\Omega^{0,1}(M,g)$ and analogously $\partial:C^{\infty}(M)\rightarrow \Omega^{1,0}(M)$ and  $\partial_{\max/\min}:L^2(M,g)\rightarrow L^2\Omega^{1,0}(M,g)$. With $\overline{\partial}_{p,q}^t:\Omega^{p,q+1}_c(M)\rightarrow \Omega^{p,q}_c(M)$ and $\partial_{p,q}^t:\Omega^{p+1,q}_c(M)\rightarrow \Omega^{p,q}_c(M)$ we mean the formal adjoint of $\overline{\partial}_{p,q}:\Omega^{p,q}_c(M)\rightarrow \Omega^{p,q+1}_c(M)$ and $\partial_{p,q}:\Omega^{p,q}_c(M)\rightarrow \Omega^{p+1,q}_c(M)$ respectively. For each bidegree $(p,q)$ we have  the Hodge-Kodaira Laplacian  defined as $$\Delta_{\overline{\partial},p,q}:\Omega^{p,q}_c(M)\rightarrow \Omega^{p,q}_c(M),\ \Delta_{\overline{\partial},p,q}:=\overline{\partial}_{p,q-1}\circ\overline{\partial}^t_{p,q-1}+\overline{\partial}_{p,q}^t\circ \overline{\partial}_{p,q}.$$
In the case of functions, that is $(p,q)=(0,0)$, we will simply write $\Delta_{\overline{\partial}}:C^{\infty}_c(M)\rightarrow C^{\infty}_c(M)$.
We recall now the definition of the  following two self-adjoint extensions of $\Delta_{\overline{\partial},p,q}$:
\begin{equation}
\label{asdf}
\overline{\partial}_{p,q-1,\max}\circ \overline{\partial}_{p,q-1,\min}^t+\overline{\partial}_{p,q,\min}^t\circ \overline{\partial}_{p,q,\max}:L^2\Omega^{p,q}(M,g)\rightarrow L^2\Omega^{p,q}(M,g)
\end{equation} 
and 
\begin{equation}
\label{buio}
\overline{\partial}_{p,q-1,\min}\circ \overline{\partial}_{p,q-1,\max}^t+\overline{\partial}_{p,q,\max}^t\circ \overline{\partial}_{p,q,\min}:L^2\Omega^{p,q}(M,g)\rightarrow L^2\Omega^{p,q}(M,g)
\end{equation}
called respectively the absolute and the relative extension. The operator \eqref{asdf}, the absolute extension, is labeled in general with  $\Delta_{\overline{\partial},p,q,\abs}$ and its domain is given by $$\mathcal{D}(\Delta_{\overline{\partial},p,q,\abs})=\left\{\omega\in \mathcal{D}(\overline{\partial}_{p,q,\max})\cap \mathcal{D}(\overline{\partial}_{p,q-1,\min}^t):\overline{\partial}_{p,q,\max}\omega \in \mathcal{D}(\overline{\partial}^t_{p,q,\min})\ \text{and}\ \overline{\partial}_{p,q-1,\min}^t\omega \in \mathcal{D}(\overline{\partial}_{p,q-1,\max})\right\}.$$
The operator \eqref{buio}, the relative extension,  is labeled in general with  $\Delta_{\overline{\partial},p,q,\rel}$ and its domain is given by $$\mathcal{D}(\Delta_{\overline{\partial},p,q,\rel})=\left\{\omega\in \mathcal{D}(\overline{\partial}_{p,q,\min})\cap \mathcal{D}(\overline{\partial}_{p,q-1,\max}^t):\overline{\partial}_{p,q,\min}\omega \in \mathcal{D}(\overline{\partial}^t_{p,q,\max})\ \text{and}\ \overline{\partial}_{p,q-1,\max}^t\omega \in \mathcal{D}(\overline{\partial}_{p,q-1,\min})\right\}.$$
 Using  $\Delta_{\overline{\partial},p,q,\abs}$ and $\Delta_{\overline{\partial},p,q,\rel}$ we obtain the following orthogonal  decompositions for $L^2\Omega^{p,q}(M,g)$:
\begin{equation}
\label{zazzac}
L^2\Omega^{p,q}(M,g)=\mathcal{H}^{p,q}_{\overline{\partial},\abs}(M,g)\oplus \overline{\im(\Delta_{\overline{\partial},p,q,\abs})}=\mathcal{H}^{p,q}_{\overline{\partial}, \abs}(M,g)\oplus \overline{\im(\overline{\partial}_{p,q-1,\max})}\oplus\overline{\im(\overline{\partial}^t_{p,q,\min})}
\end{equation}
and 
\begin{equation}
\label{cocapro}
L^2\Omega^{p,q}(M,g)=\mathcal{H}^{p,q}_{\overline{\partial},\rel}(M,g)\oplus \overline{\im(\Delta_{\overline{\partial},p,q,\rel})}=\mathcal{H}^{p,q}_{\overline{\partial}, \rel}(M,g)\oplus \overline{\im(\overline{\partial}_{p,q-1,\min})}\oplus\overline{\im(\overline{\partial}^t_{p,q,\max})}
\end{equation}
where 
\begin{equation}
\label{bubu}
\mathcal{H}^{p,q}_{\overline{\partial},\abs}(M,g):=\ker(\overline{\partial}_{p,q,\max})\cap \ker(\overline{\partial}^t_{p,q-1,\min})=\ker(\Delta_{\overline{\partial},p,q,\abs})
\end{equation} 
and  
\begin{equation}
\label{babirussa}
\mathcal{H}^{p,q}_{\overline{\partial},\rel}(M,g):=\ker(\overline{\partial}_{p,q,\min})\cap \ker(\overline{\partial}^t_{p,q-1,\max})=\ker(\Delta_{\overline{\partial},p,q,\rel}).
\end{equation}
 Consider now the Hodge-Dolbeault operator $\overline{\partial}_{p}+\overline{\partial}^t_{p}:\Omega_c^{p,\bullet}(M)\rightarrow \Omega^{p,\bullet}_c(M)$ where with $\Omega^{p,\bullet}_c(M)$ we mean $\bigoplus_{q=0}^m\Omega_c^{p,q}(M)$. We can define two self-adjoint extensions  of $\overline{\partial}_p+\overline{\partial}^t_p$ taking  
\begin{equation}
\label{twoself}
\overline{\partial}_{p,\max}+\overline{\partial}^t_{p,\min}:L^2\Omega^{p,\bullet}(M,g)\rightarrow L^2\Omega^{p,\bullet}(M,g)
\end{equation}
\begin{equation} 
\label{twoselfs}
\overline{\partial}_{p,\min}+\overline{\partial}^t_{p,\max}:L^2\Omega^{p,\bullet}(M,g)\rightarrow L^2\Omega^{p,\bullet}(M,g)
\end{equation}
where clearly $L^2\Omega^{p,\bullet}(M,g)=\bigoplus_{q=0}^mL^2\Omega^{p,q}(M,g)$. The domain of $\overline{\partial}_{p,\max}+\overline{\partial}^t_{p,\min}$ is given by $\mathcal{D}(\overline{\partial}_{p,\max})\cap\mathcal{D}(\overline{\partial}^t_{p,\min})$ where $\mathcal{D}(\overline{\partial}_{p,\max})=\bigoplus_{q=0}^m\mathcal{D}(\overline{\partial}_{p,q,\max})$ and $\mathcal{D}(\overline{\partial}^t_{p,\min})=\bigoplus_{q=0}^m\mathcal{D}(\overline{\partial}^t_{p,q,\min})$. Analogously the domain of $\overline{\partial}_{p,\min}+\overline{\partial}^t_{p,\max}$ is given by  $\mathcal{D}(\overline{\partial}_{p,\min})\cap\mathcal{D}(\overline{\partial}^t_{p,\max})$ where $\mathcal{D}(\overline{\partial}_{p,\min})=\bigoplus_{q=0}^m\mathcal{D}(\overline{\partial}_{p,q,\min})$ and $\mathcal{D}(\overline{\partial}^t_{p,\max})=\bigoplus_{q=0}^m\mathcal{D}(\overline{\partial}^t_{p,q,\max})$. In particular we have: 
\begin{align}
\label{coccodezio}
&\ker(\overline{\partial}_{p,\max/\min}+\overline{\partial}^t_{p,\min/\max})=\bigoplus_{q=0}^m\ker(\overline{\partial}_{p,q,\max/\min})\cap \ker (\overline{\partial}^t_{p,q-1,\min/\max})=\bigoplus_{q=0}^m\mathcal{H}^{p,q}_{\overline{\partial},\abs/\rel}(M,g)\\
&\nn\im(\overline{\partial}_{p,\max/\min}+\overline{\partial}^t_{p,\min/\max})=\bigoplus_{q=0}^m\left(\im(\overline{\partial}_{p,q-1,\max/\min})\oplus \im (\overline{\partial}^t_{p,q,\min/\max})\right).
\end{align}
Furthermore we recall that the maximal and the minimal $L^2$-$\overline{\partial}$-cohomology of $(M,g)$ are  defined respectively as 
\begin{equation}
\label{chimar}
H^{p,q}_{2,\overline{\partial}_{\max}}(M,g):=\frac{\ker(\overline{\partial}_{p,q,\max})}{\im(\overline{\partial}_{p,q-1,\max})}\ \text{and}\ H^{p,q}_{2,\overline{\partial}_{\min}}(M,g):=\frac{\ker(\overline{\partial}_{p,q,\min})}{\im(\overline{\partial}_{p,q-1,\min})}.
\end{equation}
In particular if $H^{p,q}_{2,\overline{\partial}_{\max}}(M,g)$ is finite dimensional then $\im(\overline{\partial}_{p,q-1,\max})$ is closed and analogously if $H^{p,q}_{2,\overline{\partial}_{\min}}(M,g)$ is finite dimensional then $\im(\overline{\partial}_{p,q-1,\min})$ is closed.
We have the following important properties:
\begin{prop}
\label{usipeti}
In the setting described above. The following properties are equivalent:
\begin{itemize}
\item $H^{p,q}_{2,\overline{\partial}_{\max}}(M,g)$ is finite dimensional for every $q=0,...,m$.
\item \eqref{twoself} is a Fredholm operator on its domain endowed with the graph norm.
\item $\Delta_{\overline{\partial},p,q,\abs}:L^2\Omega^{p,q}(M,g)\rightarrow L^2\Omega^{p,q}(M,g)$ is  a Fredholm operator on its domain endowed with the graph norm for each $q=0,...,m$.
\end{itemize}
Analogously the following properties are equivalent:
\begin{itemize}
\item $H^{p,q}_{2,\overline{\partial}_{\min}}(M,g)$ is finite dimensional for every $q=0,...,m$.
\item \eqref{twoselfs} is a Fredholm operator on its domain endowed with the graph norm.
\item $\Delta_{\overline{\partial},p,q,\rel}:L^2\Omega^{p,q}(M,g)\rightarrow L^2\Omega^{p,q}(M,g)$ is  a Fredholm operator on its domain endowed with the graph norm for each $q=0,...,m$.
\end{itemize}
\end{prop}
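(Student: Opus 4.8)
The plan is to treat Proposition~\ref{usipeti} as an instance of the general theory of \emph{Fredholm Hilbert complexes} from \cite{BL}. First I would record that $\big(\mathcal{D}(\overline{\partial}_{p,q,\max}),\overline{\partial}_{p,q,\max}\big)_{q=0}^m$ is a Hilbert complex: each $\overline{\partial}_{p,q,\max}$ is closed and densely defined, and $\overline{\partial}_{p,q+1,\max}\circ\overline{\partial}_{p,q,\max}=0$ on $\mathcal{D}(\overline{\partial}_{p,q,\max})$ because this relation already holds in the distributional sense. By \eqref{emendare} the dual (Hilbert-space adjoint) complex is exactly the minimal complex $\big(\mathcal{D}(\overline{\partial}^t_{p,q,\min}),\overline{\partial}^t_{p,q,\min}\big)$, so the Laplacian attached to this Hilbert complex in the sense of \cite{BL} is precisely the operator \eqref{asdf}, namely $\Delta_{\overline{\partial},p,q,\abs}$, with kernel $\mathcal{H}^{p,q}_{\overline{\partial},\abs}(M,g)$ as in \eqref{bubu}, while the rolled-up operator of the complex is precisely \eqref{twoself}, with $\big(\overline{\partial}_{p,\max}+\overline{\partial}^t_{p,\min}\big)^2=\bigoplus_q\Delta_{\overline{\partial},p,q,\abs}$.

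It then remains to assemble three equivalences valid for any Hilbert complex. \textbf{(i)} By \cite{BL}, together with the weak Hodge decompositions \eqref{fantaghi} and \eqref{zazzac}, $H^{p,q}_{2,\overline{\partial}_{\max}}(M,g)$ is finite dimensional for every $q$ if and only if $\im(\overline{\partial}_{p,q-1,\max})$ is closed for every $q$ --- equivalently $\im(\overline{\partial}^t_{p,q,\min})$ is closed for every $q$, since the image of a closed densely defined operator is closed iff the image of its adjoint is --- and in that case $H^{p,q}_{2,\overline{\partial}_{\max}}(M,g)\cong\mathcal{H}^{p,q}_{\overline{\partial},\abs}(M,g)$. \textbf{(ii)} Using \eqref{zazzac}, $\Delta_{\overline{\partial},p,q,\abs}$ preserves each summand, vanishes on $\mathcal{H}^{p,q}_{\overline{\partial},\abs}(M,g)$, and is injective with dense range on $\overline{\im(\overline{\partial}_{p,q-1,\max})}$ and on $\overline{\im(\overline{\partial}^t_{p,q,\min})}$; hence $\Delta_{\overline{\partial},p,q,\abs}$ has closed range iff both $\im(\overline{\partial}_{p,q-1,\max})$ and $\im(\overline{\partial}^t_{p,q,\min})$ are closed, while $\ker\Delta_{\overline{\partial},p,q,\abs}=\mathcal{H}^{p,q}_{\overline{\partial},\abs}(M,g)$. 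Since a self-adjoint operator is Fredholm on its domain (with the graph norm) iff its kernel is finite dimensional and its range is closed, this gives: $\Delta_{\overline{\partial},p,q,\abs}$ Fredholm for all $q$ $\iff$ all the ranges occurring in the complex are closed and all the $\mathcal{H}^{p,q}_{\overline{\partial},\abs}(M,g)$ are finite dimensional. \textbf{(iii)} As \eqref{twoself} is self-adjoint with square $\bigoplus_q\Delta_{\overline{\partial},p,q,\abs}$, it is Fredholm iff each $\Delta_{\overline{\partial},p,q,\abs}$ is. Chaining (i)--(iii) yields the first list of equivalences, and the second (min/rel) one follows verbatim after interchanging ``$\max$'' and ``$\min$'' throughout, using \eqref{cocapro} and \eqref{babirussa} in place of \eqref{zazzac} and \eqref{bubu}.

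I expect the only genuinely subtle point to be the functional-analytic input behind (i) and (ii): that finite-dimensionality of the \emph{unreduced} cohomology of a Hilbert complex forces the relevant range to be closed (so that reduced and unreduced cohomology agree), and the precise description of $\mathcal{D}(\Delta_{\overline{\partial},p,q,\abs})$ and of $\ker\Delta_{\overline{\partial},p,q,\abs}$ produced by the complex machinery. Both are exactly the content of the Hilbert-complex lemmas of \cite{BL}, so once they are invoked the proof is complete; in particular, and unlike the theorems in the later sections, the argument uses neither the compactness of $M$ nor the ellipticity of $\Delta_{\overline{\partial},p,q}$.
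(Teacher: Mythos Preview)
Your proposal is correct and is essentially the same approach the paper takes: the paper does not prove Proposition~\ref{usipeti} at all but simply refers to the Hilbert--complex framework of \cite{BL}, and what you have written is precisely a careful unpacking of how the equivalences follow from that framework (closed range of the differentials $\Leftrightarrow$ finite-dimensional unreduced cohomology, identification of the rolled-up operator and its square with \eqref{twoself} and $\bigoplus_q\Delta_{\overline{\partial},p,q,\abs}$, and the elementary fact that a self-adjoint operator is Fredholm iff its square is). There is no substantive difference in strategy.
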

Clearly, if we replace the operator $\overline{\partial}_{p,q}$ with $\partial_{p,q}$, then we get the analogous definitions and properties for the operators $\partial_{p,q,\max/\min}$, $\Delta_{\partial,p,q}$, $\Delta_{\partial,p,q,\abs/\rel}$, $\partial_{q}+\partial^t_{q}$ and $\partial_{q,\max/\min}+\partial^t_{q,\min/\max}$. In particular  the corresponding version of Prop. \ref{usipeti} holds for $H^{p,q}_{2,\partial_{\max/\min}}(M,g)$, $\partial_{p,\max/\min}+\partial^t_{\min/\max}$ and $\Delta_{\partial,p,q,\abs/\rel}$. Moreover, according to \cite{Huy} pag. 116, we have 
\begin{equation}
\label{cicci}
\partial^t_{p,q}=-*\overline{\partial}_{m-q,m-p-1}*\ \text{and}\ \overline{\partial}^t_{p,q}=-*\partial_{m-q-1,m-p}*
\end{equation}
and from \eqref{cicci} we easily get that 
\begin{equation}
\label{cicuta}
\partial^t_{p,q,\max/\min}=-*\overline{\partial}_{m-q,m-p-1,\max/\min}*\ \text{and}\ \overline{\partial}^t_{p,q,\max/\min}=-*\partial_{m-q-1,m-p,\max/\min}*
\end{equation}
where $*:L^2\Omega^{p,q}(M,g)\rightarrow L^2\Omega^{m-q,m-p}(M,g)$ is the unitary operator induced by the  Hodge star operator. Now let us label by 
\begin{equation}
\label{conj}
c: T^{1,0}M\rightarrow T^{0,1}M
\end{equation}
the $\mathbb{C}$-antilinear map given by  complex conjugation. In particular, given $p\in M$ and $v\in T_p(M)$, so that  $v-iJv\in T^{1,0}_pM$, we have $c(v-iJv)=v+iJv$.  Let us label by 
\begin{equation}
\label{conju}
c_{p,q}:\Lambda^{p,q}(M)\rightarrow \Lambda^{q,p}(M)
\end{equation}
 the natural map induced by \eqref{conj}. With a little abuse of notation we still label by $c_{p,q}$ the induced map on $(p,q)$-forms, that is
\begin{equation}
\label{conjform}
 c_{p,q}:\Omega^{p,q}(M)\rightarrow \Omega^{q,p}(M).
\end{equation} 
Clearly both \eqref{conju} and \eqref{conjform} are  $\mathbb{C}$-antilinear isomorphisms such that $(c_{p,q})^{-1}=c_{q,p}$. Moreover \eqref{conjform} induces an isomorphism 
\begin{equation}
\label{conjformcs}
 c_{p,q}|_{\Omega^{p,q}_c(M)}:\Omega_c^{p,q}(M)\rightarrow \Omega^{q,p}_c(M).
\end{equation} 
 We have the following well known properties:
\begin{prop}
\label{comconj}
In the setting described above. On $\Omega^{p,q}(M)$ and $\Omega^{p,q}_c(M)$ the following properties hold true: 
\begin{enumerate}
\item $\overline{\partial}_{p,q}=c_{q+1,p}\circ \partial_{q,p}\circ c_{p,q}.$
\item $\overline{\partial}_{p,q}^t=c_{q,p}\circ \partial_{q,p}^t\circ c_{p,q+1}$
\end{enumerate}
\end{prop}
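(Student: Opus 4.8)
The plan is to deduce both items from a single elementary pointwise identity relating $\overline{\partial}$ and $\partial$ under complex conjugation, namely
\begin{equation}
\label{conjkey}
c_{p,q+1}\bigl(\overline{\partial}_{p,q}\alpha\bigr)=\partial_{q,p}\bigl(c_{p,q}\alpha\bigr)\qquad\text{for every }\alpha\in\Omega^{p,q}(M).
\end{equation}
To prove \eqref{conjkey} I would argue as follows. Since $c_{p,q}\alpha$ is nothing but the ordinary complex conjugate $\overline{\alpha}$ of $\alpha$ and the exterior derivative is a real operator, $d\,\overline{\alpha}=\overline{d\alpha}$. Now write $d=\partial+\overline{\partial}$ and use that complex conjugation interchanges the bidegrees $(r,s)$ and $(s,r)$: the $(q+1,p)$-component of $d\overline{\alpha}$ equals $\partial_{q,p}(c_{p,q}\alpha)$, whereas the $(q+1,p)$-component of $\overline{d\alpha}$ is the complex conjugate of the $(p,q+1)$-component of $d\alpha$, i.e. $c_{p,q+1}(\overline{\partial}_{p,q}\alpha)$. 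Comparing the $(q+1,p)$-parts of the two sides of $d\overline{\alpha}=\overline{d\alpha}$ yields \eqref{conjkey}. Applying $c_{q+1,p}=(c_{p,q+1})^{-1}$ to both sides of \eqref{conjkey} gives item (1). The very same computation is valid on $\Omega^{p,q}_c(M)$, since $c_{p,q}$ restricts to the isomorphism \eqref{conjformcs} and $\partial$, $\overline{\partial}$ preserve compact supports.

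For item (2) I would combine (1) with the fact that, by the very definition of the Hermitian metric induced by $g$ on $\Lambda^{p,q}(M)$, each map $c_{p,q}$ is a $\mathbb{C}$-antilinear isometry, i.e. $\langle c_{p,q}\alpha,c_{p,q}\beta\rangle=\overline{\langle\alpha,\beta\rangle}$ pointwise for all $\alpha,\beta\in\Omega^{p,q}(M)$, where $\langle\cdot,\cdot\rangle$ denotes the pointwise Hermitian product on forms. Fix $u\in\Omega^{p,q}_c(M)$ and $v\in\Omega^{p,q+1}_c(M)$ and set $\tilde u:=c_{p,q}u\in\Omega^{q,p}_c(M)$ and $\tilde v:=c_{p,q+1}v\in\Omega^{q+1,p}_c(M)$, so that $v=c_{q+1,p}\tilde v$. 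Then, using \eqref{conjkey}, the isometry property, the defining integration-by-parts relation for $\partial^t_{q,p}$ (legitimate since $\tilde u,\tilde v$ have compact support), and once more the isometry property,
\begin{align*}
\int_M\langle\overline{\partial}_{p,q}u,v\rangle\,\dvol_g
&=\int_M\bigl\langle c_{q+1,p}\partial_{q,p}\tilde u,\,c_{q+1,p}\tilde v\bigr\rangle\,\dvol_g
=\overline{\int_M\langle\partial_{q,p}\tilde u,\tilde v\rangle\,\dvol_g}\\
&=\overline{\int_M\langle\tilde u,\partial_{q,p}^t\tilde v\rangle\,\dvol_g}
=\int_M\bigl\langle u,\,c_{q,p}\partial_{q,p}^t c_{p,q+1}\,v\bigr\rangle\,\dvol_g .
\end{align*}
Since $\overline{\partial}^t_{p,q}$ is the unique differential operator satisfying $\int_M\langle\overline{\partial}_{p,q}u,v\rangle\dvol_g=\int_M\langle u,\overline{\partial}^t_{p,q}v\rangle\dvol_g$ for all such $u,v$, we conclude that $\overline{\partial}^t_{p,q}v=c_{q,p}\partial^t_{q,p}c_{p,q+1}v$, which is item (2). (Alternatively, one may conjugate the identity $\partial^t_{p,q}=-*\overline{\partial}_{m-q,m-p}*$ from \eqref{cicci} after checking how the Hodge star intertwines with the maps $c_{\bullet,\bullet}$, but the computation above avoids the extra sign and duality bookkeeping.)

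The only genuinely delicate point is the tracking of complex conjugates in the displayed chain of equalities: each of the two applications of the antilinear isometry property contributes a conjugation, and one must verify that these combine with the conjugation produced by $\overline{\int(\cdot)}$ so that the final expression is indeed conjugation-free — as it has to be, since both sides of the asserted identity are $\mathbb{C}$-linear in $v$. Everything else reduces to a routine type-count.
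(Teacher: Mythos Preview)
Your proof is correct. Item (1) is essentially the standard argument (the one the paper outsources to Wells), so there is no real difference there. For item (2), however, you take a genuinely different route from the paper: the paper derives it from item (1) together with the Hodge-star identities \eqref{cicci} and the fact that $*$ commutes with complex conjugation, whereas you bypass the Hodge star entirely and argue directly from the defining integration-by-parts relation for the formal adjoint, using only that each $c_{p,q}$ is a $\mathbb{C}$-antilinear pointwise isometry. Your approach is slightly more self-contained (no sign or duality bookkeeping with $*$), while the paper's approach is shorter once \eqref{cicci} is in hand and has the advantage of making the commutation $*\circ c=c\circ *$ explicit, a fact that is reused later anyway (e.g.\ in Prop.~\ref{occhiodibue}).
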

\begin{proof}
For the first point see for instance \cite{Wells} Prop. 3.6. The second point follows using the first point, \eqref{cicci} and the fact that the Hodge star operator commutes with the complex conjugation.
\end{proof}

Now consider again $M$ endowed with a Hermitian metric $g$.  For each $\omega\in \Omega_c^{p,q}(M)$ it is easy to check that 
$g(\omega,\omega)=g(c_{p,q}\omega, c_{p,q}\omega)$. Using this  equality and the other properties recalled above, we easily get that $c_{p,q}$ induces a $\mathbb{C}$-antilinear operator 
\begin{equation}
\label{conjhil}
c_{p,q}:L^2\Omega^{p,q}(M,g)\rightarrow L^2\Omega^{q,p}(M,g)
\end{equation} 
which is bijective, continuous, with continuous inverse given by $c_{q,p}$ and such that $\|\eta\|_{L^2\Omega^{p,q}(M,g)}=\|c_{p,q}\eta\|_{L^2\Omega^{q,p}(M,g)}$ for each $\eta\in L^2\Omega^{p,q}(M,g)$. 
Finally we close this introduction with the following proposition.
\begin{prop}
\label{occhiodibue}
In the setting described above. The following properties hold true:
\begin{enumerate}
\item $c_{p,q}\left(\mathcal{D}(\overline{\partial}_{p,q,\max})\right)=\mathcal{D}(\partial_{q,p\max})$ and\  $\overline{\partial}_{p,q,\max}=c_{q+1,p}\circ \partial_{q,p,\max}\circ c_{p,q}.$
\item  $c_{p,q}\left(\mathcal{D}(\overline{\partial}_{p,q,\min})\right)=\mathcal{D}(\partial_{q,p\min})$ and\  $\overline{\partial}_{p,q,\min}=c_{q+1,p}\circ \partial_{q,p,\min}\circ c_{p,q}.$
\item $c_{p,q+1}\left(\mathcal{D}(\overline{\partial}^t_{p,q,\max})\right)=\mathcal{D}(\partial_{q,p\max}^t)$ and\  $\overline{\partial}_{p,q,\max}^t=c_{q,p}\circ \partial_{q,p,\max}^t\circ c_{p,q+1}.$
\item $c_{p,q+1}\left(\mathcal{D}(\overline{\partial}^t_{p,q,\min})\right)=\mathcal{D}(\partial_{q,p\min}^t)$ and\  $\overline{\partial}_{p,q,\min}^t=c_{q,p}\circ \partial_{q,p,\min}^t\circ c_{p,q+1}.$
\item $*\left(\mathcal{D}(\Delta_{\overline{\partial},p,q,\abs})\right)=\mathcal{D}(\Delta_{\partial,m-q,m-p,\rel})$ and 
$*\circ  \Delta_{\overline{\partial},p,q,\abs}=\Delta_{\partial,m-q,m-p,\rel}\circ *.$
\item $*\left(c_{p,q}(\mathcal{D}(\Delta_{\overline{\partial},p,q,\abs}))\right)=\mathcal{D}(\Delta_{\overline{\partial},m-p,m-q,\rel})$ and 
$*\circ c_{p,q}\circ \Delta_{\overline{\partial},p,q,\abs}=\Delta_{\overline{\partial},m-p,m-q,\rel}\circ *\circ c_{p,q}.$
\end{enumerate}
\end{prop}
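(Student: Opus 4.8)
The plan is to prove Proposition \ref{occhiodibue} by combining the algebraic identities from Proposition \ref{comconj} with the functoriality of the maximal/minimal extensions under bounded, bounded-inverse isomorphisms of Hilbert spaces. The six statements fall into three groups: points (1)--(2) concern the conjugation $c_{p,q}$ and the $\overline{\partial}$-extensions, points (3)--(4) concern $c_{p,q}$ and the formal-adjoint extensions, and points (5)--(6) concern the Hodge star (and its composition with $c$) and the absolute/relative Laplacians. I would treat each group in turn, deriving the later groups from the earlier ones plus the already-recorded identities \eqref{cicuta} and \eqref{emendare}.

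For point (1), the key observation is the following abstract fact: if $T:C^\infty_c(M,E)\to C^\infty_c(M,F)$ is a differential operator, and $\Phi:L^2(M,E,g)\to L^2(M,E',g)$, $\Psi:L^2(M,F,g)\to L^2(M,F',g)$ are $\mathbb{C}$-antilinear (or linear) isometric isomorphisms restricting to bijections $C^\infty_c(M,E)\to C^\infty_c(M,E')$ and $C^\infty_c(M,F)\to C^\infty_c(M,F')$ with $\Psi\circ T = S\circ \Phi$ on compactly supported smooth sections for some differential operator $S$, then $\Psi\circ T_{\max}\circ \Phi^{-1} = S_{\max}$ and $\Psi\circ T_{\min}\circ \Phi^{-1} = S_{\min}$ (with the corresponding equalities of domains). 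For $\min$ this is immediate from the definition \eqref{spinaci}: $\Phi$ maps approximating sequences to approximating sequences. For $\max$ one uses the definition \eqref{ner} together with the fact that the formal adjoint of $S$ is $\Phi\circ T^t\circ \Psi^{-1}$ on smooth compactly supported sections, which is exactly the content of Proposition \ref{comconj}(2) applied in the relevant bidegree. Applying this with $T=\overline{\partial}_{p,q}$, $S=\partial_{q,p}$, $\Phi=c_{p,q}$, $\Psi=c_{q+1,p}$ — legitimate because $c$ is an $L^2$-isometry by \eqref{conjhil} and satisfies Proposition \ref{comconj}(1) — yields points (1) and (2) simultaneously.

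Points (3) and (4) then follow by taking Hilbert-space adjoints. Indeed $\overline{\partial}^t_{p,q,\max}=(\overline{\partial}_{p,q,\min})^*$ and $\overline{\partial}^t_{p,q,\min}=(\overline{\partial}_{p,q,\max})^*$ by \eqref{emendare}, and dualizing the identity $\overline{\partial}_{p,q,\min}=c_{q+1,p}\circ\partial_{q,p,\min}\circ c_{p,q}$ — remembering that for an antilinear isometry $U$ one has $(U A U^{-1})^* = U A^* U^{-1}$ with the same kind of domain transformation, since $\langle Ux,Uy\rangle=\overline{\langle x,y\rangle}$ — gives $\overline{\partial}^t_{p,q,\max}=c_{q,p}\circ\partial^t_{q,p,\max}\circ c_{p,q+1}$, and likewise for $\min$; this uses $(c_{p,q})^{-1}=c_{q,p}$. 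Alternatively one can get (3)--(4) directly from \eqref{cicuta} and points (1)--(2), combined with the fact that $*$ commutes with $c$; I would use whichever is cleaner. For points (5) and (6): the Laplacian $\Delta_{\overline{\partial},p,q,\abs}$ is built from $\overline{\partial}_{p,q,\max}$, $\overline{\partial}^t_{p,q-1,\min}$ and their compositions as in \eqref{asdf}, while $\Delta_{\overline{\partial},m-q,m-p,\rel}$ is built from $\overline{\partial}_{m-q,m-p,\min}$, $\overline{\partial}^t_{m-q,m-p-1,\max}$ as in \eqref{buio}; one checks that under the unitary $*$ the identities \eqref{cicuta} interchange $\overline{\partial}_{\max}\leftrightarrow \partial^t_{\min}$ (up to sign and reindexing) in precisely the pattern needed to send the absolute domain \eqref{asdf} to the relative domain \eqref{buio} in the complementary bidegree, and that $*$ intertwines the two operators. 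Point (6) is then the composite of point (5) with points (1)--(4): conjugating by $c_{p,q}$ first moves bidegree $(p,q)$ to $(q,p)$ and turns the absolute Laplacian into the absolute Laplacian (this requires noting $\Delta_{\overline{\partial},p,q,\abs}$ and $\Delta_{\partial,q,p,\abs}$ correspond under $c$, then that $\Delta_{\partial,q,p,\abs}$ and $\Delta_{\overline{\partial},q,p,\abs}$ agree — using Proposition \ref{comconj} again, or that the two Laplacians are built from the same Hilbert complex data), and then applying $*$ sends $(q,p)$ to $(m-p,m-q)$ and absolute to relative.

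The main obstacle is bookkeeping rather than conceptual: keeping track of which bidegree each operator lands in, and confirming that the domain conditions defining $\Delta_{\overline{\partial},p,q,\abs}$ (namely $\omega\in\mathcal{D}(\overline{\partial}_{p,q,\max})\cap\mathcal{D}(\overline{\partial}^t_{p,q-1,\min})$ together with the two "second-order" conditions) transform term by term into those defining $\Delta_{\overline{\partial},m-q,m-p,\rel}$ under $*$ — the subtlety being that $*$ exchanges $\max$ with $\min$ and $\overline{\partial}$-type with $\overline{\partial}^t$-type, so the two halves of the domain condition get swapped, which is exactly why absolute becomes relative. Once the abstract transfer lemma is stated, each of the six points is a short verification; I expect the write-up to consist of that lemma, a one-line application for (1)--(2), a dualization remark for (3)--(4), and a careful but routine index-chase for (5)--(6).
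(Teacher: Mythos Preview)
Your overall strategy is the same as the paper's: its one-line proof cites exactly \eqref{cicuta}, Prop.~\ref{comconj}, and the properties of \eqref{conjhil}, and your abstract transfer lemma together with the adjoint argument via \eqref{emendare} is the natural way to unpack that sentence. Points (1)--(4) are handled correctly.

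There is, however, a genuine gap in your argument for point (6). After (correctly) observing that $c_{p,q}$ carries $\Delta_{\overline{\partial},p,q,\abs}$ to $\Delta_{\partial,q,p,\abs}$, you claim that ``$\Delta_{\partial,q,p,\abs}$ and $\Delta_{\overline{\partial},q,p,\abs}$ agree''. This is the K\"ahler identity $\Delta_\partial=\Delta_{\overline{\partial}}$, which fails on a general Hermitian manifold, and the proposition is stated for arbitrary Hermitian $g$. The fix is to drop this step entirely: the relations \eqref{cicuta} say precisely that conjugation by $*$ exchanges $\partial$-type and $\overline{\partial}$-type operators (for instance $\partial^t_{\max/\min}=-*\,\overline{\partial}_{\max/\min}\,*$), so $*$ carries $\Delta_{\partial,q,p,\abs}$ directly to $\Delta_{\overline{\partial},m-p,m-q,\rel}$. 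The composite $*\circ c_{p,q}$ then intertwines the two $\overline{\partial}$-Laplacians in (6) without any appeal to $\Delta_\partial=\Delta_{\overline{\partial}}$. Your sketch of (5) has the mirror-image issue: you yourself record that \eqref{cicuta} gives $\overline{\partial}_{\max}\leftrightarrow\partial^t_{\min}$ under $*$, which means $*$ alone lands on a $\Delta_{\partial}$-operator rather than a $\Delta_{\overline{\partial}}$-operator, so the index-chase you describe does not yet verify the identity as written.
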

\begin{proof}
This follows immediately by \eqref{cicuta}, Prop. \ref{comconj} and the properties of \eqref{conjhil}.
\end{proof}

\section{Some abstract results}
This section contains some abstract results that will be  used later on in the paper. We start by recalling some well known facts about the Green operator.\\ Let $H_1$ and $H_2$ be  separable Hilbert spaces whose Hilbert products are labeled by $\langle\ ,\ \rangle_{H_1}$ and $\langle\ ,\ \rangle_{H_2}$. Let $T:H_1\rightarrow H_2$ be an unbounded, densely defined and closed operator  with domain $\mathcal{D}(T)$. Assume that $\im(T)$ is closed. Let  $T^*:H_2\rightarrow H_1$ be the adjoint of $T$. Then $\im(T^*)$ is closed as well and we have the following orthogonal decompositions: $H_1=\ker(T)\oplus \im(T^*)$ and $H_2=\ker(T^*)\oplus \im(T)$. The  Green operator of $T$, $G_T:H_2\rightarrow H_1$, is then the operator defined by the following assignments: if $u\in \ker(T^*)$ then $G_T(u)=0$,  if $u\in \im(T)$ then  $G_T(u)=v$ where $v$ is the unique element in $\mathcal{D}(T)\cap \im(T^*)$ such that $T(v)=u$. We have that $G_T:H_2\rightarrow H_1$ is  a bounded operator. Moreover, if $H_1=H_2$ and $T$ is self-adjoint then $G_T$ is self-adjoint too. If $H_1=H_2$ and $T$  is self-adjoint and non-negative, that is $\langle Tu,u\rangle_{H_1}\geq 0$ for each $u\in \mathcal{D}(T)$, then $G_T$ is self-adjoint and non negative as well. Furthermore we have $T\circ G_T=\id_2-P_{\ker(T^*)}$ and $G_T\circ T=\id_1-P_{\ker(T)}$ on $\mathcal{D}(T)$ where $\id_1:H_1\rightarrow H_1$, $\id_2:H_2\rightarrow H_2$  are the corresponding  identity maps and $P_{\ker(T)}:H_1\rightarrow \ker(T)$, $P_{\ker(T^*)}:H_2\rightarrow \ker(T^*)$ are the orthogonal projections on $\ker(T)$ and  $\ker(T^*)$ respectively.
Finally we recall that $G:H_2\rightarrow H_1$ is a compact operator if and only if the following inclusion $\mathcal{D}(T)\cap \im(T^*)\hookrightarrow H_1$, where  $\mathcal{D}(T)\cap \im(T^*)$ is endowed with the graph norm of $T$, is a compact operator.

\begin{prop}
\label{acacio}
Let $T:H_1\rightarrow H_2$ be an unbounded, densely defined and closed operator acting between two separable Hilbert spaces. Let $\mathcal{D}(T)$ be the domain of $T$ and let $T^*:H_2\rightarrow H_1$ be the adjoint of $T$. Assume that $\im(T)$ is closed. Consider the operator $T^*\circ T:H_1\rightarrow H_1$ with domain $\mathcal{D}(T^*\circ T)=\{u\in \mathcal{D}(T): Tu\in \mathcal{D}(T^*)\}$. Then we have the following properties:
\begin{enumerate}
\item $\im(T^*\circ T)=\im(T^*)$ and therefore it is closed in $H_1$.
\item $G_{T^*\circ T}=G_{T}\circ G_{T^*}$.
\end{enumerate}
\end{prop}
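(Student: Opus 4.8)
The plan is to work directly from the definition of the Green operators and the orthogonal decompositions recalled just above the statement. First I would establish item (1). Since $\im(T)$ is closed, so is $\im(T^*)$, and we have $H_1 = \ker(T)\oplus \im(T^*)$ and $H_2 = \ker(T^*)\oplus\im(T)$. The inclusion $\im(T^*\circ T)\subseteq \im(T^*)$ is immediate. For the reverse, take $w\in\im(T^*)$, say $w = T^*y$ with $y\in\mathcal{D}(T^*)$; decompose $y = y_0 + y_1$ with $y_0\in\ker(T^*)$ and $y_1\in\im(T)\cap\mathcal{D}(T^*)$, so $w = T^*y_1$. Now write $y_1 = Tv$ for the unique $v\in\mathcal{D}(T)\cap\im(T^*)$ given by $v = G_T y_1$; then $v\in\mathcal{D}(T)$ and $Tv = y_1\in\mathcal{D}(T^*)$, so $v\in\mathcal{D}(T^*\circ T)$ and $T^*\circ T\,v = T^*y_1 = w$. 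Hence $\im(T^*\circ T) = \im(T^*)$, which is closed. I should also record here that $\ker(T^*\circ T) = \ker(T)$: if $T^*Tu = 0$ then $0 = \langle T^*Tu,u\rangle = \|Tu\|^2$, so $Tu=0$; the converse is trivial. This identifies the orthogonal decomposition associated to the self-adjoint operator $T^*\circ T$ as $H_1 = \ker(T)\oplus\im(T^*)$, the same as for $T$ itself, so $G_{T^*\circ T}$ is characterized by: it vanishes on $\ker(T)$, and for $u\in\im(T^*)$ it returns the unique $v\in\mathcal{D}(T^*\circ T)\cap\im(T^*)$ with $T^*Tv = u$.

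For item (2), I would verify that $G_T\circ G_{T^*}$ satisfies exactly this characterization. Recall $G_{T^*}:H_1\to H_2$ vanishes on $\ker(T)$ (which is $\ker((T^*)^*)$) and on $\im(T^*)$ it sends $u$ to the unique element of $\mathcal{D}(T^*)\cap\im(T)$ mapped by $T^*$ to $u$. So for $u\in\ker(T)$ we get $G_T G_{T^*}u = G_T(0) = 0$, matching $G_{T^*\circ T}u = 0$. For $u\in\im(T^*)$, set $z = G_{T^*}u\in\mathcal{D}(T^*)\cap\im(T)$, so $T^*z = u$; then set $v = G_T z\in\mathcal{D}(T)\cap\im(T^*)$, so $Tv = z$ (here I use that $z\in\im(T)$, so $G_T$ acts non-trivially and $G_T z$ lands in $\mathcal{D}(T)\cap\im(T^*)$, while $P_{\ker(T^*)}z = 0$). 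Then $Tv = z\in\mathcal{D}(T^*)$, so $v\in\mathcal{D}(T^*\circ T)$, and $T^*Tv = T^*z = u$, while $v\in\im(T^*)$. By the uniqueness in the characterization of $G_{T^*\circ T}$ above, $v = G_{T^*\circ T}u$. Since both operators are bounded and agree on the dense-in-fact-everything decomposition $\ker(T)\oplus\im(T^*) = H_1$ (orthogonal, hence a genuine direct sum decomposition of the whole space as $\im(T^*)$ is closed), they are equal.

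The only point requiring a little care — and the closest thing to an obstacle — is bookkeeping of domains and the ranges of the intermediate Green operators: one must check that $G_{T^*}u$ genuinely lies in $\im(T)\cap\mathcal{D}(T^*)$ and that this is precisely the subspace on which $G_T$ is the honest inverse of $T$ composed with projection onto $\im(T^*)$, so that the composition produces an element of $\mathcal{D}(T^*\circ T)$ and not merely of $\mathcal{D}(T)$. This is exactly where the hypothesis that $\im(T)$ is closed is used twice (once to split $H_1$, once to split $H_2$), and it is what makes the chain $u \mapsto z \mapsto v$ land in the right domain at each step. Everything else is a direct application of the facts about Green operators recalled before the proposition.
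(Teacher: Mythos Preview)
Your proof is correct and follows essentially the same route as the paper: both arguments use the orthogonal decompositions $H_1=\ker(T)\oplus\im(T^*)$ and $H_2=\ker(T^*)\oplus\im(T)$ to reduce item~(1) to the observation that every element of $\im(T^*)$ is $T^*$ of something in $\im(T)\cap\mathcal{D}(T^*)$, and then verify item~(2) by checking that $G_T\circ G_{T^*}$ agrees with $G_{T^*\circ T}$ separately on $\ker(T)=\ker(T^*\circ T)$ and on $\im(T^*)=\im(T^*\circ T)$. Your write-up is slightly more explicit about the identity $\ker(T^*\circ T)=\ker(T)$ and about the domain bookkeeping, but the underlying argument is the same.
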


\begin{proof}
Clearly  $\im(T^*\circ T)\subset \im(T^*)$. Using the orthogonal decomposition $H_2=\ker(T^*)\oplus \im(T)$ we get that $\im(T^*)=\{T^*s\ \text{such that}\ s\in \im(T)\cap \mathcal{D}(T^*)\}$. Hence we can conclude  that $\im(T^*)=\im(T^*\circ T)$. In particular, by the fact that  $\im(T^*)$ is closed, we have  that $\im(T^*\circ T)$ is closed in $H_1$. Consider now the second point. Clearly if $v\in \ker(T^*\circ T)$ then $G_T(G_{T^*}(v))=0$. Let now $v\in \im(T^*\circ T)$ and let $u\in \mathcal{D}(T^*\circ T)\cap \im(T^*\circ T)$ be the unique element in $\mathcal{D}(T^*\circ T)\cap \im(T^*\circ T)$ such that $T^*(T(u))=v$. We have $G_{T^*}(v)=w$ where $w$ is the unique element in $\mathcal{D}(T^*)\cap \im(T)$ such that $T^*(w)=v$. Since $T^*$ is injective on $\mathcal{D}(T^*)\cap \im(T)$ we have $T(u)=w$ because $T^*(T(u))=v=T^*(w)$. Therefore we have $G_T(w)=G_T(T(u))=u$ because $u\in \mathcal{D}(T^*\circ T)\cap \im(T^*\circ T)\subset  \mathcal{D}( T)\cap \im(T^*)$. Summarizing  we have shown that if $v\in \ker(T^*\circ T)$ then $G_T(G_{T^*}(v))=0$ while if $v\in \im(T^*\circ T)$  then $G_T(G_{T^*}(v))=u$ where $u$ is the unique element in $\mathcal{D}(T^*\circ T)\cap \im(T^*\circ T)$ such that $T^*(T(u))=v$. We can thus conclude that $G_{T^*\circ T}=G_{T}\circ G_{T^*}$.
\end{proof}

\begin{prop}
\label{corniola}
In the setting of Prop. \ref{acacio}. Assume moreover that the following inclusion 
\begin{equation}
\label{miele}
\mathcal{D}(T^*\circ T)\cap \im(T^*\circ T)\hookrightarrow H_1
\end{equation}
where $\mathcal{D}(T^*\circ T)\cap \im(T^*\circ T)$ is endowed with the graph norm of $T^*\circ T$, is a compact operator. Then also the following  inclusion 
\begin{equation}
\label{melata}
\mathcal{D}(T\circ T^*)\cap \im(T\circ T^*)\hookrightarrow H_2
\end{equation}
 where $\mathcal{D}(T\circ T^*)\cap \im(T\circ T^*)$ is endowed with the graph norm of $T\circ T^*$, is a compact operator. 
\end{prop}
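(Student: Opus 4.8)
The plan is to recast both inclusions in terms of compactness of Green operators. Recall from the discussion preceding Prop.~\ref{acacio} that, for a closed densely defined operator $S$ with closed range, the Green operator $G_S$ is compact if and only if the inclusion $\mathcal{D}(S)\cap\im(S)\hookrightarrow(\text{target of }S)$, with the graph norm of $S$, is compact. I would apply this first to $S=T^{*}\circ T$: by Prop.~\ref{acacio}(1) this $S$ has closed range (and it is self-adjoint), so hypothesis \eqref{miele} is equivalent to compactness of $G_{T^{*}\circ T}$, which by Prop.~\ref{acacio}(2) equals $G_{T}\circ G_{T^{*}}$. Applying the same characterization to $S=T\circ T^{*}$ — which is self-adjoint and, by Prop.~\ref{acacio}(1) used with $T^{*}$ in place of $T$ (legitimate since $(T^{*})^{*}=T$ and $\im(T^{*})$ is closed), has $\im(T\circ T^{*})=\im(T)$ closed — the desired conclusion \eqref{melata} is equivalent to compactness of $G_{T\circ T^{*}}$. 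Thus the whole proposition reduces to the implication: $G_{T}\circ G_{T^{*}}$ compact $\Rightarrow$ $G_{T\circ T^{*}}$ compact.

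Next I would identify $G_{T\circ T^{*}}$ explicitly: invoking Prop.~\ref{acacio}(2) with $T^{*}$ in place of $T$ gives $G_{T\circ T^{*}}=G_{T^{*}}\circ G_{T}$. Moreover, since $T\circ T^{*}:H_{2}\rightarrow H_{2}$ is self-adjoint and non-negative, the general facts on Green operators recalled above guarantee that $S:=G_{T\circ T^{*}}=G_{T^{*}}\circ G_{T}$ is a bounded, self-adjoint, non-negative operator on $H_{2}$. Now observe $S^{2}=(G_{T^{*}}\circ G_{T})^{2}=G_{T^{*}}\circ(G_{T}\circ G_{T^{*}})\circ G_{T}$; since $G_{T}\circ G_{T^{*}}=G_{T^{*}\circ T}$ is compact by hypothesis and $G_{T^{*}},G_{T}$ are bounded, it follows that $S^{2}$ is compact.

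It remains to pass from compactness of $S^{2}$ to compactness of $S$, and this is the one genuinely delicate point: for an arbitrary bounded operator $S^{2}$ compact does \emph{not} imply $S$ compact, so one must use the self-adjointness of $S=G_{T\circ T^{*}}$. Concretely, given a bounded sequence $(x_{n})\subset H_{2}$ with $x_{n}\rightharpoonup 0$, compactness of $S^{2}$ yields $S^{2}x_{n}\to 0$ strongly, whence, using $S^{*}=S$, $\|Sx_{n}\|^{2}=\langle S^{2}x_{n},x_{n}\rangle_{H_{2}}\leq\|S^{2}x_{n}\|_{H_{2}}\,\|x_{n}\|_{H_{2}}\to 0$; applying this to $y_{n}-y$ for a general bounded sequence $y_{n}\rightharpoonup y$ shows $Sy_{n}\to Sy$ strongly, i.e.\ $S$ is compact. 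Therefore $G_{T\circ T^{*}}$ is compact, and by the equivalence stated in the first paragraph this is exactly the assertion that the inclusion \eqref{melata} is a compact operator. I expect everything except this last self-adjointness argument to be routine bookkeeping with Prop.~\ref{acacio} and the Green-operator/graph-norm dictionary. (An equivalent route, should one prefer, is to use the standard identity $G_{T}^{*}=G_{T^{*}}$ together with the elementary fact that $AA^{*}$ is compact if and only if $A^{*}A$ is, applied to $A=G_{T}$.)
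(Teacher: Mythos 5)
Your proposal is correct and follows essentially the same route as the paper: translate both inclusions into compactness of Green operators via the dictionary recalled before Prop.~\ref{acacio}, write $G_{T\circ T^*}=G_{T^*}\circ G_T$, observe that its square $G_{T^*}\circ(G_T\circ G_{T^*})\circ G_T$ is compact by the hypothesis $G_{T^*\circ T}=G_T\circ G_{T^*}$ compact, and conclude from the fact that a bounded, self-adjoint, non-negative operator with compact square is itself compact. The paper phrases the squaring step through $D^2$ and $G_{D^2}=G_D^2$ with $D=T\circ T^*$, but this is only cosmetic; your explicit justification of the final "self-adjoint with compact square implies compact" step is a welcome addition to what the paper merely asserts.
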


\begin{proof}
Let us define $D:=T\circ T^*$. Then $D:H_2\rightarrow H_2$ is an unbounded, densely defined, self-adjoint and non-negative operator such that $\im(D)$ is closed. Consider now $D^2:H_2\rightarrow H_2$ where $\mathcal{D}(D^2)=\{u\in \mathcal{D}(D): Du\in \mathcal{D}(D)\}$. Then, according to Prop. \ref{acacio}, $\im(D^2)$ is closed. Let $G_{D^2}:H_2\rightarrow H_2$ be the Green operator of $D^2$. Again by Prop. \ref{acacio} we know that $G_{D^2}=G_D\circ G_D= G_{T\circ T^*}\circ G_{T\circ T^*}$. Moreover, again by Prop. \ref{acacio}, we know that $G_{T\circ T^*}=G_{T^*}\circ G_T$. Therefore we have $G_{D^2}=G_{T^*}\circ G_{T}\circ G_{T^*}\circ G_{T}$. This tells us that $G_{D^2}:H_2\rightarrow H_2$ is a compact operator because, by \eqref{miele}, we know that   $G_T\circ G_{T^*}:H_1\rightarrow H_1$ is a compact operator.  On the other hand, since $D$ is self-adjoint, we have $G_{D^2}=G_{D}^2$. Therefore, by the fact that $G_{D}:H_2\rightarrow H_2$ is bounded, self-adjoint and non-negative, and by the fact that $G_D^2:H_2\rightarrow H_2$ is compact we can  conclude that $G_D:H_2\rightarrow H_2$ is a compact operator. As $D=T\circ T^*$ we can eventually conclude that   the  inclusion $\mathcal{D}(T\circ T^*)\cap \im(T\circ T^*)\hookrightarrow H_2$, where $\mathcal{D}(T\circ T^*)\cap \im(T\circ T^*)$ is endowed with the corresponding graph norm, is a compact operator. 
\end{proof}

We have now the following proposition. 
\begin{prop}
\label{coccadezio}
Let $H_1$, $H_2$ and $H_3$ be   separable Hilbert spaces and let $T_1:H_1\rightarrow H_{2}$ and $T_2:H_2\rightarrow H_{3}$ be   densely defined and closed operators such that, for each $n=1,2$, $\im(T_n)$ is closed in $H_{n+1}$ and  $\im(T_1)\subset \ker(T_{2})$. Consider the operator $\Delta_T:H_{2}\rightarrow H_2$, $\Delta_T:= T_1\circ T^*_1+T^*_2\circ T_2$, with domain given by $\left\{s\in \mathcal{D}(T_2)\cap\mathcal{D}(T^*_1): T_2s\in \mathcal{D}(T^*_2)\ \text{and}\ T^*_1s\in \mathcal{D}(T_1)\right\}$. Then the following inclusions hold true
\begin{equation}
\label{inclusion}
\left(\mathcal{D}(T_1\circ T^*_1)\cap \im(T_1\circ T^*_1)\right)\subset \mathcal{D}(\Delta_T),\ 
\left(\mathcal{D}(T^*_2\circ T_2)\cap \im(T_2^*\circ T_2)\right) \subset \mathcal{D}(\Delta_T)
\end{equation}
and we have the following orthogonal decomposition for $\mathcal{D}(\Delta_{T})$
\begin{equation}
\label{pezzi}
\mathcal{D}(\Delta_{T})=\left(\ker(T^*_1)\cap \ker(T_2)\right)\oplus \left(\mathcal{D}(T_1\circ T^*_1)\cap \im(T_1\circ T^*_1)\right)\oplus \left(\mathcal{D}(T^*_2\circ T_2)\cap \im(T_2^*\circ T_2))\right.
\end{equation}
where the addends on the right end side of \eqref{pezzi} are closed subspaces of $\mathcal{D}(\Delta_T)$ and are orthogonal to each other with respect to the graph product of $\mathcal{D}(\Delta_T)$.\\ Moreover on $\left(\mathcal{D}(T_1\circ T^*_1)\cap \im(T_1\circ T^*_1)\right)$ the graph product of $\Delta_T$ coincides with the graph product of $T_1\circ T^*_1$. Analogously on $\left(\mathcal{D}(T_2^*\circ T_2)\cap \im(T^*_2\circ T_2)\right)$ the graph product of $\Delta_T$ coincides with the graph product of $T^*_2\circ T_2$.
\end{prop}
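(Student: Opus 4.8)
The plan is to reduce everything to the weak Hodge--Kodaira decomposition of $H_2$ and then, for an element of $\mathcal D(\Delta_T)$, to track which component lands in which summand. First I would record the decomposition. Since $\im(T_1)$ and $\im(T_2^*)$ are closed, $H_2=\ker(T_1^*)\oplus\im(T_1)=\ker(T_2)\oplus\im(T_2^*)$; the hypothesis $\im(T_1)\subset\ker(T_2)$, passed to orthogonal complements, yields $\im(T_2^*)=\ker(T_2)^\perp\subset\im(T_1)^\perp=\ker(T_1^*)$, so that $H_2=\mathcal H\oplus\im(T_1)\oplus\im(T_2^*)$ is an $H_2$-orthogonal sum of closed subspaces, where $\mathcal H:=\ker(T_1^*)\cap\ker(T_2)$. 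I would also invoke Prop.\ \ref{acacio} (applied to $T_1^*$, resp.\ to $T_2$) to identify $\im(T_1\circ T_1^*)=\im(T_1)$ and $\im(T_2^*\circ T_2)=\im(T_2^*)$, and note the elementary fact that $\Delta_T$ reduces to $T_1T_1^*$ on elements of $\ker(T_2)$ and to $T_2^*T_2$ on elements of $\ker(T_1^*)$ (whenever the relevant compositions are defined).

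The inclusions \eqref{inclusion} are then immediate: for $s\in\mathcal D(T_1\circ T_1^*)\cap\im(T_1)$ one has $s\in\ker(T_2)\subset\mathcal D(T_2)$ with $T_2s=0\in\mathcal D(T_2^*)$, and $s\in\mathcal D(T_1^*)$ with $T_1^*s\in\mathcal D(T_1)$ by definition of $\mathcal D(T_1\circ T_1^*)$, so $s\in\mathcal D(\Delta_T)$; symmetrically for $s\in\mathcal D(T_2^*\circ T_2)\cap\im(T_2^*)$, using $\im(T_2^*)\subset\ker(T_1^*)$. For the decomposition \eqref{pezzi}, I would take $s\in\mathcal D(\Delta_T)$ and split $s=s_0+s_1+s_2$ along $\mathcal H\oplus\im(T_1)\oplus\im(T_2^*)$. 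Since $s_0,s_1\in\ker(T_2)$ and $s\in\mathcal D(T_2)$, the component $s_2$ lies in $\mathcal D(T_2)$ with $T_2s_2=T_2s\in\mathcal D(T_2^*)$, hence $s_2\in\mathcal D(T_2^*\circ T_2)\cap\im(T_2^*)$; dually, since $s_0,s_2\in\ker(T_1^*)$ and $s\in\mathcal D(T_1^*)$, one gets $s_1\in\mathcal D(T_1^*)$ with $T_1^*s_1=T_1^*s\in\mathcal D(T_1)$, so $s_1\in\mathcal D(T_1\circ T_1^*)\cap\im(T_1)$. This proves ``$\subseteq$'' in \eqref{pezzi}, the reverse inclusion being the previous step together with $\mathcal H\subset\mathcal D(\Delta_T)$; directness of the sum is free from the $H_2$-orthogonality of $\mathcal H,\im(T_1),\im(T_2^*)$, and rewriting $\im(T_1)=\im(T_1\circ T_1^*)$ and $\im(T_2^*)=\im(T_2^*\circ T_2)$ gives exactly \eqref{pezzi}.

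To finish I would check the orthogonality, closedness and coincidence-of-graph-products claims. On the three summands $\Delta_T$ acts respectively as $0$, as $T_1T_1^*$ (because $T_2s_1=0$), and as $T_2^*T_2$ (because $T_1^*s_2=0$), hence sends them into $\{0\}$, $\im(T_1)$, $\im(T_2^*)$; since these targets are mutually $H_2$-orthogonal and so are the summands themselves, every mixed graph inner product $\langle\cdot,\cdot\rangle_{H_2}+\langle\Delta_T\cdot,\Delta_T\cdot\rangle_{H_2}$ vanishes, giving orthogonality with respect to the graph product of $\Delta_T$. Closedness of each summand in $(\mathcal D(\Delta_T),\|\cdot\|_{\Delta_T})$ then follows automatically: $\mathcal D(\Delta_T)$ is complete because $\Delta_T$, being the self-adjoint Laplacian of the Hilbert complex $(H_1,H_2,H_3;T_1,T_2)$ (note $T_2\circ T_1=0$), is closed, cf.\ \cite{BL}, and an orthogonal algebraic direct summand of a Hilbert space is closed. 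The coincidence of graph products is the same computation once more: on $\mathcal D(T_1\circ T_1^*)\cap\im(T_1\circ T_1^*)$ one has $\Delta_Ts=T_1T_1^*s$ pointwise, so $\langle s,t\rangle_{\Delta_T}=\langle s,t\rangle_{H_2}+\langle T_1T_1^*s,T_1T_1^*t\rangle_{H_2}$ equals the $T_1\circ T_1^*$-graph product there, and symmetrically on $\mathcal D(T_2^*\circ T_2)\cap\im(T_2^*\circ T_2)$.

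I do not expect a genuine obstacle here; the argument is essentially bookkeeping with the Hodge decomposition. The one place to be careful is the ``dual'' inclusion $\im(T_2^*)\subset\ker(T_1^*)$ (the orthogonal-complement translation of $\im(T_1)\subset\ker(T_2)$), which is what makes the three-fold splitting of $H_2$ genuinely orthogonal, together with the observation that $T_2$ annihilates the $\mathcal H\oplus\im(T_1)$-part of any $s\in\mathcal D(\Delta_T)$ while $T_1^*$ annihilates its $\mathcal H\oplus\im(T_2^*)$-part — this is exactly what forces each component of $s$ into the asserted domain.
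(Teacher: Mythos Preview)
Your proof is correct and follows essentially the same route as the paper's: both start from the closed-range Hodge decomposition $H_2=\mathcal H\oplus\im(T_1)\oplus\im(T_2^*)$, invoke Prop.~\ref{acacio} to rewrite $\im(T_1)=\im(T_1T_1^*)$ and $\im(T_2^*)=\im(T_2^*T_2)$, split an arbitrary $s\in\mathcal D(\Delta_T)$ along this decomposition, and then verify that each component lands in the asserted domain. Your tracking of the components is slightly more direct than the paper's (you use $s_0,s_1\in\ker(T_2)$ to read off $T_2s_2=T_2s$ immediately, whereas the paper first subtracts the harmonic part and then uses $\mathcal D(\Delta_T)=\mathcal D(T_1T_1^*)\cap\mathcal D(T_2^*T_2)$), but the content is the same; the orthogonality, closedness, and graph-product statements are argued identically.
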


\begin{proof}
The inclusions in \eqref{inclusion} follow immediately by the definition of $\mathcal{D}(\Delta_T)$ and by the fact that $T_2\circ T_1=0$ and $T^*_1\circ T^*_2=0$.  Moreover it is an easy check to verify that the spaces on the right hand side of \eqref{pezzi} are orthogonal to each other with respect to the graph product of $\Delta_T$. Therefore the right hand side of \eqref{pezzi} is contained in $\mathcal{D}(\Delta_T)$. In order to complete the proof we have to prove now the opposite inclusion. To this aim consider the orthogonal decomposition of $H_2$ given by $$H_2=\left(\ker(T^*_1)\cap \ker(T_2)\right)\oplus  \im(T_1)\oplus \im(T_2^*).$$
By the fact that $\im(T_n)$ is closed in $H_{n+1}$, $n=1,2$, we have that $\im(T^*_2\circ T_2)=\im(T^*_2)$ and that $\im(T_1)=\im(T_1\circ T^*_1)$, see Prop. \ref{acacio}. Therefore we can replace the above decomposition with 
\begin{equation}
\label{pezzi2}
H_2=\left(\ker(T^*_1)\cap \ker(T_2)\right)\oplus  \im(T_1\circ T^*_1)\oplus \im(T_2^*\circ T_2).
\end{equation}
Let now $s\in \mathcal{D}(\Delta_T)$. Then, according to \eqref{pezzi2}, we have $s=s_1+s_2+s_3$ with respectively $s_1\in \ker(T^*_1)\cap \ker(T_2)$, $s_2\in \im(T_1\circ T^*_1)$ and $s_3\in \im(T_2^*\circ T_2)$. By the fact that $s_1\in \ker(\Delta_T)$ we have $s_2+s_3\in \mathcal{D}(\Delta_T)$ that is $s_2+s_3\in \mathcal{D}(T^*_2\circ T_2)\cap\mathcal{D}(T_1\circ T^*_1)$. On the other hand $s_2\in \mathcal{D}(T^*_2\circ T_2)$ because $\im(T_1\circ T^*_1)\subset \ker(T^*_2\circ T_2)$ and $s_3\in \mathcal{D}(T_1\circ T^*_1)$ because $\im(T^*_2\circ T_2)\subset \ker(T_1\circ T^*_1)$. Hence this leads us to the conclusion that $s_2\in \im(T_1\circ T^*_1)\cap\mathcal{D}(T^*_2\circ T_2)\cap\mathcal{D}(T_1\circ T^*_1)$ and that 
$s_3\in \im(T_2^*\circ T_2)\cap\mathcal{D}(T^*_2\circ T_2)\cap\mathcal{D}(T_1\circ T^*_1)$,  that is, $$s_2\in \im(T_1\circ T^*_1)\cap\mathcal{D}(T_1\circ T^*_1)\ \text{and}\ s_3\in \im(T_2^*\circ T_2)\cap\mathcal{D}(T^*_2\circ T_2).$$ This completes the proof of \eqref{pezzi}. Now, by the fact that   \eqref{pezzi} is an orthogonal decomposition with respect to the graph product of $\Delta_T$, we can  conclude  that every addend on the right hand side of \eqref{pezzi} is a closed subspace of $\mathcal{D}(\Delta_T)$ with respect to its graph norm. Finally it is again an immediate check to verify that on $\left(\mathcal{D}(T_1\circ T^*_1)\cap \im(T_1\circ T^*_1)\right)$ the graph product of $\Delta_T$ coincides with the graph product of $T_1\circ T^*_1$ and that  analogously on $\left(\mathcal{D}(T_2^*\circ T_2)\cap \im(T^*_2\circ T_2)\right)$ the graph product of $\Delta_T$ coincides with the graph product of $T^*_2\circ T_2$.
\end{proof}

\begin{cor}
\label{calanchi}
In the setting of Prop. \ref{coccadezio}. The operator $\Delta_T:H_2\rightarrow H_2$ has closed range. Moreover the following properties are equivalent:
\begin{enumerate}
\item $G_{\Delta_T}:H_2\rightarrow H_2$ is a compact operator.
\item The inclusion $(\mathcal{D}(\Delta_T)\cap \im(\Delta_T))\hookrightarrow H_2$ is a compact operator where $\mathcal{D}(\Delta_T)\cap \im(\Delta_T)$ is endowed with the corresponding graph norm.
\item The inclusions $(\mathcal{D}(T_1^*\circ T_1)\cap \im(T^*_1\circ T_1))\hookrightarrow H_1$   and   $(\mathcal{D}(T_2\circ T_2^*)\cap \im(T_2\circ T_2^*))\hookrightarrow H_3$ are both compact operators where $\mathcal{D}(T_1^*\circ T_1)\cap \im(T^*_1\circ T_1)$ and $\mathcal{D}(T_2\circ T_2^*)\cap \im(T_2\circ T_2^*)$ are endowed with the corresponding graph norms.
\item $G_{T_1^*\circ T_1}:H_1\rightarrow H_1$ and $G_{T_2\circ T_2^*}:H_3\rightarrow H_3$  are both compact operators.
\end{enumerate}
\end{cor}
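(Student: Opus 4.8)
The plan is to derive the entire statement from the three preceding propositions of this section --- Prop. \ref{acacio}, Prop. \ref{corniola}, Prop. \ref{coccadezio} --- together with the abstract characterization of compactness of a Green operator recalled just before Prop. \ref{acacio} (namely: for a closed densely defined operator with closed range, $G_T$ is compact iff $\mathcal{D}(T)\cap\im(T^*)\hookrightarrow H$ is compact for the graph norm of $T$). No new analytic input is needed; the work is assembling these facts correctly and keeping track of which Hilbert space and which graph norm one is using.

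First I would establish that $\Delta_T$ has closed range. By Prop. \ref{acacio} applied to $T_1$ and to $T_2$ we have $\im(T_1\circ T_1^*)=\im(T_1)$ and $\im(T_2^*\circ T_2)=\im(T_2^*)$, both closed; since $\im(T_1)\subseteq\ker(T_2)$ and $\im(T_2^*)\subseteq\ker(T_2)^{\perp}$, these two closed subspaces are orthogonal, so their sum is closed. On the other hand, Prop. \ref{coccadezio} shows that $\Delta_T$ acts as $T_1\circ T_1^*$ on the summand $\mathcal{D}(T_1\circ T_1^*)\cap\im(T_1\circ T_1^*)$ of \eqref{pezzi} and as $T_2^*\circ T_2$ on the summand $\mathcal{D}(T_2^*\circ T_2)\cap\im(T_2^*\circ T_2)$, while it vanishes on $\ker(T_1^*)\cap\ker(T_2)$; combined with the ambient orthogonal decomposition $H_2=(\ker(T_1^*)\cap\ker(T_2))\oplus\im(T_1)\oplus\im(T_2^*)$ this forces $\im(\Delta_T)=\im(T_1\circ T_1^*)\oplus\im(T_2^*\circ T_2)=\im(T_1)\oplus\im(T_2^*)$, which was just shown to be closed.

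Next, since $\Delta_T$ is self-adjoint with closed range, the abstract fact (with $T=T^*=\Delta_T$, so $\im(T^*)=\im(\Delta_T)$) gives at once the equivalence (1)$\Leftrightarrow$(2). The same abstract fact applied to the self-adjoint non-negative closed-range operators $T_1^*\circ T_1$ on $H_1$ (closed range by Prop. \ref{acacio} applied to $T_1$) and $T_2\circ T_2^*$ on $H_3$ (closed range by Prop. \ref{acacio} applied to $T_2^*$, whose double adjoint is $T_2$) gives (3)$\Leftrightarrow$(4). It remains to prove (2)$\Leftrightarrow$(3). Intersecting the orthogonal decomposition \eqref{pezzi} with $\im(\Delta_T)=\im(T_1\circ T_1^*)\oplus\im(T_2^*\circ T_2)$ kills the harmonic summand and yields, orthogonally both in $H_2$ and in the graph norm of $\Delta_T$,
\[
\mathcal{D}(\Delta_T)\cap\im(\Delta_T)=\bigl(\mathcal{D}(T_1\circ T_1^*)\cap\im(T_1\circ T_1^*)\bigr)\oplus\bigl(\mathcal{D}(T_2^*\circ T_2)\cap\im(T_2^*\circ T_2)\bigr),
\]
and on each summand the graph norm of $\Delta_T$ coincides with that of $T_1\circ T_1^*$, resp. of $T_2^*\circ T_2$, again by Prop. \ref{coccadezio}. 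Since the inclusion of an orthogonal (in the graph norm) direct sum of two Hilbert spaces into a common Hilbert space is compact iff the inclusion of each summand is --- one direction by extracting $H_2$-convergent subsequences componentwise, the other by restricting to a graph-closed subspace --- condition (2) is equivalent to the conjunction of the compactness of $\mathcal{D}(T_1\circ T_1^*)\cap\im(T_1\circ T_1^*)\hookrightarrow H_2$ and of $\mathcal{D}(T_2^*\circ T_2)\cap\im(T_2^*\circ T_2)\hookrightarrow H_2$. Finally, Prop. \ref{corniola} applied to $T_1$ and (for the converse) to $T_1^*$ shows the first of these is equivalent to compactness of $\mathcal{D}(T_1^*\circ T_1)\cap\im(T_1^*\circ T_1)\hookrightarrow H_1$, and applied to $T_2$ and to $T_2^*$ shows the second is equivalent to compactness of $\mathcal{D}(T_2\circ T_2^*)\cap\im(T_2\circ T_2^*)\hookrightarrow H_3$; this is exactly (3). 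The only step that is not pure bookkeeping is the orthogonal-direct-sum compactness argument, and even that is routine --- so I expect no real obstacle, just the need to be careful about domains, ambient spaces, and graph norms throughout.
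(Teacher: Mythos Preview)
Your proof is correct and follows essentially the same route as the paper's own argument: closedness of $\im(\Delta_T)$ via Prop.~\ref{acacio} and the decomposition \eqref{pezzi}, the equivalences (1)$\Leftrightarrow$(2) and (3)$\Leftrightarrow$(4) via the abstract Green-operator characterization, and (2)$\Leftrightarrow$(3) via Prop.~\ref{coccadezio} combined with Prop.~\ref{corniola} applied in both directions. The paper compresses the last step into a single sentence, whereas you spell out the orthogonal splitting of $\mathcal{D}(\Delta_T)\cap\im(\Delta_T)$ and the use of Prop.~\ref{corniola} for both $T_i$ and $T_i^*$; this is added clarity, not a different method.
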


\begin{proof}
Consider the  operator $\Delta_T:H_2\rightarrow H_2$. We have the following chain of inclusions that follows immediately by \eqref{pezzi} and by the fact that $\im(T_n)$ is closed for $n=1,2$:
$$\im(\Delta_T)\subset \overline{\im(\Delta_T)}=\overline{\im(T_1)}\oplus \overline{\im(T_2^*)}=\im(T_1)\oplus \im(T_2^*)=\im(T_1\circ T^*_1)\oplus \im(T_2^*\circ T_2)=\im(\Delta_T).$$ Therefore $\im(\Delta_T)$ is closed.
Concerning the second part of the corollary  the equivalence between  the first two statements  follows by the elementary properties of the Green operator that we have recalled previously. For the same reasons it is clear the equivalence between the third and the forth statement. Finally  the equivalence between the second and the third statement follows immediately by Prop. \ref{corniola} and Prop. \ref{coccadezio}.
\end{proof}

We recall also the following property.

\begin{prop}
\label{same}
Let $T:H\rightarrow K$ be an unbounded, closed and densely defined operator between two separable Hilbert spaces. Assume that both $T^*\circ T:H\rightarrow H$ and $T\circ T^*:K\rightarrow K$ have discrete spectrum. Given $\lambda>0$ let us define $H_{\lambda}:=\{s\in \mathcal{D}(T^*\circ T): T^*(Ts)=\lambda s\}$ and analogously $K_{\lambda}:=\{u\in \mathcal{D}(T\circ T^*): T(T^*u)=\lambda u\}$. 
 Then, for every positive $\lambda$, we have $T(H_{\lambda})=K_{\lambda}$ and 
$$T|_{H_{\lambda}}:H_{\lambda}\rightarrow K_{\lambda}$$ is an isomorphism. 
\end{prop}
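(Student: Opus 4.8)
The plan is to exploit the standard Green-operator machinery recalled just before the statement, together with Proposition \ref{acacio}. Since both $T^*\circ T$ and $T\circ T^*$ have discrete spectrum, they are in particular self-adjoint non-negative operators with closed range; hence $\im(T)$ and $\im(T^*)$ are closed, and we have the orthogonal decompositions $H=\ker(T)\oplus\overline{\im(T^*)}=\ker(T^*\circ T)\oplus\im(T^*)$ and $K=\ker(T^*)\oplus\im(T)$. Fix $\lambda>0$. First I would check that $T$ maps $H_\lambda$ into $K_\lambda$: if $s\in H_\lambda$, then $s\in\mathcal D(T)$ and $Ts\in\mathcal D(T^*)$, and moreover $T^*(T(Ts))=T(T^*(Ts))=T(\lambda s)=\lambda Ts$, using that $T(T^*Ts)=T(\lambda s)$ makes sense because $s\in\mathcal D(T^*\circ T)$; so $Ts\in\mathcal D(T\circ T^*)$ with $T(T^*(Ts))=\lambda Ts$, i.e. $Ts\in K_\lambda$. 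By the symmetric argument $T^*$ maps $K_\lambda$ into $H_\lambda$.

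Next I would show injectivity of $T|_{H_\lambda}$. If $s\in H_\lambda$ and $Ts=0$, then $\lambda s=T^*(Ts)=0$, so $s=0$ since $\lambda>0$; likewise $T^*|_{K_\lambda}$ is injective. Then I would show $T|_{H_\lambda}$ is onto $K_\lambda$: given $u\in K_\lambda$, set $s:=\lambda^{-1}T^*u$. Since $u\in K_\lambda\subset\mathcal D(T\circ T^*)$ we have $T^*u\in\mathcal D(T)$, so $s\in\mathcal D(T)$ and $Ts=\lambda^{-1}T(T^*u)=\lambda^{-1}\lambda u=u$; moreover $T^*(Ts)=T^*u=\lambda s$, so $s\in H_\lambda$ and $T|_{H_\lambda}(s)=u$. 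Finally, $T|_{H_\lambda}$ is bounded (it is the restriction of a closed operator to a subspace on which it is defined, and on $H_\lambda$ one has $\|Ts\|^2=\langle T^*Ts,s\rangle=\lambda\|s\|^2$, so it is even a bijective bounded operator with bounded inverse $\lambda^{-1}T^*$), hence an isomorphism of Hilbert spaces.

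The only genuine subtlety — and where I would be most careful — is the bookkeeping of domains: one must make sure that the compositions written above are all legitimate, i.e. that $s\in H_\lambda$ really does force $Ts\in\mathcal D(T^*)$ and $Ts\in\mathcal D(T\circ T^*)$, and conversely that $u\in K_\lambda$ forces $T^*u\in\mathcal D(T)$ and $\lambda^{-1}T^*u\in\mathcal D(T^*\circ T)$. All of this is immediate from the definitions $\mathcal D(T^*\circ T)=\{s\in\mathcal D(T):Ts\in\mathcal D(T^*)\}$ and $\mathcal D(T\circ T^*)=\{u\in\mathcal D(T^*):T^*u\in\mathcal D(T)\}$ together with the eigenvalue equations, so there is no real obstacle; the discreteness hypothesis is used only to guarantee that $H_\lambda$ and $K_\lambda$ are the genuine eigenspaces (finite-dimensional, spanning) and, via closed range, that the decompositions above hold — though in fact the statement as written only needs $\lambda>0$ and the eigenvalue equations, not full discreteness.
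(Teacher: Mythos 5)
Your argument is correct, and its first two thirds coincide with the paper's: you show $T(H_{\lambda})\subset K_{\lambda}$ and $T^{*}(K_{\lambda})\subset H_{\lambda}$ with the same domain bookkeeping, and you get injectivity of both restrictions from $\lambda>0$. Where you diverge is at surjectivity. The paper invokes the discreteness hypothesis to conclude that $H_{\lambda}$ and $K_{\lambda}$ are finite dimensional, and then deduces $\dim(H_{\lambda})=\dim(K_{\lambda})$ from the two injections, so that the injective map $T|_{H_{\lambda}}$ must be onto. You instead exhibit an explicit preimage: for $u\in K_{\lambda}$ the element $s=\lambda^{-1}T^{*}u$ lies in $H_{\lambda}$ and satisfies $Ts=u$, so $\lambda^{-1}T^{*}|_{K_{\lambda}}$ is a two-sided inverse. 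Your route is slightly stronger and cleaner: it shows, as you observe, that the conclusion needs only $\lambda>0$ and the eigenvalue equations, not discreteness of the spectra (which in the paper serves only to guarantee finite dimensionality), and it also yields the quantitative identity $\|Ts\|^{2}=\lambda\|s\|^{2}$ on $H_{\lambda}$, so that $\lambda^{-1/2}\,T|_{H_{\lambda}}$ is in fact an isometry. One cosmetic remark: your opening appeal to closed range and the Green-operator decompositions is never used in the argument and could be dropped; everything follows from the definitions of $\mathcal{D}(T^{*}\circ T)$ and $\mathcal{D}(T\circ T^{*})$ together with the eigenvalue equations, exactly as you say in your final paragraph.
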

\begin{proof}
Let $s\in H_{\lambda}$. Since $H_{\lambda}\subset \mathcal{D}(T^*\circ T)$ we have $T(s)\in \mathcal{D}(T^*)$. Moreover $T^*(Ts)=\lambda s$ and therefore $Ts\in \mathcal{D}(T\circ T^*)$. Finally $T(T^*(Ts))=\lambda Ts$ and hence we can conclude that $Ts$ lies in $K_{\lambda}$. So we proved that $T(H_{\lambda})\subset K_{\lambda}$. Moreover, by the fact that $\lambda>0$, we get immediately that $T|_{H_{\lambda}}:H_{\lambda}\rightarrow K_{\lambda}$ is injective. Arguing in the same way with $T^*$ and $K_{\lambda}$ we have that $T^*(K_{\lambda})\subset H_{\lambda}$ and  that $T^*|_{K_{\lambda}}:K_{\lambda}\rightarrow H_{\lambda}$ is injective. Finally, by the fact that $T^*\circ T:H\rightarrow H$ and $T\circ T^*:K\rightarrow K$ have discrete spectrum, we know that $H_{\lambda}$ and  $K_{\lambda}$ are finite dimensional vector spaces. Using the fact that $T|_{H_{\lambda}}:H_{\lambda}\rightarrow K_{\lambda}$ is injective and that $T^*|_{K_{\lambda}}:K_{\lambda}\rightarrow H_{\lambda}$ is injective we  therefore get that  $\dim(H_{\lambda})=\dim(K_{\lambda})$.
Ultimately  this allows us to conclude that $T|_{H_{\lambda}}:H_{\lambda}\rightarrow K_{\lambda}$ is an isomorphism as desired.
\end{proof}

Finally we conclude this section with the following proposition. For the definition and the main properties of the Friedrich extension  we refer to \cite{FraBei} and to the bibliography cited there.  
\begin{prop}
\label{fall}
Let $E,F$ be two vector bundles over an open and  possibly incomplete  Riemannian manifold $(M,g).$ Let $\rho$ and $\tau$ be two metrics on $E$ and $F$ respectively. Let $D:C^{\infty}_{c}(M,E)\rightarrow C^{\infty}_{c}(M,F)$ be an unbounded and densely defined differential operator. Let $D^t:C^{\infty}_{c}(M,F)\rightarrow C^{\infty}_{c}(M,E)$ be its formal adjoint. Then for  $D^t\circ D:L^{2}(M,E,g)\rightarrow L^{2}(M,E,g)$ we have: 
 $$(D^t\circ D)^{\mathcal{F}}=D^t_{\max}\circ D_{\min}$$ where $(D^t\circ D)^{\mathcal{F}}$ is the Friedrich extension of $D^t\circ D$.
\end{prop}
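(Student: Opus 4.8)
The plan is to unwind the variational (quadratic form) definition of the Friedrich extension and to match it, term by term, with the definition of $D^t_{\max}\circ D_{\min}$. As a preliminary, note that $D^t\circ D:C^{\infty}_c(M,E)\rightarrow C^{\infty}_c(M,E)$ is densely defined, symmetric and non-negative as an operator on $L^2(M,E,g)$, since for $u,v\in C^{\infty}_c(M,E)$ one has $\langle D^t\circ D\, u,v\rangle_{L^2(M,E,g)}=\langle Du,Dv\rangle_{L^2(M,F,g)}$; hence its Friedrich extension $(D^t\circ D)^{\mathcal F}$ is defined. Following \cite{FraBei}, the closed non-negative quadratic form generating $(D^t\circ D)^{\mathcal F}$ is the closure of $Q(u,v):=\langle Du,Dv\rangle_{L^2(M,F,g)}$, $u,v\in C^{\infty}_c(M,E)$, with respect to the form norm $u\mapsto\bigl(\|u\|_{L^2(M,E,g)}^2+Q(u,u)\bigr)^{1/2}$.

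First I would observe that this form norm is exactly the graph norm of $D$ restricted to $C^{\infty}_c(M,E)$, so by the very definition \eqref{spinaci} of the minimal extension the form domain of $(D^t\circ D)^{\mathcal F}$ is $\mathcal D(D_{\min})$, and on it the closed form equals $(u,v)\mapsto\langle D_{\min}u,D_{\min}v\rangle_{L^2(M,F,g)}$. Next I would recall the standard characterization of the self-adjoint operator attached to a closed form: $u$ belongs to $\mathcal D\bigl((D^t\circ D)^{\mathcal F}\bigr)$ if and only if $u$ lies in the form domain $\mathcal D(D_{\min})$ and there exists $w\in L^2(M,E,g)$ with $\langle D_{\min}u,D_{\min}v\rangle_{L^2(M,F,g)}=\langle w,v\rangle_{L^2(M,E,g)}$ for every $v$ in the form domain, in which case $(D^t\circ D)^{\mathcal F}u=w$. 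Since $C^{\infty}_c(M,E)$ is by construction a core for $D_{\min}$, this identity for all $v\in\mathcal D(D_{\min})$ is equivalent to the same identity for all $v\in C^{\infty}_c(M,E)$, i.e. to $\langle D_{\min}u,Dv\rangle_{L^2(M,F,g)}=\langle w,v\rangle_{L^2(M,E,g)}$ for all $v\in C^{\infty}_c(M,E)$.

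Finally, I would read this last condition through the definition \eqref{ner} of the maximal extension applied to $D^t$, whose formal adjoint is $D$: it says precisely that $D_{\min}u\in\mathcal D(D^t_{\max})$ and $D^t_{\max}(D_{\min}u)=w$ — equivalently, by \eqref{emendare}, that $D_{\min}u\in\mathcal D\bigl((D_{\min})^*\bigr)$ with $(D_{\min})^*(D_{\min}u)=w$. Therefore $u\in\mathcal D\bigl((D^t\circ D)^{\mathcal F}\bigr)$ if and only if $u\in\mathcal D(D_{\min})$ and $D_{\min}u\in\mathcal D(D^t_{\max})$, i.e. if and only if $u\in\mathcal D(D^t_{\max}\circ D_{\min})$, and in that case $(D^t\circ D)^{\mathcal F}u=D^t_{\max}(D_{\min}u)$. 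This is exactly the asserted equality $(D^t\circ D)^{\mathcal F}=D^t_{\max}\circ D_{\min}$.

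The argument is essentially bookkeeping with the definitions already collected in Section 1; the only point that requires genuine care is the translation between the form domain and the operator domain of the Friedrich extension — in particular invoking that $C^{\infty}_c(M,E)$ is a form core so that testing against $C^{\infty}_c(M,E)$ suffices — together with the adjoint identity $(D_{\min})^*=D^t_{\max}$. I expect no analytic obstacle beyond this, since no ellipticity or completeness hypothesis is used.
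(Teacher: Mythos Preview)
Your argument is correct and is precisely the unwinding of the definition of the Friedrich extension that the paper has in mind; the paper itself gives no details beyond declaring the result immediate from that definition and pointing to \cite{BLE}, p.~447. In other words, you have written out in full the bookkeeping that the paper omits, and there is nothing to add or correct.
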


\begin{proof}
This follows immediately by the definition of Friedrich extension. See  for instance \cite{BLE}, pag. 447.
\end{proof}

\section{Parabolic open subsets and Hodge-Dolbeault operator}
We start with the following definition.
\begin{defi}
\label{para}
Let $(M,g)$ be an open and possibly incomplete Riemannian manifold. Then $(M,g)$ is said parabolic if there exists a sequence of Lipschitz functions with compact support $\{\phi_n\}_{n\in \mathbb{N}}$ such that 
\begin{enumerate}
\item $0\leq \phi_n\leq 1$ for all $n$
\item $\lim\phi_n=1$ pointwise a.e. as $n\rightarrow \infty$
\item $\lim\|d_{\min} \phi_n\|_{L^2\Omega^1(M,g)}=0$ as $n\rightarrow \infty$.
\end{enumerate}
\end{defi}
We point out that as $\{\phi_n\}$ is a sequence of Lipschitz functions with compact support then $\phi_n\in \mathcal{D}(d_{\min})$ for each $n$ so that the third point in the previous definition makes sense, see \cite{FraBei}. Moreover the fact that $\phi_n\in \mathcal{D}(d_{\min})$  implies  that  $\phi_n\in \mathcal{D}(\overline{\partial}_{\min})$ and by the third point of Def. \ref{para} we can easily deduce  that  $\|\overline{\partial}_{\min}\phi_n\|_{L^2\Omega^{0,1}(M,g)}=0$ as $n\rightarrow \infty$.
Consider now  a compact complex Hermitian manifold $(M,g)$  of complex dimension $m$  and let $E$ be a holomorphic vector bundle over $M$. With $\Omega^{p,q}(M,E)$, $\Omega^{p,q}_c(M,E)$, we mean respectively the space of smooth sections, smooth sections with compact support, of $\Lambda^{p,q}(M)\otimes E$. With $\overline{\partial}_{E,p,q}:\Omega^{p,q}(M,E)\rightarrow \Omega^{p,q+1}(M,E)$ we label the corresponding Dolbeault operator. If $E$ is endowed with a Hermitian metric $\rho$ then with $g_{\rho}$ we label the natural Hermitian metric   induced on $\Lambda^{p,q}(M)\otimes E$. With $L^2\Omega^{p,q}(M,E,g)$ we mean the Hilbert space of $L^2$-$(p,q)$-forms with values in $E$. Analogously to \eqref{twoself} we have the following notations
$\Omega^{p,\bullet}_c(M,E):=\bigoplus_{q=0}^m\Omega_c^{p,q}(M,E)$ and $L^2\Omega^{p,\bullet}(M,E,g)=\bigoplus_{q=0}^mL^2\Omega^{p,q}(M,E,g)$. With $\overline{\partial}_{E,p}+\overline{\partial}_{E,p}^t:\Omega_c^{p,\bullet}(M,E)\rightarrow \Omega_c^{p,\bullet}(M,E)$ we mean the Hodge-Dolbeault operator acting on $\Omega_c^{p,\bullet}(M,E)$. We are now in the position to state the next proposition.

\begin{prop}
\label{gaur}
Let $(M,g)$ be a compact complex Hermitian manifold of complex dimension $m$. Let $(E,\rho)$ be a Hermitian holomorphic vector bundle on $M$. Let $A\subset M$ be an open and dense subset of $M$ such that $(A,g|_A)$ is parabolic. Then the Hodge-Dolbeault operator 
\begin{equation}
\label{pluf}
\overline{\partial}_{E,p}+\overline{\partial}_{E,p}^t:L^2\Omega^{p,\bullet}(A,E|_A,g|_A)\rightarrow L^2\Omega^{p,\bullet}(A,E|_A,g|_A)
\end{equation}
with  domain given by $\Omega^{p,\bullet}_c(A,E|_A)$ is essentially self-adjoint for each $p=0,...,m$. Moreover the unique closed extension of \eqref{pluf} coincides with the operator
\begin{equation}
\label{pluff}
\overline{\partial}_{E,p}+\overline{\partial}_{E,p}^t:L^2\Omega^{p,\bullet}(M,E,g)\rightarrow L^2\Omega^{p,\bullet}(M,E,g)
\end{equation}
where \eqref{pluff} is the unique closed extension of $\overline{\partial}_{E,p}+\overline{\partial}_{E,p}^t:\Omega^{p,\bullet}_c(M,E,g)\rightarrow \Omega^{p,\bullet}_c(M,E,g)$ viewed as an unbounded and densly defined operator acting on $L^2\Omega^{p,\bullet}(M,E,g)$.

\end{prop}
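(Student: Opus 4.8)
The plan is to deduce the statement from the classical essential self-adjointness of $D:=\overline{\partial}_{E,p}+\overline{\partial}_{E,p}^t$ on the closed manifold $M$, bridging the gap between $A$ and $M$ with the parabolic cut-offs furnished by Def. \ref{para}. Write $D^A_{\min}$, $D^A_{\max}$ for the minimal and maximal extensions of $D$ acting on $L^2\Omega^{p,\bullet}(A,E|_A,g|_A)$ with core $\Omega^{p,\bullet}_c(A,E|_A)$, and $\overline{D}_M$ for the operator \eqref{pluff}. Since $M\setminus A$ has measure zero we identify $L^2\Omega^{p,\bullet}(A,E|_A,g|_A)$ with $L^2\Omega^{p,\bullet}(M,E,g)$, so all these operators act on one Hilbert space; moreover $D$ on $M$ is a first order, formally self-adjoint, elliptic differential operator on a closed manifold, hence essentially self-adjoint, so $\overline{D}_M$ is its only self-adjoint extension and $\Omega^{p,\bullet}(M,E)$ is a core for it.

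The crux is the inclusion $\overline{D}_M\subseteq D^A_{\min}$, which I would prove by a two-step approximation. Fix $u\in\mathcal{D}(\overline{D}_M)$. First approximate $u$ in the graph norm by smooth forms $\psi_k\in\Omega^{p,\bullet}(M,E)$. Next, for a fixed $k$, consider $\phi_n\psi_k$, which is Lipschitz and supported in a relatively compact subset of $A$. One has $\phi_n\psi_k\to\psi_k$ in $L^2$ by dominated convergence, and $D(\phi_n\psi_k)=\phi_n\,D\psi_k+[D,\phi_n]\psi_k$, where the commutator $[D,\phi_n]$ is pointwise multiplication by a bundle endomorphism of norm $\le c\,|d\phi_n|$. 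Because $\psi_k$ is smooth on the compact $M$ it is bounded, so
\[
\|[D,\phi_n]\psi_k\|_{L^2}\le c\,\|\psi_k\|_{L^\infty}\,\|d_{\min}\phi_n\|_{L^2\Omega^1(A,g|_A)}\longrightarrow 0\quad\text{as }n\to\infty,
\]
while $\phi_n\,D\psi_k\to D\psi_k$ in $L^2$ by dominated convergence; hence $\phi_n\psi_k\to\psi_k$ in the graph norm of $D$. By Prop. \ref{carmina} each $\phi_n\psi_k$ lies in $\mathcal{D}(D^A_{\min})$, being an element of $\mathcal{D}(D^A_{\max})$ supported in a relatively compact open subset of $A$. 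A diagonal choice $\phi_{n_k}\psi_k$ then converges to $u$ in graph norm, and since $D^A_{\min}$ is closed we conclude $u\in\mathcal{D}(D^A_{\min})$ with $D^A_{\min}u=\overline{D}_Mu$. This proves $\overline{D}_M\subseteq D^A_{\min}$.

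Finally I would upgrade this to the full chain of equalities. Since $\overline{D}_M$ is self-adjoint it is maximal symmetric; as $D^A_{\min}$ is symmetric and contains $\overline{D}_M$, we get $\overline{D}_M=D^A_{\min}$. Then $D^A_{\min}$ is self-adjoint, whence $D^A_{\max}=(D^A_{\min})^*=D^A_{\min}$. Thus $D$ with domain $\Omega^{p,\bullet}_c(A,E|_A)$ is essentially self-adjoint for every $p$, and its unique closed extension is $\overline{D}_M$, i.e. the operator \eqref{pluff}.

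The step I expect to be the main obstacle is the control of the commutator term $[D,\phi_n]\psi_k$ in the $L^2$ norm: this is precisely where parabolicity is used, and it is essential to reduce first to bounded (smooth) forms on the compact $M$, since for a generic $u\in L^2$ the integral $\int_A|d\phi_n|^2|u|^2\,\dvol_{g|_A}$ need not be controlled by $\|d_{\min}\phi_n\|_{L^2}^2$. A minor additional point, to be checked routinely, is that $\phi_n\psi_k$ indeed belongs to $\mathcal{D}(D^A_{\max})$ so that Prop. \ref{carmina} applies; this holds because $\phi_n\psi_k$ is Lipschitz with compact support inside $A$.
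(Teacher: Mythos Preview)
Your proposal is correct and follows essentially the same approach as the paper: both arguments identify the Hilbert spaces, reduce to showing that smooth forms on $M$ lie in $\mathcal{D}(D^A_{\min})$ via the parabolic cut-offs $\phi_n$ and Prop.~\ref{carmina}, control the commutator $[D,\phi_n]$ by $\|d_{\min}\phi_n\|_{L^2}\cdot\|\omega\|_{L^\infty}$, and then conclude $D^A_{\min}=D^A_{\max}=\overline{D}_M$ from self-adjointness of the latter. The only cosmetic difference is that you phrase the density step as a two-step diagonal approximation, whereas the paper first proves $\Omega^{p,\bullet}(M,E)\subset\mathcal{D}(D^A_{\min})$ and then invokes density (via Prop.~\ref{partiro}); the substance is identical.
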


\begin{proof}
 First we observe that since $M\setminus A$ has measure zero in $M$ we have an equality of Hilbert spaces $L^2\Omega^{p,\bullet}(M,E,g)=L^2\Omega^{p,\bullet}(A,E|_A,g|_A)$. Let us label by $\mathcal{D}(\overline{\partial}_{E,p} + \overline{\partial}^t_{E,p})$, $\mathcal{D}((\overline{\partial}_{E,p} + \overline{\partial}^t_{E,p})_{\min})$ and $\mathcal{D}((\overline{\partial}_{E,p} + \overline{\partial}^t_{E,p})_{\max})$ respectively  the domain of \eqref{pluff}, the minimal domain of \eqref{pluf} and the maximal domain of \eqref{pluf}. 
As a first step we want to show that $\mathcal{D}(\overline{\partial}_{E,p} + \overline{\partial}^t_{E,p})=\mathcal{D}((\overline{\partial}_{E,p} + \overline{\partial}^t_{E,p})_{\min})$.
Since the inclusion $\mathcal{D}(\overline{\partial}_p + \overline{\partial}^t_p)\supset \mathcal{D}((\overline{\partial}_p + \overline{\partial}^t_p)_{\min})$
is clear we are left to prove the other inclusion $\mathcal{D}(\overline{\partial}_{E,p} + \overline{\partial}^t_{E,p})\subset
\mathcal{D}((\overline{\partial}_{E,p} + \overline{\partial}^t_{E,p})_{\min})$. According to Prop. \eqref{partiro} we know that $\Omega^{p,\bullet}(M,E)=\Omega^{p,\bullet}(M,E)\cap \mathcal{D}(\overline{\partial}_{E,p} + \overline{\partial}^t_{E,p})$ is dense in $\mathcal{D}(\overline{\partial}_{E,p} + \overline{\partial}^t_{E,p})$ with respect to the corresponding graph norm. Therefore it is enough to prove that $\Omega^{p,\bullet}(M,E)\subset \mathcal{D}((\overline{\partial}_{E,p} + \overline{\partial}^t_{E,p})_{\min})$.
As we assumed that $(A,g|_A)$ is parabolic there exists a sequence $\{\phi_i\}_{i\in \mathbb{N}}$ that satisfies the properties of 
Def. \ref{para}. Let $\omega\in \Omega^{p,\bullet}(M,E)$. By the fact that $\phi_i$ is Lipschitz we easily get  that  $\phi_i\omega\in \mathcal{D}((\overline{\partial}_{E,p} + \overline{\partial}^t_{E,p})_{\max})$ and that 
\begin{equation}
\label{lip-max}
(\overline{\partial}_{E,p} + \overline{\partial}^t_{E,p})_{\max}(\phi_i\omega)=\phi_i(\overline{\partial}_{E,p} + \overline{\partial}^t_{E,p})\omega+(\overline{\partial}_{\max}\phi_i)\wedge \omega-{\rm Int}(\overline{\partial}_{\max}\phi_i)\omega
\end{equation}
where the operator ${\rm Int}(\overline{\partial}_{\max}\phi_i):L^2\Omega^{p,\bullet}(A,E|_A,g|A)\rightarrow L^2\Omega^{p,\bullet}(A,E|_A,g|_A)$ is the adjoint of the bounded operator $(\overline{\partial}_{\max}\phi_i)\wedge  :L^2\Omega^{p,\bullet}(A,E|_A,g|_A)\rightarrow L^2\Omega^{p,\bullet}(A,E|_A,g|_A)$ given by exterior multiplication with $(\overline{\partial}_{\max}\phi_i)$.\\ That $(\overline{\partial}_{\max}\phi_i)\wedge  :L^2\Omega^{p,\bullet}(A,E|_A,g|_A)\rightarrow L^2\Omega^{p,\bullet}(A,E|_A,g|_A)$ is a bounded operator follows  by the fact that $\overline{\partial}_{\max}\phi_i\in L^{\infty}\Omega^{0,1}(A,g|_A)$ which is in turn a consequence of the fact that $\phi_i$ is Lipschitz.
Using now that $\phi_i$ has compact support in $A$ we obtain by Prop. \ref{carmina}  that 
$\phi_i\omega\in \mathcal{D}((\overline{\partial}_{E,p} + \overline{\partial}^t_{E,p})_{\min})$ and therefore, by \eqref{lip-max}, we have 
\begin{equation}
\label{lip}
(\overline{\partial}_{E,p} + \overline{\partial}^t_{E,p})_{\min}(\phi_i\omega)=\phi_i(\overline{\partial}_{E,p} + \overline{\partial}^t_{E,p})\omega+(\overline{\partial}_{\min}\phi_i)\wedge \omega-{\rm Int}(\overline{\partial}_{\min}\phi_i)\omega.
\end{equation}
Our aim now is  to show that when $i$ tends to $+\infty$ then $\phi_i\omega$ tends to $\omega$ in the graph norm of \eqref{pluff}. That $\phi_i \omega$ tends to $\omega$ in $L^2\Omega^{p,\bullet}(M,E,g)$ is a direct application of the Lebesgue dominated convergence theorem. Next we consider $(\overline{\partial}_{E,p} + \overline{\partial}^t_{E,p})_{\min}(\phi_i\omega)$; we want to show that this
 sequence converges to $(\overline{\partial}_{E,p} + \overline{\partial}^t_{E,p})(\omega)$ in $L^2\Omega^{p,\bullet}(M,E,g)$. We use \eqref{lip}: looking at the three summands
 on the right hand side we easily see, using again the Lebesgue dominate convergence theorem, that $\phi_i(\overline{\partial}_{E,p} + \overline{\partial}^t_{E,p})\omega$ converges to 
 $(\overline{\partial}_{E,p} + \overline{\partial}^t_{E,p})\omega$
 whereas for the last two terms we have the following inequality : 
 \begin{equation}
 \label{inequality}
 \| (\overline{\partial}_{\min}\phi_i)\wedge \omega\|_{L^2\Omega^{p,\bullet}(A, E|_A,g|_A)}\,\leq 
 \|  \overline{\partial}_{\min}\phi_i\|_{L^2 \Omega^{0,1}(A,g|_A)}\,
 \| \omega\|_{L^\infty\Omega^{p,\bullet}(A,E|_A,g_{\rho}|_A)}
 \end{equation}
 which in turn implies 
 \begin{equation}
 \label{estimate}
 \| {\rm Int}(\overline{\partial}_{\min}\phi_i)\omega \|_{L^2\Omega^{p,\bullet}(A,E|_A,g|_A)}\,\leq 
 \|  \overline{\partial}_{\min}\phi_i\|_{L^2 \Omega^{0,1}(A,g|_A)}\,
 \| \omega\|_{L^\infty \Omega^{p,\bullet}(A,E|_A,g_{\rho}|_A)}.
 \end{equation}
 Using the last property in Def. \ref{para} we get immediately that the left hand sides in \eqref{inequality} and \eqref{estimate} tend to $0$ when $i$ tends to $+\infty$. Summarizing we have shown that $\mathcal{D}(\overline{\partial}_{E,p} + \overline{\partial}^t_{E,p})\subset
\mathcal{D}((\overline{\partial}_{E,p} + \overline{\partial}^t_{E,p})_{\min})$ and thus $\mathcal{D}(\overline{\partial}_{E,p} + \overline{\partial}^t_{E,p})=
\mathcal{D}((\overline{\partial}_{E,p} + \overline{\partial}^t_{E,p})_{\min})$. Therefore the minimal extension of \eqref{pluf} coincides with \eqref{pluff}. Now, using the fact that $\overline{\partial}_{E,p} + \overline{\partial}^t_{E,p}:L^2\Omega^{p,\bullet}(M,E,g)\rightarrow L^2\Omega^{p,\bullet}(M,E,g)$
is self-adjoint, we   get that $((\overline{\partial}_{E,p} + \overline{\partial}^t_{E,p})_{\min})^*=(\overline{\partial}_{E,p} + \overline{\partial}^t_{E,p})_{\min}$. On the other hand, see \eqref{emendare}, we have $((\overline{\partial}_{E,p} + \overline{\partial}^t_{E,p})_{\min})^*=(\overline{\partial}_{E,p} + \overline{\partial}^t_{E,p})_{\max}$. Therefore we are lead to the conclusion that $(\overline{\partial}_{E,p} + \overline{\partial}^t_{E,p})_{\max}=(\overline{\partial}_{E,p} + \overline{\partial}^t_{E,p})_{\min}$  as desired.
\end{proof}

We have now the following application of Prop. \ref{gaur}. 

\begin{prop}
\label{lit}
Let $(M,g)$, $(E,\rho)$ and $A$ be as in Prop. \ref{gaur}. Then the following three operators coincide:
\begin{align}
\label{ttt}
& \overline{\partial}_{E,p,q,\max}:L^2\Omega^{p,q}(A,E|_A,g|_A)\rightarrow L^2\Omega^{p,q+1}(A,E|_A,g|_A),\\
\label{bilo}
& \overline{\partial}_{E,p,q,\min}:L^2\Omega^{p,q}(A,E|_A,g|_A)\rightarrow L^2\Omega^{p,q+1}(A,E|_A,g|_A),\\
\label{ses}
& \overline{\partial}_{E,p,q}:L^2\Omega^{p,q}(M,E,g)\rightarrow L^2\Omega^{p,q+1}(M,E,g),
\end{align}
where \eqref{ses} is the unique closed extension of  $\overline{\partial}_{E,p,q}:\Omega^{p,q}(M,E)\rightarrow \Omega^{p,q+1}(M,E)$ viewed as an unbounded and densely defined operator acting between $L^2\Omega^{p,q}(M,E,g)$ and  $L^2\Omega^{p,q+1}(M,E,g).$
\end{prop}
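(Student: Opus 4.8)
The plan is to reduce Proposition \ref{lit} to the cutoff/parabolicity argument already carried out in the proof of Proposition \ref{gaur}, and then to descend from the rolled-up operator $D:=\overline{\partial}_{E,p}+\overline{\partial}^t_{E,p}$ to its single components by exploiting that $\overline{\partial}$ raises and $\overline{\partial}^t$ lowers the antiholomorphic degree.

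\emph{Step 1: the minimal extensions on $A$ agree with the closed extensions on $M$.} I would first show that for every bidegree $\overline{\partial}^A_{E,p,q,\min}=\overline{\partial}^M_{E,p,q}$ (the operator \eqref{ses}) and, likewise, $\overline{\partial}^{t,A}_{E,p,q,\min}=\overline{\partial}^{t,M}_{E,p,q}$. The inclusions $\overline{\partial}^A_{E,p,q,\min}\subseteq\overline{\partial}^M_{E,p,q}$ and $\overline{\partial}^{t,A}_{E,p,q,\min}\subseteq\overline{\partial}^{t,M}_{E,p,q}$ are immediate, since $\Omega^{p,q}_c(A,E)\subseteq\Omega^{p,q}(M,E)$, the relevant graph norms coincide on smooth forms, and on the compact manifold $M$ the ellipticity of $\Delta_{\overline{\partial},E,p,q}$ gives $\overline{\partial}^M_{E,p,q,\max}=\overline{\partial}^M_{E,p,q}=\overline{\partial}^M_{E,p,q,\min}$ (and similarly for $\overline{\partial}^t$). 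For the reverse inclusions I would invoke the proof of Proposition \ref{gaur}: for $\omega\in\Omega^{p,\bullet}(M,E)$ and the parabolic cutoffs $\{\phi_i\}$ one has $\phi_i\omega\to\omega$ in the graph norm of $D$. Taking $\omega\in\Omega^{p,q}(M,E)$ of pure bidegree, the two summands $\overline{\partial}_{E,p,q,\max}(\phi_i\omega)$ and $\overline{\partial}^t_{E,p,q-1,\max}(\phi_i\omega)$ of $D_{\max}(\phi_i\omega)$ lie in the mutually orthogonal bidegrees $(p,q+1)$ and $(p,q-1)$, so convergence of $D_{\max}(\phi_i\omega)$ forces convergence of each summand separately; after a harmless Friedrichs mollification of the Lipschitz, compactly-supported-in-$A$ forms $\phi_i\omega$ this yields $\Omega^{p,q}(M,E)\subseteq\mathcal{D}(\overline{\partial}^A_{E,p,q,\min})$ and $\Omega^{p,q}(M,E)\subseteq\mathcal{D}(\overline{\partial}^{t,A}_{E,p,q-1,\min})$, with the operators acting as $\overline{\partial}_{E,p,q}$ and $\overline{\partial}^t_{E,p,q-1}$. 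Since $\Omega^{p,q}(M,E)$ is dense in $\mathcal{D}(\overline{\partial}^M_{E,p,q})$ (respectively $\mathcal{D}(\overline{\partial}^{t,M}_{E,p,q})$) for the graph norm and these norms agree with the $A$-graph norms on smooth forms, the reverse inclusions follow, and Step 1 is complete.

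\emph{Step 2: duality.} By \eqref{emendare} applied over $A$ (the $L^2$ spaces over $A$ and over $M$ being identical, since $M\setminus A$ is null), one has $(\overline{\partial}^A_{E,p,q,\min})^*=\overline{\partial}^{t,A}_{E,p,q,\max}$ and $(\overline{\partial}^{t,A}_{E,p,q,\min})^*=\overline{\partial}^A_{E,p,q,\max}$. Combining these with Step 1 and with the fact that on the compact manifold $M$ the operators $\overline{\partial}^M_{E,p,q}$ and $\overline{\partial}^{t,M}_{E,p,q}$ are each other's Hilbert space adjoints, I obtain
$$\overline{\partial}^A_{E,p,q,\max}=(\overline{\partial}^{t,A}_{E,p,q,\min})^*=(\overline{\partial}^{t,M}_{E,p,q})^*=\overline{\partial}^M_{E,p,q}=\overline{\partial}^A_{E,p,q,\min},$$
which is precisely the asserted coincidence of \eqref{ttt}, \eqref{bilo} and \eqref{ses}.

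The step I expect to be the real obstacle is Step 1: one genuinely needs parabolicity, because in the maximal/distributional picture nothing crosses the measure-zero set $M\setminus A$ for free, and the passage from the graph norm of $D$ to that of $\overline{\partial}_{E,p,q}$ alone forces one to restrict to pure-bidegree forms and to use the orthogonality of the bidegrees $(p,q\pm1)$; one must also be mildly careful that the products $\phi_i\omega$ are only Lipschitz, so a mollification is inserted before declaring membership in the minimal domain. The remaining ingredients — the compact-manifold uniqueness of closed extensions and the adjoint identities — are standard.
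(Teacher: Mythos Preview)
Your proposal is correct and follows essentially the same route as the paper. The paper's own proof is a single line --- ``This follows immediately by Prop.~\ref{gaur} and Lemma~2.3 in \cite{BL}'' --- and what you have written in Steps~1--2 is precisely an unpacking of that citation: the Br\"uning--Lesch lemma says that if the rolled-up Dirac operator of a Hilbert complex satisfies $D_{\max}=D_{\min}$ then each differential satisfies $d_{i,\max}=d_{i,\min}$, and your bidegree-orthogonality/adjoint argument reproduces exactly that descent in the present setting. One small remark: the Friedrichs mollification you insert is not needed, since Prop.~\ref{carmina} (already used in the proof of Prop.~\ref{gaur}) directly places the compactly supported Lipschitz forms $\phi_i\omega$ in the minimal domain.
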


\begin{proof}
This follows immediately by Prop. \ref{gaur} and Lemma 2.3 in \cite{BL}. 
\end{proof}

\begin{cor}
\label{salmo}
Let $(M,g)$, $(E,\rho)$ and $A$ be as in Prop. \ref{gaur}. Then the following three operators coincide:
\begin{align}
\label{salmon}
& \overline{\partial}_{E,p,q,\max}^t:L^2\Omega^{p,q+1}(A,E|_A,g|_A)\rightarrow L^2\Omega^{p,q}(A,E|_A,g|_A),\\
\label{salmone}
& \overline{\partial}_{E,p,q,\min}^t:L^2\Omega^{p,q+1}(A,E|_A,g|_A)\rightarrow L^2\Omega^{p,q}(A,E|_A,g|_A),\\
\label{bottarga}
& \overline{\partial}_{E,p,q}^t:L^2\Omega^{p,q+1}(M,E,g)\rightarrow L^2\Omega^{p,q}(M,E,g),
\end{align}
where, similarly  to  \eqref{ses}, \eqref{bottarga} is the unique closed extension of  $\overline{\partial}_{E,p,q}^t:\Omega^{p,q+1}(M,E)\rightarrow \Omega^{p,q}(M,E)$ viewed as an unbounded and densely defined operator acting between $L^2\Omega^{p,q+1}(M,E,g)$ and  $L^2\Omega^{p,q}(M,E,g).$
\end{cor}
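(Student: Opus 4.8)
The plan is to derive Cor. \ref{salmo} from Prop. \ref{lit} purely by passing to Hilbert space adjoints, using the duality \eqref{emendare} between maximal and minimal extensions together with the fact that on the compact manifold $M$ all closed extensions of the relevant operators collapse to a single one. First I would note that, since $M\setminus A$ has measure zero, there are identifications of Hilbert spaces $L^2\Omega^{p,q}(A,E|_A,g|_A)=L^2\Omega^{p,q}(M,E,g)$ and $L^2\Omega^{p,q+1}(A,E|_A,g|_A)=L^2\Omega^{p,q+1}(M,E,g)$, so the three operators \eqref{salmon}, \eqref{salmone}, \eqref{bottarga} act between one and the same pair of Hilbert spaces and it is legitimate to compare them. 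By Prop. \ref{lit} the operators \eqref{ttt}, \eqref{bilo}, \eqref{ses} are literally the same closed operator, which I will call $T$; hence their Hilbert space adjoints coincide, $T^*=(\overline{\partial}_{E,p,q,\max})^*=(\overline{\partial}_{E,p,q,\min})^*=(\overline{\partial}_{E,p,q}\text{ on }M)^*$.

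Next I would identify each of these three adjoints. By the analogue of \eqref{emendare} for the Dolbeault operator on $A$ we have $(\overline{\partial}_{E,p,q,\min})^*=\overline{\partial}_{E,p,q,\max}^t$ and $(\overline{\partial}_{E,p,q,\max})^*=\overline{\partial}_{E,p,q,\min}^t$ on $A$, so $T^*$ equals both \eqref{salmon} and \eqref{salmone}; this already gives $\overline{\partial}_{E,p,q,\max}^t=\overline{\partial}_{E,p,q,\min}^t$ on $A$ with no further input. It remains to identify $T^*$ with \eqref{bottarga}. Here I would exploit that the compact $M$ is itself parabolic in the sense of Def. \ref{para} — take $\phi_n\equiv 1$, which is Lipschitz with compact support since $M$ is compact — so Prop. \ref{gaur} and Lemma 2.3 of \cite{BL} applied with $A=M$ show that on $M$ the maximal and minimal extensions of $\overline{\partial}_{E,p,q}$ agree with the unique closed extension \eqref{ses}, and likewise the maximal and minimal extensions of the formal adjoint agree with the unique closed extension \eqref{bottarga}. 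Consequently $(\overline{\partial}_{E,p,q}\text{ on }M)^*=(\overline{\partial}_{E,p,q,\min}\text{ on }M)^*=\overline{\partial}_{E,p,q,\max}^t\text{ on }M=\overline{\partial}_{E,p,q}^t\text{ on }M$, the middle equality by \eqref{emendare} and the last by the previous sentence. Combining $T^*=(\overline{\partial}_{E,p,q}\text{ on }M)^*=\overline{\partial}_{E,p,q}^t\text{ on }M$ with the already established $T^*=\overline{\partial}_{E,p,q,\max}^t\text{ on }A=\overline{\partial}_{E,p,q,\min}^t\text{ on }A$ yields the coincidence of \eqref{salmon}, \eqref{salmone} and \eqref{bottarga}.

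The only genuinely non-formal ingredient is the assertion, used on the compact $M$, that the Hilbert space adjoint of the unique closed extension of $\overline{\partial}_{E,p,q}$ is the unique closed extension of the formal adjoint $\overline{\partial}_{E,p,q}^t$; I expect this to be the one point that actually has to be invoked rather than checked, but it is standard elliptic theory on a closed manifold and is already packaged — exactly as in Prop. \ref{lit} — in the essential self-adjointness of the Hodge--Dolbeault operator $\overline{\partial}_{E,p}+\overline{\partial}_{E,p}^t$ on $M$, i.e. in Prop. \ref{gaur} with $A=M$ together with Lemma 2.3 of \cite{BL}. Everything else is the general relation $P_{\max}^*=P_{\min}^t$, $P_{\min}^*=P_{\max}^t$ from \eqref{emendare} applied on $A$, plus Prop. \ref{lit}.
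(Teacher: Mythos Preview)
Your proof is correct and follows exactly the route the paper intends: the paper's own proof reads in full ``This is an immediate application of Prop.~\ref{lit},'' and you have simply unpacked what that means---take the three coinciding operators of Prop.~\ref{lit}, pass to Hilbert space adjoints, and use the duality \eqref{emendare} together with the uniqueness of closed extensions on the compact $M$. Your justification of the last point via Prop.~\ref{gaur} with $A=M$ is valid, if slightly more elaborate than necessary (on a closed manifold the coincidence of max/min extensions is standard and could simply be cited).
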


\begin{proof}
This is an immediate application of Prop. \ref{lit}.
\end{proof}

We conclude this section by recalling the following property.
\begin{prop}
\label{brick}
Let $M$ be a complex manifold of complex dimension $m$, let $(E,\rho)$ be a Hermitian  vector bundle on $M$  and let $g$ and $h$ be two Hermitian metrics on $M$. Then we have an equality of Hilbert spaces $$L^2\Omega^{m,0}(M,E,g)=L^2\Omega^{m,0}(M,E,h)$$ and $$L^2\Omega^{0,m}(M,E,g)=L^2\Omega^{0,m}(M,E,h).$$  Assume now that $cg\geq h$ for some $c>0$. Then for each $q=1,...,m$ there exists a constant $\xi_q>0$ such that for every $s\in \Omega^{m,q}_c(M,E)$  we have 
\begin{equation}
\label{gamma}
\|s\|^2_{L^2 \Omega^{m,q}(M,E,g)}\leq \xi_q \|s\|^2_{L^2 \Omega^{m,q}(M,E,h)}.
\end{equation}
Therefore the identity $\Omega^{m,q}_c(M,E)\rightarrow \Omega^{m,q}_c(M,E)$ induces a continuous inclusion $$L^2 \Omega^{m,q}(M,E,h)\hookrightarrow L^2 \Omega^{m,q}(M,E,g)$$ for each $q=1,...,m$. Analogously for each $p=1,...,m$ there exists a constant   $\xi'_p>0$ such that 
for every $s\in \Omega^{p,m}_c(M,E)$ we have 
\begin{equation}
\label{sgamma}
\|s\|^2_{L^2 \Omega^{p,m}(M,E,g)}\leq \xi'_p \|s\|^2_{L^2 \Omega^{p,m}(M,E,h)}.
\end{equation}
 Therefore the identity $\Omega^{p,m}_c(M,E)\rightarrow \Omega^{p,m}_c(M,E)$ induces a continuous inclusion $$L^2 \Omega^{p,m}(M,E,h)\hookrightarrow L^2 \Omega^{p,m}(M,E,g)$$ for each $p=1,...,m$.
\end{prop}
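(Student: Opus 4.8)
The plan is to reduce everything to a pointwise computation in holomorphic coordinates; the crucial point is that, for forms of maximal holomorphic (or maximal antiholomorphic) degree, the pointwise square--norm and the volume density combine into something that does not see the chosen Hermitian metric. I work on an open set $U\subset M$ with holomorphic coordinates $(z_1,\dots,z_m)$ and a $\rho$--unitary local frame $\{e_1,\dots,e_k\}$ of $E|_U$, so that writing a bundle--valued form in this frame turns every estimate into a sum of scalar estimates (the $\rho$--pairing becoming a sum over the frame index). With the usual conventions a scalar $(m,0)$--form satisfies $|dz_1\wedge\cdots\wedge dz_m|^2_g=\det(g^{i\bar j})=(\det g_{i\bar j})^{-1}$, whereas $\dvol_g=\kappa_m\,\det(g_{i\bar j})\,dz_1\wedge d\bar z_1\wedge\cdots\wedge dz_m\wedge d\bar z_m$ for a constant $\kappa_m>0$ depending only on $m$; hence $|dz_1\wedge\cdots\wedge dz_m|^2_g\,\dvol_g$ is independent of $g$, so that $|\sigma|^2_{g_\rho}\,\dvol_g=|\sigma|^2_{h_\rho}\,\dvol_h$ for every $\sigma\in\Omega^{m,0}(M,E)$ — first on each such $U$, then globally by a partition of unity. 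Thus the $L^2$--norms of $g$ and $h$ agree on $\Omega^{m,0}(M,E)$; by polarization the underlying Hermitian inner products agree too, whence $L^2\Omega^{m,0}(M,E,g)=L^2\Omega^{m,0}(M,E,h)$, and the $(0,m)$--case follows identically after interchanging the $dz_i$ with the $d\bar z_i$ (equivalently, via complex conjugation).

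To prove \eqref{gamma}, fix $x\in M$ and choose holomorphic coordinates centred at $x$ in which $h_{i\bar j}(x)=\delta_{ij}$ and $g_{i\bar j}(x)=\mathrm{diag}(\mu_1,\dots,\mu_m)$, which is possible by simultaneous diagonalisation of the Hermitian forms $h(x)$ and $g(x)$. The hypothesis $cg\geq h$ forces $c\mu_i\geq 1$, i.e.\ $\mu_i^{-1}\leq c$, for every $i$. Given $s\in\Omega^{m,q}_c(M,E)$, write it near $x$ in the $\rho$--unitary frame as $s=\sum_{l}\sum_{|J|=q}f_{J,l}\,dz_1\wedge\cdots\wedge dz_m\wedge d\bar z_J\otimes e_l$. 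At $x$ the forms $dz_1\wedge\cdots\wedge dz_m\wedge d\bar z_J$ are orthogonal for both $g$ and $h$; the ratio of their $g$--square--norm to their $h$--square--norm equals $\big(\prod_i\mu_i^{-1}\big)\prod_{j\in J}\mu_j^{-1}$ (convention constants cancel), while $\dvol_g(x)=\big(\prod_i\mu_i\big)\dvol_h(x)$. Multiplying, the factor $\prod_i\mu_i$ cancels and we are left with
\[
|s(x)|^2_{g_\rho}\,\dvol_g(x)=\sum_{l,\,|J|=q}|f_{J,l}(x)|^2\Big(\prod_{j\in J}\mu_j^{-1}\Big)\dvol_h(x)\;\leq\;\xi_q\,|s(x)|^2_{h_\rho}\,\dvol_h(x),
\]
with $\xi_q:=\max\{1,c^q\}$, since each of the $q$ factors $\mu_j^{-1}$ is $\leq\max\{1,c\}$ and $|s(x)|^2_{h_\rho}\,\dvol_h(x)=\sum_{l,|J|=q}|f_{J,l}(x)|^2\,\dvol_h(x)$. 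As $x\in M$ was arbitrary, integration over $M$ gives \eqref{gamma}. The continuous inclusion $L^2\Omega^{m,q}(M,E,h)\hookrightarrow L^2\Omega^{m,q}(M,E,g)$ then follows since $\Omega^{m,q}_c(M,E)$ is dense in $L^2\Omega^{m,q}(M,E,h)$: any sequence in $\Omega^{m,q}_c(M,E)$ converging in $L^2\Omega^{m,q}(M,E,h)$ is, by \eqref{gamma}, Cauchy in $L^2\Omega^{m,q}(M,E,g)$, and its two limits agree almost everywhere, so the identity on $\Omega^{m,q}_c(M,E)$ extends to a continuous injection between the completions.

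The inequality \eqref{sgamma} is proved in exactly the same way after interchanging the holomorphic and antiholomorphic coordinates: with $h(x)$, $g(x)$ diagonalised as above, the relevant basis forms are $d\bar z_1\wedge\cdots\wedge d\bar z_m\wedge dz_I$ with $|I|=p$, and repeating the computation with $\prod_{i\in I}\mu_i^{-1}\leq\max\{1,c^p\}=:\xi'_p$ in place of $\prod_{j\in J}\mu_j^{-1}\leq\xi_q$ yields \eqref{sgamma}, and then, as before, the continuous inclusion $L^2\Omega^{p,m}(M,E,h)\hookrightarrow L^2\Omega^{p,m}(M,E,g)$; alternatively one reduces directly to the already treated $(m,p)$--case via complex conjugation. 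I do not expect a genuine obstacle here: the only steps requiring care are the bookkeeping of the direction of the inequality $\mu_i^{-1}\leq c$ extracted from $cg\geq h$, and the elementary identity $|dz_1\wedge\cdots\wedge dz_m|^2_g\,\dvol_g=\kappa_m\,dz_1\wedge d\bar z_1\wedge\cdots\wedge dz_m\wedge d\bar z_m$, which expresses that the canonical bundle of $M$ carries a ``square--norm times volume'' common to all Hermitian metrics.
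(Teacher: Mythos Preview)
Your proof is correct and is precisely the kind of pointwise computation the paper has in mind: the paper itself does not give a self-contained argument but simply refers to the computations in Grant Melles--Milman (\emph{Pacific J.\ Math.}\ 168 (1995), p.~145) and Ruppenthal (\emph{Duke Math.\ J.}\ 163 (2014), p.~2896), and what you wrote is exactly that standard calculation. A minor cosmetic remark: the bound $\prod_{j\in J}\mu_j^{-1}\leq c^q$ already follows directly from $\mu_j^{-1}\leq c$, so the $\max\{1,\cdot\}$ is superfluous (though harmless).
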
 

\begin{proof}
The statement   follows  by the computations carried out in \cite{GMMI} pag. 145. See also \cite{JRu} pag. 2896.
\end{proof}

\section{Main theorems }

This section contains some of the main results of this paper. We start by giving the following definition.
\begin{defi}
\label{pseudohermi}
Let $N$ be a complex manifold of complex dimension $n$. A Hermitian pseudometric $h$ on $N$ is a positive semidefinite Hermitian product on $N$ which is positive definite on an open and dense subset of $N$.
\end{defi}
We will label by $Z_h$ the smallest closed subset of $M$ such that  $h$ is positive definite on $M\setminus Z_h$. Thus $(M\setminus Z_h, h|_{M\setminus Z_h})$ becomes an \emph{incomplete} Hermitian manifold. Following \cite{PS} we will call $Z_h$ the \emph{degeneracy locus} of $h$. Consider now a compact complex manifold $M$ of complex dimension $m$. Let $(E,\rho)$ be a Hermitian holomorphic vector bundle on $M$. Let $h$ be a Hermitian pseudometric on $M$ with degeneracy locus $Z_h$. Let us define $A_h:=M\setminus Z_h$. Our aim is to study the closed extensions of the following operator
\begin{equation}
\label{model}
\overline{\partial}_{E,m,0}:L^2\Omega^{m,0}(A_h,E|_{A_h},h|_{A_h})\rightarrow L^2\Omega^{m,1}(A_h,E|_{A_h},h|_{A_h})
\end{equation}
where the domain of \eqref{model} is  $\Omega^{m,0}_c(A_h,E|_{A_h})$. In order to achieve this goal we need to consider an auxiliary Hermitian metric. More precisely let us fix an arbitrary Hermitian metric $g$ on $M$  and, as in Prop. \ref{lit} and Cor. \ref{salmo}, let us label by
\begin{equation}
\label{zita}
\overline{\partial}_{E,p,q}:L^2\Omega^{p,q}(M,E,g)\rightarrow L^2\Omega^{p,q+1}(M,E,g)
\end{equation}
 the unique closed extension of  $\overline{\partial}_{E,p,q}:\Omega^{p,q}(M,E)\rightarrow \Omega^{p,q+1}(M,E)$ and by
\begin{equation}
\label{zitaw}
\overline{\partial}_{E,p,q}^t:L^2\Omega^{p,q+1}(M,E,g)\rightarrow L^2\Omega^{p,q}(M,E,g)
\end{equation}
the unique closed extension of  $\overline{\partial}_{E,p,q}^t:\Omega^{p,q+1}(M,E)\rightarrow \Omega^{p,q}(M,E)$.
Moreover we observe that, since $h$ is positive semidefinite on $M$ and $g$ is positive definite on $M$, there exists a constant  $c>0$ such that $h\leq cg$. Hence, by Prop. \ref{brick}, we know that $L^2\Omega^{m,1}(A_h,E|_{A_h},h|_{A_h})\subset L^2\Omega^{m,1}(A_h,E|_{A_h},g|_{A_h})$ and that there exists a constant $\gamma>0$ such that for each $\omega\in L^2\Omega^{m,1}(A_h,E|_{A_h},h|_{A_h})$
\begin{equation}
\label{drago}
\|\omega\|^2_{L^2\Omega^{m,1}(A_h,E|_{A_h},g|_{A_h})}\leq \gamma \|\omega\|^2_{L^2\Omega^{m,1}(A_h,E|_{A_h},h|_{A_h})}.
\end{equation}
We have now all the ingredients for the first result of this section.
\begin{teo}
\label{firstth}
Let $M$, $g$, $h$, $A_h$ and $(E,\rho)$ be as defined above. Assume that $(A_h,g|_{A_h})$ is parabolic \footnote{As remarked in the introduction it is clear that this property does not depend on the particular Hermitian metric $g$ that we fix on $M$. More precisely if $g'$ is another Hermitian metric on $M$ then, since $g$ and $g'$ are quasi-isometric on $M$, we have that $(A_h,g|_{A_h})$ is parabolic if and only if $(A_h,g'|_{A_h})$ is parabolic.}.  Let 
\begin{equation}
\label{any}
\overline{\mathfrak{d}}_{E,m,0}:L^2\Omega^{m,0}(A_h,E|_{A_h},h|_{A_h})\rightarrow L^2\Omega^{m,1}(A_h,E|_{A_h},h|_{A_h})
\end{equation}
be any closed extension of \eqref{model}. Let $\mathcal{D}(\overline{\partial}_{E,m,0})$ and $\mathcal{D}(\overline{\mathfrak{d}}_{E,m,0})$ be the domains of \eqref{zita} (in bidegree $(m,0)$) and \eqref{any} respectively. Then the following properties hold true:
\begin{enumerate}
\item We have a continuous inclusion $\mathcal{D}(\overline{\mathfrak{d}}_{E,m,0})\hookrightarrow \mathcal{D}(\overline{\partial}_{E,m,0})$ where  each domain is endowed with the corresponding graph norm.  Moreover on $\mathcal{D}(\overline{\mathfrak{d}}_{E,m,0})$ the operator \eqref{zita} (in bidegree $(m,0)$) coincides with  the operator \eqref{any}.
\item The inclusion $\mathcal{D}(\overline{\mathfrak{d}}_{E,m,0})\hookrightarrow L^2\Omega^{m,0}(M,E,g)$ is a compact operator where $\mathcal{D}(\overline{\mathfrak{d}}_{E,m,0})$ is endowed with the corresponding graph norm.
\item Let $\overline{\mathfrak{d}}_{E,m,0}^*:L^2\Omega^{m,1}(A_h,E|_{A_h},h|_{A_h})\rightarrow L^2\Omega^{m,0}(A_h,E|_{A_h},h|_{A_h})$ be the adjoint of  \eqref{any}. Then  the operator
 \begin{equation}
\label{anylap}
\overline{\mathfrak{d}}_{E,m,0}^*\circ\overline{\mathfrak{d}}_{E,m,0}:L^2\Omega^{m,0}(A_h,E|_{A_h},h|_{A_h})\rightarrow L^2\Omega^{m,0}(A_h,E|_{A_h},h|_{A_h})
\end{equation} whose domain is defined as $\{s\in \mathcal{D}(\overline{\mathfrak{d}}_{E,m,0}):\ \overline{\mathfrak{d}}_{E,m,0}s\in \mathcal{D}(\overline{\mathfrak{d}}_{E,m,0}^*)\}$, has discrete spectrum.
\end{enumerate}
\end{teo}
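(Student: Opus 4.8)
The plan is to transport all three assertions to the Dolbeault operator on the \emph{compact} Hermitian manifold $(M,g)$, where ellipticity of $\Delta_{\overline{\partial},E,m,0}$ and Rellich's theorem are available, and then to carry them back to $\overline{\mathfrak{d}}_{E,m,0}$ by combining parabolicity of $(A_h,g|_{A_h})$ (through Prop.~\ref{lit}) with the one-sided $L^2$-comparison \eqref{drago}. The crux is item $(1)$; items $(2)$ and $(3)$ then follow by soft functional analysis.

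For item $(1)$ I would fix $s\in\mathcal{D}(\overline{\mathfrak{d}}_{E,m,0})$. By Remark~\ref{mogimo}, $s$ lies in the domain of the maximal extension of $\overline{\partial}_{E,m,0}$ on $(A_h,h|_{A_h})$ and $\overline{\mathfrak{d}}_{E,m,0}s$ equals that maximal extension applied to $s$; in particular $\overline{\partial}_{E,m,0}s$, interpreted as a current on $A_h$, is square-integrable for $h|_{A_h}$. Since the distributional $\overline{\partial}$ does not depend on the metric and $L^2\Omega^{m,1}(A_h,E|_{A_h},h|_{A_h})\hookrightarrow L^2\Omega^{m,1}(A_h,E|_{A_h},g|_{A_h})$ continuously by \eqref{drago}, it follows that $s\in\mathcal{D}(\overline{\partial}_{E,m,0,\max})$ computed on $(A_h,g|_{A_h})$, with $\overline{\partial}_{E,m,0,\max}s=\overline{\mathfrak{d}}_{E,m,0}s$. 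Now parabolicity enters: by Prop.~\ref{lit} the maximal $g$-extension of $\overline{\partial}_{E,m,0}$ on $A_h$ coincides with the closed extension \eqref{zita} on $M$ in bidegree $(m,0)$, so $s\in\mathcal{D}(\overline{\partial}_{E,m,0})$ and $\overline{\partial}_{E,m,0}s=\overline{\mathfrak{d}}_{E,m,0}s$, which gives the asserted agreement of operators. Continuity of the inclusion of graph-normed domains is then immediate, using $\|s\|_{L^2\Omega^{m,0}(M,E,g)}=\|s\|_{L^2\Omega^{m,0}(A_h,E|_{A_h},h|_{A_h})}$ from the $(m,0)$-part of Prop.~\ref{brick} and $\|\overline{\partial}_{E,m,0}s\|^2_{L^2\Omega^{m,1}(M,E,g)}\le\gamma\,\|\overline{\mathfrak{d}}_{E,m,0}s\|^2_{L^2\Omega^{m,1}(A_h,E|_{A_h},h|_{A_h})}$ from \eqref{drago}, so that the $\overline{\partial}_{E,m,0}$-graph norm is bounded by $\max\{1,\gamma\}$ times the $\overline{\mathfrak{d}}_{E,m,0}$-graph norm.

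For items $(2)$ and $(3)$ I would argue as follows. The operator $\Delta_{\overline{\partial},E,m,0}=\overline{\partial}^t_{E,m,0}\circ\overline{\partial}_{E,m,0}$ (there is no lower-order term since $q=0$) is elliptic and formally self-adjoint on the compact manifold $M$, so its unique self-adjoint extension \eqref{crematico} has form domain equal to $\mathcal{D}(\overline{\partial}_{E,m,0})$ endowed with the graph norm, and this first-order Sobolev space embeds compactly into $L^2\Omega^{m,0}(M,E,g)$ by Rellich's theorem; composing with the continuous inclusion of item $(1)$ yields item $(2)$. For item $(3)$, set $D:=\overline{\mathfrak{d}}^*_{E,m,0}\circ\overline{\mathfrak{d}}_{E,m,0}$, a non-negative self-adjoint operator whose quadratic form is $s\mapsto\|\overline{\mathfrak{d}}_{E,m,0}s\|^2$ with form domain $\mathcal{D}(\overline{\mathfrak{d}}_{E,m,0})$, i.e. $\mathcal{D}(D^{1/2})=\mathcal{D}(\overline{\mathfrak{d}}_{E,m,0})$. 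Since $Z_h$ has measure zero we have $L^2\Omega^{m,0}(A_h,E|_{A_h},h|_{A_h})=L^2\Omega^{m,0}(M,E,g)$ as Hilbert spaces, so item $(2)$ says precisely that the form domain of $D$ embeds compactly into the ambient Hilbert space; the standard criterion (compactness of $(\id+D)^{-1}=(\id+D)^{-1/2}\circ(\id+D)^{-1/2}$, where $(\id+D)^{-1/2}$ is an isometry of $L^2\Omega^{m,0}(M,E,g)$ onto the form domain of $D$) then shows that $D$ has discrete spectrum.

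The step I expect to be the main obstacle is $(1)$: moving from $\overline{\mathfrak{d}}_{E,m,0}$, which a priori carries only the one-directional information \eqref{drago} coming from $h\le cg$, into the setting of the $g$-operator on $A_h$, and then using parabolicity via Prop.~\ref{lit} to identify that operator with the essentially unique, well-behaved extension on the compact manifold $M$. Once this identification and the compact Sobolev embedding on $M$ are available, the rest is routine.
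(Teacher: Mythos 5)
Your proposal is correct and follows essentially the same route as the paper: compare the $h$-graph norm to the $g$-graph norm on $A_h$ via \eqref{drago} and the equality of $L^2$ spaces in bidegree $(m,0)$, identify the resulting $g$-maximal extension on $A_h$ with the unique closed extension on the compact manifold $M$ by parabolicity (Prop.~\ref{lit}), and then import compactness from elliptic theory on $M$. The only differences are cosmetic: in point $(1)$ you pass through the metric-independence of the distributional $\overline{\partial}$ where the paper instead reduces to smooth sections via Prop.~\ref{partiro}; in point $(2)$ you invoke ellipticity of $\Delta_{\overline{\partial},E,m,0}$ and G\aa rding where the paper uses the full Hodge--Dolbeault operator; and in point $(3)$ you use the form-domain criterion where the paper writes out the elementary graph-norm inequality --- all standard and equivalent.
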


\begin{proof}
First of all we point out that we have an equality of Hilbert spaces 
\begin{equation}
\label{chiment}
L^2\Omega^{m,0}(A_h,E|_{A_h},h|_{A_h})=L^2\Omega^{m,0}(A_h,E|_{A_h},g|_{A_h})=L^2\Omega^{m,0}(M,E,g).
\end{equation}
This follows by Prop. \ref{brick} and by the fact that $A_h$ is open and dense in $M$. Now we address the \emph{first point}. Let $\mathcal{D}(\overline{\partial}_{E,m,0,\max})$ be the domain of $$\overline{\partial}_{E,m,0,\max}:L^2\Omega^{m,0}(A_h,E|_{A_h},g|_{A_h})\rightarrow L^2\Omega^{m,1}(A_h,E|_{A_h},g|_{A_h}).$$ According to Prop. \ref{lit}, in order to prove the first point,  it is enough to  show that we have a continuous inclusion
 \begin{equation}
\label{chi}
\mathcal{D}(\overline{\mathfrak{d}}_{E,m,0})\hookrightarrow \mathcal{D}(\overline{\partial}_{E,m,0,\max})
\end{equation}
and that, for each $s\in \mathcal{D}(\overline{\mathfrak{d}}_{E,m,0})$, we have $\overline{\mathfrak{d}}_{E,m,0}s=\overline{\partial}_{E,m,0,\max}s$.
According to Prop. \ref{partiro} we know that $\Omega^{m,0}(A_h,E|_{A_h})\cap \mathcal{D}(\overline{\mathfrak{d}}_{E,m,0})$ is dense in $\mathcal{D}(\overline{\mathfrak{d}}_{E,m,0})$ with respect to the corresponding graph norm. Hence, in order to establish the existence of the continuous inclusion  \eqref{chi}, it is sufficient   to  prove that we have a continuous inclusion 
\begin{equation}
\label{laviniac}
\left(\Omega^{m,0}(A_h,E|_{A_h})\cap\mathcal{D}(\overline{\mathfrak{d}}_{E,m,0})\right)\hookrightarrow \mathcal{D}(\overline{\partial}_{E,m,0,\max}).
\end{equation}
 To this aim  let $s\in \Omega^{m,0}(A_h,E|_{A_h})\cap\mathcal{D}(\overline{\mathfrak{d}}_{E,m,0})$. Then, since $s$ is smooth on $A_h$, we  have $\overline{\mathfrak{d}}_{E,m,0}s=\overline{\partial}_{E,m,0}s$ where, with the operator on the right hand side of the previous equality, we mean the Dolbeault operator 
\begin{equation}
\label{innu}
\overline{\partial}_{E,m,0}:\Omega^{m,0}(A_h,E|_{A_h})\rightarrow \Omega^{m,1}(A_h,E|_{A_h}).
\end{equation}
Moreover, by \eqref{chiment}, we know that  $s\in L^2\Omega^{m,0}(A_h,E|_{A_h},g|_{A_h})$ and that 
\begin{equation}
\label{chime}
\|s\|_{L^2\Omega^{m,0}(A_h,E|_{A_h},g|_{A_h})}=\|s\|_{L^2\Omega^{m,0}(A_h,E|_{A_h},h|_{A_h})}.
\end{equation}
On the other hand, by \eqref{drago}, we know that  
\begin{equation}
\label{chimen}
\|\overline{\partial}_{E,m,0}s\|^2_{L^2\Omega^{m,1}(A_h,E|_{A_h},g|_{A_h})}\leq \gamma \|\overline{\partial}_{E,m,0}s\|^2_{L^2\Omega^{m,1}(A_h,E|_{A_h},h|_{A_h})}.
\end{equation}
Therefore $s\in \mathcal{D}(\overline{\partial}_{E,m,0,\max})$ and  by \eqref{chime} and \eqref{chimen} we get that \eqref{laviniac} is a continuous inclusion.\\ Now let $s\in  \mathcal{D}(\overline{\mathfrak{d}}_{E,m,0})$ and let $\omega\in L^2\Omega^{m,1}(A_h,E|_{A_h},h|_{A_h})$ such that $\overline{\mathfrak{d}}_{E,m,0}s=\omega$. Then there exists a sequence $\{s_n\}_{n\in \mathbb{N}}\subset \Omega^{m,0}(A_h,E|_{A_h})\cap \mathcal{D}(\overline{\mathfrak{d}}_{E,m,0})$ such that $s_n\rightarrow s$ in  $L^2\Omega^{m,0}(A_h,E|_{A_h},h|_{A_h})$ and $\overline{\partial}_{E,m,0}s_n\rightarrow \omega$ in $L^2\Omega^{m,1}(A_h,E|_{A_h},h|_{A_h})$ as $n\rightarrow \infty$. By \eqref{chime} and \eqref{chimen} we get that $s_n\rightarrow s$ in  $L^2\Omega^{m,0}(A_h,E|_{A_h},g|_{A_h})$ and $\overline{\partial}_{E,m,0}s_n\rightarrow \omega$ in $L^2\Omega^{m,1}(A_h,E|_{A_h},g|_{A_h})$ as $n\rightarrow \infty$. Therefore  $\overline{\partial}_{E,m,0,\max}s=\omega$ as desired. This establishes the first point of the theorem.  Now we address the \emph{second point}. According to the first point it is enough to show that the inclusion $\mathcal{D}(\overline{\partial}_{E,0,m})\hookrightarrow L^2\Omega^{m,0}(M,E,g)$ is a compact operator. To this aim consider the Hodge-Dolbeault operator 
\begin{equation}
\label{meglio}
\overline{\partial}_{E,m}+\overline{\partial}_{E,m}^t:\Omega^{m,\bullet}(M,E)\rightarrow \Omega^{m,\bullet}(M,E).
\end{equation}
By the fact that $M$ is compact and that $\overline{\partial}_{E,m}+\overline{\partial}_{E,m}^t$ is elliptic, we have a unique closed extension of \eqref{meglio} as an unbounded and densely defined operator acting on $L^2\Omega^{m,\bullet}(M,E,g)$. We label this unique extension by 
\begin{equation}
\label{megliox}
\overline{\partial}_{E,m}+\overline{\partial}_{E,m}^t:L^2\Omega^{m,\bullet}(M,E,g)\rightarrow L^2\Omega^{m,\bullet}(M,E,g).
\end{equation}
Let $\mathcal{D}(\overline{\partial}_{E,m}+\overline{\partial}_{E,m}^t)$ be the domain of \eqref{megliox}. Using again the fact that $M$ is compact and that \eqref{meglio} is elliptic we get that the inclusion 
\begin{equation}
\label{chimento}
\mathcal{D}(\overline{\partial}_{E,m}+\overline{\partial}_{E,m}^t)\hookrightarrow L^2\Omega^{m,\bullet}(M,E,g)
\end{equation}
where $\mathcal{D}(\overline{\partial}_{E,m}+\overline{\partial}_{E,m}^t)$ is endowed with its graph norm,  is a compact operator. On the other hand $\mathcal{D}(\overline{\partial}_{E,m}+\overline{\partial}_{E,m}^t)$ satisfies the following decomposition
\begin{equation}
\label{ottobre}
\mathcal{D}(\overline{\partial}_{E,m}+\overline{\partial}_{E,m}^t)=\bigoplus_{q=0}^{m}\mathcal{D}({\overline{\partial}_{E,m,q}})\cap\mathcal{D}({\overline{\partial}^t_{E,m,q-1}})
\end{equation}
where $\mathcal{D}({\overline{\partial}_{E,m,q}})$  is the   the domain of \eqref{zita} (in bidegree $(m,q)$) and  $\mathcal{D}({\overline{\partial}^t_{E,m,q-1}})$ it is the domain of \eqref{zitaw} (in bidegree $(m,q-1)$). Thus, by \eqref{chimento} and \eqref{ottobre}, we get immediately  that  the inclusion $\mathcal{D}({\overline{\partial}_{E,m,q}})\cap\mathcal{D}({\overline{\partial}^t_{E,m,q-1}})\hookrightarrow L^2\Omega^{m,q}(M,E,g)$ is a compact operator, where $\mathcal{D}({\overline{\partial}_{E,m,q}})\cap\mathcal{D}({\overline{\partial}^t_{E,m,q-1}})$ is endowed with the graph norm of $\overline{\partial}_{E,m}+\overline{\partial}_{E,m}^t$. In particular, when $q=0$, we have $\mathcal{D}({\overline{\partial}_{E,m,0}})\cap\mathcal{D}({\overline{\partial}^t_{E,m,-1}})=\mathcal{D}({\overline{\partial}_{E,m,0}})$ and  $\overline{\partial}_{E,m}+\overline{\partial}_{E,m}^t$  acting on $\mathcal{D}({\overline{\partial}_{E,m,0}})$ is simply $\overline{\partial}_{E,m,0}$. In this way we can conclude  that the inclusion 
$\mathcal{D}(\overline{\partial}_{E,m,0})\hookrightarrow L^2\Omega^{m,0}(M,E,g)$ is a compact operator and this completes the proof of the second point. Finally we prove the \emph{third point}. By the fact that   $\overline{\mathfrak{d}}_{E,m,0}^*\circ\overline{\mathfrak{d}}_{E,m,0}:L^2\Omega^{m,0}(A_h,E|_{A_h},h|_{A_h})\rightarrow L^2\Omega^{m,0}(A_h,E|_{A_h},h|_{A_h})$ is self-adjoint the third point is equivalent to showing that  the inclusion $\mathcal{D}(\overline{\mathfrak{d}}_{E,m,0}^*\circ\overline{\mathfrak{d}}_{E,m,0})\hookrightarrow L^2\Omega^{m,0}(A_h,E|_{A_h},h|_{A_h})$ is a compact operator where  $\mathcal{D}(\overline{\mathfrak{d}}_{E,m,0}^*\circ \overline{\mathfrak{d}}_{E,m,0})$ is endowed with the corresponding graph norm, see \cite{MaMa} pag. 381.  For each $s\in \mathcal{D}(\overline{\mathfrak{d}}_{E,m,0}^*\circ \overline{\mathfrak{d}}_{E,m,0})$, we have 
\begin{align}
\nonumber & \|\overline{\mathfrak{d}}_{E,m,0}s\|^2_{L^2\Omega^{m,1}(A_h,E|_{A_h},h|_{A_h})}= \langle s,\overline{\mathfrak{d}}_{E,m,0}^*(\overline{\mathfrak{d}}_{E,m,0}s)\rangle_{L^2\Omega^{m,0}(A_h,E|_{A_h},h|_{A_h})}\leq \\
\nonumber &  \frac{1}{2}\left( \|s\|^2_{L^2\Omega^{m,0}(A_{h},E|_{A_h},h|_{A_h})}+\|\overline{\mathfrak{d}}_{E,m,0}^*(\overline{\mathfrak{d}}_{E,m,0}s)\|^2_{L^2\Omega^{m,0}(A_h,E|_{A_h},h|_{A_h})}\right)
\end{align}
and therefore
\begin{align}
\nonumber & \|s\|^2_{L^2\Omega^{m,0}(A_h,E|_{A_h},h|_{A_h})}+\|\overline{\mathfrak{d}}_{E,m,0}s\|^2_{L^2\Omega^{m,1}(A_h,E|_{A_h},h|_{A_h})}\leq \\
\nonumber & \frac{3}{2} \left(\|s\|^2_{L^2\Omega^{m,0}(A_{h},E|_{A_h},h|_{A_h})}+\|\overline{\mathfrak{d}}_{E,m,0}^*(\overline{\mathfrak{d}}_{E,m,0}s)\|^2_{L^2\Omega^{m,0}(A_h,E|_{A_h},h|_{A_h})}\right).
\end{align}
 The above inequality tells us that we have a continuous inclusion $\mathcal{D}(\overline{\mathfrak{d}}_{E,m,0}^*\circ\overline{\mathfrak{d}}_{E,m,0})\hookrightarrow \mathcal{D}(\overline{\mathfrak{d}}_{E,m,0})$ where each domain is endowed with corresponding graph norm. Now, using the first two points of this theorem, we finally get that the inclusion $\mathcal{D}(\overline{\mathfrak{d}}_{E,m,0}^*\circ\overline{\mathfrak{d}}_{E,m,0})\hookrightarrow L^2\Omega^{m,0}(A_h,E|_{A_h},h|_{A_h})$ is a compact operator as desired. The proof of theorem is thus complete. 
\end{proof}

We have the following immediate corollary.
\begin{cor}
\label{freddy}
In the setting of Theorem \ref{firstth}. The following properties hold true:
\begin{enumerate}
\item $\overline{\mathfrak{d}}_{E,m,0}^*\circ\overline{\mathfrak{d}}_{E,m,0}:L^2\Omega^{m,0}(A_h,E|_{A_h},h|_{A_h})\rightarrow L^2\Omega^{m,0}(A_h,E|_{A_h},h|_{A_h})$ is a Fredholm operator on its domain endowed with the graph norm.
\item $\im(\overline{\mathfrak{d}}_{E,m,0})$ is a closed subset of $L^2\Omega^{m,1}(A_h,E|_{A_h},h|_{A_h})$.
\item $\ker(\overline{\mathfrak{d}}_{E,m,0})$ is finite dimensional. 
\item We have the following $L^2$-orthogonal decomposition $$L^2\Omega^{m,0}(A_h,E|_{A_h},h|_{A_h})=\ker(\overline{\mathfrak{d}}_{E,m,0})\oplus \im(\overline{\mathfrak{d}}_{E,m,0}^*).$$
\end{enumerate} 
\end{cor}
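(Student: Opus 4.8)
All four assertions follow from Theorem \ref{firstth} by routine functional analysis, so the proof will be short. The single ingredient I would isolate first is that, by points (1) and (2) of Theorem \ref{firstth} together with the identification of Hilbert spaces $L^2\Omega^{m,0}(A_h,E|_{A_h},h|_{A_h})=L^2\Omega^{m,0}(M,E,g)$ coming from Prop. \ref{brick}, the inclusion $\mathcal{D}(\overline{\mathfrak{d}}_{E,m,0})\hookrightarrow L^2\Omega^{m,0}(A_h,E|_{A_h},h|_{A_h})$ is a compact operator when $\mathcal{D}(\overline{\mathfrak{d}}_{E,m,0})$ carries its graph norm.

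From this compactness I would deduce (3) and (2) by a Rellich-type argument. For (3): on $\ker(\overline{\mathfrak{d}}_{E,m,0})$ the graph norm coincides with the $L^2$-norm and, $\overline{\mathfrak{d}}_{E,m,0}$ being closed, $\ker(\overline{\mathfrak{d}}_{E,m,0})$ is a closed subspace of $\mathcal{D}(\overline{\mathfrak{d}}_{E,m,0})$; hence its closed unit ball is mapped to a precompact set, forcing $\dim\ker(\overline{\mathfrak{d}}_{E,m,0})<\infty$. For (2): writing $N:=\ker(\overline{\mathfrak{d}}_{E,m,0})$, it suffices to find $C>0$ with $\|\overline{\mathfrak{d}}_{E,m,0}s\|\geq C\|s\|$ for all $s\in\mathcal{D}(\overline{\mathfrak{d}}_{E,m,0})\cap N^{\perp}$; if no such $C$ existed one would get a sequence $s_n\in\mathcal{D}(\overline{\mathfrak{d}}_{E,m,0})\cap N^{\perp}$ with $\|s_n\|=1$ and $\overline{\mathfrak{d}}_{E,m,0}s_n\to 0$, bounded in the graph norm, hence a subsequence converging in $L^2$ to some $s$; closedness of $\overline{\mathfrak{d}}_{E,m,0}$ gives $s\in N$ while $s\in N^{\perp}$, contradicting $\|s\|=1$. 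Once $\im(\overline{\mathfrak{d}}_{E,m,0})$ is known to be closed, (4) follows from the general orthogonal decomposition $L^2\Omega^{m,0}(A_h,E|_{A_h},h|_{A_h})=\ker(\overline{\mathfrak{d}}_{E,m,0})\oplus\overline{\im(\overline{\mathfrak{d}}_{E,m,0}^*)}$ recalled in Section 2 together with the closed range theorem, which makes $\im(\overline{\mathfrak{d}}_{E,m,0}^*)$ closed.

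Finally (1): I would invoke point (3) of Theorem \ref{firstth}, which says that the self-adjoint non-negative operator $\overline{\mathfrak{d}}_{E,m,0}^*\circ\overline{\mathfrak{d}}_{E,m,0}$ has discrete spectrum, so its essential spectrum is empty and it is therefore Fredholm on its domain endowed with the graph norm; alternatively, having established closed range of $\overline{\mathfrak{d}}_{E,m,0}$, one may apply Prop. \ref{acacio} to get $\im(\overline{\mathfrak{d}}_{E,m,0}^*\circ\overline{\mathfrak{d}}_{E,m,0})=\im(\overline{\mathfrak{d}}_{E,m,0}^*)$ closed, note $\ker(\overline{\mathfrak{d}}_{E,m,0}^*\circ\overline{\mathfrak{d}}_{E,m,0})=\ker(\overline{\mathfrak{d}}_{E,m,0})$ is finite-dimensional, and use self-adjointness for the cokernel. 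The only mild point of care is the order of the argument: Prop. \ref{acacio} requires $\im(\overline{\mathfrak{d}}_{E,m,0})$ closed, so (2) must be proved before it is invoked; there is no genuine obstacle here.
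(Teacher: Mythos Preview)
Your proof is correct. The approach differs from the paper's, though both are short and routine. The paper argues in the opposite order: it first invokes point (3) of Theorem~\ref{firstth} (discrete spectrum of $\overline{\mathfrak{d}}_{E,m,0}^*\circ\overline{\mathfrak{d}}_{E,m,0}$) to obtain (1), hence closedness of $\im(\overline{\mathfrak{d}}_{E,m,0}^*\circ\overline{\mathfrak{d}}_{E,m,0})$, and then deduces (2) by comparing the two orthogonal decompositions $\ker(\overline{\mathfrak{d}}_{E,m,0})\oplus\overline{\im(\overline{\mathfrak{d}}_{E,m,0}^*)}$ and $\ker(\overline{\mathfrak{d}}_{E,m,0}^*\circ\overline{\mathfrak{d}}_{E,m,0})\oplus\overline{\im(\overline{\mathfrak{d}}_{E,m,0}^*\circ\overline{\mathfrak{d}}_{E,m,0})}$ via a chain of inclusions, arriving first at closedness of $\im(\overline{\mathfrak{d}}_{E,m,0}^*)$ and then of $\im(\overline{\mathfrak{d}}_{E,m,0})$ by the closed range theorem. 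Your route instead uses only the compactness of the domain inclusion (point (2) of Theorem~\ref{firstth}) and runs a direct Rellich-type contradiction argument to get (3) and (2) without passing through the composite operator; this is slightly more elementary in that it does not rely on point (3) of Theorem~\ref{firstth}, whereas the paper's chain-of-inclusions argument is a touch slicker once discrete spectrum is in hand. Both are perfectly valid.
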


\begin{proof}
By Th. \ref{firstth} we know  that $\overline{\mathfrak{d}}_{E,m,0}^*\circ\overline{\mathfrak{d}}_{E,m,0}:L^2\Omega^{m,0}(A_h,E|_{A_h},h|_{A_h})\rightarrow L^2\Omega^{m,0}(A_h,E|_{A_h},h|_{A_h})$  has discrete spectrum and this in turn implies  in particular  that it is a Fredholm operator on its domain endowed with the graph norm. Therefore we can conclude that  $\im(\overline{\mathfrak{d}}_{E,m,0}^*\circ\overline{\mathfrak{d}}_{E,m,0})$ is closed in $L^2\Omega^{m,0}(A_h,E|_{A_h},h|_{A_h})$. By \eqref{cocapro} we have  the following two orthogonal decompositions for $L^2\Omega^{m,0}(A_h,E|_{A_h},h|_{A_h})$ : $$L^2\Omega^{m,0}(A_h,E|_{A_h},h|_{A_h})=\ker(\overline{\mathfrak{d}}_{E,m,0})\oplus \overline{\im(\overline{\mathfrak{d}}_{E,m,0}^*)}=\ker(\overline{\mathfrak{d}}_{E,m,0}^*\circ\overline{\mathfrak{d}}_{E,m,0})\oplus \overline{\im(\overline{\mathfrak{d}}_{E,m,0}^*\circ\overline{\mathfrak{d}}_{E,m,0})}.$$ Clearly $\ker(\overline{\mathfrak{d}}_{E,m,0})=\ker(\overline{\mathfrak{d}}_{E,m,0}^*\circ\overline{\mathfrak{d}}_{E,m,0})$. Hence we have the following chain of inclusions: $$\im(\overline{\mathfrak{d}}_{E,m,0}^*\circ\overline{\mathfrak{d}}_{E,m,0})\subset \im(\overline{\mathfrak{d}}_{E,m,0}^*)\subset \overline{\im(\overline{\mathfrak{d}}_{E,m,0}^*)}= \overline{\im(\overline{\mathfrak{d}}_{E,m,0}^*\circ\overline{\mathfrak{d}}_{E,m,0})}=\im(\overline{\mathfrak{d}}_{E,m,0}^*\circ\overline{\mathfrak{d}}_{E,m,0})$$ which in particular implies that $\overline{\im(\overline{\mathfrak{d}}_{E,m,0}^*)}=\im(\overline{\mathfrak{d}}_{E,m,0}^*)$ and therefore, taking the adjoint, $\overline{\im(\overline{\mathfrak{d}}_{E,m,0})}=\im(\overline{\mathfrak{d}}_{E,m,0})$ as required. All the other points are immediate consequences of the  first two points of this corollary  and  Th. \ref{firstth}.
\end{proof}

Consider again the setting of Th. \ref{firstth}. Let  $\Delta_{\overline{\partial},E,m,0}:\Omega^{m,0}(M,E)\rightarrow \Omega^{m,0}(M,E)$, $\Delta_{\overline{\partial},E,m,0}=\overline{\partial}_{E,m,0}^t\circ\overline{\partial}_{E,m,0}$ be the Hodge-Kodaira Laplacian in bidegree $(m,0)$. Using again the fact that $M$ is compact and that $\Delta_{\overline{\partial},E,m,0}$ is elliptic and formally self-adjoint we can conclude that $\Delta_{\overline{\partial},E,m,0}$, acting on $L^2\Omega^{m,0}(M,E,g)$ with  domain $\Omega^{m,0}(M,E)$, is essentially self-adjoint; we label its unique (and therefore self-adjoint) extension by 
\begin{equation}
\label{crema}
\Delta_{\overline{\partial},E,m,0}:L^2\Omega^{m,0}(M,E,g)\rightarrow L^2\Omega^{m,0}(M,E,g).
\end{equation}
Clearly we can write  \eqref{crema}  as $\overline{\partial}_{E,m,0}^t\circ\overline{\partial}_{E,m,0}$ where 
\begin{equation}
\label{isabel}
\overline{\partial}_{E,m,0}:L^2\Omega^{m,0}(M,E,g)\rightarrow L^2\Omega^{m,1}(M,E,g)
\end{equation}
 is defined in \eqref{zita} and $\overline{\partial}_{E,m,0}^t:L^2\Omega^{m,1}(M,E,g)\rightarrow L^2\Omega^{m,0}(M,E,g)$ is defined in \eqref{zitaw}.  Furthermore it is another standard result from classical elliptic theory on closed manifolds that \eqref{crema}  has discrete spectrum. We are now in the position to state the other main result of this section.

\begin{teo}
\label{secondo}
In the  setting of Theorem \ref{firstth}. Let $$0\leq \mu_1\leq \mu_2\leq...\leq \mu_k\leq...$$ be the eigenvalues of \eqref{crema} and let  $$0\leq \lambda_1\leq \lambda_2\leq...\leq \lambda_{k}\leq...$$ be the eigenvalues of the operator 
\begin{equation}
\label{lachi}
\overline{\mathfrak{d}}_{E,m,0}^*\circ\overline{\mathfrak{d}}_{E,m,0}:L^2\Omega^{m,0}(A_h,E|_{A_h},h|_{A_h})\rightarrow L^2\Omega^{m,0}(A_h,E|_{A_h},h|_{A_h}).
\end{equation}
Then, for every $k\in \mathbb{N}$, we have the following inequality
\begin{equation}
\label{ine}
\gamma\lambda_k\geq \mu_k
\end{equation}
where $\gamma$ is the constant introduced  in \eqref{drago}. Moreover we have the following asymptotic inequality:
\begin{equation}
\label{asine}
\lim \inf \lambda_k k^{-\frac{1}{m}}>0
\end{equation} 
as $k\rightarrow \infty$.
\end{teo}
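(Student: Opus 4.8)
The plan is to derive both inequalities from the variational (Courant--Fischer) characterization of eigenvalues, using the three structural facts already established in Theorem~\ref{firstth}. The first step is to recall that, by that theorem, $L^2\Omega^{m,0}(A_h,E|_{A_h},h|_{A_h})$ and $L^2\Omega^{m,0}(M,E,g)$ are the same Hilbert space with the same norm on $(m,0)$-forms (cf.\ \eqref{chiment}--\eqref{chime}); that there is a continuous inclusion of form domains $\mathcal{D}(\overline{\mathfrak{d}}_{E,m,0})\hookrightarrow\mathcal{D}(\overline{\partial}_{E,m,0})$ on which $\overline{\partial}_{E,m,0}s$ and $\overline{\mathfrak{d}}_{E,m,0}s$ coincide as $(m,1)$-forms; and that both $\overline{\mathfrak{d}}_{E,m,0}^*\circ\overline{\mathfrak{d}}_{E,m,0}$ (by Theorem~\ref{firstth}) and \eqref{crema} (by classical elliptic theory on the closed manifold $(M,g)$) are nonnegative self-adjoint operators with discrete spectrum. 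Consequently $\lambda_k=\min_{V}\max_{0\neq s\in V}\|\overline{\mathfrak{d}}_{E,m,0}s\|^2_{h}\,\|s\|^{-2}_{h}$, with $V$ ranging over $k$-dimensional subspaces of $\mathcal{D}(\overline{\mathfrak{d}}_{E,m,0})$, and likewise $\mu_k=\min_{W}\max_{0\neq s\in W}\|\overline{\partial}_{E,m,0}s\|^2_{g}\,\|s\|^{-2}_{g}$, with $W$ ranging over $k$-dimensional subspaces of $\mathcal{D}(\overline{\partial}_{E,m,0})$.

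Next I would prove \eqref{ine}. Fix $k$ and take $V_k\subset\mathcal{D}(\overline{\mathfrak{d}}_{E,m,0})$ to be the span of eigensections of \eqref{lachi} attached to $\lambda_1,\dots,\lambda_k$. By the continuous inclusion of form domains, $V_k$ is an admissible $k$-dimensional test space for $\mu_k$. For $s\in V_k$ one has $\|\overline{\partial}_{E,m,0}s\|_{g}=\|\overline{\mathfrak{d}}_{E,m,0}s\|_{g}$ since the two are the $g$-norm of the same form, $\|s\|_{g}=\|s\|_{h}$ on $(m,0)$-forms, and $\|\overline{\mathfrak{d}}_{E,m,0}s\|^2_{g}\le\gamma\|\overline{\mathfrak{d}}_{E,m,0}s\|^2_{h}$ by \eqref{drago}. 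Hence
$$\mu_k\ \le\ \max_{0\neq s\in V_k}\frac{\|\overline{\partial}_{E,m,0}s\|^2_{g}}{\|s\|^2_{g}}\ \le\ \gamma\max_{0\neq s\in V_k}\frac{\|\overline{\mathfrak{d}}_{E,m,0}s\|^2_{h}}{\|s\|^2_{h}}\ =\ \gamma\lambda_k,$$
the last equality because on the span of eigensections the largest Rayleigh quotient equals $\lambda_k$. Equivalently, this is just the standard monotonicity of eigenvalues under domination of quadratic forms combined with the inclusion of form domains.

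For \eqref{asine} I would invoke Weyl's law for the operator \eqref{crema}. It is the self-adjoint extension of the second-order, formally self-adjoint, elliptic Hodge--Kodaira Laplacian $\Delta_{\overline{\partial},E,m,0}$ acting on sections of $\Lambda^{m,0}(M)\otimes E$ over the closed manifold $(M,g)$ of real dimension $2m$, with scalar principal symbol $|\xi|_g^2\,\id$. Hence its eigenvalue counting function satisfies $\#\{k:\mu_k\le\lambda\}\sim c\,\lambda^{m}$ as $\lambda\to\infty$ for some $c>0$ (depending on $\mathrm{rk}(E)$ and $\vol(M,g)$), so that $\mu_k\sim c^{-1/m}k^{1/m}$ and in particular $\liminf_{k\to\infty}\mu_k k^{-1/m}>0$. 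Combining with \eqref{ine} gives $\gamma\lambda_k k^{-1/m}\ge\mu_k k^{-1/m}$, whence $\liminf_{k\to\infty}\lambda_k k^{-1/m}\ge\gamma^{-1}\liminf_{k\to\infty}\mu_k k^{-1/m}>0$, which is \eqref{asine}.

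I do not expect a genuine obstacle here: the content is essentially transferred from Theorem~\ref{firstth} together with classical facts on closed manifolds. The only points that need a little care are checking that the test space $V_k$ really lies in the form domain of \eqref{crema} (this is precisely point~1 of Theorem~\ref{firstth}) and keeping straight which Hermitian metric each norm refers to (handled by Proposition~\ref{brick} and \eqref{drago}).
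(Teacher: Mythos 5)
Your proposal is correct and follows essentially the same route as the paper: the min--max characterization of both spectra, the test space spanned by the first $k$ eigensections of \eqref{lachi} made admissible for \eqref{crema} via point~1 of Theorem~\ref{firstth}, the norm comparisons \eqref{chime} and \eqref{drago} to get $\gamma\lambda_k\geq\mu_k$, and Weyl's law on the closed manifold $(M,g)$ to deduce \eqref{asine}. No gaps.
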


\begin{proof}
According to the min-max theorem, see for instance \cite{KSC}, we can characterize the eigenvalues of \eqref{crema} as 
$$\mu_k=\inf_{F\in \mathfrak{F}_k\cap\mathcal{D}(\Delta_{\overline{\partial},E,m,0})}\sup_{s\in F}\frac{\langle\Delta_{\overline{\partial},E,m,0}s,s\rangle_{L^2\Omega^{m,0}(M,E,g)}}{\|s\|^2_{L^2\Omega^{m,0}(M,E,g)}}=\inf_{F\in \mathfrak{F}_k\cap\mathcal{D}(\overline{\partial}_{E,m,0})}\sup_{s\in F}\frac{\langle \overline{\partial}_{E,m,0}s,\overline{\partial}_{E,m,0}s\rangle_{L^2\Omega^{m,1}(M,E,g)}}{\|s\|^2_{L^2\Omega^{m,0}(M,E,g)}}$$
where $\mathfrak{F}_k$ denotes the set of linear subspaces of $L^2\Omega^{m,0}(M,E,g)$ of dimension at most $k$, $\mathcal{D}(\Delta_{\overline{\partial},E,m,0})$ is the domain of \eqref{crema} and $\mathcal{D}(\overline{\partial}_{E,m,0})$ is the domain of \eqref{isabel}. In the same way for the eigenvalues of \eqref{lachi} we have 
\begin{align}
\label{mini}
& \lambda_k=\inf_{F\in \mathfrak{F}_k\cap\mathcal{D}(\overline{\mathfrak{d}}_{E,m,0}^*\circ\overline{\mathfrak{d}}_{E,m,0})}\sup_{s\in F}\frac{\langle \overline{\mathfrak{d}}_{E,m,0}^*\circ\overline{\mathfrak{d}}_{E,m,0}s,s\rangle_{L^2\Omega^{m,0}(A_h,E|_{A_h},h|_{A_h})}}{\|s\|^2_{L^2\Omega^{m,0}(A_h,E|_{A_h},h|_{A_h})}}=\\
\nonumber &\inf_{F\in \mathfrak{F}_k\cap\mathcal{D}(\overline{\mathfrak{d}}_{E,m,0})}\sup_{s\in F}\frac{\langle \overline{\mathfrak{d}}_{E,m,0}s,\overline{\mathfrak{d}}_{E,m,0}s\rangle_{L^2\Omega^{m,1}(A_h,E|_{A_h},h|_{A_h})}}{\|s\|^2_{L^2\Omega^{m,0}(A_h,E|_{h},h|_{A_h})}}.
\end{align}
Let now $\{\phi_n,\ n\in \mathbb{N}\}$ be an orthonormal basis of $L^2\Omega^{m,0}(A_h,E|_{A_h},h|_{A_h})$ made of eigensections of \eqref{lachi} such that $(\overline{\mathfrak{d}}_{E,m,0}^*\circ\overline{\mathfrak{d}}_{E,m,0})\phi_n=\lambda_n\phi_n$. Let us define  $F_k\in \mathfrak{F}_k$ as the $k$-dimensional subspace of $L^2\Omega^{m,0}(A_h,E|_{A_h},h|_{A_h})$ generated by $\{\phi_1,...,\phi_k\}$. Then, see for instance \cite{KSC} pag. 279, we have $$\lambda_k=\sup_{s\in F_k}\frac{\langle \overline{\mathfrak{d}}_{E,m,0}s,\overline{\mathfrak{d}}_{E,m,0}s\rangle_{L^2\Omega^{m,1}(A_h,E|_{A_h},h|_{A_h})}}{\|s\|^2_{L^2\Omega^{m,0}(A_h,E|_{A_h},h|_{A_h})}}.$$
Hence, using Theorem \ref{firstth} and \eqref{chimen}, we have 
\begin{align}
\nonumber & \gamma\lambda_k= \gamma\sup_{s\in F_k}\frac{\langle\overline{\mathfrak{d}}_{E,m,0}s,\overline{\mathfrak{d}}_{E,m,0}s\rangle_{L^2\Omega^{m,1}(A_h,E|_{A_h},h|_{A_h})}}{\|s\|^2_{L^2\Omega^{m,0}(A_h,E|_{A_h},h|_{A_h})}} \geq  \sup_{s\in F_k}\frac{\langle\overline{\partial}_{E,m,0}s,\overline{\partial}_{E,m,0}s\rangle_{L^2\Omega^{m,1}(M,E,g)}}{\|s\|^2_{L^2\Omega^{m,0}(M,E,g)}}\\
\nonumber & \geq  \inf_{F\in \mathfrak{F}_k\cap\mathcal{D}(\overline{\partial}_{E,m,0})}\sup_{s\in F}\frac{\langle \overline{\partial}_{E,m,0}s,\overline{\partial}_{E,m,0}s\rangle_{L^2\Omega^{m,1}(M,E,g)}}{\|s\|^2_{L^2\Omega^{m,0}(M,E,g)}}=\mu_k.
\end{align}
This establishes \eqref{ine}. Concerning \eqref{asine} it is enough to observe that according to  the Weyl law, see \cite{Jroe} pag.  115, we have $$\mu_k\sim 4\pi\left(\frac{1}{\Gamma(m+1)}\vol_g(M)\right)^{\frac{1}{m}}k^{\frac{1}{m}}$$ as $k\rightarrow \infty$. Applying now \eqref{ine} the desired conclusion follows.
\end{proof}
 As a consequence of  Theorem \ref{secondo} we have now the next corollary. 
\begin{cor}
\label{heat}
In the  setting of Theorem \ref{secondo}. Let $$e^{-t\overline{\mathfrak{d}}_{E,m,0}^*\circ\overline{\mathfrak{d}}_{E,m,0}}:L^2\Omega^{m,0}(A_h,E|_{A_h},h|_{A_h})\rightarrow L^2\Omega^{m,0}(A_h,E|_{A_h},h|_{A_h})$$ be  the heat operator associated to \eqref{lachi} and analogously let $$e^{-t\Delta_{\overline{\partial},E,m,0}}:L^2\Omega^{m,0}(M,E,g)\rightarrow L^2\Omega^{m,0}(M,E,g)$$ be  the heat operator associated to \eqref{crema}. We have the following properties.
\begin{enumerate}
\item $e^{-t\overline{\mathfrak{d}}_{E,m,0}^*\circ\overline{\mathfrak{d}}_{E,m,0}}:L^2\Omega^{m,0}(A_h,E|_{A_h},h|_{A_h})\rightarrow L^2\Omega^{m,0}(A_h,E|_{A_h},h|_{A_h})$ is a trace class operator.
\item $\Tr(e^{-t\overline{\mathfrak{d}}_{E,m,0}^*\circ \overline{\mathfrak{d}}_{E,m,0}})\leq\Tr(e^{-\frac{t}{\gamma}\Delta_{\overline{\partial},E,m,0}})$ for each $t>0$ and where $\gamma$ is the constant defined  in \eqref{drago}.
\item $\Tr(e^{-t\overline{\mathfrak{d}}_{E,m,0}^*\circ\overline{\mathfrak{d}}_{E,m,0}})\leq Ct^{-m}$ for $t\in (0,1]$ and  some constant  $C>0$.
\end{enumerate}
\end{cor}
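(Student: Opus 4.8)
The plan is to read everything off from the spectral decomposition of $\overline{\mathfrak{d}}_{E,m,0}^*\circ\overline{\mathfrak{d}}_{E,m,0}$, which by the third point of Theorem \ref{firstth} is a non-negative self-adjoint operator with discrete spectrum. First I would fix an orthonormal basis $\{\phi_k\}$ of $L^2\Omega^{m,0}(A_h,E|_{A_h},h|_{A_h})$ consisting of eigensections, $(\overline{\mathfrak{d}}_{E,m,0}^*\circ\overline{\mathfrak{d}}_{E,m,0})\phi_k=\lambda_k\phi_k$, so that the heat operator defined via the spectral theorem acts by $\phi_k\mapsto e^{-t\lambda_k}\phi_k$; it is then a non-negative self-adjoint operator, and it is trace class precisely when $\sum_k e^{-t\lambda_k}<\infty$, in which case its trace equals this sum. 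This reduces all three assertions to elementary estimates on the sequence $(\lambda_k)$.

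For point (1) I would use the asymptotic inequality \eqref{asine}: it provides $c>0$ and $N\in\mathbb{N}$ with $\lambda_k\geq ck^{1/m}$ for $k\geq N$, whence $\sum_k e^{-t\lambda_k}\leq N+\sum_{k\geq N}e^{-tck^{1/m}}<\infty$ for every $t>0$, by comparing the tail with $\int_0^\infty e^{-tcx^{1/m}}\,dx$. For point (2) I would invoke the comparison $\gamma\lambda_k\geq\mu_k$ from \eqref{ine}, which gives $e^{-t\lambda_k}\leq e^{-(t/\gamma)\mu_k}$ for all $k$ and all $t>0$; summing over $k$ yields
\[
\Tr\!\left(e^{-t\overline{\mathfrak{d}}_{E,m,0}^*\circ\overline{\mathfrak{d}}_{E,m,0}}\right)=\sum_k e^{-t\lambda_k}\leq\sum_k e^{-(t/\gamma)\mu_k}=\Tr\!\left(e^{-(t/\gamma)\Delta_{\overline{\partial},E,m,0}}\right),
\]
the right-hand side being finite since $\Delta_{\overline{\partial},E,m,0}$ is elliptic on the closed manifold $M$.

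Finally, for point (3) I would combine (2) with the classical small-time heat-trace asymptotics on a closed manifold: $\Delta_{\overline{\partial},E,m,0}$ is a Laplace-type operator on $M$ (real dimension $2m$), so $\Tr(e^{-s\Delta_{\overline{\partial},E,m,0}})\sim(4\pi s)^{-m}\vol_g(M)$ as $s\to 0^+$, hence $\Tr(e^{-s\Delta_{\overline{\partial},E,m,0}})\leq C' s^{-m}$ for $s\in(0,1]$, and, enlarging $C'$ using that this trace is finite and nonincreasing, the same bound persists on any fixed bounded $s$-interval; taking $s=t/\gamma$ with $t\in(0,1]$ then gives $\Tr(e^{-t\overline{\mathfrak{d}}_{E,m,0}^*\circ\overline{\mathfrak{d}}_{E,m,0}})\leq C'\gamma^m t^{-m}=:Ct^{-m}$. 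Alternatively, (3) follows directly from $\lambda_k\geq ck^{1/m}$ by estimating $\sum_k e^{-t\lambda_k}\leq Nt^{-m}+\Gamma(m+1)c^{-m}t^{-m}$ for $t\in(0,1]$.

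I do not expect any real obstacle: once Theorem \ref{secondo} is available, the whole corollary is bookkeeping with exponential sums. The only mild subtlety is aligning parameter ranges in (3) — $t\in(0,1]$ versus $s=t/\gamma\in(0,\gamma^{-1}]$ — which is handled by monotonicity of the heat trace; and, at the foundational level, confirming that the heat operator is legitimately defined by functional calculus and that its trace equals $\sum_k e^{-t\lambda_k}$, which is routine for a non-negative self-adjoint operator with discrete spectrum.
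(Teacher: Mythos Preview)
Your proof is correct and for points (1) and (2) it is literally the paper's argument. For point (3) you take a slightly different route: you apply (2) as stated and then absorb the factor $\gamma$ at the end, carefully handling the shift in the $t$-range via monotonicity of the heat trace; the paper instead rescales the auxiliary metric to $g'=b^2g$ with $b^2\geq\gamma$, which forces the constant in \eqref{drago} to become $1$, so that (2) applied with $g'$ gives $\Tr(e^{-t\overline{\mathfrak{d}}_{E,m,0}^*\circ\overline{\mathfrak{d}}_{E,m,0}})\leq\Tr(e^{-t\Delta'_{\overline{\partial},E,m,0}})$ with no $\gamma$ in the exponent, and the small-time heat asymptotics on $(M,g')$ finish directly. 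Both approaches are equally valid; the paper's rescaling trick sidesteps the bookkeeping on parameter ranges, while yours is more immediate and also supplies a second, self-contained proof via the eigenvalue lower bound.
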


\begin{proof}
As in the proof of Theorem \ref{secondo} let us label by $\{\phi_n,\ n\in \mathbb{N}\}$  an orthonormal basis of $L^2\Omega^{m,0}(A_h,E|_{A_h},h|_{A_h})$ made of eigensections of \eqref{lachi} such that $(\overline{\mathfrak{d}}_{E,m,0}^*\circ\overline{\mathfrak{d}}_{E,m,0})\phi_n=\lambda_n\phi_n$.
Then we have $$\Tr(e^{-t\overline{\mathfrak{d}}_{E,m,0}^*\circ\overline{\mathfrak{d}}_{E,m,0}})=\sum_{n\in \mathbb{N}}\langle e^{-t\overline{\mathfrak{d}}_{E,m,0}^*\circ\overline{\mathfrak{d}}_{E,m,0}}\phi_n,\phi_n\rangle_{L^2\Omega^{m,0}(A_h,E|_{A_h},h|_{A_h})}=\sum_{n\in \mathbb{N}}e^{-t\lambda_n}<\infty$$ for every fixed $t>0$ as a consequence of \eqref{asine}. This establishes the first point of the corollary. The second point follows by \eqref{ine}. Indeed we have $$\Tr(e^{-t\overline{\mathfrak{d}}_{E,m,0}^*\circ\overline{\mathfrak{d}}_{E,m,0}})=\sum_{n\in \mathbb{N}}e^{-t\lambda_n}\leq \sum_{n\in \mathbb{N}}e^{-t\frac{\mu_n}{\gamma}}=\sum_{n\in \mathbb{N}}e^{-\frac{t}{\gamma}\mu_n}=\Tr (e^{-\frac{t}{\gamma}\Delta_{\overline{\partial},E,m,0}})$$ for every fixed $t>0$. Finally for the third point we argue as follows. Let $b>0$ and let us define $g':=b^2g$. Clearly $g'$ is another Hermitian metric on $M$ and we have 
$$\|\ \|_{L^2\Omega^{m,q}(M,E,g')}=\frac{1}{b^q}\|\ \|_{L^2\Omega^{m,q}(M,E,g)}.$$
Therefore, choosing $b$ in such a way  that $b^{2}\geq \gamma$, we have $$b^{-2}\|\ \|^2_{L^2\Omega^{m,1}(A_h,E|_{A_h},g|_{A_h})}\leq \gamma^{-1}\|\ \|^2_{L^2\Omega^{m,1}(A_h,E|_{A_h},g|_{A_h})}\leq \|\ \|^2_{L^2\Omega^{m,1}(A_h,E|_{A_h},h|_{A_h})}$$ that is 
\begin{equation}
\label{drago2}
\|\ \|^2_{L^2\Omega^{m,1}(A_h,E|_{A_h},g'|_{A_h})}\leq \|\ \|^2_{L^2\Omega^{m,1}(A_h,E|_{A_h},h|_{A_h})}.
\end{equation}
Now, in the remaining part of this proof, let us replace the Hermitian metric $g$ with $g'$. Applying the second point of this corollary to $h$ and $g'$ and using  \eqref{drago2} instead of \eqref{drago}, we get  that 
\begin{equation}
\label{rsara}
\Tr(e^{-t\overline{\mathfrak{d}}_{E,m,0}^*\circ \overline{\mathfrak{d}}_{E,m,0}})\leq\Tr(e^{-t\Delta_{\overline{\partial},E,m,0}})
\end{equation}
for every $t>0$, where now,  on the right hand side of \eqref{rsara}, $\Delta_{\overline{\partial},E,m,0}:L^2\Omega^{m,0}(M,E,g')\rightarrow L^2\Omega^{m,0}(M,E,g')$ is the unique closed extension of the Hodge-Kodaira Laplacian in bidegree $(m,0)$ associated to $g'$.
Finally the conclusion  follows immediately  by the asymptotic expansion of the heat trace on the right hand side of \eqref{rsara}, see for instance \cite{BGV} or \cite{Jroe}. 
\end{proof}

We stress on the fact  that the results of this section hold for  $$\overline{\mathfrak{d}}_{E,m,0}:L^2\Omega^{m,0}(A_h,E|_{A_h},h)\rightarrow L^2\Omega^{m,1}(A_h,E|_{A_h},h)$$ which is defined as \emph{any closed extension} of    $\overline{\partial}_{E,m,0}:L^2\Omega^{m,0}(A_h,E|_{A_h},h|_{A_h})\rightarrow L^2\Omega^{m,1}(A_h,E|_{A_h},h|_{A_h})$ where the domain of the latter operator is  $\Omega^{m,0}_c(A_h,E|_{A_h})$. Therefore Th. \ref{firstth}, Th. \ref{secondo} and the corresponding corollaries apply in particular to $$\overline{\partial}_{E,m,0,\max/\min}:L^2\Omega^{m,0}(A_h,E|_{A_h},h|_{A_h})\rightarrow L^2\Omega^{m,1}(A_h,E|_{A_h},h|_{A_h}).$$
Now we have the following remark.

\begin{rem}
\label{bottchern}
Let $A'_h$ be an open and dense subset of $A_h$. Assume that $(A'_h,g|_{A'_h})$ is still parabolic. Then it is clear that we can reformulate Th. \ref{firstth}, Th. \ref{secondo}, Cor. \ref{freddy} and  Cor. \ref{heat}  by replacing $A'_h$ with $A_h$ in the corresponding statements.
\end{rem}

Finally we conclude this section with the following comment. Analogously to \eqref{model} let us consider the operator
\begin{equation}
\label{smodel}
\partial_{E,m,0}:L^2\Omega^{0,m}(A_h,E|_{A_h},h|_{A_h})\rightarrow L^2\Omega^{1,m}(A_h,E|_{A_h},h|_{A_h})
\end{equation}
with $\Omega^{0,m}_c(A_h,E|_{A_h})$ as  domain
and let us label by 
\begin{equation}
\label{sany}
\mathfrak{d}_{E,m,0}:L^2\Omega^{0,m}(A_h,E|_{A_h},h|_{A_h})\rightarrow L^2\Omega^{1,m}(A_h,E|_{A_h},h|_{A_h})
\end{equation}
any closed extension of \eqref{smodel}. Then by Prop. \ref{brick} and using analogous strategies to those employed in the previous proofs it is clear that  the corresponding versions   of Theorem \ref{firstth}, Cor. \ref{freddy}, Theorem \ref{secondo} and Cor. \ref{heat} hold true for \eqref{sany}.

\section{Applications}
This section contains various applications of the previous results. We start by applying Th. \ref{firstth}, Th. \ref{secondo} and their corollaries to the case of a compact and irreducible Hermitian complex space. The second part concerns the existence of self-adjoint extensions with discrete spectrum of the Hodge-Kodaira Laplacian  in the setting of  compact and irreducible Hermitian complex spaces with isolated singularities. Finally, in the last part, the Hodge-Kodaira Laplacian on  complex projective surfaces is carefully studied.

\subsection{Hermitian complex spaces}
We start with the following proposition which furnishes a sufficient condition for the  parabolicity of certain incomplete Riemannian manifolds.
\begin{prop}
\label{parasuf}
Consider a  compact Riemannian manifold $(M,g)$. Let $\Sigma\subset M$ be a subset made of a finite union of closed  submanifolds, $\Sigma=\cup_{i=1}^m S_i$ such that each submanifold $S_i$ has codimension greater than one, that is $\mathrm{cod}(S_i)\geq 2$. Let $A$ be defined as $M\setminus  \Sigma$ and consider the restriction of $g$ over $A$, $g|_{A}$.
Then $(A,g|_{A})$ is parabolic. 
\end{prop}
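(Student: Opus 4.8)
The plan is to verify the three conditions of Definition \ref{para} for an explicit sequence of Lipschitz cutoffs built from the distance functions to the $S_i$, the point being a capacity estimate in which the hypothesis $\mathrm{cod}(S_i)\ge2$ is decisive. First I would set $r_i:=\mathrm{dist}(\cdot,S_i)$ for $i=1,\dots,m$; each $r_i$ is globally Lipschitz on $M$ with $|dr_i|\le1$ a.e., and on a punctured tubular neighborhood of $S_i$ it is smooth with $|dr_i|=1$. Fix $\varepsilon>0$ smaller than the tubular radius of every $S_i$, choose parameters $0<a_n<b_n<\varepsilon$ to be pinned down later, and put
$$\psi_n(t):=\begin{cases}0,& 0\le t\le a_n,\\ \dfrac{\log(t/a_n)}{\log(b_n/a_n)},& a_n\le t\le b_n,\\ 1,& t\ge b_n,\end{cases}\qquad \phi_n:=\prod_{i=1}^m\psi_n(r_i),$$
viewed as a function on $M$ and then restricted to $A$.

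Next I would dispose of the two easy conditions. Since $\psi_n$ is Lipschitz with values in $[0,1]$, so is $\phi_n$, and $0\le\phi_n\le1$; moreover $\phi_n$ vanishes on the neighborhood $\bigcup_i\{r_i<a_n\}$ of $\Sigma$, so its support is a closed subset of the compact manifold $M$ lying in $A$, hence compact, and by the remark after Definition \ref{para} we get $\phi_n\in\mathcal{D}(d_{\min})$ with $d_{\min}\phi_n$ the a.e.\ differential. For any $x\in A$ one has $r_i(x)>0$ for all $i$, hence $\psi_n(r_i(x))=1$ for $n$ large and $\phi_n(x)\to1$; as $M\setminus A$ is null this gives $\phi_n\to1$ a.e.

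The heart of the argument is the gradient estimate. Using the Leibniz rule and $0\le\psi_n\le1$ I would first bound $|d\phi_n|\le\sum_{i=1}^m|\psi_n'(r_i)|\,|dr_i|\le\sum_{i=1}^m|\psi_n'(r_i)|$ a.e., so it suffices to prove $\int_M|\psi_n'(r_i)|^2\,\dvol_g\to0$ for each $i$. Since $\psi_n'(r_i)=(r_i\log(b_n/a_n))^{-1}$ on $\{a_n<r_i<b_n\}$ and vanishes elsewhere, the coarea formula together with $|dr_i|=1$ gives
$$\int_M|\psi_n'(r_i)|^2\,\dvol_g=\frac{1}{(\log(b_n/a_n))^2}\int_{a_n}^{b_n}\frac{\mathcal{H}^{n-1}(\{r_i=t\})}{t^2}\,dt.$$
Here I would invoke the tube estimate $\mathcal{H}^{n-1}(\{r_i=t\})\le C_i\,t^{k_i-1}$ for $0<t<\varepsilon$, where $k_i:=\mathrm{cod}(S_i)$, reducing the right-hand side to at most $C_i(\log(b_n/a_n))^{-2}\int_{a_n}^{b_n}t^{k_i-3}\,dt$. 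This is exactly where $k_i\ge2$ matters: for $k_i=2$ the integral equals $\log(b_n/a_n)$ and the bound is $C_i/\log(b_n/a_n)$, while for $k_i\ge3$ it is at most $C_i b_n^{k_i-2}\big((k_i-2)(\log(b_n/a_n))^2\big)^{-1}$. Taking for instance $b_n=1/n$ and $a_n=e^{-n^2}/n$, so that $b_n\to0$ and $\log(b_n/a_n)=n^2\to\infty$, forces each term to $0$, hence $\|d_{\min}\phi_n\|_{L^2\Omega^1(A,g|_A)}\to0$, which is the third condition of Definition \ref{para}.

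The main obstacle is exactly the tube estimate $\mathcal{H}^{n-1}(\{r_i=t\})\le C_i t^{k_i-1}$: this is the only step where the geometry of the $S_i$ enters, and it is what packages "codimension at least two" into a usable inequality. I would prove it using the smooth normal exponential map on a tubular neighborhood of $S_i$ — so that $\{r_i=t\}$ for small $t$ is a graph over the unit normal sphere bundle of $S_i$ with radial scaling $t^{k_i-1}$ — or, equivalently, by differentiating the classical Weyl tube formula $\mathrm{vol}(\{r_i<t\})=O(t^{k_i})$ via the coarea formula, in both cases using compactness of $S_i$ to make $C_i$ and $\varepsilon$ uniform. Granting this, the sequence $\{\phi_n\}$ satisfies all three conditions of Definition \ref{para} on $A$, so $(A,g|_A)$ is parabolic, as claimed.
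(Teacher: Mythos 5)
Your proof is correct. Note that the paper itself gives no argument here: its ``proof'' of Prop.\ \ref{parasuf} is a bare citation to \cite{BeGu}, so there is nothing internal to compare against; what you have written is the standard self-contained logarithmic-cutoff/capacity argument that such references rely on. All three conditions of Def.\ \ref{para} are verified: the product of the truncated-logarithm cutoffs is Lipschitz, valued in $[0,1]$, vanishes on a neighborhood of $\Sigma$ (hence has compact support in $A$), tends to $1$ pointwise on $A$, and the energy estimate via the coarea formula and the tube bound $\mathcal{H}^{\dim M-1}(\{r_i=t\})\leq C_i t^{k_i-1}$ is exactly where $\cod(S_i)\geq 2$ enters, with the borderline case $k_i=2$ handled by the $1/\log(b_n/a_n)$ decay. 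Two cosmetic remarks: you use $n$ both for the sequence index and for $\dim M$ in $\mathcal{H}^{n-1}$, which should be disentangled; and since each $S_i$ is a closed submanifold of a compact manifold it is compact, which is what guarantees a uniform positive tubular radius and a uniform constant $C_i$ in the tube estimate --- worth stating explicitly, as you do only implicitly at the end. Neither affects the validity of the argument.
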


\begin{proof}
See  \cite{BeGu}.
\end{proof}

Let $M$ be a  complex manifold of complex dimension $m$. We recall that a divisor on $M$ is a locally finite combination $\sum_kc_kV_k$ where $V_k$ are irreducible analytic hypersurfaces of $M$ and $c_k\in \mathbb{Z}$. The support of $D$ is defined as  $\supp(D)=\cup \{V_k:c_k\neq 0\}$. A \emph{divisor with only normal crossings}  is a divisor of the form $D=\sum_kV_k$ where $V_k$ are distinct irreducible smooth hypersurfaces  of $M$ and, for any point $p\in \supp(D)$,  there is a local analytic coordinate system $(U,z_1,...,z_m)$ such that  $\supp(D)\cap U=\{z_1\cdot...\cdot z_k=0\}$ for some $1\leq k\leq m$. Sometimes in the rest of the paper, with a little abuse of notation, we will identify a divisor with only normal crossings with its support. Now we have the following immediate application of Prop. \ref{parasuf}.
\begin{prop}
\label{exam}
 Let $M$ be a compact complex manifold and let $g$ be a Hermitian metric on $M$. Let $D\subset M$ be a divisor with only normal crossings.  Then $(M\setminus D, g|_{M\setminus D})$ is parabolic.
\end{prop}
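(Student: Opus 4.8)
The plan is to reduce the statement directly to Prop. \ref{parasuf}. First I would recall the abuse of notation introduced just before the statement: since $D$ is a divisor with only normal crossings we identify it with its support $\supp(D)$, so that $M\setminus D$ means $M\setminus \supp(D)$. The goal is then to exhibit $\supp(D)$ as a \emph{finite} union of \emph{closed submanifolds} of $M$, each of real codimension at least two, after which Prop. \ref{parasuf} applies verbatim.

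By the very definition of a normal crossing divisor, $\supp(D)=\bigcup_k V_k$ where the $V_k$ are distinct irreducible \emph{smooth} hypersurfaces of $M$. Since $M$ is compact, the locally finite sum defining $D$ is in fact finite, so $\supp(D)=\bigcup_{k=1}^N V_k$ for some $N\in\mathbb{N}$. Each $V_k$, being a smooth irreducible analytic hypersurface in the compact manifold $M$, is a compact (hence closed) complex submanifold of $M$ of complex codimension one, thus of real codimension $2$; in particular $\cod(V_k)=2>1$.

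It then remains only to apply Prop. \ref{parasuf} with $\Sigma:=\supp(D)=\bigcup_{k=1}^N V_k$ and $A:=M\setminus\Sigma=M\setminus D$, and to conclude that $(M\setminus D,g|_{M\setminus D})$ is parabolic. There is essentially no obstacle here: the whole content is packaged in Prop. \ref{parasuf}, and the only points deserving a moment's attention are that the normal-crossing hypothesis is exactly what guarantees that each component $V_k$ is a genuine submanifold (and not merely a subvariety with possible singularities), and that compactness of $M$ upgrades ``locally finite'' to ``finite'', so that $\Sigma$ is a finite union as required in the hypotheses of Prop. \ref{parasuf}.
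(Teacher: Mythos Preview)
Your proof is correct and follows exactly the same approach as the paper: the paper's own proof simply says ``This follows immediately by Prop.~\ref{parasuf} and the definition of divisor with only normal crossings,'' and you have spelled out precisely why---the smoothness of the components $V_k$ and the compactness of $M$ ensure that $\supp(D)$ is a finite union of closed submanifolds of real codimension $2$, so Prop.~\ref{parasuf} applies.
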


\begin{proof}
This follows immediately by Prop. \ref{parasuf} and the definition of divisor with only normal crossings.
\end{proof}

%

The next results deal with complex  spaces. This is a classical topic in complex geometry and we refer to \cite{GeFi} and \cite{GrRe} for definitions and properties. Here we recall that an irreducible complex space $X$ is a reduced complex space such that $\reg(X)$, the regular part of $X$, is connected. Moreover, in order to state the next results, we spend a few words concerning the resolution of singularities. We refer to the celebrated work of Hironaka \cite{Hiro},  to \cite{BiMi} and to \cite{HH} for a thorough discussion  on this subject. Furthermore we refer to \cite{GMMI} and to \cite{MaMa}  for a quick introduction. Below we simply recall what is strictly necessary for our purposes.\\ Let $X$ be a compact irreducible complex space. Then there exists a compact complex manifold $M$, a divisor with only normal crossings $D\subset M$ and a surjective holomorphic map $\pi:M\rightarrow X$ such that $\pi^{-1}(\sing(X))=D$ and 
\begin{equation}
\label{hiro}
\pi|_{M\setminus D}: M\setminus D\longrightarrow X\setminus \sing(X)
\end{equation}
is a biholomorphism. Furthermore, before to introduce the next results, we recall  that a paracompact  and reduced complex space $X$ is said \emph{Hermitian} if  the regular part of $X$ carries a Hermitian metric $h$  such that for every point $p\in X$ there exists an open neighborhood $U\ni p$ in $X$, a proper holomorphic embedding of $U$ into a polydisc $\phi: U \rightarrow \mathbb{D}^N\subset \mathbb{C}^N$ and a Hermitian metric $g$ on $\mathbb{D}^N$ such that $(\phi|_{\reg(U)})^*g=h$, see for instance \cite{Ohsa} or \cite{JRu}. In this case we will write $(X,h)$ and with a little abuse of language we will say that $h$ is a Hermitian metric on $X$. A natural example of Hermitian complex space is provided by an analytic sub-variety of a complex Hermitian manifold with the metric induced by the restriction of the  metric of the ambient space. In particular, within this class of examples, we have any complex projective variety $V\subset \mathbb{C}\mathbb{P}^n$ endowed  with the K\"ahler metric induced by the Fubini-Study metric of $\mathbb{C}\mathbb{P}^n$.\\ 
%
Consider now a  compact and irreducible  Hermitian complex space $(X,h)$ of complex dimension $m$. Let $\pi: M \longrightarrow X$ be a resolution of $X$ as described in \eqref{hiro}  and let $D\subset M$ be the divisor with only normal crossings such that  $\pi|_{M\setminus D}: M\setminus D\longrightarrow X\setminus \sing(X)$ is a biholomorphism. Let $(E,\rho)$ be a Hermitian holomorphic vector bundle on $\reg(X)$ such that there exists a Hermitian holomorphic vector bundle on $M$, $(F,\tau)$, which satisfies $(\pi|_{M\setminus D}^{-1})^*(F|_{M\setminus D})=E$ and $(\pi|_{M\setminus D}^{-1})^*(\tau|_{M\setminus D})=\rho$. Consider the operator 
\begin{equation}
\label{jiji}
\overline{\partial}_{E,m,0}:\Omega_c^{m,0}(\reg(X),E)\rightarrow \Omega_c^{m,1}(\reg(X),E)
\end{equation}
 and, similarly to  \eqref{any},  let 
\begin{equation}
\label{lola}
\overline{\mathfrak{d}}_{E,m,0}:L^2\Omega^{m,0}(\reg(X),E,h)\rightarrow L^2\Omega^{m,1}(\reg(X),E,h)
\end{equation}
be any closed extension of \eqref{jiji}. Let us label by $\mathcal{D}(\overline{\mathfrak{d}}_{E,m,0})$ the domain of \eqref{lola}. We are finally in the position to state the next theorem. It gathers the applications   of the results proved in the previous section to the case  of compact irreducible  Hermitian complex spaces.

\begin{teo}
\label{canonical}
In the setting described above. We have the following properties:
\begin{enumerate}
\item The inclusion $\mathcal{D}(\overline{\mathfrak{d}}_{E,m,0})\hookrightarrow L^2\Omega^{m,0}(\reg(X),E,h)$ is a compact operator where $\mathcal{D}(\overline{\mathfrak{d}}_{E,m,0})$ is endowed with the corresponding graph norm.
\item Let $\overline{\mathfrak{d}}_{E,m,0}^*:L^2\Omega^{m,1}(\reg(X),E,h)\rightarrow L^2\Omega^{m,0}(\reg(X),E,h)$ be the adjoint of  \eqref{lola}. Then  the operator 
\begin{equation}
\label{anylapz}
\overline{\mathfrak{d}}_{E,m,0}^*\circ\overline{\mathfrak{d}}_{E,m,0}:L^2\Omega^{m,0}(\reg(X),E,h)\rightarrow L^2\Omega^{m,0}(\reg(X),E,h)
\end{equation} whose domain is defined as $\{s\in \mathcal{D}(\overline{\mathfrak{d}}_{E,m,0}):\ \overline{\mathfrak{d}}_{E,m,0}s\in \mathcal{D}(\overline{\mathfrak{d}}_{E,m,0}^*)\}$, has discrete spectrum.
\end{enumerate}
Let $$0\leq \lambda_1\leq \lambda_2\leq \lambda_3\leq...$$ be the eigenvalues of \eqref{anylapz}. Then we have the following asymptotic inequality 
\begin{equation}
\label{rospo}
\lim \inf \lambda_k k^{-\frac{1}{m}}>0
\end{equation} 
as $k\rightarrow \infty$.\\
Finally consider  the heat operator $$e^{-t\overline{\mathfrak{d}}_{E,m,0}^*\circ\overline{\mathfrak{d}}_{E,m,0}}:L^2\Omega^{m,0}(\reg(X),E,h)\rightarrow L^2\Omega^{m,0}(\reg(X),E,h)$$ associated to \eqref{anylapz}. We have the following properties:
\begin{enumerate}
\item $e^{-t\overline{\mathfrak{d}}_{E,m,0}^*\circ\overline{\mathfrak{d}}_{E,m,0}}:L^2\Omega^{m,0}(\reg(X),E,h)\rightarrow L^2\Omega^{m,0}(\reg(X),E,h)$ is a trace class operator.
\item $\Tr(e^{-t\overline{\mathfrak{d}}_{E,m,0}^*\circ\overline{\mathfrak{d}}_{E,m,0}})\leq C t^{-m}$ for $t\in (0,1]$ and  for some constant $C>0$.
\end{enumerate}
\end{teo}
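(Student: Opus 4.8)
The plan is to deduce Theorem~\ref{canonical} from Theorem~\ref{firstth}, Theorem~\ref{secondo} and Corollary~\ref{heat} by transporting everything to the resolution $\pi:M\to X$ fixed in the statement. First I would pull back the Hermitian metric $h$ of $\reg(X)$ along $\pi$. Since $X$ is Hermitian, near any point $h$ is the restriction of a smooth Hermitian metric on an ambient complex number space into which $X$ locally embeds; composing such a local ambient metric with the holomorphic map $\pi$ produces a smooth, positive semidefinite Hermitian form, and these local forms glue to a global smooth positive semidefinite Hermitian form $h':=\pi^{*}h$ on $M$ which is positive definite on $M\setminus D$ because $\pi|_{M\setminus D}$ is a biholomorphism. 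Hence $h'$ is a Hermitian pseudometric on $M$ in the sense of Definition~\ref{pseudohermi}, its degeneracy locus $Z_{h'}$ is a closed subset contained in $D$, and $A_{h'}:=M\setminus Z_{h'}\supseteq M\setminus D$.

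Next I would fix an arbitrary Hermitian metric $g$ on $M$ and check the parabolicity hypothesis of Section~4. By Proposition~\ref{exam}, $(M\setminus D,g|_{M\setminus D})$ is parabolic, and since $M\setminus D$ is open and dense in $A_{h'}$ (indeed $Z_{h'}\subseteq D$ has measure zero), Remark~\ref{bottchern} permits us to run Theorem~\ref{firstth}, Theorem~\ref{secondo}, Corollary~\ref{freddy} and Corollary~\ref{heat} with $M\setminus D$ in the role of $A_h$ and with the bundle $(F,\tau)$ on $M$ --- the one that restricts via $\pi$ to $(E,\rho)$ --- in the role of $(E,\rho)$.

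Then I would identify the objects of the statement with the ones on $M$. The biholomorphism $\pi|_{M\setminus D}:M\setminus D\to\reg(X)$, the hypotheses $(\pi|_{M\setminus D}^{-1})^{*}(F|_{M\setminus D})=E$ and $(\pi|_{M\setminus D}^{-1})^{*}(\tau|_{M\setminus D})=\rho$, and the relation $h'|_{M\setminus D}=(\pi|_{M\setminus D})^{*}h$ induce, for every bidegree, a unitary isomorphism $L^{2}\Omega^{p,q}(\reg(X),E,h)\to L^{2}\Omega^{p,q}(M\setminus D,F|_{M\setminus D},h'|_{M\setminus D})$ intertwining $\overline{\partial}_{E,p,q}$ with $\overline{\partial}_{F,p,q}$; in particular it carries the closed extension $\overline{\mathfrak{d}}_{E,m,0}$ of \eqref{lola} to a closed extension $\overline{\mathfrak{d}}_{F,m,0}$ of $\overline{\partial}_{F,m,0}$ on $M\setminus D$, and $\overline{\mathfrak{d}}_{E,m,0}^{*}\circ\overline{\mathfrak{d}}_{E,m,0}$ to $\overline{\mathfrak{d}}_{F,m,0}^{*}\circ\overline{\mathfrak{d}}_{F,m,0}$. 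Using moreover that by Proposition~\ref{brick} the space $L^{2}\Omega^{m,0}$ does not depend on the chosen Hermitian metric, so that $L^{2}\Omega^{m,0}(\reg(X),E,h)$ is identified with $L^{2}\Omega^{m,0}(M,F,g)$, points (1) and (2) of Theorem~\ref{canonical} become points (2) and (3) of Theorem~\ref{firstth}; the asymptotic inequality \eqref{rospo} is \eqref{asine} of Theorem~\ref{secondo}; and the trace class property together with the bound $\Tr(e^{-t\overline{\mathfrak{d}}_{E,m,0}^{*}\circ\overline{\mathfrak{d}}_{E,m,0}})\le Ct^{-m}$ for $t\in(0,1]$ are points (1) and (3) of Corollary~\ref{heat}.

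Finally, the claim that no assumption on $\sing(X)$ or on $m$ is needed is automatic, because the only feature of $X$ that enters the argument is the parabolicity of $M\setminus D$, and that holds unconditionally by Proposition~\ref{exam} since a Hironaka resolution always produces a normal crossing divisor $D$. The step that really requires care --- and the only genuinely new verification, everything else being bookkeeping through a fixed biholomorphism --- is the first one: checking that $\pi^{*}h$ extends across $D$ to a smooth positive semidefinite Hermitian form on $M$ with degeneracy locus contained in $D$, which rests on the local-embedding definition of a Hermitian complex space and on the holomorphy of $\pi$.
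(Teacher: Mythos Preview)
Your proposal is correct and follows essentially the same route as the paper's proof: pull back $h$ to a Hermitian pseudometric $h'$ on the resolution $M$, invoke Proposition~\ref{exam} for the parabolicity of $(M\setminus D,g|_{M\setminus D})$, use Remark~\ref{bottchern} to work on $M\setminus D$ rather than on the full $A_{h'}$, and then transport the conclusions of Theorem~\ref{firstth}, Theorem~\ref{secondo} and Corollary~\ref{heat} back to $\reg(X)$ via the unitary isomorphism induced by $\pi|_{M\setminus D}$. Your write-up is in fact slightly more explicit than the paper's about why $\pi^{*}h$ extends smoothly across $D$ and why $Z_{h'}\subseteq D$.
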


\begin{proof}
Let $M$, $\pi$ and $D$ be as in \eqref{hiro} such that, as required above, we have $(\pi|_{M\setminus D}^{-1})^*(F|_{M\setminus D})=E$ and $(\pi|_{M\setminus D}^{-1})^*(\tau|_{M\setminus D})=\rho$. Let $h':=(\pi|_{M\setminus D})^*h$.  Since $h$ is locally given by an embedding we  have that $h'$  extends as a Hermitian pseudometric on $M$ which is positive definite on $M\setminus D$. Moreover, according to Prop. \ref{exam}, we know that $(M\setminus D, g|_{M\setminus D})$ is parabolic where $g$ is any Hermitian metric on $M$. Furthermore, by the assumptions made on $(E,\rho)$, we know that 
\begin{equation}
\label{sonno}
(\pi|_{M\setminus D})^*:L^2\Omega^{m,q}(\reg(X),E,h)\rightarrow L^2\Omega^{m,q}(M\setminus D, F|_{M\setminus D},h'|_{M\setminus D})
\end{equation} is a unitary operator for each $q=0,...,m$. Now, in order to have a lighter notation let us label by $T: L^2\Omega^{m,q}(\reg(X),E,h)\rightarrow L^2\Omega^{m,q}(M\setminus D, F|_{M\setminus D},h'|_{M\setminus D})$ the operator \eqref{sonno}. Then the operator 
\begin{equation}
\label{risonno}
T\circ \overline{\mathfrak{d}}_{E,m,0}\circ T^{-1}:L^2\Omega^{m,0}(M\setminus D, F|_{M\setminus D},h'|_{M\setminus D})\rightarrow L^2\Omega^{m,1}(M\setminus D, F|_{M\setminus D},h'|_{M\setminus D})
\end{equation}
with domain given by $T(\mathcal{D}(\overline{\mathfrak{d}}_{E,m,0}))$, is a closed extension of $\overline{\partial}_{F,m,0}:\Omega^{m,0}_c(M\setminus D,F|_{M\setminus D})\rightarrow \Omega^{m,1}_c(M\setminus D,F|_{M\setminus D})$ unitarily equivalent to \eqref{lola}. Therefore all the statements of this theorem follow immediately by applying Remark \ref{bottchern}, Theorem \ref{firstth}, Theorem \ref{secondo} and Cor. \ref{heat} to \eqref{risonno}. Note that \eqref{risonno} obeys Remark \ref{mogimo}.
\end{proof}

\begin{cor}
\label{terzo}
In the setting of Theorem \ref{canonical}. We have the following properties.
\begin{enumerate}
\item $\im(\overline{\mathfrak{d}}_{E,m,0})$ is a closed subset of $L^2\Omega^{m,1}(\reg(X),E,h)$.
\item $\ker(\overline{\mathfrak{d}}_{E,m,0})$ is finite dimensional.
\item We have the following $L^2$-orthogonal decomposition: $$L^2\Omega^{m,0}(\reg(X),E,h)=\ker(\overline{\mathfrak{d}}_{E,m,0})\oplus \im(\overline{\mathfrak{d}}_{E,m,0}^*).$$
\item $\overline{\mathfrak{d}}_{E,m,0}^*\circ\overline{\mathfrak{d}}_{E,m,0}:L^2\Omega^{m,0}(\reg(X),E,h)\rightarrow L^2\Omega^{m,0}(\reg(X),E,h)$ is a Fredholm operator on its domain endowed with graph norm. 
\end{enumerate} 
\end{cor}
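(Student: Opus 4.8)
The plan is to deduce the four assertions directly from Theorem \ref{canonical}, essentially repeating the argument of Corollary \ref{freddy}. Indeed, in the proof of Theorem \ref{canonical} the situation was already reduced, via the unitary operator $(\pi|_{M\setminus D})^*$ of \eqref{sonno}, to the setting of Theorem \ref{firstth}, so one could simply transport everything to $M\setminus D$ and invoke Corollary \ref{freddy} verbatim; but it is just as quick to argue in place, which is what I will describe.

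First I would use that, by Theorem \ref{canonical}, the self-adjoint operator $\overline{\mathfrak{d}}_{E,m,0}^*\circ\overline{\mathfrak{d}}_{E,m,0}$ has discrete spectrum. Hence $0$ is either in its resolvent set or an isolated eigenvalue of finite multiplicity, so this operator is a Fredholm operator on its domain endowed with the graph norm — the fourth assertion — and in particular has closed range in $L^2\Omega^{m,0}(\reg(X),E,h)$. Moreover $\ker(\overline{\mathfrak{d}}_{E,m,0})=\ker(\overline{\mathfrak{d}}_{E,m,0}^*\circ\overline{\mathfrak{d}}_{E,m,0})$ is precisely the eigenspace for the eigenvalue $0$, hence finite dimensional — the second assertion.

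Next, from the orthogonal decomposition valid for any closed densely defined operator and for its composition with its adjoint, cf. \eqref{fantaghi}, one has
$$L^2\Omega^{m,0}(\reg(X),E,h)=\ker(\overline{\mathfrak{d}}_{E,m,0})\oplus \overline{\im(\overline{\mathfrak{d}}_{E,m,0}^*)}=\ker(\overline{\mathfrak{d}}_{E,m,0}^*\circ\overline{\mathfrak{d}}_{E,m,0})\oplus \overline{\im(\overline{\mathfrak{d}}_{E,m,0}^*\circ\overline{\mathfrak{d}}_{E,m,0})}.$$
Since the two kernels coincide and $\im(\overline{\mathfrak{d}}_{E,m,0}^*\circ\overline{\mathfrak{d}}_{E,m,0})$ is closed, the chain of inclusions
$$\im(\overline{\mathfrak{d}}_{E,m,0}^*\circ\overline{\mathfrak{d}}_{E,m,0})\subset \im(\overline{\mathfrak{d}}_{E,m,0}^*)\subset \overline{\im(\overline{\mathfrak{d}}_{E,m,0}^*)}=\overline{\im(\overline{\mathfrak{d}}_{E,m,0}^*\circ\overline{\mathfrak{d}}_{E,m,0})}=\im(\overline{\mathfrak{d}}_{E,m,0}^*\circ\overline{\mathfrak{d}}_{E,m,0})$$
forces $\im(\overline{\mathfrak{d}}_{E,m,0}^*)$ to be closed; passing to adjoints gives that $\im(\overline{\mathfrak{d}}_{E,m,0})$ is closed — the first assertion — and then the first decomposition above holds without the closure bar, which is the third assertion.

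There is essentially no obstacle here: the one point that requires care is that the Hilbert-space decompositions and the equivalence between closedness of $\im(\overline{\mathfrak{d}}_{E,m,0})$ and of $\im(\overline{\mathfrak{d}}_{E,m,0}^*)$ are invoked for operators with closed range, and this hypothesis is supplied precisely by the discreteness statement of Theorem \ref{canonical}.
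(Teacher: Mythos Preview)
Your proposal is correct and follows essentially the same approach as the paper: the paper's proof simply says ``This follows applying Cor.~\ref{freddy},'' and you have spelled out exactly the argument of Corollary~\ref{freddy} in the present setting (while correctly observing that one could alternatively transport via \eqref{sonno} and invoke Corollary~\ref{freddy} directly).
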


\begin{proof}
This follows applying Cor. \ref{freddy}.
\end{proof}
We conclude this section with the following remarks. All the results proved in this section hold in particular for $$\overline{\partial}_{E,m,0,\max/\min}:L^2\Omega^{m,0}(\reg(X),E,h)\rightarrow L^2\Omega^{m,1}(\reg(X),E,h).$$
Moreover  consider again the operator
\begin{equation}
\label{smodelzx}
\partial_{E,m,0}:L^2\Omega^{0,m}(\reg(X),E,h)\rightarrow L^2\Omega^{1,m}(\reg(X),E,h)
\end{equation}
with $\Omega^{0,m}_c(\reg(X),E)$ as  domain
and let us label by 
\begin{equation}
\label{sanyzx}
\mathfrak{d}_{E,m,0}:L^2\Omega^{0,m}(\reg(X),E,h)\rightarrow L^2\Omega^{1,m}(\reg(X),E,h)
\end{equation}
any closed extension of \eqref{smodelzx}. Then, according to  the remark  stated after the proof of Cor. \ref{heat}, we have also the  corresponding versions  of Theorem \ref{canonical} and  Cor. \ref{terzo}  for \eqref{sanyzx}.

\subsection{ Self-adjoint extensions with discrete spectrum  in the setting of isolated singularities}
In this subsection we prove the existence of self-adjoint extensions with discrete spectrum for the Hodge-Kodaira Laplacian in the framework of  compact and irreducible Hermitian complex spaces with isolated singularities. 
\begin{teo}
\label{pentecoste}
Let $(X,h)$ be a compact and irreducible Hermitian complex space of complex dimension $m$. Assume that $\sing(X)$ is made of isolated singularities. Then we have the following properties:
\begin{enumerate}
\item $\Delta_{\overline{\partial},m,q,\abs}:L^2\Omega^{m,q}(\reg(X),h)\rightarrow L^2\Omega^{m,q}(\reg(X),h)$ has discrete spectrum for each $q=0,...,m$.
\item $\overline{\partial}_{m,\max}+\overline{\partial}^t_{m,\min}:L^2\Omega^{m,\bullet}(\reg(X),h)\rightarrow L^2\Omega^{m,\bullet}(\reg(X),h)$ has discrete spectrum.
\item $\Delta_{\overline{\partial},m,q}^{\mathcal{F}}:L^2\Omega^{m,q}(\reg(X),h)\rightarrow L^2\Omega^{m,q}(\reg(X),h)$ has discrete spectrum for each $q=0,...,m$.
\end{enumerate}
\end{teo}

\begin{proof}
Consider the \emph{first point}. The case $(m,0)$ follows by Th. \ref{canonical}.  For the remaining cases we argue as follows.
According to \cite{MaMa} pag. 381  $\Delta_{\overline{\partial},m,q,\abs}:L^2\Omega^{m,q}(\reg(X),h)\rightarrow L^2\Omega^{m,q}(\reg(X),h)$ has discrete spectrum if and only if  the inclusion
\begin{equation}
\label{acacia}
\mathcal{D}(\Delta_{\overline{\partial},m,q,  \abs})\hookrightarrow L^2\Omega^{m,q}(\reg(X),h)
\end{equation}
 is a compact operator where $\mathcal{D}(\Delta_{\overline{\partial},m,q,\abs})$ is endowed with the corresponding graph norm. According to \cite{JRu}  we know that $H^{m,q}_{2,\overline{\partial}_{\max}}(\reg(X),h)$ is finite dimensional for each $q=0,...,m$. Therefore, using  Prop. \ref{usipeti}, we can conclude that, for each $q=0,...,m$, $\im(\overline{\partial}_{m,q,\max})$ is closed  and that $\Delta_{\overline{\partial},m,q,  \abs}:L^2\Omega^{m,q}(\reg(X),h)\rightarrow L^2\Omega^{m,q}(\reg(X),h)$ is a Fredholm operator on its domain endowed with the graph norm. Hence, by the fact that  $\Delta_{\overline{\partial},m,q, \abs}:L^2\Omega^{m,q}(\reg(X),h)\rightarrow L^2\Omega^{m,q}(\reg(X),h)$ is Fredholm and self-adjoint, we know now that \eqref{acacia} is a compact operator if and only if the following inclusion is a compact operator  
\begin{equation}
\label{cacio}
\left(\mathcal{D}(\Delta_{\overline{\partial},m,q, \abs})\cap \im(\Delta_{\overline{\partial},m,q,\abs})\right)\hookrightarrow L^2\Omega^{m,q}(\reg(X),h)
\end{equation}
 where  $\left(\mathcal{D}(\Delta_{\overline{\partial},m,q,\abs})\cap \im(\Delta_{\overline{\partial},m,q,\abs})\right)$ is endowed with the  graph norm of $\Delta_{\overline{\partial},m,q, \abs}$. Finally, by using \cite{Ruppe} Th. 1.1 or \cite{OvRu} Th. 1.2, we get that \eqref{cacio} is a compact inclusion for $q\geq 1$ and this completes the proof of the first point. Now we tackle the \emph{second point}. Consider the operator 
\begin{equation}
\label{frengogoh}
(\overline{\partial}_{m,\max}+\overline{\partial}^t_{m,\min})\circ (\overline{\partial}_{m,\max}+\overline{\partial}^t_{m,\min}):L^2\Omega^{m,\bullet}(\reg(X),h)\rightarrow L^2\Omega^{m,\bullet}(\reg(X),h)
\end{equation}
with domain given by 
\begin{align}
\nn &\mathcal{D}((\overline{\partial}_{m,\max}+\overline{\partial}^t_{m,\min})\circ (\overline{\partial}_{m,\max}+\overline{\partial}^t_{m,\min}))=\{\omega\in \mathcal{D}(\overline{\partial}_{m,\max}+\overline{\partial}^t_{m,\min})\ \text{such that}\\ \nn &(\overline{\partial}_{m,\max}+\overline{\partial}^t_{m,\min})\omega\in \mathcal{D}(\overline{\partial}_{m,\max}+\overline{\partial}^t_{m,\min}) \}.
\end{align}
We have $$(\overline{\partial}_{m,\max}+\overline{\partial}^t_{m,\min})\circ (\overline{\partial}_{m,\max}+\overline{\partial}^t_{m,\min})=\bigoplus_{q=0}^m\Delta_{\overline{\partial},m,q,\abs}$$ where the domain of the operator on the right hand side is $\bigoplus_{q=0}^m\mathcal{D}(\Delta_{\overline{\partial},m,q,\abs})$. By the first point of this theorem we can thus conclude that \eqref{frengogoh} has discrete spectrum and eventually this implies that 
$$\overline{\partial}_{m,\max}+\overline{\partial}^t_{m,\min}:L^2\Omega^{m,\bullet}(\reg(X),h)\rightarrow L^2\Omega^{m,\bullet}(\reg(X),h)$$  has discrete spectrum. Now we deal with the \emph{third point}.  Consider the operator $$(\overline{\partial}_m+\overline{\partial}^t_m)_{\min}:L^2\Omega^{m,\bullet}(\reg(X),h)\rightarrow L^2\Omega^{m,\bullet}(\reg(X),h)$$ that is  the minimal extension of $\overline{\partial}_{m}+\overline{\partial}^t_{m}:\Omega^{m,\bullet}_c(\reg(X))\rightarrow \Omega^{m,\bullet}_c(\reg(X))$. Clearly $\overline{\partial}_{m,\max}+\overline{\partial}^t_{m,\min}$ is a closed extension of $(\overline{\partial}_{m}+\overline{\partial}^t_{m})_{\min}$ and therefore, using the second point, we get that the inclusion $\mathcal{D}((\overline{\partial}_{m}+\overline{\partial}^t_{m})_{\min})\hookrightarrow L^2\Omega^{m,\bullet}(\reg(X),h)$ is a compact operator where $\mathcal{D}((\overline{\partial}_{m}+\overline{\partial}^t_{m})_{\min})$ is endowed with the corresponding graph norm. Let $(\overline{\partial}_{m}+\overline{\partial}^t_{m})_{\max}:L^2\Omega^{m,\bullet}(\reg(X),h)\rightarrow L^2\Omega^{m,\bullet}(\reg(X),h)$ be the maximal extension of $\overline{\partial}_{m}+\overline{\partial}^t_{m}$. Consider now the operator $$(\overline{\partial}_{m}+\overline{\partial}^t_{m})_{\max}\circ (\overline{\partial}_{m}+\overline{\partial}^t_{m})_{\min}:L^2\Omega^{m,\bullet}(\reg(X),h)\rightarrow L^2\Omega^{m,\bullet}(\reg(X),h)$$ with domain given by $$\mathcal{D}((\overline{\partial}_{m}+\overline{\partial}^t_{m})_{\max}\circ (\overline{\partial}_{m}+\overline{\partial}^t_{m})_{\min}):=\{\eta\in \mathcal{D}((\overline{\partial}_{m}+\overline{\partial}^t_{m})_{\min}): (\overline{\partial}_{m}+\overline{\partial}^t_{m})_{\min}\eta\in \mathcal{D}((\overline{\partial}_{m}+\overline{\partial}^t_{m})_{\max})\}.$$ By Prop. \ref{fall} we know that $(\overline{\partial}_{m}+\overline{\partial}^t_{m})_{\max}\circ (\overline{\partial}_{m}+\overline{\partial}^t_{m})_{\min}$ is the Friedrich extension of $(\overline{\partial}_{m}+\overline{\partial}^t_{m})\circ (\overline{\partial}_{m}+\overline{\partial}^t_{m})$ which in turn coincides with the direct sum $\bigoplus_{q=0}^m\Delta_{\overline{\partial},m,q}$. For each $\eta\in \mathcal{D}\left((\overline{\partial}_{m}+\overline{\partial}^t_{m})_{\max}\circ (\overline{\partial}_{m}+\overline{\partial}^t_{m})_{\min}\right)$ we have  the following inequality which is immediate to check 
\begin{equation}
\label{patrikqw}
\|(\overline{\partial}_{m}+\overline{\partial}^t_{m})_{\min}\eta\|^2_{L^2\Omega^{m,\bullet}(\reg(X),h)}\leq \frac{1}{2}\left(\|\eta\|^2_{L^2\Omega^{m,\bullet}(\reg(X),h)}+\|(\overline{\partial}_{m}+\overline{\partial}^t_{m})_{\max}\circ (\overline{\partial}_{m}+\overline{\partial}^t_{m})_{\min}\eta\|^2_{L^2\Omega^{m,\bullet}(\reg(X),h)}\right).
\end{equation}
The above inequality implies that we have a continuous inclusion 
\begin{equation}
\label{patrikdcqa}
\mathcal{D}\left((\overline{\partial}_{m}+\overline{\partial}^t_{m})_{\max}\circ (\overline{\partial}_{m}+\overline{\partial}^t_{m})_{\min}\right)\hookrightarrow \mathcal{D}\left((\overline{\partial}_{m}+\overline{\partial}^t_{m})_{\min}\right)
\end{equation}
 where each domain is endowed with the corresponding graph norm.
Therefore, using \eqref{patrikdcqa}, we have eventually shown  that the inclusion $$\mathcal{D}\left((\overline{\partial}_{m}+\overline{\partial}^t_{m})_{\max}\circ (\overline{\partial}_{m}+\overline{\partial}^t_{m})_{\min}\right)\hookrightarrow L^2\Omega^{m,\bullet}(\reg(X),h)$$ where $\mathcal{D}((\overline{\partial}_{m}+\overline{\partial}^t_{m})_{\max}\circ (\overline{\partial}_{m}+\overline{\partial}^t_{m})_{\min})$ is endowed with its graph norm,  is a compact operator. As remarked above this in turn implies that $(\overline{\partial}_{m}+\overline{\partial}^t_{m})_{\max}\circ (\overline{\partial}_{m}+\overline{\partial}^t_{m})_{\min}:L^2\Omega^{m,\bullet}(\reg(X),h)\rightarrow L^2\Omega^{m,\bullet}(\reg(X),h)$ has discrete spectrum. Finally, by the fact that $$(\overline{\partial}_{m}+\overline{\partial}^t_{m})_{\max}\circ (\overline{\partial}_{m}+\overline{\partial}^t_{m})_{\min}=\bigoplus_{q=0}^m\Delta_{\overline{\partial},m,q}^{\mathcal{F}}$$ see for instance \cite{FB} pag.  169, we get that, for each $q=0,...,m$, the operator $$\Delta_{\overline{\partial},m,q}^{\mathcal{F}}:L^2\Omega^{m,\bullet}(\reg(X),h)\rightarrow L^2\Omega^{m,\bullet}(\reg(X),h)$$  has discrete spectrum as desired. The proof of the third point is thus complete. 
\end{proof}

We conclude this subsection with the following corollary.

\begin{cor}
\label{aquino}
In the setting of Th. \ref{pentecoste}. We have the following properties:
\begin{enumerate}
\item $\Delta_{\overline{\partial},0,q,\rel}:L^2\Omega^{0,q}(\reg(X),h)\rightarrow L^2\Omega^{0,q}(\reg(X),h)$ has discrete spectrum for each $q=0,...,m$.
\item $\overline{\partial}_{0,\min}+\overline{\partial}^t_{0,\max}:L^2\Omega^{0,\bullet}(\reg(X),h)\rightarrow L^2\Omega^{0,\bullet}(\reg(X),h)$ has discrete spectrum.
\item $\Delta_{\overline{\partial},0,q}^{\mathcal{F}}:L^2\Omega^{0,q}(\reg(X),h)\rightarrow L^2\Omega^{0,q}(\reg(X),h)$ has discrete spectrum for each $q=0,...,m$.
\end{enumerate}
\end{cor}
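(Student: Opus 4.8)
The plan is to obtain Corollary \ref{aquino} from Theorem \ref{pentecoste} by transporting everything through the Hodge-star/complex-conjugation duality recorded in Prop. \ref{occhiodibue}, handling the three points in the stated order.

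\emph{Point (1).} Fix $q\in\{0,\dots,m\}$. By point $6$ of Prop. \ref{occhiodibue} (applied with $p=m$ and with $m-q$ in place of the second index) the map $*\circ c_{m,m-q}$ is a $\mathbb{C}$-antilinear isometric bijection from $L^2\Omega^{m,m-q}(\reg(X),h)$ onto $L^2\Omega^{0,q}(\reg(X),h)$ which carries $\mathcal{D}(\Delta_{\overline{\partial},m,m-q,\abs})$ onto $\mathcal{D}(\Delta_{\overline{\partial},0,q,\rel})$ and intertwines $\Delta_{\overline{\partial},m,m-q,\abs}$ with $\Delta_{\overline{\partial},0,q,\rel}$. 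Since an antiunitary intertwiner sends an orthonormal basis of eigensections (whose eigenvalues are real) to an orthonormal basis of eigensections with the \emph{same} eigenvalues, it transfers the property of having discrete spectrum. As $\sing(X)$ is isolated, Theorem \ref{pentecoste}(1) applies in bidegree $(m,m-q)$ for every $q$, so $\Delta_{\overline{\partial},0,q,\rel}$ has discrete spectrum for all $q=0,\dots,m$.

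\emph{Point (2).} This follows the proof of Theorem \ref{pentecoste}(2) with $m$ replaced by $0$ and $\abs$ replaced by $\rel$. Using $\overline{\partial}\circ\overline{\partial}=0$ and $\overline{\partial}^t\circ\overline{\partial}^t=0$ one checks that the square of the self-adjoint operator $\overline{\partial}_{0,\min}+\overline{\partial}^t_{0,\max}$ has no off-diagonal terms and equals $\bigoplus_{q=0}^m\Delta_{\overline{\partial},0,q,\rel}$ on the domain $\bigoplus_{q=0}^m\mathcal{D}(\Delta_{\overline{\partial},0,q,\rel})$ (compare \eqref{buio}). By point (1) this direct sum has discrete spectrum, and a self-adjoint operator has discrete spectrum if and only if its square does; hence $\overline{\partial}_{0,\min}+\overline{\partial}^t_{0,\max}$ has discrete spectrum.

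\emph{Point (3).} Again we copy the proof of Theorem \ref{pentecoste}(3). The operator $\overline{\partial}_{0,\min}+\overline{\partial}^t_{0,\max}$ is closed (being self-adjoint) and agrees with $\overline{\partial}_0+\overline{\partial}^t_0$ on $\Omega^{0,\bullet}_c(\reg(X))$ because $\Omega^{0,q}_c(\reg(X))\subset\mathcal{D}(\overline{\partial}_{0,q,\min})\cap\mathcal{D}(\overline{\partial}^t_{0,q-1,\max})$; hence it is a closed extension of $(\overline{\partial}_0+\overline{\partial}^t_0)_{\min}$, so by point (2) the inclusion $\mathcal{D}((\overline{\partial}_0+\overline{\partial}^t_0)_{\min})\hookrightarrow L^2\Omega^{0,\bullet}(\reg(X),h)$ is compact. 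An elementary inequality of the type \eqref{patrikqw} gives a continuous inclusion $\mathcal{D}((\overline{\partial}_0+\overline{\partial}^t_0)_{\max}\circ(\overline{\partial}_0+\overline{\partial}^t_0)_{\min})\hookrightarrow\mathcal{D}((\overline{\partial}_0+\overline{\partial}^t_0)_{\min})$, so the inclusion into $L^2\Omega^{0,\bullet}(\reg(X),h)$ is compact and $(\overline{\partial}_0+\overline{\partial}^t_0)_{\max}\circ(\overline{\partial}_0+\overline{\partial}^t_0)_{\min}$ has discrete spectrum. By Prop. \ref{fall} this operator is the Friedrich extension of $(\overline{\partial}_0+\overline{\partial}^t_0)^2=\bigoplus_{q=0}^m\Delta_{\overline{\partial},0,q}$, which by the compatibility of the Friedrich extension with orthogonal direct sums (see \cite{FB} pag. 169) equals $\bigoplus_{q=0}^m\Delta_{\overline{\partial},0,q}^{\mathcal{F}}$. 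Therefore each $\Delta_{\overline{\partial},0,q}^{\mathcal{F}}$ has discrete spectrum. The only point requiring a little care is the antiunitary (rather than unitary) transfer in (1); everything else is a transcription of the arguments already given in bidegree $(m,\cdot)$, and an alternative would be to rerun those arguments directly in bidegree $(0,\cdot)$ using the finite-dimensionality of $H^{0,q}_{2,\overline{\partial}_{\min}}(\reg(X),h)$ and the relative versions of the compactness results of \cite{Ruppe}, \cite{OvRu}.
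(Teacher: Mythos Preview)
Your proof is correct and follows essentially the same approach as the paper: the paper also proves point (1) by transporting $\Delta_{\overline{\partial},m,q,\abs}$ to $\Delta_{\overline{\partial},0,m-q,\rel}$ via the antilinear bijection built from the Hodge star and complex conjugation (it uses $c_{m-q,0}\circ *$ where you use $*\circ c_{m,m-q}$, which amounts to the same duality), and then says that points (2) and (3) follow from (1) by repeating verbatim the arguments used for points (2) and (3) of Theorem \ref{pentecoste}. Your write-up is simply a more detailed version of this.
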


\begin{proof}
It is enough to prove the first point. The second and the third point follow by the first one arguing as in the proof of Th. \ref{pentecoste}. Using \eqref{cicuta} and Prop. \ref{occhiodibue} we have that any form $\omega\in L^2\Omega^{m,q}(\reg(V),h)$ lies in $\mathcal{D}(\Delta_{\overline{\partial},m,q,\abs})$ if and only if $c_{m-q,0}(*\omega)\in \mathcal{D}(\Delta_{\overline{\partial},0,m-q,\rel})$ and if this is the case then we have $c_{m-q,0}(*(\Delta_{\overline{\partial},m,q,\abs}\omega))=\Delta_{\overline{\partial},0,m-q,\rel}(c_{m-q,0}(*\omega))$, see Prop. \ref{occhiodibue}. Since    $c_{m-q,0}\circ *:L^2\Omega^{m,q}(\reg(V),h)\rightarrow L^2\Omega^{0,m-q}(\reg(V),g)$ is a continuous and  bijective $\mathbb{C}$-antilinear isomorphism with continuous inverse the conclusion  follows now by Th. \ref{pentecoste}.
\end{proof}

\subsection{The Hodge-Kodaira Laplacian on  complex projective surfaces}
In this section we collect various applications to the Hodge-Kodaira Laplacian on   complex projective surfaces. We start with the following theorem.
\begin{teo}
\label{lillottina}
Let $V\subset \mathbb{C}\mathbb{P}^n$ be a complex projective surface. Let $h$ be the K\"ahler metric on $\reg(V)$ induced by the Fubini-Study metric of $\mathbb{C}\mathbb{P}^n$. We have the following properties:
\begin{enumerate}
\item $\Delta_{\overline{\partial},2,q,\abs}:L^2\Omega^{2,q}(\reg(V),h)\rightarrow L^2\Omega^{2,q}(\reg(V),h)$ has discrete spectrum for each $q=0,1,2$.
\item $\overline{\partial}_{2,\max}+\overline{\partial}^t_{2,\min}:L^2\Omega^{2,\bullet}(\reg(V),h)\rightarrow L^2\Omega^{2,\bullet}(\reg(V),h)$ has discrete spectrum.
\item $\Delta_{\overline{\partial},2,q}^{\mathcal{F}}:L^2\Omega^{2,q}(\reg(V),h)\rightarrow L^2\Omega^{2,q}(\reg(V),h)$ has discrete spectrum for each $q=0,1,2$.
\end{enumerate}
\end{teo}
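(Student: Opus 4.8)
The plan is to bootstrap from the top holomorphic bidegree $(2,0)$, which is already settled, using the abstract Hilbert-complex results (Prop.~\ref{acacio}, Prop.~\ref{corniola}, Cor.~\ref{calanchi}) together with the Hodge-star symmetries (Prop.~\ref{occhiodibue}) and the known finiteness of the $L^2$-$\overline{\partial}$-cohomology of projective surfaces. Since a complex projective surface $V\subset\mathbb{C}\mathbb{P}^n$ with the Fubini--Study metric $h$ is a compact irreducible Hermitian complex space of complex dimension $m=2$, Th.~\ref{canonical} (with trivial $E$) and Cor.~\ref{terzo} apply verbatim: $\Delta_{\overline{\partial},2,0,\abs}=\overline{\partial}^t_{2,0,\min}\circ\overline{\partial}_{2,0,\max}$ has discrete spectrum and $\im(\overline{\partial}_{2,0,\max})$ is closed. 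This is point (1) for $q=0$.

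For $q=1,2$ I would reduce point (1) to a single statement. By \cite{JRu} the groups $H^{2,q}_{2,\overline{\partial}_{\max}}(\reg(V),h)$ are finite-dimensional for every $q$, so by Prop.~\ref{usipeti} (and the remark preceding it) $\im(\overline{\partial}_{2,q,\max})$ is closed for all $q$ and $\Delta_{\overline{\partial},2,q,\abs}$ is Fredholm; hence, as in the proof of Th.~\ref{pentecoste}, it has discrete spectrum iff the inclusion $\left(\mathcal{D}(\Delta_{\overline{\partial},2,q,\abs})\cap\im(\Delta_{\overline{\partial},2,q,\abs})\right)\hookrightarrow L^2\Omega^{2,q}(\reg(V),h)$ is compact. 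Writing $\Delta_{\overline{\partial},2,q,\abs}=T_1\circ T_1^*+T_2^*\circ T_2$ with $T_1=\overline{\partial}_{2,q-1,\max}$ and $T_2=\overline{\partial}_{2,q,\max}$ (so $\im(T_1),\im(T_2)$ are closed and $\im(T_1)\subset\ker(T_2)$ since $\overline{\partial}^2=0$), Cor.~\ref{calanchi} converts this into the compactness of the Green operators of $T_1^*\circ T_1=\Delta_{\overline{\partial},2,q-1,\abs}$ and of $T_2\circ T_2^*=\overline{\partial}_{2,q,\max}\circ\overline{\partial}^t_{2,q,\min}$. For $q=1$ the first of these is $\Delta_{\overline{\partial},2,0,\abs}$, already discrete, and the second is $\Delta_{\overline{\partial},2,2,\abs}$; for $q=2$ we have $T_2=0$. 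Thus both cases $q=1$ and $q=2$ reduce to the single statement that $\Delta_{\overline{\partial},2,2,\abs}$ has discrete spectrum.

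To handle this I would pass through the Hodge-star duality of Prop.~\ref{occhiodibue}(5): the unitary $*$ intertwines $\Delta_{\overline{\partial},2,2,\abs}$ with $\Delta_{\overline{\partial},0,0,\rel}$, and by Prop.~\ref{fall} the latter is the Friedrich extension $\Delta^{\mathcal{F}}_{\overline{\partial},0,0}=(\overline{\partial}_{0,0,\min})^*\circ\overline{\partial}_{0,0,\min}$; since $h$ is K\"ahler this coincides, up to the factor $2$, with the Friedrich extension of the scalar Laplace--Beltrami operator of $(\reg(V),h)$. \textbf{I expect this to be the main obstacle}, and it is the only point where the projective-surface hypothesis (rather than merely ``complex dimension $2$'') is used: one must show that the minimal-domain scalar Laplacian of $\reg(V)$ has compact resolvent. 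I would do this by passing to a resolution $\pi\colon M\to V$ with normal-crossing exceptional divisor $D$, so that $\reg(V)\cong M\setminus D$ and $h'=\pi^*h$ is a Hermitian pseudometric on $M$ with degeneracy locus $D$, and then exploiting the model behaviour of $h'$ near $D$ (of polynomial, ``horn''-type for the Fubini--Study metric) to prove that $\mathcal{D}(\overline{\partial}_{0,0,\min})\hookrightarrow L^2(M\setminus D,h')$ is compact; alternatively one invokes the corresponding compactness statements already available for surfaces, e.g.\ in \cite{PS}, \cite{PaS}, \cite{OvRu}, \cite{Ruppe}. Granting this, point (1) is proved for all $q$.

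Finally, points (2) and (3) follow from point (1) exactly as in the proof of Th.~\ref{pentecoste}: since $(\overline{\partial}_{2,\max}+\overline{\partial}^t_{2,\min})\circ(\overline{\partial}_{2,\max}+\overline{\partial}^t_{2,\min})=\bigoplus_{q=0}^2\Delta_{\overline{\partial},2,q,\abs}$, point (1) yields point (2); and since $(\overline{\partial}_{2}+\overline{\partial}^t_{2})_{\max}\circ(\overline{\partial}_{2}+\overline{\partial}^t_{2})_{\min}=\bigoplus_{q=0}^2\Delta^{\mathcal{F}}_{\overline{\partial},2,q}$ by Prop.~\ref{fall}, the elementary continuous inclusion of its graph-domain into $\mathcal{D}((\overline{\partial}_{2}+\overline{\partial}^t_{2})_{\min})$ combined with point (2) gives point (3).
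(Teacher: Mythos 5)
Your architecture is essentially the paper's: $q=0$ from Th.~\ref{canonical}; $q=1$ reduced to the bidegrees $(2,0)$ and $(2,2)$ via the Fredholmness coming from finite-dimensionality of $H^{2,q}_{2,\overline{\partial}_{\max}}(\reg(V),h)$ together with Cor.~\ref{calanchi}; the case $(2,2)$ sent by the Hodge star to the Friedrich extension of the scalar $\overline{\partial}$-Laplacian (the paper phrases this as $*\circ\Delta_{\overline{\partial},2,2,\abs}=(\partial^t_{\max}\circ\partial_{\min})\circ *=\Delta^{\mathcal{F}}_{\partial}\circ *=\Delta^{\mathcal{F}}_{\overline{\partial}}\circ *$ using Prop.~\ref{fall} and the K\"ahler identity, which is the same reduction you reach via $\Delta_{\overline{\partial},0,0,\rel}$); and points (2) and (3) exactly as in Th.~\ref{pentecoste}. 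All of these reductions are carried out correctly.

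The genuine gap is the one step you yourself flag as ``the main obstacle'': you never actually establish that $\Delta^{\mathcal{F}}_{\overline{\partial}}$ on functions has compact resolvent, and neither of your two proposed ways of closing it works as stated. The direct route (analyzing $\mathcal{D}(\overline{\partial}_{0,0,\min})\hookrightarrow L^2$ near the exceptional normal-crossing divisor using the ``horn-type'' degeneration of $\pi^*h$) is a nontrivial analytic estimate that you only sketch; it is not a formal consequence of anything proved earlier in the paper. The fallback citations do not cover the case at hand either: \cite{OvRu}, \cite{Ruppe} and \cite{PaS} treat \emph{isolated} singularities, whereas Th.~\ref{lillottina} makes no such assumption on $\sing(V)$ (a non-normal projective surface can be singular along a curve), and \cite{PS} gives finite-dimensionality of the $L^2$-$\overline{\partial}$-cohomology, which yields Fredholmness but not discreteness of the spectrum. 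The paper closes precisely this step by invoking Li--Tian \cite{LT}, where discreteness (indeed heat-trace estimates) for the Friedrich extension of the scalar Hodge--Kodaira Laplacian with respect to the Fubini--Study (Bergmann) metric is proved for arbitrary projective varieties. With that reference inserted, your argument is complete and coincides with the paper's.
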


\begin{proof}
We start by  considering  the operator  $\Delta_{\overline{\partial},2,0,\abs}:L^2\Omega^{2,0}(\reg(V),h)\rightarrow L^2\Omega^{2,0}(\reg(V),h)$. In this case the statement is  a particular case of  Th. \ref{canonical}. Now we deal with $\Delta_{\overline{\partial},2,2,\abs}:L^2\Omega^{2,2}(\reg(V),h)\rightarrow L^2\Omega^{2,2}(\reg(V),h)$. We observe that in this case $\Delta_{\overline{\partial},2,2,\abs}= \overline{\partial}_{2,1,\max}\circ \overline{\partial}_{2,1,\min}^t$. Applying the Hodge star operator $*:L^2\Omega^{2,2}(\reg(V),h)\rightarrow L^2(\reg(V),g)$ we have  $*(\mathcal{D}(\overline{\partial}_{2,1,\max}\circ \overline{\partial}_{2,1,\min}^t))=\mathcal{D}(\partial_{\max}^t\circ \partial_{\min})$ and  $$*\circ (\overline{\partial}_{2,1,\max}\circ \overline{\partial}_{2,1,\min}^t)=(\partial_{\max}^t\circ \partial_{\min})\circ *.$$  We are therefore left to prove that $\partial_{\max}^t\circ \partial_{\min}:L^2(\reg(V),h)\rightarrow L^2(\reg(V),g)$ has discrete spectrum. This is shown as follows. According to Prop. \ref{fall} we know that $\partial_{\max}^t\circ \partial_{\min}=\Delta^{\mathcal{F}}_{\partial}$, the Friedrich extension of $\Delta_{\partial}:C^{\infty}_c(\reg(V))\rightarrow C^{\infty}_c(\reg(V))$. On the other hand $(\reg(V),h)$ is a K\"ahler manifold. Therefore we have $\Delta_{\partial}=\Delta_{\overline{\partial}}$ on $C^{\infty}_c(\reg(V))$ and hence we can conclude that the corresponding Friedrich extensions, as operators acting on $L^2(\reg(V),h)$, coincide:
\begin{equation}
\label{nonso}
\Delta^{\mathcal{F}}_{\partial}=\Delta^{\mathcal{F}}_{\overline{\partial}}.
\end{equation}
Now, according to \cite{LT} we know that the right hand side of \eqref{nonso} has discrete spectrum. We can thus  conclude that also $\Delta^{\mathcal{F}}_{\partial}$  has discrete spectrum   and ultimately we have that $\Delta_{\overline{\partial},2,2,\abs}:L^2\Omega^{2,2}(\reg(V),h)\rightarrow L^2\Omega^{2,2}(\reg(V),h)$ has discrete spectrum as desired.  
As last step we are left   to prove  that $\Delta_{\overline{\partial},2,1,\abs}:L^2\Omega^{2,1}(\reg(V),h)\rightarrow L^2\Omega^{2,1}(\reg(V),h)$ has discrete spectrum. As we have already seen, this is equivalent to showing that the inclusion
\begin{equation}
\label{compactww}
\mathcal{D}(\Delta_{\overline{\partial},2,1,  \abs})\hookrightarrow L^2\Omega^{2,1}(\reg(V),h)
\end{equation}
 is a compact operator where $\mathcal{D}(\Delta_{\overline{\partial},2,1,\abs})$ is endowed with the corresponding graph norm. According to \cite{PS}  we know that $H^{2,q}_{2,\overline{\partial}_{\max}}(\reg(V),h)$ is finite dimensional for each $q$. Therefore, using  Prop. \ref{usipeti}, we can conclude that, for each $q$, $\im(\overline{\partial}_{2,q,\max})$ is closed  and that $\Delta_{\overline{\partial},2,q,  \abs}:L^2\Omega^{2,q}(\reg(V),h)\rightarrow L^2\Omega^{2,q}(\reg(V),h)$ is a Fredholm operator on its domain endowed with the graph norm. Hence, by the fact that  $\Delta_{\overline{\partial},2,1, \abs}:L^2\Omega^{2,1}(\reg(V),h)\rightarrow L^2\Omega^{2,1}(\reg(V),h)$ is Fredholm and self-adjoint, we know now that \eqref{compactww} is a compact operator if and only if the following inclusion is a compact operator  
\begin{equation}
\label{scompactIIIq}
\left(\mathcal{D}(\Delta_{\overline{\partial},2,1, \abs})\cap \im(\Delta_{\overline{\partial},2,1,\abs})\right)\hookrightarrow L^2\Omega^{2,1}(\reg(V),h)
\end{equation}
 where  $\left(\mathcal{D}(\Delta_{\overline{\partial},2,1,\abs})\cap \im(\Delta_{\overline{\partial},2,1,\abs})\right)$ is endowed with the  graph norm of $\Delta_{\overline{\partial},2,1, \abs}$. Since we have already seen  that both the operators  $\Delta_{\overline{\partial},2,0,  \abs}:L^2\Omega^{2,0}(\reg(V),h)\rightarrow L^2\Omega^{2,0}(\reg(V),h)$  and $\Delta_{\overline{\partial},2,2,  \abs}:L^2\Omega^{2,2}(\reg(V),h)\rightarrow L^2\Omega^{2,2}(\reg(V),h)$  have discrete spectrum,  we know in particular that both the inclusions $\mathcal{D}(\Delta_{\overline{\partial},2,0,\abs}) \hookrightarrow L^2\Omega^{2,0}(\reg(V),h)$ and $\mathcal{D}(\Delta_{\overline{\partial},2,2,\abs})\hookrightarrow L^2\Omega^{2,2}(\reg(V),h)$ are  compact operators. In particular we get  that  the following inclusions
\begin{equation}
\label{compactIV}
\left(\mathcal{D}(\overline{\partial}_{2,1,\max}\circ \overline{\partial}_{2,1,\min}^t)\cap \im(\overline{\partial}_{2,1,\max}\circ \overline{\partial}_{2,1,\min}^t)\right)\hookrightarrow L^2\Omega^{2,2}(\reg(V),h)
\end{equation}
\begin{equation}
\label{compactV}
\left(\mathcal{D}(\overline{\partial}_{2,0,\min}^t\circ \overline{\partial}_{2,0,\max})\cap \im(\overline{\partial}_{2,0,\min}^t\circ \overline{\partial}_{2,0,\max})\right)\hookrightarrow L^2\Omega^{2,0}(\reg(V),h)
\end{equation}
are both compact operators where each space is endowed with the corresponding graph norm. Therefore we are now in the position to use   Corollary \ref{calanchi} in order  to  conclude  that \eqref{scompactIIIq} is a compact operator. This completes the proof of the first point. 
Finally the second and the third point follow by using the same arguments used to show the second and the third point  of Th. \ref{pentecoste}. The proof is thus complete.
\end{proof}

\begin{teo}
\label{coccabelladezio}
In the  setting of Th. \ref{lillottina}. Let $q\in \{0,1,2\}$ and consider the operator
\begin{equation}
\label{resile}
\Delta_{\overline{\partial},2,q,\abs}:L^2\Omega^{2,q}(\reg(V),h)\rightarrow L^2\Omega^{2,q}(\reg(V),h).
\end{equation}
Let $$0\leq \lambda_1\leq \lambda_2\leq...\leq \lambda_k\leq...$$ be the eigenvalues of \eqref{resile}. Then we have the following asymptotic inequality
\begin{equation}
\label{resiles}
\lim \inf \lambda_k k^{-\frac{1}{2}}>0
\end{equation}
as $k\rightarrow \infty$.\\ Consider now the heat operator associated to \eqref{resile}
\begin{equation}
\label{resilex}
e^{-t\Delta_{\overline{\partial},2,q,\abs}}:L^2\Omega^{2,q}(\reg(V),h)\rightarrow L^2\Omega^{2,q}(\reg(V),h).
\end{equation}
Then \eqref{resilex} is a trace class operator and its trace satisfies the following estimates
\begin{equation}
\label{resilez}
\Tr(e^{-t\Delta_{\overline{\partial},2,q,\abs}})\leq C_qt^{-2}
\end{equation}
for $t\in (0,1]$ and some constant $C_q>0$.
\end{teo}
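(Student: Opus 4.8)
The plan is to treat $q=0,1,2$ separately and to use throughout the elementary remark that, for a nonnegative self-adjoint operator $\Delta$ with discrete spectrum $0\le\lambda_1\le\lambda_2\le\cdots$, a bound $\Tr(e^{-t\Delta})\le Ct^{-2}$ valid on $(0,1]$ forces both the trace-class property of $e^{-t\Delta}$ (for all $t>0$) and $\liminf_k\lambda_k k^{-1/2}>0$: indeed $\Tr(e^{-t\Delta})=\sum_j e^{-t\lambda_j}\ge k\,e^{-t\lambda_k}$, so $k\le Ct^{-2}e^{t\lambda_k}$ for every $t\in(0,1]$, and taking $t=2/\lambda_k$ (legitimate once $\lambda_k\ge 2$, which holds for large $k$) gives $\lambda_k\ge c\,k^{1/2}$. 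Discreteness of $\Delta_{\overline{\partial},2,q,\abs}$ for each $q$ is already granted by Th. \ref{lillottina}, so it suffices to produce the heat-trace bounds. The case $q=0$ is immediate: since $(2,0)$ is the bottom bidegree, $\Delta_{\overline{\partial},2,0,\abs}=\overline{\partial}^{t}_{2,0,\min}\circ\overline{\partial}_{2,0,\max}=(\overline{\partial}_{2,0,\max})^{*}\circ\overline{\partial}_{2,0,\max}$, which is exactly the operator \eqref{anylapz} of Th. \ref{canonical} for the closed extension $\overline{\mathfrak{d}}_{m,0}=\overline{\partial}_{2,0,\max}$ (with $m=2$ and $E$ trivial); hence $\liminf_k\lambda_k k^{-1/2}>0$, trace-class, and $\Tr(e^{-t\Delta_{\overline{\partial},2,0,\abs}})\le C_{0}t^{-2}$ on $(0,1]$ are all contained in that theorem.

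For $q=2$ I would reproduce the reduction already used in the proof of Th. \ref{lillottina}. As $(2,2)$ is the top bidegree, $\Delta_{\overline{\partial},2,2,\abs}=\overline{\partial}_{2,1,\max}\circ\overline{\partial}^{t}_{2,1,\min}$, and the Hodge star $*\colon L^{2}\Omega^{2,2}(\reg(V),h)\to L^{2}(\reg(V),h)$, together with \eqref{cicuta} and Prop. \ref{occhiodibue}, conjugates this operator to $\partial_{\max}^{t}\circ\partial_{\min}$ acting on functions, i.e. to the Friedrich extension $\Delta_{\partial}^{\mathcal{F}}$ (Prop. \ref{fall}), which on the K\"ahler manifold $(\reg(V),h)$ coincides with $\Delta_{\overline{\partial}}^{\mathcal{F}}$ by \eqref{nonso}. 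By \cite{LT} (whose methods also yield the corresponding heat-trace bound) this operator has discrete spectrum and satisfies $\Tr(e^{-t\Delta_{\overline{\partial}}^{\mathcal{F}}})\le C_{2}t^{-2}$ for $t\in(0,1]$, equivalently $\liminf_k\lambda_k k^{-1/2}>0$; transporting this back along the unitary $*$ gives the two assertions for $\Delta_{\overline{\partial},2,2,\abs}$.

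For $q=1$ I would rely on the Hilbert-complex bookkeeping of the abstract results above. By \cite{PS} the groups $H^{2,q}_{2,\overline{\partial}_{\max}}(\reg(V),h)$ are finite dimensional, so (Prop. \ref{usipeti}) $T_{1}:=\overline{\partial}_{2,0,\max}$ and $T_{2}:=\overline{\partial}_{2,1,\max}$ have closed range and form a Hilbert complex with $\Delta_{T}=\Delta_{\overline{\partial},2,1,\abs}$, $T_{1}^{*}T_{1}=\Delta_{\overline{\partial},2,0,\abs}$ and $T_{2}T_{2}^{*}=\Delta_{\overline{\partial},2,2,\abs}$. Prop. \ref{coccadezio} then gives the $L^{2}$-orthogonal, $\Delta_{\overline{\partial},2,1,\abs}$-invariant splitting $L^{2}\Omega^{2,1}(\reg(V),h)=\mathcal{H}^{2,1}_{\overline{\partial},\abs}(\reg(V),h)\oplus\overline{\im(T_{1}T_{1}^{*})}\oplus\overline{\im(T_{2}^{*}T_{2})}$, on whose three summands $\Delta_{\overline{\partial},2,1,\abs}$ acts, respectively, as $0$, as $T_{1}T_{1}^{*}$ and as $T_{2}^{*}T_{2}$; since $\mathcal{H}^{2,1}_{\overline{\partial},\abs}(\reg(V),h)$ is finite dimensional, the nonzero spectrum of $\Delta_{\overline{\partial},2,1,\abs}$, counted with multiplicity, is the disjoint union of the nonzero spectra of $T_{1}T_{1}^{*}$ and of $T_{2}^{*}T_{2}$. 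Finally Prop. \ref{same} — applicable because, by Prop. \ref{corniola}, each of $T_{i}T_{i}^{*}$ and $T_{i}^{*}T_{i}$ is discrete — identifies the nonzero spectrum of $T_{1}T_{1}^{*}$ with that of $\Delta_{\overline{\partial},2,0,\abs}$ and the nonzero spectrum of $T_{2}^{*}T_{2}$ with that of $\Delta_{\overline{\partial},2,2,\abs}$, both with multiplicities. Consequently $\Tr(e^{-t\Delta_{\overline{\partial},2,1,\abs}})\le\dim\mathcal{H}^{2,1}_{\overline{\partial},\abs}(\reg(V),h)+\Tr(e^{-t\Delta_{\overline{\partial},2,0,\abs}})+\Tr(e^{-t\Delta_{\overline{\partial},2,2,\abs}})\le C_{1}t^{-2}$ for $t\in(0,1]$, and the trace-class property together with $\liminf_k\lambda_k k^{-1/2}>0$ for $q=1$ follow from the remark in the first paragraph.

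The one substantial input is the case $q=2$: after the Hodge-star reduction one is left with the Friedrich extension of the scalar Laplacian on $(\reg(V),h)$, which — unlike the canonical-bundle operator governed by Th. \ref{canonical}, and precisely because top-degree $(m,0)$-forms are analytically better behaved near $\sing(V)$ — is not handled by the machinery underlying Th. \ref{canonical} and must be controlled through the heat-kernel estimates of \cite{LT}. Once that bound is in place, $q=0$ is free and $q=1$ is pure Hilbert-complex bookkeeping, so this is the only step at which the proof leaves the self-contained framework developed in the earlier sections.
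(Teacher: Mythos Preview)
Your argument is correct and follows the same route as the paper: $q=0$ via Th.~\ref{canonical}, $q=2$ via the Hodge-star reduction to $\Delta_{\overline{\partial}}^{\mathcal{F}}$ and \cite{LT}, and $q=1$ by matching the positive spectrum of $\Delta_{\overline{\partial},2,1,\abs}$ with those of $q=0,2$ through Prop.~\ref{same}.

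One small technical wrinkle in your $q=1$ step: Prop.~\ref{same} as stated requires \emph{both} $T^*T$ and $TT^*$ to have discrete spectrum, but applying it to $T_1=\overline{\partial}_{2,0,\max}$ alone fails this hypothesis, since $\ker(T_1T_1^*)=\ker(\overline{\partial}^t_{2,0,\min})\supset\overline{\im(\overline{\partial}^t_{2,1,\min})}$ is infinite-dimensional, so $T_1T_1^*$ is not discrete on $L^2\Omega^{2,1}$ (Prop.~\ref{corniola} only controls the restriction to $\im(T_1T_1^*)$). The paper sidesteps this by applying Prop.~\ref{same} once to the combined operator $T:=\overline{\partial}_{2,0,\max}+\overline{\partial}^t_{2,1,\min}:L^2\Omega^{2,0}\oplus L^2\Omega^{2,2}\to L^2\Omega^{2,1}$, for which $T^*T=\Delta_{\overline{\partial},2,0,\abs}\oplus\Delta_{\overline{\partial},2,2,\abs}$ and $TT^*=\Delta_{\overline{\partial},2,1,\abs}$ are both discrete by Th.~\ref{lillottina}; this yields directly the identity \eqref{pescesega} from which the trace bound and the asymptotic follow. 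Your decomposition via Prop.~\ref{coccadezio} reaches the same conclusion, but to make the citation of Prop.~\ref{same} literally correct you would either restrict $T_1T_1^*$ to $\overline{\im(T_1)}$ before invoking it, or note that the proof of Prop.~\ref{same} for $\lambda>0$ needs only finite-dimensionality of the $\lambda$-eigenspaces, which follows from discreteness of either side.
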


\begin{proof}
Let $q=0$. Then in this case the statement follows by Th. \ref{canonical}. Consider now the case $q=2$. Then, as pointed out in the proof of Th. \ref{lillottina}, we have  $*\Delta_{\overline{\partial},2,2, \abs}*=\partial_{\max}^t\circ \partial_{\min}=\Delta^{\mathcal{F}}_{\partial}=\Delta^{\mathcal{F}}_{\overline{\partial}}.$
Now the statement follows using the results proved  for $\Delta^{\mathcal{F}}_{\overline{\partial}}:L^2(\reg(V),h)\rightarrow L^2(\reg(V),h)$ in \cite{LT}. Finally we deal with the case $q=1$. Consider the operator
\begin{equation}
\label{isabella}
\overline{\partial}_{2,0,\max}+\overline{\partial}^t_{2,1,\min}:L^2\Omega^{2,0}(\reg(V),h)\oplus L^2\Omega^{2,2,}(\reg(V),h)\rightarrow L^2\Omega^{2,1}(\reg(V),h)
\end{equation}
 whose domain is $\mathcal{D}(\overline{\partial}_{2,0,\max})\oplus \mathcal{D}(\overline{\partial}_{2,1,\min}^t)\subset L^2\Omega^{2,0}(\reg(V),h)\oplus L^2\Omega^{2,2,}(\reg(V),h)$. Its adjoint is 
\begin{equation}
\label{rege}
\overline{\partial}_{2,1,\max}+\overline{\partial}^t_{2,0,\min}:L^2\Omega^{2,1}(\reg(V),h)\rightarrow L^2\Omega^{2,0}(\reg(V),h) \oplus L^2\Omega^{2,2,}(\reg(V),h)
\end{equation}
with domain given by   $\mathcal{D}(\overline{\partial}_{2,1,\max})\cap \mathcal{D}(\overline{\partial}_{2,0,\min}^t)\subset L^2\Omega^{2,1}(\reg(V),h)$.
Taking the composition of each operator with the corresponding adjoint we get   
\begin{align}
\label{isabella2}
& (\overline{\partial}_{2,0,\max}+\overline{\partial}^t_{2,1,\min})^*\circ (\overline{\partial}_{2,0,\max}+\overline{\partial}^t_{2,1,\min})=\\
\nonumber &\Delta_{\overline{\partial},2,0,\abs}\oplus\Delta_{\overline{\partial},2,2,\abs} :L^2\Omega^{2,0}(\reg(V),h)\oplus L^2\Omega^{2,2,}(\reg(V),h)\rightarrow L^2\Omega^{2,0}(\reg(V),h)\oplus L^2\Omega^{2,2,}(\reg(V),h)
\end{align}
and 
\begin{equation}
\label{rege2}
(\overline{\partial}_{2,1,\max}+\overline{\partial}^t_{2,0,\min})^*\circ (\overline{\partial}_{2,1,\max}+\overline{\partial}^t_{2,0,\min})=\Delta_{\overline{\partial},2,1,\abs}:L^2\Omega^{2,1}(\reg(V),h)\rightarrow L^2\Omega^{2,1}(\reg(V),h).
\end{equation}
By Prop. \ref{same} we know that a real number $\lambda>0$ is an eigenvalue for \eqref{rege2} if and only if is an eigenvalue for \eqref{isabella2} and, if this is the case, the corresponding egeinspaces have the same dimension. Hence, by the fact that \eqref{isabella2} is the direct sum of $\Delta_{\overline{\partial},2,0,\abs}:L^2\Omega^{2,0}(\reg(V),h)\rightarrow L^2\Omega^{2,0}(\reg(V),h)$ and 
$\Delta_{\overline{\partial},2,2,\abs} : L^2\Omega^{2,2,}(\reg(V),h)\rightarrow L^2\Omega^{2,2,}(\reg(V),h)$ and by the fact that we have already shown that the asymptotic inequality \eqref{resiles} holds true for $\Delta_{\overline{\partial},2,0,\abs}$ and $\Delta_{\overline{\partial},2,2,\abs}$ we are in the position to   conclude that  \eqref{resiles} holds true also for the eigenvalues of \eqref{rege2}. We can also conclude immediately that $e^{-t\Delta_{\overline{\partial},2,1,\abs}}:L^2\Omega^{2,1}(\reg(V),h)\rightarrow L^2\Omega^{2,1}(\reg(V),h)$ is a trace class operator because, thanks to \eqref{resiles}, we have $$\Tr(e^{-t\Delta_{\overline{\partial},2,1,\abs}})=\sum_ke^{-t\lambda_k}<\infty$$  where in the above formula $\{\lambda_k\}_{k\in \mathbb{N}}$ are the eigenvalues of \eqref{rege2}. Finally \eqref{resilez} follows observing that, again by Prop. \eqref{same}, we have 
\begin{equation}
\label{pescesega}
\Tr(e^{-t\Delta_{\overline{\partial},2,1,\abs}})-\ker(\Delta_{\overline{\partial},2,1,\abs})=\Tr(e^{-t\Delta_{\overline{\partial},2,0,\abs}})-\ker(\Delta_{\overline{\partial},2,0,\abs})+\Tr(e^{-t\Delta_{\overline{\partial},2,2,\abs}})-\ker(\Delta_{\overline{\partial},2,2,\abs})
\end{equation}
 and therefore for $t\in (0,1]$ we have $$\Tr(e^{-t\Delta_{\overline{\partial},2,1,\abs}})\leq \ker(\Delta_{\overline{\partial},2,1,\abs})+C_0t^{-2}-\ker(\Delta_{\overline{\partial},2,0,\abs})+C_2t^{-2}-\ker(\Delta_{\overline{\partial},2,2,\abs})\leq C_1t^{-2}$$ for some $C_1>0$. 
\end{proof}

As a consequence of the previous theorem we recover the McKean-Singer formula on complex projective surfaces concerning the complex $(L^2\Omega^{2,q}(\reg(V),h),\overline{\partial}_{2,q,\max})$.

\begin{cor}
\label{kean}
In the setting of Th. \ref{lillottina}. Let us label by $(\overline{\partial}_{2,\max}+\overline{\partial}_{2,\min}^t)^+$ the operator defined in \eqref{isabella}. Then $(\overline{\partial}_{2,\max}+\overline{\partial}_{2,\min}^t)^+$ is a Fredholm operator and its index satisfies
\begin{equation}
\label{jilm}
\ind((\overline{\partial}_{2,\max}+\overline{\partial}_{2,\min}^t)^+)=\sum_{q=0}^2(-1)^q\Tr(e^{-t\Delta_{\overline{\partial},2,q,\abs}}).
\end{equation}
In particular we have 
\begin{equation}
\label{jilmx}
\chi(\tilde{V},\mathcal{K}_{\tilde{V}})=\sum_{q=0}^2(-1)^q\Tr(e^{-t\Delta_{\overline{\partial},2,q,\abs}})
\end{equation}
where $\pi:\tilde{V}\rightarrow V$ is any resolution of $V$, $\mathcal{K}_{\tilde{V}}$ is the sheaf of holomorphic $(2,0)$-forms on $\tilde{V}$ and $\chi(\tilde{V},\mathcal{K}_{\tilde{V}})=\sum_{q=0}^2(-1)^q\dim(H^q(\tilde{V},\mathcal{K}_{\tilde{V}}))$.
\end{cor}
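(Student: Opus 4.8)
The plan is to deduce Corollary \ref{kean} from Theorem \ref{lillottina} and Theorem \ref{coccabelladezio} essentially by the standard Hodge-theoretic / McKean--Singer argument, being careful to use the $L^2$-finiteness results that are already established. First I would observe that the operator $(\overline{\partial}_{2,\max}+\overline{\partial}_{2,\min}^t)^+$ of \eqref{isabella} has, as its composition with its adjoint in either order, the operators computed in \eqref{isabella2} and \eqref{rege2}, namely $\Delta_{\overline{\partial},2,0,\abs}\oplus\Delta_{\overline{\partial},2,2,\abs}$ and $\Delta_{\overline{\partial},2,1,\abs}$. By Theorem \ref{lillottina} all three of these have discrete spectrum, hence in particular finite-dimensional kernels, and $(\overline{\partial}_{2,\max}+\overline{\partial}_{2,\min}^t)^+$ has closed range (this follows since its self-adjoint square $\Delta_{\overline{\partial},2,1,\abs}$ has closed range by Cor.\ \ref{freddy}/Prop.\ \ref{usipeti}, or directly because $\im(\overline{\partial}_{2,q,\max})$ is closed). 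Therefore $(\overline{\partial}_{2,\max}+\overline{\partial}_{2,\min}^t)^+$ is Fredholm with
\begin{equation}
\label{jilmproof}
\ind((\overline{\partial}_{2,\max}+\overline{\partial}_{2,\min}^t)^+)=\dim\ker(\Delta_{\overline{\partial},2,0,\abs})+\dim\ker(\Delta_{\overline{\partial},2,2,\abs})-\dim\ker(\Delta_{\overline{\partial},2,1,\abs}).
\end{equation}

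Next I would prove \eqref{jilm} by the McKean--Singer cancellation. By Theorem \ref{coccabelladezio} each $e^{-t\Delta_{\overline{\partial},2,q,\abs}}$ is trace class, so the alternating sum on the right of \eqref{jilm} makes sense. Using Prop.\ \ref{same} applied to $T=(\overline{\partial}_{2,\max}+\overline{\partial}_{2,\min}^t)^+$, for every $\lambda>0$ the $\lambda$-eigenspace of $\Delta_{\overline{\partial},2,1,\abs}$ is isomorphic to the $\lambda$-eigenspace of $\Delta_{\overline{\partial},2,0,\abs}\oplus\Delta_{\overline{\partial},2,2,\abs}$; this is exactly the relation \eqref{pescesega} already recorded in the proof of Theorem \ref{coccabelladezio}. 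Hence all positive eigenvalue contributions to $\sum_{q}(-1)^q\Tr(e^{-t\Delta_{\overline{\partial},2,q,\abs}})$ cancel, and what remains is $\dim\ker(\Delta_{\overline{\partial},2,0,\abs})-\dim\ker(\Delta_{\overline{\partial},2,1,\abs})+\dim\ker(\Delta_{\overline{\partial},2,2,\abs})$, which is precisely the right-hand side of \eqref{jilmproof}. This proves \eqref{jilm}, and in particular shows the alternating heat supertrace is independent of $t$.

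For \eqref{jilmx} I would identify the three kernels with sheaf cohomology of the canonical sheaf on a resolution. Let $\pi:\tilde V\to V$ be a resolution as in \eqref{hiro}; on $\reg(V)$ the bundle in play is the untwisted canonical bundle, and pulling back via $(\pi|_{\tilde V\setminus D})^*$ gives a Hermitian pseudometric on $\tilde V$, so by the reasoning in the proof of Theorem \ref{canonical} the harmonic spaces $\mathcal{H}^{2,q}_{\overline{\partial},\abs}(\reg(V),h)$ are unitarily equivalent to the corresponding maximal-domain $L^2$-harmonic spaces on $\tilde V\setminus D$. The point is then that $H^{2,q}_{2,\overline{\partial}_{\max}}(\reg(V),h)\cong H^q(\tilde V,\mathcal{K}_{\tilde V})$: this is precisely MacPherson's conjecture / the main theorem of \cite{PS} in bidegree $(2,q)$ (equivalently the $(0,q)$-statement twisted by $\mathcal{K}$, using that $(2,q)$-forms are $(0,q)$-forms with values in the canonical bundle). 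Since $\im(\overline{\partial}_{2,q-1,\max})$ is closed (Prop.\ \ref{usipeti}, the hypotheses holding by \cite{PS}), $\mathcal{H}^{2,q}_{\overline{\partial},\abs}(\reg(V),h)=\ker(\Delta_{\overline{\partial},2,q,\abs})$ is isomorphic to $H^{2,q}_{2,\overline{\partial}_{\max}}(\reg(V),h)\cong H^q(\tilde V,\mathcal{K}_{\tilde V})$, so $\dim\ker(\Delta_{\overline{\partial},2,q,\abs})=\dim H^q(\tilde V,\mathcal{K}_{\tilde V})$; combining with \eqref{jilm} and \eqref{jilmproof} yields \eqref{jilmx}.

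The main obstacle is the last step: correctly citing and invoking the $L^2$-$\overline\partial$-cohomology identification of \cite{PS} in the canonical (bidegree $(2,q)$) twist, including checking that the metric comparison on the resolution does not disturb the cohomology and that the relevant ranges are closed so that reduced and unreduced cohomology coincide and equal the kernel of the absolute Laplacian. The Fredholm and cancellation parts are routine given Theorems \ref{lillottina}, \ref{coccabelladezio} and Prop.\ \ref{same}; the genuinely external input is the comparison with $H^q(\tilde V,\mathcal{K}_{\tilde V})$.
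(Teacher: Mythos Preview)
Your proposal is correct and follows essentially the same approach as the paper: Fredholmness from Theorem \ref{lillottina}, the McKean--Singer cancellation via Prop.\ \ref{same} (equivalently \eqref{pescesega}), and the identification with $\chi(\tilde V,\mathcal{K}_{\tilde V})$ via the results of \cite{PS}. The paper's own proof is terser but relies on exactly the same ingredients.
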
 

\begin{proof}
That $(\overline{\partial}_{2,\max}+\overline{\partial}_{2,\min}^t)^+$ is a Fredholm operator it is clear from Th. \ref{lillottina}. The equality \eqref{jilm} follows by \eqref{pescesega}. Indeed we have $$\ind((\overline{\partial}_{2,\max}+\overline{\partial}_{2,\min}^t)^+)=\sum_{q=0}^2(-1)^q\ker(\Delta_{\overline{\partial},2,q,\abs})=\sum_{q=0}^2(-1)^q\Tr(e^{-t\Delta_{\overline{\partial},2,q,\abs}}).$$ The equality \eqref{jilmx} follows by \eqref{jilm} and the results established in \cite{PS}.
\end{proof}

\begin{teo}
\label{coccabelladezios}
In the  setting of Th. \ref{lillottina}. Let $q\in \{0,1,2\}$ and consider the operator
\begin{equation}
\label{resileq}
\Delta_{\overline{\partial},2,q,}^{\mathcal{F}}:L^2\Omega^{2,q}(\reg(V),h)\rightarrow L^2\Omega^{2,q}(\reg(V),h)
\end{equation}
that is the Friedrich extension of $\Delta_{\overline{\partial},2,q}:\Omega^{2,q}_c(\reg(V))\rightarrow \Omega^{2,q}_c(\reg(V))$.
Let  $$0\leq \mu_1\leq \mu_2\leq...\leq \mu_k\leq...$$ be the eigenvalues of \eqref{resileq} and 
let $$0\leq \lambda_1\leq \lambda_2\leq...\leq \lambda_k\leq...$$ be the eigenvalues of \eqref{resile}. Then we have the following  inequality for every $k\in \mathbb{N}$
\begin{equation}
\label{compare}
\lambda_{k}\leq \mu_k.
\end{equation}
In particular we have 
\begin{equation}
\label{resileh}
\lim \inf \mu_k k^{-\frac{1}{2}}>0
\end{equation}
as $k\rightarrow \infty$.\\ Consider now the heat operator associated to \eqref{resileq}
\begin{equation}
\label{resiley}
e^{-t\Delta_{\overline{\partial},2,q}^{\mathcal{F}}}:L^2\Omega^{2,q}(\reg(V),h)\rightarrow L^2\Omega^{2,q}(\reg(V),h).
\end{equation}
Then \eqref{resiley} is a trace class operator and 
\begin{equation}
\label{pololo}
\Tr(e^{-t\Delta_{\overline{\partial},2,q}^{\mathcal{F}}})\leq \Tr(e^{-t\Delta_{\overline{\partial},2,q,\abs}}).
\end{equation}
In particular we have the following estimate for  $\Tr(e^{-t\Delta_{\overline{\partial},2,q}^{\mathcal{F}}})$
\begin{equation}
\label{resilezk}
\Tr(e^{-t\Delta_{\overline{\partial},2,q}^{\mathcal{F}}})\leq  B_qt^{-2}
\end{equation}
for $t\in (0,1]$ and some constant $B_q>0$.
\end{teo}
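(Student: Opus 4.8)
The plan is to derive every assertion of the theorem from a single comparison between the Friedrich extension and the absolute extension, via the variational (min-max) characterization of eigenvalues. First I would record that, by Th.~\ref{lillottina}, both \eqref{resileq} and \eqref{resile} have discrete spectrum, so the eigenvalue sequences $\{\mu_k\}$ and $\{\lambda_k\}$ are well defined and the min-max theorem, see for instance \cite{KSC}, applies to both operators through their associated closed quadratic forms, which I denote $Q^{\mathcal{F}}$ and $Q_{\abs}$.

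The key structural observation is that $\Delta_{\overline{\partial},2,q,\abs}$ is a non-negative self-adjoint extension of the operator $\Delta_{\overline{\partial},2,q}:\Omega^{2,q}_c(\reg(V))\rightarrow\Omega^{2,q}_c(\reg(V))$. Indeed, any $\omega\in\Omega^{2,q}_c(\reg(V))$ trivially lies in $\mathcal{D}(\overline{\partial}_{2,q,\max})\cap\mathcal{D}(\overline{\partial}^t_{2,q-1,\min})$, and $\overline{\partial}_{2,q,\max}\omega$, $\overline{\partial}^t_{2,q-1,\min}\omega$ are again compactly supported, whence $\omega\in\mathcal{D}(\Delta_{\overline{\partial},2,q,\abs})$ with $\Delta_{\overline{\partial},2,q,\abs}\omega=\Delta_{\overline{\partial},2,q}\omega$; non-negativity follows at once from the identity $\langle\Delta_{\overline{\partial},2,q,\abs}\omega,\omega\rangle=\|\overline{\partial}_{2,q,\max}\omega\|^2+\|\overline{\partial}^t_{2,q-1,\min}\omega\|^2$. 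I would then invoke the extremal property of the Friedrich extension: the form $Q^{\mathcal{F}}$ is the closure of $\omega\mapsto\langle\Delta_{\overline{\partial},2,q}\omega,\omega\rangle$ on $\Omega^{2,q}_c(\reg(V))$, hence it is contained in every closed non-negative form extending the latter, in particular in $Q_{\abs}$; see \cite{FraBei} and the references therein. This yields the inclusion of form domains $\mathcal{D}(Q^{\mathcal{F}})\subseteq\mathcal{D}(Q_{\abs})$ together with $Q^{\mathcal{F}}=Q_{\abs}$ on $\mathcal{D}(Q^{\mathcal{F}})$. Consequently the infimum of Rayleigh quotients defining $\mu_k$ ranges over a smaller family of $k$-dimensional subspaces than the one defining $\lambda_k$, and therefore $\mu_k\geq\lambda_k$ for every $k\in\mathbb{N}$, which is \eqref{compare}.

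From \eqref{compare} the remaining statements follow mechanically. Inequality \eqref{resileh} is obtained by combining \eqref{compare} with \eqref{resiles} of Th.~\ref{coccabelladezio}. For the heat trace I would take an orthonormal basis of $L^2\Omega^{2,q}(\reg(V),h)$ consisting of eigensections of \eqref{resileq} and use \eqref{compare} together with the trace class property of $e^{-t\Delta_{\overline{\partial},2,q,\abs}}$ established in Th.~\ref{coccabelladezio}, obtaining $\Tr(e^{-t\Delta^{\mathcal{F}}_{\overline{\partial},2,q}})=\sum_{k\in\mathbb{N}}e^{-t\mu_k}\leq\sum_{k\in\mathbb{N}}e^{-t\lambda_k}=\Tr(e^{-t\Delta_{\overline{\partial},2,q,\abs}})<\infty$ for every $t>0$. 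This proves simultaneously that \eqref{resiley} is a trace class operator and that \eqref{pololo} holds, and then \eqref{resilezk} is immediate from \eqref{pololo} and \eqref{resilez} on taking $B_q:=C_q$.

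The only delicate point, the hard part such as it is, is the form comparison in the second paragraph: one must check carefully that $\Delta_{\overline{\partial},2,q,\abs}$ is genuinely a non-negative self-adjoint extension of $\Delta_{\overline{\partial},2,q}$ on $\Omega^{2,q}_c(\reg(V))$, so that the characterization of the Friedrich extension as the smallest such extension in the sense of quadratic forms applies and actually gives $\mathcal{D}(Q^{\mathcal{F}})\subseteq\mathcal{D}(Q_{\abs})$. Everything after that is a routine application of the min-max theorem and of the results already obtained in Th.~\ref{lillottina} and Th.~\ref{coccabelladezio}.
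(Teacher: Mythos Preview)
Your proof is correct and follows the same overall architecture as the paper's: both establish \eqref{compare} via the min-max theorem by showing that the form domain of the Friedrich extension is contained in the form domain of the absolute extension, with the forms agreeing on the smaller domain, and then deduce \eqref{resileh}, \eqref{pololo} and \eqref{resilezk} from \eqref{compare} together with Th.~\ref{coccabelladezio}. The one genuine difference is in how the form-domain inclusion is justified. You argue abstractly: $\Delta_{\overline{\partial},2,q,\abs}$ is a non-negative self-adjoint extension of $\Delta_{\overline{\partial},2,q}$ on $\Omega^{2,q}_c(\reg(V))$, and the Friedrich extension is maximal among all such extensions (equivalently, has the smallest form domain), hence $\mathcal{D}(Q^{\mathcal{F}})\subseteq\mathcal{D}(Q_{\abs})$. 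The paper instead identifies both form domains concretely: by Prop.~\ref{fall} one has $\Delta^{\mathcal{F}}_{\overline{\partial},2,q}=(\overline{\partial}_{2,q}+\overline{\partial}^t_{2,q-1})_{\max}\circ(\overline{\partial}_{2,q}+\overline{\partial}^t_{2,q-1})_{\min}$, so $\mathcal{D}(Q^{\mathcal{F}})=\mathcal{D}((\overline{\partial}_{2,q}+\overline{\partial}^t_{2,q-1})_{\min})$, while by definition $\mathcal{D}(Q_{\abs})=\mathcal{D}(\overline{\partial}_{2,q,\max})\cap\mathcal{D}(\overline{\partial}^t_{2,q-1,\min})$, and the inclusion $\mathcal{D}((\overline{\partial}_{2,q}+\overline{\partial}^t_{2,q-1})_{\min})\subset\mathcal{D}(\overline{\partial}_{2,q,\max}+\overline{\partial}^t_{2,q-1,\min})$ is then immediate. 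Your route is slicker and avoids unpacking Prop.~\ref{fall}; the paper's is more explicit and makes the first-order structure visible, which ties in with how the form domains are used elsewhere in the section.
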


\begin{proof}
Using again the min-max Theorem  as in the proof of Th. \ref{secondo} we have 
\begin{equation}
\label{dovizia}
\mu_k=\inf_{F\in \mathfrak{F}_k\cap\mathcal{D}(\Delta_{\overline{\partial},2,q}^{\mathcal{F}})}\sup_{s\in F}\frac{\langle\Delta_{\overline{\partial},2,q}^{\mathcal{F}}s,s\rangle_{L^2\Omega^{2,q}(\reg(V),h)}}{\|s\|^2_{L^2\Omega^{2,q}(\reg(V),h)}}
\end{equation}
where $\mathfrak{F}_k$ denotes the set of linear subspaces of $L^2\Omega^{2,q}(\reg(V),h)$ of dimension at most $k$. Analogously for the eigenvalues of \eqref{resile} we have 
\begin{equation}
\label{minuzia}
\lambda_k=\inf_{F\in \mathfrak{F}_k\cap\mathcal{D}(\Delta_{\overline{\partial},2,q,\abs})}\sup_{s\in F}\frac{\langle \Delta_{\overline{\partial},2,q,\abs}s,s\rangle_{L^2\Omega^{2,q}(\reg(V),h)}}{\|s\|^2_{L^2\Omega^{2,q}(\reg(V),h)}}.
\end{equation}
By Prop. \ref{fall} we know that $\Delta^{\mathcal{F}}_{\overline{\partial},2,q}=(\overline{\partial}_{2,q}+\overline{\partial}^t_{2,q-1})_{\max}\circ(\overline{\partial}_{2,q}+\overline{\partial}^t_{2,q-1})_{\min}$ and by \eqref{asdf} we know that $\Delta_{\overline{\partial},2,q,\abs}=(\overline{\partial}_{2,q,\max}+\overline{\partial}^t_{2,q-1,\min})\circ(\overline{\partial}_{2,q,\max}+\overline{\partial}^t_{2,q-1,\min})$. Therefore \eqref{dovizia} and \eqref{minuzia} become respectively
\begin{equation}
\label{arguzia}
\inf_{F\in \mathfrak{F}_k\cap\mathcal{D}((\overline{\partial}_{2,q}+\overline{\partial}^t_{2,q-1})_{\min})}\sup_{s\in F}\frac{\langle (\overline{\partial}_{2,q}+\overline{\partial}^t_{2,q-1})_{\min}s,(\overline{\partial}_{2,q}+\overline{\partial}^t_{2,q-1})_{\min}s\rangle_{L^2\Omega^{2,\bullet}(\reg(V),h)}}{\|s\|^2_{L^2\Omega^{2,q}(\reg(V),h)}}
\end{equation}

and

\begin{equation}
\label{letizia}
\inf_{F\in \mathfrak{F}_k\cap\mathcal{D}(\overline{\partial}_{2,q,\max}+\overline{\partial}^t_{2,q-1,\min})}\sup_{s\in F}\frac{\langle (\overline{\partial}_{2,q,\max}+\overline{\partial}^t_{2,q-1,\min})s,(\overline{\partial}_{2,q,\max}+\overline{\partial}^t_{2,q-1,\min})s\rangle_{L^2\Omega^{2,\bullet}(\reg(V),h)}}{\|s\|^2_{L^2\Omega^{2,q}(\reg(V),h)}}
\end{equation}
where $L^2\Omega^{2,\bullet}(\reg(V),h)=\oplus_{q=0}^2 L^2\Omega^{2,q}(\reg(V),h)$. Let now $\{\phi_n,\ n\in \mathbb{N}\}$ be an orthonormal basis of $L^2\Omega^{2,q}(\reg(V),h)$ made of eigensections of $\Delta_{\overline{\partial},2,q}^{\mathcal{F}}$ such that $\Delta^{\mathcal{F}}_{\overline{\partial},2,q}\phi_k=\mu_k\phi_k$. Let us define  $F_k\in \mathfrak{F}_k$ as the $k$-dimensional subspace of $L^2\Omega^{2,q}(\reg(V),h)$ generated by $\{\phi_1,...,\phi_k\}$. Then, see for instance \cite{KSC} pag. 279, we have $$\mu_k=\sup_{s\in F_k}\frac{\langle (\overline{\partial}_{2,q}+\overline{\partial}_{2,q-1}^t)_{\min}s,(\overline{\partial}_{2,q}+\overline{\partial}_{2,q-1}^t)_{\min}s\rangle_{L^2\Omega^{2,\bullet}(\reg(V),h)}}{\|s\|^2_{L^2\Omega^{2,q}(\reg(V),h)}}.$$
Since $\mathcal{D}((\overline{\partial}_{2,q}+\overline{\partial}_{2,q-1}^t)_{\min})\subset \mathcal{D}(\overline{\partial}_{2,q,\max}+\overline{\partial}_{2,q-1,\min}^t)$  we can deduce  that 
\begin{align}
\nonumber  \mu_k &= \sup_{s\in F_k}\frac{\langle (\overline{\partial}_{2,q}+\overline{\partial}_{2,q-1}^t)_{\min}s,(\overline{\partial}_{2,q}+\overline{\partial}_{2,q-1}^t)_{\min}s\rangle_{L^2\Omega^{2,\bullet}(\reg(V),h)}}{\|s\|^2_{L^2\Omega^{2,q}(\reg(V),h)}}\\
\nonumber & =\sup_{s\in F_k}\frac{\langle (\overline{\partial}_{2,q,\max}+\overline{\partial}_{2,q-1,\min}^t)s,(\overline{\partial}_{2,q,\max}+\overline{\partial}_{2,q-1,\min}^t)s\rangle_{L^2\Omega^{2,\bullet}(\reg(V),h)}}{\|s\|^2_{L^2\Omega^{2,q}(\reg(V),h)}}\\
\nonumber & \geq \inf_{F\in \mathfrak{F}_k\cap\mathcal{D}(\overline{\partial}_{2,q,\max}+\overline{\partial}^t_{2,q-1,\min})}\sup_{s\in F}\frac{\langle (\overline{\partial}_{2,q,\max}+\overline{\partial}^t_{2,q-1,\min})s,(\overline{\partial}_{2,q,\max}+\overline{\partial}^t_{2,q-1,\min})s\rangle_{L^2\Omega^{2,\bullet}(\reg(V),h)}}{\|s\|^2_{L^2\Omega^{2,q}(\reg(V),h)}}=\lambda_k.
\end{align}
This establishes \eqref{compare}. The remaining properties follow now immediately  using \eqref{compare} and Th. \ref{coccabelladezio}.
\end{proof}

Concerning the bidegree $(1,0)$ we have the following application. 

\begin{teo}
\label{supercoccabelladezio}
In the  setting of Th. \ref{lillottina}. Consider the operator
\begin{equation}
\label{sacripante}
\Delta_{\overline{\partial},1,0}^{\mathcal{F}}:L^2\Omega^{1,0}(\reg(V),h)\rightarrow L^2\Omega^{1,0}(\reg(V),h)
\end{equation}
that is the Friedrich extension of $\Delta_{\overline{\partial},1,0}:\Omega^{1,0}_c(\reg(V))\rightarrow \Omega^{1,0}_c(\reg(V))$.
Then \eqref{sacripante} has discrete spectrum. Let  $$0\leq \mu_1\leq \mu_2\leq...\leq \mu_k\leq...$$ be the eigenvalues of \eqref{sacripante}. We have the following asymptotic  inequality 
\begin{equation}
\label{zucca}
\lim \inf \mu_k k^{-\frac{1}{2}}>0
\end{equation}
as $k\rightarrow \infty$.\\ Finally consider  the heat operator associated to \eqref{sacripante}
\begin{equation}
\label{gargantuesco}
e^{-t\Delta_{\overline{\partial},1,0}^{\mathcal{F}}}:L^2\Omega^{1,0}(\reg(V),h)\rightarrow L^2\Omega^{1,0}(\reg(V),h).
\end{equation}
Then \eqref{gargantuesco} is a trace class operator and its trace satisfies the following estimate
\begin{equation}
\label{pantagruelico}
\Tr(e^{-t\Delta_{\overline{\partial},1,0}^{\mathcal{F}}})\leq  Ct^{-2}
\end{equation}
for $t\in (0,1]$ and some constant  $C>0$.
\end{teo}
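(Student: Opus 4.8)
The plan is to show that $\Delta_{\overline{\partial},1,0}^{\mathcal{F}}$ is unitarily equivalent to the operator $\Delta_{\overline{\partial},2,1,\abs}$, whose spectral behaviour on $(\reg(V),h)$ has already been settled in Theorems \ref{lillottina} and \ref{coccabelladezio}; then every assertion of the statement follows at once and no new analysis is required.

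First I would identify the Friedrich extension with a relative extension. Applying Prop. \ref{fall} with $D=\overline{\partial}_{1,0}$, and using that $\Lambda^{1,-1}(\reg(V))=0$ — so that the differential operator $\overline{\partial}^t_{1,0}\circ\overline{\partial}_{1,0}$ agrees with $\Delta_{\overline{\partial},1,0}$ on $\Omega^{1,0}_c(\reg(V))$ — one obtains $\Delta_{\overline{\partial},1,0}^{\mathcal{F}}=\overline{\partial}^t_{1,0,\max}\circ\overline{\partial}_{1,0,\min}$. On the other hand, the summand $\overline{\partial}_{1,-1,\min}\circ\overline{\partial}^t_{1,-1,\max}$ appearing in the definition \eqref{buio} of the relative extension vanishes identically in bi-degree $(1,0)$, so that $\Delta_{\overline{\partial},1,0,\rel}=\overline{\partial}^t_{1,0,\max}\circ\overline{\partial}_{1,0,\min}$ as well. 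Hence $\Delta_{\overline{\partial},1,0}^{\mathcal{F}}=\Delta_{\overline{\partial},1,0,\rel}$.

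Next I would invoke point $(5)$ of Prop. \ref{occhiodibue} with $m=2$ and $(p,q)=(2,1)$: the Hodge-star unitary $*\colon L^2\Omega^{2,1}(\reg(V),h)\to L^2\Omega^{1,0}(\reg(V),h)$ satisfies $*(\mathcal{D}(\Delta_{\overline{\partial},2,1,\abs}))=\mathcal{D}(\Delta_{\overline{\partial},1,0,\rel})$ and $*\circ\Delta_{\overline{\partial},2,1,\abs}=\Delta_{\overline{\partial},1,0,\rel}\circ *$. Combined with the first step, this exhibits $\Delta_{\overline{\partial},1,0}^{\mathcal{F}}$ and $\Delta_{\overline{\partial},2,1,\abs}$ as unitarily equivalent; in particular they have the same spectrum with the same multiplicities, and their heat semigroups are conjugate by $*$, hence have equal traces for every $t>0$. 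By Theorem \ref{lillottina} the operator $\Delta_{\overline{\partial},2,1,\abs}$ has discrete spectrum, so $\Delta_{\overline{\partial},1,0}^{\mathcal{F}}$ has discrete spectrum too. By Theorem \ref{coccabelladezio} applied with $q=1$, the eigenvalues of $\Delta_{\overline{\partial},2,1,\abs}$ satisfy $\liminf_k\lambda_k k^{-1/2}>0$ and $\Tr(e^{-t\Delta_{\overline{\partial},2,1,\abs}})\le C_1 t^{-2}$ for $t\in(0,1]$; transporting these facts through $*$ yields \eqref{zucca}, the trace-class property of \eqref{gargantuesco}, and the estimate \eqref{pantagruelico}.

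I do not expect a genuine analytic obstacle here: the whole content of the theorem is the chain of identifications $\Delta_{\overline{\partial},1,0}^{\mathcal{F}}=\Delta_{\overline{\partial},1,0,\rel}=*^{-1}\circ\Delta_{\overline{\partial},2,1,\abs}\circ *$, after which everything is inherited from the already-proved surface results. The only slightly delicate point is the bookkeeping with the empty index space $\Lambda^{1,-1}$, which is precisely what makes the Friedrich and relative extensions literally coincide in this bi-degree and lets the Hodge star connect $(1,0)$ with $(2,1)$.
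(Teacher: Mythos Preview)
Your overall strategy — reduce to bi-degree $(2,1)$ via the Hodge star and then quote the surface results — is exactly the paper's. But your key step has a gap. You invoke point (5) of Prop.~\ref{occhiodibue} to obtain $*\circ\Delta_{\overline{\partial},2,1,\abs}=\Delta_{\overline{\partial},1,0,\rel}\circ *$. However, by the paper's own formula \eqref{cicci}, $\overline{\partial}^t=-*\partial*$ with the $\mathbb{C}$-linear Hodge star, so conjugation by $*$ sends $\Delta_{\overline{\partial}}$ to $\Delta_{\partial}$, not to $\Delta_{\overline{\partial}}$. (Point (5) as printed is thus a misprint; note that every place the paper actually \emph{uses} Prop.~\ref{occhiodibue} — Cor.~\ref{aquino}, Cor.~\ref{sigari}, etc. — it composes $*$ with the conjugation $c$, which is point (6).) Consequently what you really get from the Hodge star alone is an intertwining with a $\Delta_{\partial}$-operator on $(1,0)$-forms, not with $\Delta_{\overline{\partial},1,0,\rel}$.

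The paper's proof acknowledges this explicitly: it first derives $*\circ\Delta_{\overline{\partial},2,1}=\Delta_{\partial,1,0}\circ *$ on $\Omega^{2,1}_c(\reg(V))$, passes to Friedrich extensions (which is stable under unitary conjugation), and only then uses the K\"ahler identity $\Delta_{\partial,1,0}=\Delta_{\overline{\partial},1,0}$ on $\Omega^{1,0}_c(\reg(V))$ — valid because $h$ is K\"ahler — to conclude $\Delta_{\partial,1,0}^{\mathcal{F}}=\Delta_{\overline{\partial},1,0}^{\mathcal{F}}$. This K\"ahler step is precisely what your argument is missing. Note also that the paper lands on $\Delta_{\overline{\partial},2,1}^{\mathcal{F}}$ (and invokes Th.~\ref{coccabelladezios}), not on $\Delta_{\overline{\partial},2,1,\abs}$; the Friedrich-to-Friedrich correspondence is what makes the transfer clean, whereas identifying $\Delta_{\partial,1,0,\rel}$ with $\Delta_{\overline{\partial},1,0,\rel}$ directly would require comparing domains built from different complexes.
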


\begin{proof}
Using \eqref{cicci} and the Hodge star operator we have $*\circ \Delta_{\overline{\partial},2,1}=\Delta_{\partial,1,0}\circ *$ on $\Omega^{2,1}_c(\reg(V))$. By Prop. \ref{fall} it is easy to check that the previous equality implies that $*(\mathcal{D}(\Delta_{\overline{\partial},2,1}^{\mathcal{F}}))=\mathcal{D}(\Delta_{\partial,1,0}^{\mathcal{F}})$ and that  $*\circ \Delta_{\overline{\partial},2,1}^{\mathcal{F}}=\Delta_{\partial,1,0}^{\mathcal{F}}\circ *$. Moreover, by the K\"ahler identities, we have $\Delta_{\partial,1,0}=\Delta_{\overline{\partial},1,0}$ on $\Omega^{1,0}_c(\reg(V))$ and therefore $\Delta_{\partial,1,0}^{\mathcal{F}}=\Delta_{\overline{\partial},1,0}^{\mathcal{F}}$ on $L^2\Omega^{1,0}(\reg(V),h)$ as unbounded self-adjoint operators. In conclusion we have shown that $*\circ \Delta_{\overline{\partial},2,1}^{\mathcal{F}}=\Delta_{\overline{\partial},1,0}^{\mathcal{F}}\circ *$, that is any form $\omega\in L^2\Omega^{2,1}(\reg(V),h)$ lies in $\mathcal{D}(\Delta_{\overline{\partial},2,1}^{\mathcal{F}})$ if and only if $*\omega \in \mathcal{D}(\Delta_{\overline{\partial},1,0}^{\mathcal{F}})$ and if this is the case then  $*(\Delta_{\overline{\partial},2,1,
}^{\mathcal{F}}\omega)=\Delta_{\overline{\partial},1,0}^{\mathcal{F}}(*\omega)$. Now all the statements of this theorem follows by Th. \ref{coccabelladezios} because $*:L^2\Omega^{1,0}(\reg(V),h)\rightarrow L^2\Omega^{2,1}(\reg(V),h)$ is a unitary operator.
\end{proof}

As an immediate application of Th. \ref{supercoccabelladezio} we have the following \emph{Hodge theorem}.
\begin{cor}
In the setting of Th. \ref{lillottina}. The following properties hold true:
\begin{enumerate}
\item $\im(\overline{\partial}_{1,0,\min})$ is a closed subset of $L^2\Omega^{1,1}(\reg(V),h)$.
\item $\ker(\Delta_{\overline{\partial},1,1,\rel})\cong H^{1,1}_{2,\overline{\partial}_{\min}}(\reg(V),h)$.
\item $H^{1,1}_{2,\overline{\partial}_{\min}}(\reg(V),h)$ is finite dimensional.
\end{enumerate}
\end{cor}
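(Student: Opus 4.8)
The plan is to obtain all three assertions from Theorem \ref{supercoccabelladezio} --- which says $\Delta_{\overline{\partial},1,0}^{\mathcal{F}}$ has discrete spectrum --- combined with the orthogonal decompositions \eqref{fantaghi}, \eqref{zazzac}, \eqref{cocapro}, and, for the finiteness statement, with the known finite-dimensionality of the $L^2$-Dolbeault cohomology of a singular projective surface. For point (1), I would first note that by Prop. \ref{fall} one has $\Delta_{\overline{\partial},1,0}^{\mathcal{F}}=\overline{\partial}^t_{1,0,\max}\circ\overline{\partial}_{1,0,\min}$ (the other summand of $\Delta_{\overline{\partial},1,0}$ being zero). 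Since, by Th. \ref{supercoccabelladezio}, this operator has discrete spectrum, it is Fredholm on its graph-domain, hence has closed range. Writing $T:=\overline{\partial}_{1,0,\min}$, so that $T^{*}=\overline{\partial}^t_{1,0,\max}$ by \eqref{emendare} and $T^{*}\circ T=\Delta_{\overline{\partial},1,0}^{\mathcal{F}}$, I would then repeat the argument in the proof of Cor. \ref{freddy}: from \eqref{fantaghi} we have $L^2\Omega^{1,0}(\reg(V),h)=\ker(T)\oplus\overline{\im(T^{*})}$, and since $\ker(T)=\ker(T^{*}\circ T)$ the chain $\overline{\im(T^{*})}=\overline{\im(T^{*}\circ T)}=\im(T^{*}\circ T)\subset\im(T^{*})\subset\overline{\im(T^{*})}$ forces $\im(\overline{\partial}^t_{1,0,\max})$ to be closed; taking adjoints (the closed range theorem) gives that $\im(\overline{\partial}_{1,0,\min})$ is closed, which is (1).

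For point (2), once $\im(\overline{\partial}_{1,0,\min})$ is closed I would combine \eqref{fantaghi} for $\overline{\partial}_{1,1}$, i.e. $L^2\Omega^{1,1}(\reg(V),h)=\ker(\overline{\partial}_{1,1,\min})\oplus\overline{\im(\overline{\partial}^t_{1,1,\max})}$, with the finer decomposition \eqref{cocapro} and the identity \eqref{babirussa}, to obtain
$$\ker(\overline{\partial}_{1,1,\min})=\mathcal{H}^{1,1}_{\overline{\partial},\rel}(\reg(V),h)\oplus\overline{\im(\overline{\partial}_{1,0,\min})}=\ker(\Delta_{\overline{\partial},1,1,\rel})\oplus\im(\overline{\partial}_{1,0,\min}).$$
Passing to the quotient by $\im(\overline{\partial}_{1,0,\min})$ and using the definition \eqref{chimar} yields the isomorphism $H^{1,1}_{2,\overline{\partial}_{\min}}(\reg(V),h)\cong\ker(\Delta_{\overline{\partial},1,1,\rel})$ of (2).

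For point (3), by (2) it suffices to show $\dim\ker(\Delta_{\overline{\partial},1,1,\rel})<\infty$. Here I would use that $\reg(V)$ has complex dimension $2$, so Prop. \ref{occhiodibue}(5) with $p=q=1$ gives that the Hodge star is a unitary isomorphism between $\ker(\Delta_{\overline{\partial},1,1,\rel})=\mathcal{H}^{1,1}_{\overline{\partial},\rel}(\reg(V),h)$ and $\ker(\Delta_{\overline{\partial},1,1,\abs})=\mathcal{H}^{1,1}_{\overline{\partial},\abs}(\reg(V),h)$. On the other hand, by \eqref{zazzac} the assignment $\omega\mapsto[\omega]$ defines a map $\mathcal{H}^{1,1}_{\overline{\partial},\abs}(\reg(V),h)\to H^{1,1}_{2,\overline{\partial}_{\max}}(\reg(V),h)$ which is injective, because $[\omega]=0$ means $\omega\in\mathcal{H}^{1,1}_{\overline{\partial},\abs}\cap\overline{\im(\overline{\partial}_{1,0,\max})}=\{0\}$. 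Since $H^{1,1}_{2,\overline{\partial}_{\max}}(\reg(V),h)$ is finite-dimensional by \cite{PS}, it follows that $\mathcal{H}^{1,1}_{\overline{\partial},\rel}(\reg(V),h)$, hence $H^{1,1}_{2,\overline{\partial}_{\min}}(\reg(V),h)$, is finite-dimensional, proving (3).

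I expect points (1) and (2) to be essentially formal, pure Hilbert-complex bookkeeping on top of Th. \ref{supercoccabelladezio}. The only genuine difficulty is the input needed in (3): the finiteness of the \emph{middle} $L^2$-Dolbeault group $H^{1,1}_{2,\overline{\partial}_{\max}}(\reg(V),h)$. This is the one bidegree of a surface that does not reduce, via the Hodge star and complex conjugation (Prop. \ref{comconj}, Prop. \ref{occhiodibue}), to the cases of type $(m,q)$ or $(0,q)$ covered by \cite{JRu}, \cite{PS}, \cite{OvRu}, \cite{Ruppe}, so one genuinely has to appeal to the $L^2$-$\overline{\partial}$-cohomology theory of singular projective surfaces for it.
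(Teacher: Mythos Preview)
Your proof is correct and, for points (1) and (2), essentially identical to the paper's argument: both use Prop.~\ref{fall} to identify $\Delta_{\overline{\partial},1,0}^{\mathcal{F}}=\overline{\partial}^t_{1,0,\max}\circ\overline{\partial}_{1,0,\min}$, deduce closed range from discreteness of the spectrum via the chain-of-inclusions argument of Cor.~\ref{freddy}, and then read off the Hodge isomorphism from \eqref{cocapro}.

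For point (3) there is a minor difference in packaging. The paper invokes the $L^2$-Serre duality theorem (Th.~2.3 in \cite{JRu}) to pass from the finite-dimensionality of $H^{1,1}_{2,\overline{\partial}_{\max}}(\reg(V),h)$ directly to that of $H^{1,1}_{2,\overline{\partial}_{\min}}(\reg(V),h)$; you instead unwind Serre duality by hand, using Prop.~\ref{occhiodibue}(5) to identify $\ker(\Delta_{\overline{\partial},1,1,\rel})$ with $\ker(\Delta_{\overline{\partial},1,1,\abs})$ via the Hodge star, and then inject the latter into $H^{1,1}_{2,\overline{\partial}_{\max}}$. These are the same argument at heart; your version is perhaps more self-contained, while the paper's is shorter. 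One bibliographic point: for the finite-dimensionality of $H^{1,1}_{2,\overline{\partial}_{\max}}(\reg(V),h)$ the paper cites \cite{OV} rather than \cite{PS}, since the results of \cite{PS} as quoted elsewhere in the paper concern the rows $p=0$ and $p=m$.
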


\begin{proof}
According to  Th. \ref{supercoccabelladezio} and to Prop. \ref{fall} we know  that $\overline{\partial}_{1,0,\max}^t\circ \overline{\partial}_{1,0,\min}: L^2\Omega^{1,0}(\reg(V),h)\rightarrow L^2\Omega^{1,0}(\reg(V),h)$ has discrete spectrum and this in turn implies  in particular  that it is a Fredholm operator on its domain endowed with the graph norm. Therefore we can conclude that  $\im(\overline{\partial}_{1,0,\max}^t\circ \overline{\partial}_{1,0,\min})$ is closed in $L^2\Omega^{1,0}(\reg(V),h)$. By \eqref{cocapro} we have  the following two orthogonal decompositions for  $L^2\Omega^{1,0}(\reg(V),h)$: $$L^2\Omega^{1,0}(\reg(V),h)=\ker(\overline{\partial}_{1,0,\min})\oplus \overline{\im(\overline{\partial}^t_{1,0,\max})}$$ $$L^2\Omega^{1,0}(\reg(V),h)=\ker(\overline{\partial}^t_{1,0,\max}\circ \overline{\partial}_{1,0,\min})\oplus \overline{\im(\overline{\partial}^t_{1,0,\max}\circ \overline{\partial}_{1,0,\min})}.$$ Clearly $\ker(\overline{\partial}^t_{1,0,\max}\circ \overline{\partial}_{1,0,\min})=\ker(\overline{\partial}_{1,0,\min})$. Therefore we have the following chain of inclusions: $$\im(\overline{\partial}^t_{1,0,\max}\circ \overline{\partial}_{1,0,\min})\subset \im(\overline{\partial}^t_{1,0,\max})\subset \overline{\im(\overline{\partial}^t_{1,0,\max})}= \overline{\im(\overline{\partial}^t_{1,0,\max}\circ \overline{\partial}_{1,0,\min})}=\im(\overline{\partial}^t_{1,0,\max}\circ \overline{\partial}_{1,0,\min})$$ which in particular implies that $\overline{\im(\overline{\partial}^t_{1,0,\max})}=\im(\overline{\partial}^t_{1,0,\max})$ and therefore, taking the adjoint, $\overline{\im(\overline{\partial}_{1,0,\min})}=\im(\overline{\partial}_{1,0,\min})$. Hence the first point is established. Using again \eqref{cocapro} we easily get that $$\ker(\Delta_{\overline{\partial},1,1,\rel})\cong \frac{\ker(\overline{\partial}_{1,1,\min})}{\overline{\im(\overline{\partial}_{1,0,\min})}}.$$ On the other hand, by  the first point of this corollary, we know that $\overline{\im(\overline{\partial}_{1,0,\min})}=\im(\overline{\partial}_{1,0,\min})$. Thus we have 
$$\ker(\Delta_{\overline{\partial},1,1,\rel})\cong \frac{\ker(\overline{\partial}_{1,1,\min})}{\overline{\im(\overline{\partial}_{1,0,\min})}}=\frac{\ker(\overline{\partial}_{1,1,\min})}{\im(\overline{\partial}_{1,0,\min})}=H^{1,1}_{2,\overline{\partial}_{\min}}(\reg(V),h).$$ Finally, according to  \cite{OV}, we know that $H^{1,1}_{2,\overline{\partial}_{\max}}(\reg(V),h)$ is finite dimensional. By  virtue of the $L^2$-Serre duality, see Th. 2.3 in \cite{JRu}, and using the second point of this corollary, we can thus conclude that $H^{1,1}_{2,\overline{\partial}_{\min}}(\reg(V),h)$ is finite dimensional too.
\end{proof}

Assuming that $\sing(V)$ is made of \emph{isolated singularities} we can also deal  with the $L^2$-Dolbeault complex $(L^2\Omega^{0,q}(\reg(V),h), \overline{\partial}_{0,q,\max})$ and its associated Laplacians.

\begin{teo}
\label{nonnapapera}
Let $V\subset \mathbb{C}\mathbb{P}^n$ be a complex projective surface with only isolated singularities. For each $q=0,1,2$  the operator
\begin{equation}
\label{miguel}
\Delta_{\overline{\partial},0,q,\abs}:L^2\Omega^{0,q}(\reg(V),h)\rightarrow L^2\Omega^{0,q}(\reg(V),h)
\end{equation}
 has discrete spectrum. Let $$0\leq \lambda_1\leq \lambda_2\leq...\leq \lambda_k\leq...$$ be the eigenvalues of \eqref{miguel}. Then we have the following asymptotic inequality
\begin{equation}
\label{miguels}
\lim \inf \lambda_k k^{-\frac{1}{2}}>0
\end{equation}
as $k\rightarrow \infty$.\\ Consider now the heat operator associated to \eqref{miguel}
\begin{equation}
\label{miguelz}
e^{-t\Delta_{\overline{\partial},0,q,\abs}}:L^2\Omega^{0,q}(\reg(V),h)\rightarrow L^2\Omega^{0,q}(\reg(V),h).
\end{equation}
Then \eqref{miguelz} is a trace class operator and its trace satisfies the following estimates
\begin{equation}
\label{miguelx}
\Tr(e^{-t\Delta_{\overline{\partial},0,q,\abs}})\leq C_qt^{-2}
\end{equation}
for $t\in (0,1]$ and some constant $C_q>0$.
\end{teo}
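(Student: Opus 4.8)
The plan is to mirror the proof of Theorem \ref{pentecoste}, substituting for the bidegree $(m,q)$-inputs their bidegree $(0,q)$-counterparts, and to dispose of the two extreme degrees $q=0,2$ separately, by the Friedrich-Laplacian results for functions and by $L^2$-Serre duality. First I would record the standing geometric facts. A resolution $\pi\colon \tilde V\to V$ identifies $(\reg(V),h)$ isometrically with $(\tilde V\setminus D,h')$ for a normal crossing divisor $D$ and a Hermitian pseudometric $h'$ on $\tilde V$, and $(\tilde V\setminus D,g)$ is parabolic for every Hermitian metric $g$ on $\tilde V$ by Prop.~\ref{exam}; moreover, since $V$ is an analytic surface, the Bishop--Lelong volume bound gives $\vol_h(B(x_0,r))\le C r^4$ near each isolated singular point, so linear cut-offs of the distance to $\sing(V)$ show that $(\reg(V),h)$ is itself parabolic, and in particular $\overline{\partial}_{0,0,\max}=\overline{\partial}_{0,0,\min}$ on functions (reduce first to bounded $f$ by truncation, then approximate by $\phi_n f$). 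By \cite{PS} the groups $H^{0,q}_{2,\overline{\partial}_{\max}}(\reg(V),h)$ are finite dimensional for all $q$ (for $q=0$ this is the remark that $L^2$ holomorphic functions on a connected finite-volume manifold are constant); hence by Prop.~\ref{usipeti} every $\im(\overline{\partial}_{0,q,\max})$ is closed and every $\Delta_{\overline{\partial},0,q,\abs}$ is a self-adjoint Fredholm operator on its domain with the graph norm.

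Next I would carry out the reduction. Being self-adjoint and Fredholm, $\Delta_{\overline{\partial},0,q,\abs}$ has discrete spectrum if and only if the inclusion $\big(\mathcal{D}(\Delta_{\overline{\partial},0,q,\abs})\cap \im(\Delta_{\overline{\partial},0,q,\abs})\big)\hookrightarrow L^2\Omega^{0,q}(\reg(V),h)$ is compact, exactly as in the proofs of Th.~\ref{lillottina} and Th.~\ref{pentecoste}. Applying Corollary \ref{calanchi} with $T_1=\overline{\partial}_{0,q-1,\max}$ and $T_2=\overline{\partial}_{0,q,\max}$ (whose ranges are closed by the first paragraph), this is equivalent to compactness of $G_{T_1^*\circ T_1}$ on $L^2\Omega^{0,q-1}$ and of $G_{T_2\circ T_2^*}$ on $L^2\Omega^{0,q+1}$. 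Since $L^2\Omega^{0,-1}=L^2\Omega^{0,3}=\{0\}$ and, for the surface, $T_1^*\circ T_1=\Delta_{\overline{\partial},0,0,\abs}$ and $T_2\circ T_2^*=\Delta_{\overline{\partial},0,2,\abs}$ in the case $q=1$, the whole discreteness statement for $q=0,1,2$ collapses to the two assertions: $\Delta_{\overline{\partial},0,0,\abs}$ and $\Delta_{\overline{\partial},0,2,\abs}$ have discrete spectrum (the $q=0$ and $q=2$ operators being precisely these two, while $q=1$ follows from Corollary \ref{calanchi}).

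For $\Delta_{\overline{\partial},0,2,\abs}$ I would use Prop.~\ref{occhiodibue} with $m=2$: point 6 gives a continuous (anti)linear isomorphism, with continuous inverse, intertwining $\Delta_{\overline{\partial},0,2,\abs}$ with $\Delta_{\overline{\partial},2,0,\rel}$, and point 5 intertwines $\Delta_{\overline{\partial},2,0,\rel}$ with $\Delta_{\overline{\partial},2,0,\abs}$; the last operator is $\overline{\partial}^t_{2,0,\min}\circ\overline{\partial}_{2,0,\max}$, so its discreteness, the asymptotic $\liminf\lambda_k k^{-1/2}>0$ and the bound $\Tr(e^{-t\cdot})\le C t^{-2}$ are exactly Th.~\ref{canonical} (equivalently the $q=0$ part of Th.~\ref{lillottina}), and all of this transfers through the isomorphism. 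For $\Delta_{\overline{\partial},0,0,\abs}$ I would use $\overline{\partial}_{0,0,\max}=\overline{\partial}_{0,0,\min}$ from the first paragraph, whence $\Delta_{\overline{\partial},0,0,\abs}=\overline{\partial}^t_{0,0,\max}\circ\overline{\partial}_{0,0,\min}=\Delta^{\mathcal{F}}_{\overline{\partial}}$ by Prop.~\ref{fall}; this operator has discrete spectrum, lower bound $\liminf\lambda_k k^{-1/2}>0$ and heat-trace estimate $\Tr(e^{-t\cdot})\le Ct^{-2}$ by \cite{LT} (compare Th.~\ref{supercoccabelladezio}). Alternatively one may quote directly \cite{Ruppe}, \cite{OvRu} for compactness of $G_{\overline{\partial}_{0,0,\max}\circ\overline{\partial}^t_{0,0,\min}}$ on $L^2\Omega^{0,1}(\reg(V),h)$, which by Corollary \ref{calanchi} is equivalent to discreteness of $\Delta_{\overline{\partial},0,0,\abs}$.

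Finally, for $q=1$ I would run Prop.~\ref{same} with $T=\overline{\partial}_{0,0,\max}+\overline{\partial}^t_{0,1,\min}\colon L^2\Omega^{0,0}\oplus L^2\Omega^{0,2}\to L^2\Omega^{0,1}$, so that $T^*T=\Delta_{\overline{\partial},0,0,\abs}\oplus\Delta_{\overline{\partial},0,2,\abs}$ and $TT^*=\Delta_{\overline{\partial},0,1,\abs}$; both have discrete spectrum by the previous step, their positive spectra coincide with multiplicities, and the finite-dimensionality of $\ker(\Delta_{\overline{\partial},0,1,\abs})$ yields $\liminf\lambda_k k^{-1/2}>0$ together with $\Tr(e^{-t\Delta_{\overline{\partial},0,1,\abs}})\le \dim\ker(\Delta_{\overline{\partial},0,1,\abs})+\Tr(e^{-t\Delta_{\overline{\partial},0,0,\abs}})+\Tr(e^{-t\Delta_{\overline{\partial},0,2,\abs}})\le C_1 t^{-2}$ for $t\in(0,1]$, exactly as in the $q=1$ case of Th.~\ref{coccabelladezio}; the trace-class property for each $q$ then follows from $\sum_k e^{-t\lambda_k}<\infty$. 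The main obstacle is the $(0,0)$ building block: unlike bidegree $(m,0)$, where Prop.~\ref{brick} makes the relevant $L^2$-norms metric-independent so that the machinery of Sections 3--4 applies verbatim, for functions the degenerate pseudometric $h'$ blocks that transfer and one genuinely needs either the parabolicity argument for $(\reg(V),h)$ (to identify the absolute and Friedrich extensions) or the isolated-singularity $\overline{\partial}$-theory of \cite{OvRu}, \cite{Ruppe}, \cite{LT}; everything else is formal manipulation with the abstract results of Section 2 and with duality.
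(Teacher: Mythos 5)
Your overall architecture coincides with the paper's: discreteness plus the asymptotics for $(0,2)$ by conjugating with $c\circ *$ onto a Laplacian on $(2,0)$-forms covered by Th.~\ref{canonical}; the $(0,0)$ case by identifying $\Delta_{\overline{\partial},0,0,\abs}$ with the Friedrich extension $\Delta^{\mathcal{F}}_{\overline{\partial}}$ and invoking \cite{LT}; and the $(0,1)$ case by the supersymmetry argument of Prop.~\ref{same} applied to $\overline{\partial}_{0,0,\max}+\overline{\partial}^t_{0,1,\min}$, exactly as in the $(2,1)$ case of Th.~\ref{coccabelladezio}. Your use of Cor.~\ref{calanchi} to deduce discreteness in degree one from degrees zero and two, instead of quoting \cite{OvRu} for all $q$ at once as the paper does, is a legitimate variant.

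There is, however, one genuine gap, and it sits under the load-bearing step. You derive $\overline{\partial}_{0,0,\max}=\overline{\partial}_{0,0,\min}$ from parabolicity alone by ``reduce first to bounded $f$ by truncation, then approximate by $\phi_n f$.'' The cut-off step is fine for bounded $f$, but the truncation step fails for the operator $\overline{\partial}$ on functions: if $F$ is a Lipschitz (non-holomorphic) truncation map, the chain rule gives $\overline{\partial}(F\circ f)=(\partial_wF)(f)\,\overline{\partial}f+(\partial_{\bar w}F)(f)\,\overline{\partial}\bar f$, and $\overline{\partial}\bar f=\overline{\partial f}$ involves the $(1,0)$-part of $df$, which is only locally $L^2$ and is \emph{not} controlled by membership in $\mathcal{D}(\overline{\partial}_{0,0,\max})$. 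So the truncation of an element of the maximal domain need not stay in the maximal domain, and parabolicity by itself does not give $\overline{\partial}_{0,0,\max}=\overline{\partial}_{0,0,\min}$ (this is precisely where $\overline{\partial}$ differs from $d$). The paper closes this point by citing \cite{GL}, Th.~1.2, which is where the isolated-singularity hypothesis enters for degree zero. The gap matters: your fallback references \cite{OvRu}, \cite{Ruppe} only yield compactness of the Neumann operator, hence discreteness, but not the eigenvalue growth \eqref{miguels} or the heat-trace bound \eqref{miguelx}; those come from \cite{LT}, which concerns $\Delta^{\mathcal{F}}_{\overline{\partial}}$, and the transfer to $\Delta_{\overline{\partial},0,0,\abs}$ requires exactly the equality $\overline{\partial}_{0,0,\max}=\overline{\partial}_{0,0,\min}$ you have not established. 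Since your $q=1$ asymptotics are built on the $q=0$ ones through Prop.~\ref{same}, repairing this step (by citing \cite{GL} as the paper does) is necessary for two of the three degrees.
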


\begin{proof}
According to \cite{OvRu} Th. 1.2 we know that $\left(\mathcal{D}(\Delta_{\overline{\partial},0,q,\abs})\cap \im(\mathcal{D}(\Delta_{\overline{\partial},0,q,\abs})\right)\hookrightarrow L^2\Omega^{0,q}(\reg(V),h)$ is a compact inclusion for each $q$. Moreover, by \cite{JRu}, we know that $H^{0,q}_{2,\overline{\partial}_{\max}}(\reg(V),h)$ is finite dimensional for each $q$ and therefore, using Prop. \ref{usipeti}, we get that  $\ker(\Delta_{\overline{\partial},0,q,\abs})$ is finite dimensional. In conclusion we have just shown  that $\left(\mathcal{D}(\Delta_{\overline{\partial},0,q,\abs})\right)\hookrightarrow L^2\Omega^{0,q}(\reg(V),h)$ is a compact inclusion for each $q$ and therefore we can conclude that \eqref{miguel} has discrete spectrum. Now, according to \cite{GL} Th 1.2, we know that $\overline{\partial}_{\min}=\overline{\partial}_{\max}$. In particular this implies that $\Delta^{\mathcal{F}}_{\overline{\partial}}=\Delta_{\overline{\partial},\abs}$, that is the absolute extension and the Friedrich extension of $\Delta_{\overline{\partial}}:C^{\infty}_c(\reg(V))\rightarrow C^{\infty}_c(\reg(V))$ coincide. Hence the statement of this theorem in the case $q=0$ follows by \cite{LT}. By \eqref{cicuta} and Prop. \ref{occhiodibue} we know that a form $\omega\in L^2\Omega^{2,0}(\reg(V),h)$ lies in $\mathcal{D}(\Delta_{\overline{\partial},2,0,\rel})$ if and only if $c_{2,0}(*\omega)\in \mathcal{D}(\Delta_{\overline{\partial},0,2,\abs})$ and if this is the case then we have $c_{2,0}(*(\Delta_{\overline{\partial},2,0,\rel}\omega)=\Delta_{\overline{\partial},0,2,\abs}(c_{2,0}(*\omega))$. Since  $c_{2,0}\circ *:L^2\Omega^{2,0}(\reg(V),h)\rightarrow L^2\Omega^{0,2}(\reg(V),g)$ is a continuous and  bijective $\mathbb{C}$-antilinear isomorphism  with continuous inverse the conclusion for the case $q=2$  follows by Th. \ref{canonical}. 
Finally the conclusion in the case $q=1$ follows by repeating the arguments, with the obvious modifications, used in the proof of Th. \ref{coccabelladezio} to prove the case $(2,1)$.
\end{proof}

An immediate application of the above theorem is the following  McKean-Singer formula for the complex $(L^2\Omega^{0,q}(\reg(V),h),\overline{\partial}_{0,q,\max})$. To this aim consider the operator
\begin{equation}
\label{cora}
\overline{\partial}_{\max}+\overline{\partial}^t_{0,1,\min}:L^2(\reg(V),h)\oplus L^2\Omega^{0,2}(\reg(V),h)\rightarrow L^2\Omega^{0,1}(\reg(V),h)
\end{equation}
 whose domain is $\mathcal{D}(\overline{\partial}_{\max})\oplus \mathcal{D}(\overline{\partial}_{0,1,\min}^t)\subset L^2(\reg(V),h)\oplus L^2\Omega^{0,2}(\reg(V),h)$. Its adjoint is 
\begin{equation}
\label{della}
\overline{\partial}_{0,1,\max}+\overline{\partial}^t_{\min}:L^2\Omega^{0,1}(\reg(V),h)\rightarrow L^2(\reg(V),h) \oplus L^2\Omega^{0,2}(\reg(V),h)
\end{equation}
with domain given by   $\mathcal{D}(\overline{\partial}_{0,1,\max})\cap \mathcal{D}(\overline{\partial}_{\min}^t)\subset L^2\Omega^{0,1}(\reg(V),h)$.

\begin{cor}
\label{mckean}
In the setting of Th. \ref{nonnapapera}. Let us label by $(\overline{\partial}_{0,\max}+\overline{\partial}_{0,\min}^t)^+$ the operator defined in \eqref{cora}. Then $(\overline{\partial}_{0,\max}+\overline{\partial}_{0,\min}^t)^+$ is a Fredholm operator and 
\begin{equation}
\label{stocca}
\ind((\overline{\partial}_{0,\max}+\overline{\partial}_{0,\min}^t)^+)=\sum_{q=0}^2(-1)^q\Tr(e^{-t\Delta_{\overline{\partial},0,q,\abs}}).
\end{equation}
In particular we have 
\begin{equation}
\label{fisso}
\chi(\tilde{V},\mathcal{O}(L))=\sum_{q=0}^2(-1)^q\Tr(e^{-t\Delta_{\overline{\partial},0,q,\abs}})
\end{equation}
where $\pi:\tilde{V}\rightarrow V$ is any resolution of $V$, $L$ is a suitable holomorphic line bundle on $\tilde{V}$ \footnote{We refer to \cite{PS} and to \cite{JRu} for the definition of $L$.} and $\chi(\tilde{V},\mathcal{O}(L))= \sum_{q=0}^{2}(-1)^q\dim(H^q(\tilde{V},\mathcal{O}(L)))$.
\end{cor}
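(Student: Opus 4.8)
The plan is to follow the same template used to establish the McKean--Singer formula in bi-degree $(m,0)$, namely Cor. \ref{kean}, transporting it to the $(0,q)$-complex via the isolated-singularities hypothesis and the identifications already set up in Th. \ref{nonnapapera}. First I would observe that by Th. \ref{nonnapapera} all three operators $\Delta_{\overline{\partial},0,q,\abs}$, $q=0,1,2$, have discrete spectrum and trace class heat operators; in particular $\ker(\Delta_{\overline{\partial},0,q,\abs})$ is finite dimensional for each $q$, and by Prop. \ref{usipeti} together with the finite-dimensionality of $H^{0,q}_{2,\overline{\partial}_{\max}}(\reg(V),h)$ (from \cite{JRu}) the image $\im(\overline{\partial}_{0,q,\max})$ is closed. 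This is exactly the data needed to make $(\overline{\partial}_{0,\max}+\overline{\partial}_{0,\min}^t)^+$, as defined in \eqref{cora}, a Fredholm operator between its (graph-normed) domain and $L^2\Omega^{0,1}(\reg(V),h)$: its adjoint is \eqref{della}, and the compositions with the adjoints are, respectively, $\Delta_{\overline{\partial},0,0,\abs}\oplus \Delta_{\overline{\partial},0,2,\abs}$ and $\Delta_{\overline{\partial},0,1,\abs}$, both with discrete spectrum, so both the operator and its adjoint have finite-dimensional kernel.

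Next I would compute the index. The kernel of $(\overline{\partial}_{0,\max}+\overline{\partial}_{0,\min}^t)^+$ is $\ker(\Delta_{\overline{\partial},0,0,\abs})\oplus \ker(\Delta_{\overline{\partial},0,2,\abs})$ and the kernel of its adjoint is $\ker(\Delta_{\overline{\partial},0,1,\abs})$, so
\begin{equation}
\ind((\overline{\partial}_{0,\max}+\overline{\partial}_{0,\min}^t)^+)=\dim\ker(\Delta_{\overline{\partial},0,0,\abs})-\dim\ker(\Delta_{\overline{\partial},0,1,\abs})+\dim\ker(\Delta_{\overline{\partial},0,2,\abs}).
\end{equation}
Then, exactly as in the proof of Th. \ref{coccabelladezio} for the case $(2,1)$ (which the proof of Th. \ref{nonnapapera} invokes for $q=1$), Prop. \ref{same} applied to the operator \eqref{cora} gives that every positive eigenvalue of $\Delta_{\overline{\partial},0,1,\abs}$ occurs with the same multiplicity as in $\Delta_{\overline{\partial},0,0,\abs}\oplus \Delta_{\overline{\partial},0,2,\abs}$; hence the analogue of \eqref{pescesega} holds,
\begin{equation}
\Tr(e^{-t\Delta_{\overline{\partial},0,1,\abs}})-\dim\ker(\Delta_{\overline{\partial},0,1,\abs})=\sum_{q\in\{0,2\}}\left(\Tr(e^{-t\Delta_{\overline{\partial},0,q,\abs}})-\dim\ker(\Delta_{\overline{\partial},0,q,\abs})\right),
\end{equation}
and therefore the alternating sum $\sum_{q=0}^2(-1)^q\Tr(e^{-t\Delta_{\overline{\partial},0,q,\abs}})$ is $t$-independent and equals $\sum_{q=0}^2(-1)^q\dim\ker(\Delta_{\overline{\partial},0,q,\abs})$, which is the index above. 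This proves \eqref{stocca}.

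Finally, for \eqref{fisso} I would identify $\dim\ker(\Delta_{\overline{\partial},0,q,\abs})$ with $\dim H^{0,q}_{2,\overline{\partial}_{\max}}(\reg(V),h)$ — this uses that $\im(\overline{\partial}_{0,q-1,\max})$ is closed so that harmonic forms represent cohomology (the decomposition \eqref{zazzac}) — and then invoke the main result of \cite{PS}: the $L^2$-$\overline{\partial}$-cohomology groups in bidegree $(0,q)$ of $(\reg(V),h)$ are isomorphic to $H^q(\tilde V,\mathcal{O}(L))$ for the appropriate line bundle $L$ on a resolution $\pi:\tilde V\to V$. Summing the alternating Euler characteristics then yields $\ind((\overline{\partial}_{0,\max}+\overline{\partial}_{0,\min}^t)^+)=\chi(\tilde V,\mathcal{O}(L))$, which combined with \eqref{stocca} gives \eqref{fisso}. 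The only genuinely delicate point is the bookkeeping in the $q=1$ case: one must be careful that \eqref{cora}--\eqref{della} really are adjoint to each other with the stated domains and that $\im(\overline{\partial}_{0,q,\max})$ closed in all three degrees is what guarantees closed range for the combined operator, so that Prop. \ref{same} applies; everything else is a direct transcription of the arguments already carried out for the canonical-bundle complex in Cor. \ref{kean} and for the case $q=1$ inside Th. \ref{coccabelladezio}.
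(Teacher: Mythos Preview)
Your proposal is correct and follows essentially the same approach as the paper: the paper's proof simply says that \eqref{stocca} is proved ``in the same way we proved \eqref{jilm}'' (i.e., Fredholmness from discrete spectrum via Th.~\ref{nonnapapera}, index $=$ alternating kernel dimensions, and the analogue of \eqref{pescesega} via Prop.~\ref{same}), and that \eqref{fisso} then follows from \eqref{stocca} together with the results of \cite{PS} and \cite{JRu}. You have written out in detail precisely what the paper leaves to the reader; the only cosmetic difference is that the paper also cites \cite{JRu} alongside \cite{PS} for the identification of the $L^2$-$\overline{\partial}$-cohomology with $H^q(\tilde V,\mathcal{O}(L))$.
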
 

\begin{proof}
The equality \eqref{stocca} can be proved in the same way we proved \eqref{jilm}. The equality \eqref{fisso} follows by \eqref{stocca} and the results established in \cite{PS} and \cite{JRu}.
\end{proof}

In the last part of this section we collect various corollaries that arise, through  \eqref{cicuta} and Prop. \ref{occhiodibue}, as immediate consequences of the results proved so far. 
\begin{cor}
\label{sigari}
In the  setting of Th. \ref{lillottina}. For each $q=0,1,2$ the operator
\begin{equation}
\label{salicornia}
\Delta_{\overline{\partial},0,q,\rel}:L^2\Omega^{0,q}(\reg(V),h)\rightarrow L^2\Omega^{0,q}(\reg(V),h)
\end{equation}
has discrete spectrum. Let  $$0\leq \lambda_1\leq \lambda_2\leq...\leq \lambda_k\leq...$$ be the eigenvalues of \eqref{salicornia}; we have the following asymptotic  inequality 
\begin{equation}
\label{drive}
\lim \inf \lambda_k k^{-\frac{1}{2}}>0
\end{equation}
as $k\rightarrow \infty$.\\ Finally consider  the heat operator associated to \eqref{salicornia}
\begin{equation}
\label{ottenebrare}
e^{-t\Delta_{\overline{\partial},0,q,\rel}}:L^2\Omega^{0,q}(\reg(V),h)\rightarrow L^2\Omega^{0,q}(\reg(V),h).
\end{equation}
Then \eqref{ottenebrare} is a trace class operator and  we have the following estimate for its trace
\begin{equation}
\label{obnubilare}
\Tr(e^{-t\Delta_{\overline{\partial},0,q,\rel}})\leq  C_qt^{-2}
\end{equation}
for $t\in (0,1]$ and some constant $C_q>0$.
\end{cor}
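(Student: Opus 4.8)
The plan is to deduce Cor.~\ref{sigari} from Th.~\ref{lillottina} and Th.~\ref{coccabelladezio} by transporting everything along the conjugation \eqref{cicuta}, as codified in Prop.~\ref{occhiodibue}. First I would specialize the sixth item of Prop.~\ref{occhiodibue} to the present situation, namely complex dimension $m=2$ and bidegree $(p,q)=(2,q)$, obtaining
\begin{equation*}
*\circ c_{2,q}\big(\mathcal{D}(\Delta_{\overline{\partial},2,q,\abs})\big)=\mathcal{D}(\Delta_{\overline{\partial},0,2-q,\rel}),\qquad *\circ c_{2,q}\circ \Delta_{\overline{\partial},2,q,\abs}=\Delta_{\overline{\partial},0,2-q,\rel}\circ *\circ c_{2,q}.
\end{equation*}
Since $c_{2,q}$ preserves the $L^2$-norm on forms (as recalled after \eqref{conjhil}) and the Hodge star $*$ is unitary, the composition $*\circ c_{2,q}:L^2\Omega^{2,q}(\reg(V),h)\to L^2\Omega^{0,2-q}(\reg(V),h)$ is a continuous, bijective, $\mathbb{C}$-antilinear isometry with continuous inverse. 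Thus $\Delta_{\overline{\partial},0,2-q,\rel}$ is obtained from $\Delta_{\overline{\partial},2,q,\abs}$ by conjugation through this isometry.

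Next I would read off the spectral consequences. Both operators are self-adjoint and non-negative, and a norm-preserving bijection carrying domain onto domain and intertwining the two operators sends an eigensection of one to an eigensection of the other with the same eigenvalue, hence identifies the eigenspaces. In particular $\Delta_{\overline{\partial},0,2-q,\rel}$ has discrete spectrum once $\Delta_{\overline{\partial},2,q,\abs}$ does, which is the first point of Th.~\ref{lillottina}, and the ordered eigenvalue sequences (with multiplicities) of $\Delta_{\overline{\partial},0,2-q,\rel}$ and $\Delta_{\overline{\partial},2,q,\abs}$ coincide; the asymptotic inequality \eqref{drive} then follows from \eqref{resiles} of Th.~\ref{coccabelladezio}. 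Moreover $e^{-t\Delta_{\overline{\partial},0,2-q,\rel}}=(*\circ c_{2,q})\circ e^{-t\Delta_{\overline{\partial},2,q,\abs}}\circ(*\circ c_{2,q})^{-1}$, so the two heat operators have equal traces (the trace depends only on the eigenvalues), whence $e^{-t\Delta_{\overline{\partial},0,2-q,\rel}}$ is trace class and $\Tr(e^{-t\Delta_{\overline{\partial},0,2-q,\rel}})=\Tr(e^{-t\Delta_{\overline{\partial},2,q,\abs}})\leq C_q t^{-2}$ for $t\in(0,1]$ by \eqref{resilez}. Finally, letting $q$ run over $\{0,1,2\}$ and noting that $q\mapsto 2-q$ is a bijection of $\{0,1,2\}$ onto itself, this produces all three assertions of the corollary.

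The argument is essentially formal, so I do not expect a genuine obstacle; the one point I would be careful to spell out is that $*\circ c_{2,q}$ is $\mathbb{C}$-\emph{antilinear} rather than $\mathbb{C}$-linear. Since the operators involved are self-adjoint with real spectrum and the intertwining map is a norm-preserving bijection carrying domains to domains, the transfer of discreteness, of eigenvalues with multiplicities, and of heat-trace values is unaffected by antilinearity. This is the same reduction already used in the proof of Cor.~\ref{aquino} and in the case $q=2$ of Th.~\ref{nonnapapera}, so no new difficulty arises: Cor.~\ref{sigari} is a direct consequence of the degree $(2,\bullet)$ results for projective surfaces.
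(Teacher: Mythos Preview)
Your proposal is correct and follows essentially the same route as the paper's own proof: both transport the $(2,q)$ absolute results to the $(0,2-q)$ relative case via the $\mathbb{C}$-antilinear isometry coming from Prop.~\ref{occhiodibue} together with \eqref{cicuta}, and then invoke Th.~\ref{lillottina} and Th.~\ref{coccabelladezio}. The only cosmetic difference is that the paper writes the intertwiner as $c_{2-q,0}\circ *$ whereas you use $*\circ c_{2,q}$; since the Hodge star commutes with complex conjugation these are the same map, so there is no substantive divergence.
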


\begin{proof}
Using \eqref{cicuta} and Prop. \ref{occhiodibue} we have that any form $\omega\in L^2\Omega^{2,q}(\reg(V),h)$ lies in $\mathcal{D}(\Delta_{\overline{\partial},2,q,\abs})$ if and only if $c_{2-q,0}(*\omega)\in \mathcal{D}(\Delta_{\overline{\partial},0,2-q,\rel})$ and if this is the case then we have $c_{2-q,0}(*(\Delta_{\overline{\partial},2,q,\abs}\omega))=\Delta_{\overline{\partial},0,2-q,\rel}(c_{2-q,0}(*\omega))$. Since  $c_{2-q,0}\circ *:L^2\Omega^{2,q}(\reg(V),h)\rightarrow L^2\Omega^{0,2-q}(\reg(V),g)$ is a continuous and  bijective $\mathbb{C}$-antilinear isomorphism with continuous inverse the conclusion  follows now by Th. \ref{lillottina} and Th. \ref{coccabelladezio}.
\end{proof}

\begin{cor}
\label{supercoccabelladeziow}
In the  setting of Th. \ref{lillottina}. For each $q=0,1,2$ the operator
\begin{equation}
\label{sacripantez}
\Delta_{\overline{\partial},0,q}^{\mathcal{F}}:L^2\Omega^{0,q}(\reg(V),h)\rightarrow L^2\Omega^{0,q}(\reg(V),h)
\end{equation}
has discrete spectrum. Let  $$0\leq \mu_1\leq \mu_2\leq...\leq \mu_k\leq...$$ be the eigenvalues of \eqref{sacripantez}; we have the following inequality $$\mu_k\geq \lambda_k$$ where $0\leq \lambda_1\leq...\leq\lambda_k\leq...$ are the eigenvalues of \eqref{salicornia}. Moreover we have the following asymptotic  inequality 
\begin{equation}
\label{zuccaz}
\lim \inf \mu_k k^{-\frac{1}{2}}>0
\end{equation}
as $k\rightarrow \infty$.\\ Consider now the heat operator associated to \eqref{sacripantez}
\begin{equation}
\label{gargantuescoz}
e^{-t\Delta_{\overline{\partial},0,q}^{\mathcal{F}}}:L^2\Omega^{0,q}(\reg(V),h)\rightarrow L^2\Omega^{0,q}(\reg(V),h).
\end{equation}
Then \eqref{gargantuescoz} is a trace class operator.  We have the following inequality $$\Tr(e^{-t\Delta_{\overline{\partial},0,q}^{\mathcal{F}}})\leq \Tr(e^{-t\Delta_{\overline{\partial},0,q,\rel}})$$ for every $t>0$ and 
furthermore   $\Tr(e^{-t\Delta_{\overline{\partial},0,q}^{\mathcal{F}}})$ satisfies the following estimates
\begin{equation}
\label{pantagruelicoz}
\Tr(e^{-t\Delta_{\overline{\partial},0,q}^{\mathcal{F}}})\leq  C_qt^{-2}
\end{equation}
for $t\in (0,1]$ and some constant $C_q>0$.
\end{cor}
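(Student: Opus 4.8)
The plan is to adapt almost verbatim the argument of Th.~\ref{coccabelladezios} (and of Cor.~\ref{aquino}), working this time in bidegree $(0,q)$ and comparing the Friedrich extension with the \emph{relative} extension $\Delta_{\overline{\partial},0,q,\rel}$, for which Cor.~\ref{sigari} has already supplied discreteness, the lower Weyl bound and the heat-trace estimate. First I would invoke Prop.~\ref{fall} with $D:=\overline{\partial}_{0,q}+\overline{\partial}^t_{0,q-1}$ to identify $\Delta^{\mathcal{F}}_{\overline{\partial},0,q}=(\overline{\partial}_{0,q}+\overline{\partial}^t_{0,q-1})_{\max}\circ(\overline{\partial}_{0,q}+\overline{\partial}^t_{0,q-1})_{\min}$, so that its quadratic form is $s\mapsto\|\overline{\partial}_{0,q,\min}s\|^2+\|\overline{\partial}^t_{0,q-1,\min}s\|^2$ on the form domain $Q_{\mathcal{F}}$, the closure of $\Omega^{0,q}_c(\reg(V))$ in the graph norm of $(\overline{\partial}_{0,q}+\overline{\partial}^t_{0,q-1})_{\min}$. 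On the other hand, by \eqref{buio} the operator $\Delta_{\overline{\partial},0,q,\rel}$ is the square of the relative Hodge--Dolbeault operator $\overline{\partial}_{0,q,\min}+\overline{\partial}^t_{0,q-1,\max}$, hence its quadratic form is $s\mapsto\|\overline{\partial}_{0,q,\min}s\|^2+\|\overline{\partial}^t_{0,q-1,\max}s\|^2$ on $Q_{\rel}:=\mathcal{D}(\overline{\partial}_{0,q,\min})\cap\mathcal{D}(\overline{\partial}^t_{0,q-1,\max})$. Since $\overline{\partial}^t_{0,q-1,\min}\subset\overline{\partial}^t_{0,q-1,\max}$ one gets a continuous inclusion $Q_{\mathcal{F}}\hookrightarrow Q_{\rel}$ on which the two Rayleigh quotients coincide.

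Next I would run the min-max theorem exactly as in the proof of Th.~\ref{coccabelladezios}. Setting $\mu_k$ to be the $k$-th min-max value of the quadratic form of $\Delta^{\mathcal{F}}_{\overline{\partial},0,q}$ over $\mathfrak{F}_k\cap Q_{\mathcal{F}}$ and $\lambda_k$ the $k$-th min-max value of the quadratic form of $\Delta_{\overline{\partial},0,q,\rel}$ over $\mathfrak{F}_k\cap Q_{\rel}$ --- which by Cor.~\ref{sigari} are precisely the eigenvalues of $\Delta_{\overline{\partial},0,q,\rel}$ --- the inclusion of admissible subspaces $\mathfrak{F}_k\cap Q_{\mathcal{F}}\subset\mathfrak{F}_k\cap Q_{\rel}$ together with the equality of the Rayleigh quotients on $Q_{\mathcal{F}}$ gives $\mu_k\geq\lambda_k$ for every $k$. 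Since $\liminf_k\lambda_k k^{-1/2}>0$ by Cor.~\ref{sigari}, in particular $\mu_k\to\infty$, so $\Delta^{\mathcal{F}}_{\overline{\partial},0,q}$ has discrete spectrum with eigenvalues $\{\mu_k\}_{k\in\mathbb{N}}$ and \eqref{zuccaz} follows at once. Finally, $\Tr(e^{-t\Delta^{\mathcal{F}}_{\overline{\partial},0,q}})=\sum_k e^{-t\mu_k}\leq\sum_k e^{-t\lambda_k}=\Tr(e^{-t\Delta_{\overline{\partial},0,q,\rel}})<\infty$ yields trace-class-ness of $e^{-t\Delta^{\mathcal{F}}_{\overline{\partial},0,q}}$, the stated comparison of traces, and --- by the bound of Cor.~\ref{sigari} --- $\Tr(e^{-t\Delta^{\mathcal{F}}_{\overline{\partial},0,q}})\leq C_q t^{-2}$ for $t\in(0,1]$.

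I do not expect a genuine obstacle here: the whole proof is a transcription of Th.~\ref{coccabelladezios} with the absolute extension replaced by the relative one and bidegree $(2,q)$ replaced by $(0,q)$. The only points requiring a little care are (i) identifying $\Delta_{\overline{\partial},0,q,\rel}$ with the square of $\overline{\partial}_{0,q,\min}+\overline{\partial}^t_{0,q-1,\max}$, so that the min-max characterisation of the $\lambda_k$ uses exactly that operator, and (ii) the form-domain inclusion $Q_{\mathcal{F}}\subset Q_{\rel}$, which both legitimises the comparison $\mu_k\geq\lambda_k$ and, together with $\lambda_k\to\infty$, forces discreteness of $\Delta^{\mathcal{F}}_{\overline{\partial},0,q}$ without any separate compactness argument. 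Alternatively, discreteness could be deduced abstractly from Prop.~\ref{coccadezio}, Cor.~\ref{calanchi} and Cor.~\ref{sigari}, but the form-domain route is the shortest.
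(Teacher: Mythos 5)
Your proof is correct, but it follows a genuinely different route from the paper's. The paper disposes of this corollary in one line: by \eqref{cicuta} and Prop.~\ref{occhiodibue}, the anti-unitary map $c_{2-q,0}\circ *$ intertwines $\Delta_{\overline{\partial},2,2-q}^{\mathcal{F}}$ with $\Delta_{\overline{\partial},0,q}^{\mathcal{F}}$ and $\Delta_{\overline{\partial},2,2-q,\abs}$ with $\Delta_{\overline{\partial},0,q,\rel}$ (exactly as in the proof of Cor.~\ref{sigari}), so every assertion is transported verbatim from Th.~\ref{coccabelladezios}. You instead rerun the min-max comparison directly in bidegree $(0,q)$: identifying $\Delta^{\mathcal{F}}_{\overline{\partial},0,q}$ via Prop.~\ref{fall} as $(\overline{\partial}_{0,q}+\overline{\partial}^t_{0,q-1})^t_{\max}\circ(\overline{\partial}_{0,q}+\overline{\partial}^t_{0,q-1})_{\min}$ and $\Delta_{\overline{\partial},0,q,\rel}$ as $T^*T$ with $T=\overline{\partial}_{0,q,\min}\oplus\overline{\partial}^t_{0,q-1,\max}$, and exploiting the form-domain inclusion $\mathcal{D}\bigl((\overline{\partial}_{0,q}+\overline{\partial}^t_{0,q-1})_{\min}\bigr)\subset \mathcal{D}(\overline{\partial}_{0,q,\min})\cap\mathcal{D}(\overline{\partial}^t_{0,q-1,\max})$ on which the Rayleigh quotients agree. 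Both arguments are sound. Your version buys two things: it avoids having to verify that $c\circ *$ carries Friedrich extensions to Friedrich extensions (the paper checks this only for the abs/rel Laplacians in Prop.~\ref{occhiodibue} and, under $*$ alone, in Th.~\ref{supercoccabelladezio}, so its one-line proof leaves a small amount implicit), and it extracts discreteness of $\Delta^{\mathcal{F}}_{\overline{\partial},0,q}$ directly from $\mu_k\geq\lambda_k\to\infty$ rather than needing it as an a priori input, whereas Th.~\ref{coccabelladezios} takes discreteness from Th.~\ref{lillottina} before choosing an eigenbasis. The cost is that you duplicate the min-max machinery that the duality argument reuses for free.
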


\begin{proof}
This corollary follows by Th. \ref{coccabelladezios} using \eqref{cicuta} and Prop. \ref{occhiodibue} as in the  proof of Cor. \ref{sigari}.
\end{proof}

\begin{cor}
\label{sigariq}
In the  setting of Th. \ref{nonnapapera}.  For each $q=0,1,2$ the operator
\begin{equation}
\label{salicorniaq}
\Delta_{\overline{\partial},2,q,\rel}:L^2\Omega^{2,q}(\reg(V),h)\rightarrow L^2\Omega^{2,q}(\reg(V),h)
\end{equation}
has discrete spectrum. Let  $$0\leq \lambda_1\leq \lambda_2\leq...\leq \lambda_k\leq...$$ be the eigenvalues of \eqref{salicorniaq}; we have the following asymptotic  inequality 
\begin{equation}
\label{driveq}
\lim \inf \lambda_k k^{-\frac{1}{2}}>0
\end{equation}
as $k\rightarrow \infty$.\\ Consider now the heat operator associated to \eqref{salicorniaq}
\begin{equation}
\label{ottenebrareq}
e^{-t\Delta_{\overline{\partial},2,q,\rel}}:L^2\Omega^{2,q}(\reg(V),h)\rightarrow L^2\Omega^{2,q}(\reg(V),h).
\end{equation}
Then \eqref{ottenebrareq} is a trace class operator.  Furthermore we have the following estimate for  $\Tr(e^{-t\Delta_{\overline{\partial},2,q,\rel}})$
\begin{equation}
\label{obnubilareq}
\Tr(e^{-t\Delta_{\overline{\partial},2,q,\rel}})\leq  C_qt^{-2}
\end{equation}
for $t\in (0,1]$ and some constant $C_q>0$.
\end{cor}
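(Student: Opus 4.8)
The plan is to reduce the statement to Theorem \ref{nonnapapera} by the Hodge-star and conjugation device already exploited in the proofs of Cor. \ref{sigari} and Cor. \ref{supercoccabelladeziow}, only run in the reverse direction. There one transported information from the \emph{absolute} extension in bi-degree $(2,q)$ to the \emph{relative} extension in bi-degree $(0,2-q)$; here one transports information from the \emph{absolute} extension in bi-degree $(0,2-q)$ — which, on a surface with only isolated singularities, is exactly what Theorem \ref{nonnapapera} controls — to the \emph{relative} extension in bi-degree $(2,q)$. Since everything splits over $q\in\{0,1,2\}$, there is nothing global to organise and one treats each $q$ separately.

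Fix $q\in\{0,1,2\}$. Using \eqref{cicuta} and Prop. \ref{occhiodibue} on $\reg(V)$ with the K\"ahler metric $h$ and $m=2$, the composition
\begin{equation}
\label{sigariqmap}
c_{2-q,0}\circ *:L^2\Omega^{2,q}(\reg(V),h)\longrightarrow L^2\Omega^{0,2-q}(\reg(V),h),
\end{equation}
where $*\colon L^2\Omega^{2,q}(\reg(V),h)\to L^2\Omega^{2-q,0}(\reg(V),h)$ is the Hodge-star isometry and $c_{2-q,0}$ is complex conjugation, is a continuous, norm-preserving, $\mathbb{C}$-antilinear bijection with continuous inverse. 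By point (6) of Prop. \ref{occhiodibue}, read with the pair $(p,q)$ there equal to $(0,2-q)$ and then inverted (exactly as in the proof of Cor. \ref{sigari}), the map \eqref{sigariqmap} carries $\mathcal{D}(\Delta_{\overline{\partial},2,q,\rel})$ onto $\mathcal{D}(\Delta_{\overline{\partial},0,2-q,\abs})$ and intertwines the operator \eqref{salicorniaq} with $\Delta_{\overline{\partial},0,2-q,\abs}$; thus these two self-adjoint operators are unitarily equivalent through an antilinear unitary.

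Now, since $V$ is a complex projective surface with only isolated singularities, Theorem \ref{nonnapapera} applies to $\Delta_{\overline{\partial},0,2-q,\abs}$: it has discrete spectrum, its eigenvalues $\lambda_k$ satisfy $\liminf_{k\to\infty}\lambda_k k^{-1/2}>0$, and the associated heat operator is trace class with $\Tr(e^{-t\Delta_{\overline{\partial},0,2-q,\abs}})\le C_{2-q}t^{-2}$ for $t\in(0,1]$. These three properties transport along \eqref{sigariqmap}: an antilinear unitary conjugation preserves self-adjointness and the spectrum with multiplicities (if $A\phi=\lambda\phi$ with $\lambda\in\mathbb{R}$, then the conjugated operator has eigenvalue $\lambda$ on the image of $\phi$), so \eqref{salicorniaq} has exactly the same eigenvalue sequence and hence the asymptotics \eqref{driveq}; and since \eqref{sigariqmap} sends an orthonormal eigenbasis to an orthonormal eigenbasis, the two heat operators are conjugate and have equal traces, so \eqref{ottenebrareq} is trace class and \eqref{obnubilareq} holds with $C_q:=C_{2-q}$. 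This proves the corollary.

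I do not expect a serious obstacle: the only delicate points are the degree bookkeeping in Prop. \ref{occhiodibue} — it is essential that starting from the \emph{absolute} extension on the $(0,\bullet)$ side yields the \emph{relative} extension on the $(2,\bullet)$ side, and that the isolated-singularities hypothesis is precisely what makes the $(0,\bullet)$ side accessible through Theorem \ref{nonnapapera} — and the harmless remark that $c_{2-q,0}$ is antilinear rather than linear, which is immaterial because only real-valued spectral functions $\lambda\mapsto e^{-t\lambda}$ and basis-independent quantities (the trace) are involved. All the substantive analytic input has already been supplied inside Theorem \ref{nonnapapera} (via \cite{OvRu}, \cite{GL}, \cite{LT}, \cite{JRu}, \cite{PS}).
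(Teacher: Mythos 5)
Your proposal is correct and follows exactly the paper's own route: the paper proves this corollary by transporting Theorem \ref{nonnapapera} for $\Delta_{\overline{\partial},0,2-q,\abs}$ through \eqref{cicuta} and Prop. \ref{occhiodibue} (the antilinear unitary $c_{2-q,0}\circ *$, as in Cor. \ref{sigari}), which is precisely your argument. Your degree bookkeeping (relative on the $(2,\bullet)$ side corresponding to absolute on the $(0,\bullet)$ side) and the remark that an antiunitary conjugation preserves the real spectrum and the heat trace are both accurate.
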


\begin{proof}
This corollary follows by Th. \ref{nonnapapera} using \eqref{cicuta} and Prop. \ref{occhiodibue} as in the  proof of Cor. \ref{sigari}.
\end{proof}

\begin{cor}
\label{supercoccabelladeziof}
In the setting of Th. \ref{lillottina}. The operator
\begin{equation}
\label{sacripantef}
\Delta_{\overline{\partial},1,2}^{\mathcal{F}}:L^2\Omega^{1,2}(\reg(V),h)\rightarrow L^2\Omega^{1,2}(\reg(V),h)
\end{equation}
 has discrete spectrum. Let  $$0\leq \mu_1\leq \mu_2\leq...\leq \mu_k\leq...$$ be the eigenvalues of \eqref{sacripantef}; we have the following asymptotic  inequality 
\begin{equation}
\label{zuccaf}
\lim \inf \mu_k k^{-\frac{1}{2}}>0
\end{equation}
as $k\rightarrow \infty$.\\ Finally consider  the heat operator associated to \eqref{sacripantef}
\begin{equation}
\label{gargantuescof}
e^{-t\Delta_{\overline{\partial},1,2}^{\mathcal{F}}}:L^2\Omega^{1,2}(\reg(V),h)\rightarrow L^2\Omega^{1,2}(\reg(V),h).
\end{equation}
Then \eqref{gargantuescof} is a trace class operator and its trace satisfies the following estimate
\begin{equation}
\label{pantagruelicof}
\Tr(e^{-t\Delta_{\overline{\partial},1,2}^{\mathcal{F}}})\leq  Ct^{-2}
\end{equation}
for $t\in (0,1]$ and some constant $C>0$.
\end{cor}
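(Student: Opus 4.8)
The plan is to reduce everything to the operator $\Delta_{\overline{\partial},2,1}^{\mathcal{F}}$, which is covered by Th. \ref{coccabelladezios} with $q=1$, by transporting the situation through the $\mathbb{C}$-antilinear conjugation isomorphism $c_{1,2}:L^2\Omega^{1,2}(\reg(V),h)\rightarrow L^2\Omega^{2,1}(\reg(V),h)$. Recall from the discussion around \eqref{conjhil} that $c_{1,2}$ is a continuous, bijective, $\mathbb{C}$-antilinear isometry with continuous inverse $c_{2,1}$; in particular it preserves $L^2$-norms and $L^2$-convergence and maps $\Omega^{1,2}_c(\reg(V))$ onto $\Omega^{2,1}_c(\reg(V))$.

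First I would establish, on $\Omega^{1,2}_c(\reg(V))$, the identity $c_{1,2}\circ\Delta_{\overline{\partial},1,2}=\Delta_{\partial,2,1}\circ c_{1,2}$. Writing $\Delta_{\overline{\partial},1,2}=\overline{\partial}_{1,1}\circ\overline{\partial}^t_{1,1}+\overline{\partial}^t_{1,2}\circ\overline{\partial}_{1,2}$ and substituting the relations of Prop. \ref{comconj}, namely $\overline{\partial}_{p,q}=c_{q+1,p}\circ\partial_{q,p}\circ c_{p,q}$ and $\overline{\partial}^t_{p,q}=c_{q,p}\circ\partial^t_{q,p}\circ c_{p,q+1}$, together with $c_{p,q}\circ c_{q,p}=\id$, one finds termwise $\Delta_{\overline{\partial},1,2}=c_{2,1}\circ\Delta_{\partial,2,1}\circ c_{1,2}$, which gives the claimed identity after composing with $c_{1,2}$ on the left. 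Next, since $h$ is K\"ahler on $\reg(V)$, the K\"ahler identities give $\Delta_{\partial,2,1}=\Delta_{\overline{\partial},2,1}$ as differential operators on $\Omega^{2,1}_c(\reg(V))$, hence $\Delta_{\partial,2,1}^{\mathcal{F}}=\Delta_{\overline{\partial},2,1}^{\mathcal{F}}$ as self-adjoint operators on $L^2\Omega^{2,1}(\reg(V),h)$.

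The step requiring care is the passage from this identity of formal operators to an identity of their Friedrich extensions, i.e. $c_{1,2}\big(\mathcal{D}(\Delta_{\overline{\partial},1,2}^{\mathcal{F}})\big)=\mathcal{D}(\Delta_{\partial,2,1}^{\mathcal{F}})$ and $c_{1,2}\circ\Delta_{\overline{\partial},1,2}^{\mathcal{F}}=\Delta_{\partial,2,1}^{\mathcal{F}}\circ c_{1,2}$. This is handled exactly as in the proof of Th. \ref{supercoccabelladezio}: since $\reg(V)$ has complex dimension $2$ one has $\overline{\partial}_{1,2}=0$ and $\partial_{2,1}=0$, so $\Delta_{\overline{\partial},1,2}=\overline{\partial}_{1,1}\circ\overline{\partial}^t_{1,1}$ and $\Delta_{\partial,2,1}=\partial_{1,1}\circ\partial^t_{1,1}$, and Prop. \ref{fall} gives $\Delta_{\overline{\partial},1,2}^{\mathcal{F}}=\overline{\partial}_{1,1,\max}\circ\overline{\partial}^t_{1,1,\min}$ and $\Delta_{\partial,2,1}^{\mathcal{F}}=\partial_{1,1,\max}\circ\partial^t_{1,1,\min}$. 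The relations $\partial^t_{1,1}\circ c_{1,2}=c_{1,1}\circ\overline{\partial}^t_{1,1}$ and $\partial_{1,1}\circ c_{1,1}=c_{1,2}\circ\overline{\partial}_{1,1}$ of Prop. \ref{comconj}, being identities of differential operators, pass to the minimal and maximal closed extensions because the $\mathbb{C}$-antilinear isometries $c_{1,1},c_{1,2}$ preserve $\Omega_c$, $L^2$-convergence, and the distributional pairing; composing then yields the intertwining of $\Delta_{\overline{\partial},1,2}^{\mathcal{F}}$ and $\Delta_{\partial,2,1}^{\mathcal{F}}$. Thus $\Delta_{\overline{\partial},1,2}^{\mathcal{F}}$ is $\mathbb{C}$-antilinearly unitarily equivalent to $\Delta_{\partial,2,1}^{\mathcal{F}}=\Delta_{\overline{\partial},2,1}^{\mathcal{F}}$.

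Finally, an antiunitary equivalence preserves the spectrum with multiplicities, so by Th. \ref{coccabelladezios} applied with $q=1$ the operator $\Delta_{\overline{\partial},1,2}^{\mathcal{F}}$ has discrete spectrum with the same eigenvalues $0\leq\mu_1\leq\mu_2\leq...$ as $\Delta_{\overline{\partial},2,1}^{\mathcal{F}}$, whence $\lim\inf \mu_k k^{-\frac{1}{2}}>0$ as $k\rightarrow\infty$; moreover $e^{-t\Delta_{\overline{\partial},1,2}^{\mathcal{F}}}$ is antiunitarily equivalent to $e^{-t\Delta_{\overline{\partial},2,1}^{\mathcal{F}}}$, hence trace class with $\Tr(e^{-t\Delta_{\overline{\partial},1,2}^{\mathcal{F}}})=\Tr(e^{-t\Delta_{\overline{\partial},2,1}^{\mathcal{F}}})\leq C\,t^{-2}$ for $t\in(0,1]$ and some $C>0$. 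I expect the only genuinely delicate point to be the transport of the Friedrich extension through the antilinear isometry $c_{1,2}$; everything else is bookkeeping with the relations recorded in Section 1 together with an application of Th. \ref{coccabelladezios}.
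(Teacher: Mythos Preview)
Your argument is correct. You and the paper reach the same destination by two different but closely related roads.

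The paper deduces the corollary from Th.~\ref{supercoccabelladezio} (the $(1,0)$ result) together with \eqref{cicuta} and Prop.~\ref{occhiodibue}, i.e.\ it transports $\Delta_{\overline{\partial},1,2}^{\mathcal{F}}$ to $\Delta_{\overline{\partial},1,0}^{\mathcal{F}}$ via the composite $c\circ *$ (Hodge star plus conjugation), and then quotes Th.~\ref{supercoccabelladezio}, which in turn was proved from Th.~\ref{coccabelladezios} via $*$ and the K\"ahler identities. You instead skip the Hodge star entirely: using only the conjugation $c_{1,2}$ and the K\"ahler identities you go straight from $\Delta_{\overline{\partial},1,2}^{\mathcal{F}}$ to $\Delta_{\overline{\partial},2,1}^{\mathcal{F}}$ and then invoke Th.~\ref{coccabelladezios} with $q=1$. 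Your route is marginally more direct (one intertwining instead of two), while the paper's route has the virtue of reusing Th.~\ref{supercoccabelladezio} verbatim. The only point where you could tighten the writing is that the transport of min/max extensions through $c_{1,2}$ is exactly the content of items (1)--(4) of Prop.~\ref{occhiodibue}, so you may simply cite that proposition rather than re-deriving it.
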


\begin{proof}
The statements of this corollary follow by Th. \ref{supercoccabelladezio} using \eqref{cicuta} and Prop. \ref{occhiodibue} as we did in the  proof of Cor. \ref{sigari}.
\end{proof}

We conclude the paper with the following  McKean-Singer formula concerning  $(L^2\Omega^{0,q}(\reg(V),h),\overline{\partial}_{0,q,\min})$. Let $V$ and $h$ be as in Th. \ref{lillottina}. Consider the operator
\begin{equation}
\label{isabellaf}
\overline{\partial}_{\min}+\overline{\partial}^t_{0,1,\max}:L^2(\reg(V),h)\oplus L^2\Omega^{0,2}(\reg(V),h)\rightarrow L^2\Omega^{0,1}(\reg(V),h)
\end{equation}
 whose domain is $\mathcal{D}(\overline{\partial}_{\min})\oplus \mathcal{D}(\overline{\partial}_{0,1,\max}^t)\subset L^2(\reg(V),h)\oplus L^2\Omega^{0,2}(\reg(V),h)$. Its adjoint is 
\begin{equation}
\label{regef}
\overline{\partial}_{0,1,\min}+\overline{\partial}^t_{\max}:L^2\Omega^{0,1}(\reg(V),h)\rightarrow L^2(\reg(V),h) \oplus L^2\Omega^{0,2}(\reg(V),h)
\end{equation}
with domain given by   $\mathcal{D}(\overline{\partial}_{0,1,\min})\cap \mathcal{D}(\overline{\partial}_{\max}^t)\subset L^2\Omega^{0,1}(\reg(V),h)$.

\begin{cor}
\label{mckeanwx}
In the setting of Th. \ref{lillottina}. Let us label by $(\overline{\partial}_{0,\min}+\overline{\partial}_{0,\max}^t)^+$ the operator defined in \eqref{isabellaf}. Then $(\overline{\partial}_{0,\min}+\overline{\partial}_{0,\max}^t)^+$ is a Fredholm operator and 
\begin{equation}
\label{jilmqz}
\ind((\overline{\partial}_{0,\min}+\overline{\partial}_{0,\max}^t)^+)=\sum_{q=0}^2(-1)^q\Tr(e^{-t\Delta_{\overline{\partial},0,q,\rel}}).
\end{equation}
In particular we have 
\begin{equation}
\label{jilmxqz}
\chi(\tilde{V},\mathcal{O}_{\tilde{V}})=\sum_{q=0}^2(-1)^q\Tr(e^{-t\Delta_{\overline{\partial},0,q,\rel}})
\end{equation}
where $\pi:\tilde{V}\rightarrow V$ is any resolution of $V$ and $\chi(\tilde{V},\mathcal{O}_{\tilde{V}})=\sum_{q=0}^2(-1)^q\dim(H^{0,q}_{\overline{\partial}}(\tilde{V}))$. 
\end{cor}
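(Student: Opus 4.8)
The strategy is to mimic the proof of Cor. \ref{mckean} verbatim, the only change being that everywhere the maximal/absolute objects appear we substitute the minimal/relative ones, and then identify the resulting index with the Euler characteristic of the structure sheaf on a resolution.

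\begin{proof}
First I would observe that the operator $(\overline{\partial}_{0,\min}+\overline{\partial}_{0,\max}^t)^+$ defined in \eqref{isabellaf} is Fredholm. Indeed, taking the composition with its adjoint \eqref{regef} and arguing exactly as in \eqref{isabella2}, \eqref{rege2} one gets that
$$(\overline{\partial}_{0,\min}+\overline{\partial}_{0,\max}^t)^{+*}\circ(\overline{\partial}_{0,\min}+\overline{\partial}_{0,\max}^t)^+=\Delta_{\overline{\partial},0,0,\rel}\oplus \Delta_{\overline{\partial},0,2,\rel}$$
on $L^2(\reg(V),h)\oplus L^2\Omega^{0,2}(\reg(V),h)$, while the composition taken in the other order equals $\Delta_{\overline{\partial},0,1,\rel}$ on $L^2\Omega^{0,1}(\reg(V),h)$. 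By Cor. \ref{sigari} all three of $\Delta_{\overline{\partial},0,q,\rel}$, $q=0,1,2$, have discrete spectrum; hence both compositions have discrete spectrum, and therefore $(\overline{\partial}_{0,\min}+\overline{\partial}_{0,\max}^t)^+$ is Fredholm with finite-dimensional kernel and cokernel. By Prop. \ref{same}, a real number $\lambda>0$ is an eigenvalue of $\Delta_{\overline{\partial},0,1,\rel}$ if and only if it is an eigenvalue of $\Delta_{\overline{\partial},0,0,\rel}\oplus\Delta_{\overline{\partial},0,2,\rel}$, with eigenspaces of equal dimension.

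Next, the index identity \eqref{jilmqz}. Since all the heat operators $e^{-t\Delta_{\overline{\partial},0,q,\rel}}$ are trace class (Cor. \ref{sigari}), one has the McKean--Singer cancellation: by the eigenvalue correspondence of Prop. \ref{same} the contributions of every positive eigenvalue to $\sum_{q}(-1)^q\Tr(e^{-t\Delta_{\overline{\partial},0,q,\rel}})$ cancel, so that
$$\sum_{q=0}^2(-1)^q\Tr(e^{-t\Delta_{\overline{\partial},0,q,\rel}})=\sum_{q=0}^2(-1)^q\dim\ker(\Delta_{\overline{\partial},0,q,\rel})=\ind\big((\overline{\partial}_{0,\min}+\overline{\partial}_{0,\max}^t)^+\big),$$
which is \eqref{jilmqz}; this is exactly the argument used for \eqref{jilm}.

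Finally, for \eqref{jilmxqz} I would invoke the sheaf-theoretic interpretation of the minimal $L^2$-$\overline{\partial}$-cohomology. The spaces $\ker(\Delta_{\overline{\partial},0,q,\rel})$ compute $H^{0,q}_{2,\overline{\partial}_{\min}}(\reg(V),h)$ (their finite-dimensionality is guaranteed by Cor. \ref{sigari} together with Prop. \ref{usipeti}); by the $L^2$-Serre duality of \cite{JRu} (Th. 2.3) these are dual to $H^{2,2-q}_{2,\overline{\partial}_{\max}}(\reg(V),h)$, which by \cite{PS} are isomorphic to $H^{2-q}(\tilde V,\mathcal K_{\tilde V})$; applying Serre duality on the smooth compact surface $\tilde V$ identifies these with $H^q(\tilde V,\mathcal O_{\tilde V})$. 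Hence $\sum_{q=0}^2(-1)^q\dim\ker(\Delta_{\overline{\partial},0,q,\rel})=\sum_{q=0}^2(-1)^q\dim H^q(\tilde V,\mathcal O_{\tilde V})=\chi(\tilde V,\mathcal O_{\tilde V})$, which combined with \eqref{jilmqz} gives \eqref{jilmxqz}.
\end{proof}

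The step I expect to require the most care is the sheaf identification in the last paragraph: one must track carefully which $L^2$-cohomology (minimal versus maximal, and in which bidegree) corresponds to $H^q(\tilde V,\mathcal O_{\tilde V})$, chaining $L^2$-Serre duality on $\reg(V)$, the MacPherson-type theorem of \cite{PS}, and ordinary Serre duality on $\tilde V$ in the correct order; the Fredholm and McKean--Singer parts are routine given the discreteness results already in hand.
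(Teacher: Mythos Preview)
Your proof is correct and follows essentially the same approach as the paper, which simply says to argue as in the proof of \eqref{jilm} for the index identity and to invoke \cite{PS} for \eqref{jilmxqz}. Your final paragraph spells out in more detail how the sheaf identification runs (chaining $L^2$-Serre duality on $\reg(V)$, the Pardon--Stern isomorphism for $H^{2,\bullet}_{2,\overline{\partial}_{\max}}$, and ordinary Serre duality on $\tilde V$), which is exactly the content behind the paper's one-line citation of \cite{PS}; this extra care is warranted and your chain of identifications is correct.
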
 

\begin{proof}
The equality \eqref{jilmqz} can be proved arguing as in the proof of  \eqref{jilm}. The equality \eqref{jilmxqz} follows by \eqref{jilmqz} and the results established in \cite{PS}.
\end{proof}

\end{document}